\renewcommand{\theequation}{$\smash{\sharp}\mspace{0.5mu}$\arabic{equation}}
\newcommand{\numberseries}{\mdseries}   
\newlength{\thmtopspace}                
\newlength{\thmbotspace}                
\newlength{\thmheadspace}               
\newlength{\thmindent}                  
\newtheoremstyle{bfupright head,slanted body}
                {\thmtopspace}{\thmbotspace}
                {\slshape}{\thmindent}{\bfseries}{.}{\thmheadspace}
                {{\numberseries \thmnumber{{\bf #2} }}\thmnote{#3}}
\newtheoremstyle{bfupright head,upright body}
                {\thmtopspace}{\thmbotspace}
                {\upshape}{\thmindent}{\bfseries}{.}{\thmheadspace}
                {{\numberseries \thmnumber{{\bf #2} }}\thmnote{#3}}
\newtheoremstyle{bfit head,upright body}
                {\thmtopspace}{\thmbotspace}
                {\upshape}{\thmindent}{\upshape}{.}{\thmheadspace}
                {{\numberseries\thmnumber{{\bf #2} }}
                {\bfseries\itshape\thmnote{\negthickspace#3}}}
\newtheoremstyle{it head,upright body}
                {\thmtopspace}{\thmbotspace}
                {\upshape}{\thmindent}{\upshape}{.}{\thmheadspace}
                {{\numberseries\thmnumber{{\bf #2} }}
                {\itshape\thmnote{\negthickspace#3}}}
\newtheoremstyle{fixed bf head,slanted body}
                {\thmtopspace}{\thmbotspace}{\slshape}
                {\thmindent}{\bfseries}{.}{\thmheadspace}
                {{\numberseries \thmnumber{{\bf #2} }}\thmname{#1}\thmnote{ (#3)}}
\newtheoremstyle{fixed bf head,upright body}
                {\thmtopspace}{\thmbotspace}{\upshape}
                {\thmindent}{\bfseries}{.}{\thmheadspace}
                {{\numberseries \thmnumber{{\bf #2} }}\thmname{#1}\thmnote{ (#3)}}
\newtheoremstyle{independent paragraph}
                {\thmtopspace}{\thmbotspace}
                {\upshape}{\thmindent}{\upshape}{}{0pt}
                {\thmnote{#3 }}
\newtheoremstyle{subparagraph}
                {\thmbotspace}{\thmbotspace}
                {\upshape}{\thmindent}{\upshape}{}{0pt}
                {\thmnote{#3 }}
\newtheoremstyle{notes}
                {\thmtopspace}{\thmbotspace}
                {\ttfamily}{\thmindent}{\ttfamily\small }{}{0pt}
                {\thmnote{#3 }}
\newtheoremstyle{numbered paragraph}
                {\thmtopspace}{\thmbotspace}{\upshape}
                {\thmindent}{\upshape}{}{\thmheadspace}
                {{\numberseries \thmnumber{\bf #2.}}}
\theoremstyle{bfupright head,slanted body}
\newtheorem{res}{}[section]             \newtheorem*{res*}{}
\theoremstyle{bfit head,upright body}
                 \newtheorem*{com*}{}
\theoremstyle{bfupright head,upright body}
\newtheorem{bfhpg}[res]{}               \newtheorem*{bfhpg*}{}
\theoremstyle{it head,upright body}
               \newtheorem*{ithpg*}{}
\theoremstyle{fixed bf head,slanted body}
\newtheorem{thm}[res]{Theorem}          \newtheorem*{thm*}{Theorem}
\newtheorem{prp}[res]{Proposition}      \newtheorem*{prp*}{Proposition}
\newtheorem{cor}[res]{Corollary}        \newtheorem*{cor*}{Corollary}
\newtheorem{lem}[res]{Lemma}            \newtheorem*{lem*}{Lemma}
\theoremstyle{fixed bf head,upright body}
\newtheorem{dfn}[res]{Definition}       \newtheorem*{dfn*}{Definition}
      \newtheorem*{obs*}{Observation}
\newtheorem{rmk}[res]{Remark}           \newtheorem*{rmk*}{Remark}
\newtheorem{exa}[res]{Example}          \newtheorem*{exa*}{Example}
         \newtheorem*{exe*}{Exercise}
\newtheorem{stp}[res]{Setup}            \newtheorem{stp*}{Setup}
         \newtheorem{ntn*}{Notation}
\newtheorem{con}[res]{Construction}     \newtheorem{con*}{Construction}
\theoremstyle{numbered paragraph}
\newtheorem{ipg}[res]{}
\theoremstyle{subparagraph}
\theoremstyle{notes}
\newlength{\thmlistleft}        
\newlength{\thmlistright}       
\newlength{\thmlistpartopsep}   
\newlength{\thmlisttopsep}      
\newlength{\thmlistparsep}      
\newlength{\thmlistitemsep}     
\newcounter{eqc} 
\newenvironment{eqc}{\begin{list}{\upshape (\textit{\roman{eqc}})}%
    {\usecounter{eqc}%
      \setlength{\leftmargin}{\thmlistleft}%
      \setlength{\labelwidth}{\thmlistleft}%
      \setlength{\rightmargin}{\thmlistright}%
      \setlength{\partopsep}{\thmlistpartopsep}%
      \setlength{\topsep}{\thmlisttopsep}%
      \setlength{\parsep}{\thmlistparsep}%
      \setlength{\itemsep}{\thmlistitemsep}}}%
  {\end{list}}%
\newcommand{\eqclbl}[1]{{\upshape(\textit{#1})}}
\newcounter{prt}
\newenvironment{prt}{\begin{list}{\upshape (\alph{prt})}%
    {\usecounter{prt}%
      \setlength{\leftmargin}{\thmlistleft}%
      \setlength{\labelwidth}{\thmlistleft}%
      \setlength{\rightmargin}{\thmlistright}%
      \setlength{\partopsep}{\thmlistpartopsep}%
      \setlength{\topsep}{\thmlisttopsep}%
      \setlength{\parsep}{\thmlistparsep}%
      \setlength{\itemsep}{\thmlistitemsep}}}%
  {\end{list}}%
\newcommand{\prtlbl}[1]{{\upshape(#1)}}
\newcounter{rqm}
\newenvironment{rqm}{\begin{list}{\upshape (\arabic{rqm})}%
    {\usecounter{rqm}%
      \setlength{\leftmargin}{\thmlistleft}%
      \setlength{\labelwidth}{\thmlistleft}%
      \setlength{\rightmargin}{\thmlistright}%
      \setlength{\partopsep}{\thmlistpartopsep}%
      \setlength{\topsep}{\thmlisttopsep}%
      \setlength{\parsep}{\thmlistparsep}%
      \setlength{\itemsep}{\thmlistitemsep}}}%
  {\end{list}}%
\newcommand{\rqmlbl}[1]{{\upshape(#1)}}
\newcounter{rqmm}
\newenvironment{rqmm}{\begin{list}{\upshape (\arabic{rqmm}$^*$)}%
    {\usecounter{rqmm}%
      \setlength{\leftmargin}{\thmlistleft}%
      \setlength{\labelwidth}{\thmlistleft}%
      \setlength{\rightmargin}{\thmlistright}%
      \setlength{\partopsep}{\thmlistpartopsep}%
      \setlength{\topsep}{\thmlisttopsep}%
      \setlength{\parsep}{\thmlistparsep}%
      \setlength{\itemsep}{\thmlistitemsep}}}%
  {\end{list}}%
\newenvironment{itemlist}{\nopagebreak \begin{list}{{\small $\bullet$}}%
    {\setlength{\leftmargin}{\thmlistleft}%
      \setlength{\labelwidth}{\thmlistleft}%
      \setlength{\rightmargin}{\thmlistright}%
      \setlength{\partopsep}{\thmlistpartopsep}%
      \setlength{\topsep}{\thmlisttopsep}%
      \setlength{\parsep}{\thmlistparsep}%
      \setlength{\itemsep}{\thmlistitemsep}}}%
  {\end{list}}%
  \newcommand{\proofoftag}[2][:]{(#2)#1}
  \newcommand{\proofofimp}[3][:]{\mbox{\eqclbl{#2}$\!\implies\!$\eqclbl{#3}#1}}  
\newcommand{\pgref}[1]{\ref{#1}}
\renewcommand{\eqref}[1]{(\pgref{eq:#1})}
\newcommand{\eqmref}[1]{(\pgref{eq:#1}$^*$)}
\newcommand{\corref}[2][Corollary ]{#1\pgref{cor:#2}}
\newcommand{\dfnref}[2][Definition~]{#1\pgref{dfn:#2}}
\newcommand{\exaref}[2][Example ]{#1\pgref{exa:#2}}
\newcommand{\lemref}[2][Lemma ]{#1\pgref{lem:#2}}
\newcommand{\prpref}[2][Proposition ]{#1\pgref{prp:#2}}
\newcommand{\rmkref}[2][Remark ]{#1\pgref{rmk:#2}}
\newcommand{\thmref}[2][Theorem ]{#1\pgref{thm:#2}}
\newcommand{\stpref}[2][Setup~]{#1\pgref{stp:#2}}
\newcommand{\secref}[2][Section ]{#1\pgref{sec:#2}}
\newcommand{\appref}[2][Appendix~]{#1\ref{app:#2}}
\newcommand{\conref}[2][Construction~]{#1\ref{con:#2}}
\def\@nobreak@#1{\mathchoice%
  {\nobreakdef@\displaystyle\f@size{#1}}%
  {\nobreakdef@\nobreakstyle\tf@size{\firstchoice@false #1}}%
  {\nobreakdef@\nobreakstyle\sf@size{\firstchoice@false #1}}%
  {\nobreakdef@\nobreakstyle\ssf@size{\firstchoice@false #1}}%
  \check@mathfonts}%
\def\nobreakdef@#1#2#3{\hbox{{%
                    \everymath{#1}%
                    \let\f@size#2\selectfont%
                    #3}}}%
\DeclareFontFamily{T1}{cmex}{}
\DeclareFontShape{T1}{cmex}{m}{n}{<-> s * [0.89] cmex10}{}
\DeclareSymbolFont{cmlargesymbols}{T1}{cmex}{m}{n}
\DeclareMathSymbol{\mycoprod}{\mathop}{cmlargesymbols}{"60} 
\DeclareMathSymbol{\myprod}{\mathop}{cmlargesymbols}{"51} \let\prod\myprod
\DeclareSymbolFont{usualmathcal}{OMS}{cmsy}{m}{n}
\DeclareSymbolFontAlphabet{\mathcal}{usualmathcal}
\DeclareSymbolFont{letters}{OML}{txmi}{m}{it}
\DeclareMathSymbol{\alpha}{\mathord}{letters}{"0B}
\DeclareMathSymbol{\beta}{\mathord}{letters}{"0C}
\DeclareMathSymbol{\gamma}{\mathord}{letters}{"0D}
\DeclareMathSymbol{\sigma}{\mathord}{letters}{"0E}
\DeclareMathSymbol{\epsilon}{\mathord}{letters}{"0F}
\DeclareMathSymbol{\zeta}{\mathord}{letters}{"10}
\DeclareMathSymbol{\eta}{\mathord}{letters}{"11}
\DeclareMathSymbol{\theta}{\mathord}{letters}{"12}
\DeclareMathSymbol{\iota}{\mathord}{letters}{"13}
\DeclareMathSymbol{\kappa}{\mathord}{letters}{"14}
\DeclareMathSymbol{\lambda}{\mathord}{letters}{"15}
\DeclareMathSymbol{\mu}{\mathord}{letters}{"16}
\DeclareMathSymbol{\nu}{\mathord}{letters}{"17}
\DeclareMathSymbol{\xi}{\mathord}{letters}{"18}
\DeclareMathSymbol{\pi}{\mathord}{letters}{"19}
\DeclareMathSymbol{\rho}{\mathord}{letters}{"1A}
\DeclareMathSymbol{\sigma}{\mathord}{letters}{"1B}
\DeclareMathSymbol{\tau}{\mathord}{letters}{"1C}
\DeclareMathSymbol{\upsilon}{\mathord}{letters}{"1D}
\DeclareMathSymbol{\phi}{\mathord}{letters}{"1E}
\DeclareMathSymbol{\chi}{\mathord}{letters}{"1F}
\DeclareMathSymbol{\psi}{\mathord}{letters}{"20}
\DeclareMathSymbol{\omega}{\mathord}{letters}{"21}
\DeclareMathSymbol{\varepsilon}{\mathord}{letters}{"22}
\DeclareMathSymbol{\vartheta}{\mathord}{letters}{"23}
\DeclareMathSymbol{\varpi}{\mathord}{letters}{"24}
\DeclareMathSymbol{\varrho}{\mathord}{letters}{"25}
\DeclareMathSymbol{\varsigma}{\mathord}{letters}{"26}
\DeclareMathSymbol{\varphi}{\mathord}{letters}{"27}
\DeclareMathSymbol{\Gamma}{\mathord}{letters}{"00}
\DeclareMathSymbol{\Delta}{\mathord}{letters}{"01}
\DeclareMathSymbol{\Theta}{\mathord}{letters}{"02}
\DeclareMathSymbol{\Lambda}{\mathord}{letters}{"03}
\DeclareMathSymbol{\Xi}{\mathord}{letters}{"04}
\DeclareMathSymbol{\Pi}{\mathord}{letters}{"05}
\DeclareMathSymbol{\Sigma}{\mathord}{letters}{"06}
\DeclareMathSymbol{\Upsilon}{\mathord}{letters}{"07}
\DeclareMathSymbol{\Phi}{\mathord}{letters}{"08}
\DeclareMathSymbol{\Psi}{\mathord}{letters}{"09}
\DeclareMathSymbol{\Omega}{\mathord}{letters}{"0A}
\DeclareMathSymbol{\upGamma}{\mathalpha}{operators}{"00}
\DeclareMathSymbol{\upDelta}{\mathalpha}{operators}{"01}
\DeclareMathSymbol{\upTheta}{\mathalpha}{operators}{"02}
\DeclareMathSymbol{\upLambda}{\mathalpha}{operators}{"03}
\DeclareMathSymbol{\upXi}{\mathalpha}{operators}{"04}
\DeclareMathSymbol{\upPi}{\mathalpha}{operators}{"05}
\DeclareMathSymbol{\upSigma}{\mathalpha}{operators}{"06}
\DeclareMathSymbol{\upUpsilon}{\mathalpha}{operators}{"07}
\DeclareMathSymbol{\upPhi}{\mathalpha}{operators}{"08}
\DeclareMathSymbol{\upPsi}{\mathalpha}{operators}{"09}
\DeclareMathSymbol{\upOmega}{\mathalpha}{operators}{"0A}
\DeclareMathAlphabet\PazoBB{U}{fplmbb}{m}{n}%
\newcommand{\cpx}[1]{#1_{\scriptscriptstyle{\bullet}}}
\newcommand{\Hom}[3]{\operatorname{Hom}_{#1}(#2,#3)}
\newcommand{\Ext}[4]{\operatorname{Ext}_{#1}^{#2}(#3,#4)}
\newcommand{\Tor}[4]{\operatorname{Tor}^{#1}_{#2}(#3,#4)}
\renewcommand{\Im}[1]{\operatorname{Im}\mspace{1mu}#1}
\newcommand{\Ker}[1]{\operatorname{Ker}\mspace{1mu}#1}
\newcommand{\Coker}[1]{\operatorname{Cok}\mspace{1mu}#1}
\newcommand{\lMod}[1]{{}_{#1}\mspace{-1mu}\operatorname{Mod}}
\newcommand{\rMod}[1]{\operatorname{Mod}_{#1}}
\newcommand{\bMod}[2]{{}_{#1}\mspace{-1mu}\operatorname{Mod}_{#2}}
\newcommand{\GPrj}{\operatorname{GPrj}}
\newcommand{\lGPrj}[1]{{}_{#1}\mspace{-1mu}\operatorname{GPrj}}
\newcommand{\GInj}{\operatorname{GInj}}
\newcommand{\lGInj}[1]{{}_{#1}\mspace{-1mu}\operatorname{GInj}}
\newcommand{\lPrj}[1]{{}_{#1}\mspace{-1mu}\operatorname{Prj}}
\newcommand{\lInj}[1]{{}_{#1}\mspace{-1mu}\operatorname{Inj}}
\newcommand{\Fin}{\mathcal{L}}
\newcommand{\lFin}[1]{{}_{#1}\mspace{-1mu}\mathcal{L}}
\newcommand{\pd}[2]{\operatorname{pd}_{#1}#2}
\newcommand{\id}[2]{\operatorname{id}_{#1}#2}
\newcommand{\Serre}{\mathbb{S}}
\newcommand{\QSD}[2]{\mathcal{D}_{#1}(#2)}
\newcommand{\stalkco}[1]{S\mspace{-2mu}\langle{#1}\rangle}
\newcommand{\stalkcn}[1]{S\mspace{-2mu}\{#1\}}
\newcommand{\dom}[1]{\mathrm{dom}\mspace{2mu}#1}
\newcommand{\cod}[1]{\mathrm{cod}\mspace{2mu}#1}
\newcommand{\alg}{A}
\newcommand{\tr}{\tau}
\newcommand{\str}{\sigma}
\newcommand{\mH}[1]{\mathscr{H}_{#1}}
\newcommand{\vtx}{\text{\small $\bullet$}}
\newcommand{\Cq}[1]{C_{\mspace{-1mu}\smash{#1}}}
\newcommand{\Sq}[1]{S_{\mspace{-4mu}\smash{#1}}}
\newcommand{\Kq}[1]{K_{\smash{#1}}}
\newcommand{\Fq}[1]{F_{\mspace{-5mu}\smash{#1}}}
\newcommand{\Eq}[1]{E_{\smash{#1}}}
\newcommand{\Gq}[1]{G_{\mspace{-1mu}\smash{#1}}}
\newcommand{\setco}[1]{J_{\mspace{-2mu}#1}}
\newcommand{\setcn}[1]{I_{\mspace{-1mu}#1}}
\newcommand{\mapco}[1]{\psi_{\mspace{-2mu}#1}}
\newcommand{\mapcn}[1]{\varphi_{\mspace{-1mu}#1}}
\newcommand{\Mapco}[2]{\Psi_{\mspace{-2mu}#1}^{#2}}
\newcommand{\Mapcn}[2]{\Phi_{\mspace{-2mu}#1}^{\mspace{2mu}#2}}
\newcommand{\ORt}[3][Q]{#2 \otimes_{#1} #3}
\newcommand{\hH}[2][1]{\mathbb{H}_{#1}^{\textnormal{\tiny[}#2\textnormal{\tiny]}}}
\newcommand{\cH}[2][1]{\mathbb{H}^{#1}_{\textnormal{\tiny[}#2\textnormal{\tiny]}}}
\newcommand{\dou}[1]{#1^\mathrm{dou}}
\newcommand{\rep}[1]{#1^\mathrm{rep}}
\newcommand{\mesh}[1]{Q_\mathrm{mesh}(#1)}
\newcommand{\isomap}[3]{\upXi^{#1}_{#2,\mspace{2mu}#3}}
\newcommand{\isoelt}[3]{\xi^{\mspace{2mu}#1}_{#2,\mspace{1mu}#3}}
\newcommand{\isoeltd}[3]{\Check{\xi}^{\mspace{2mu}#1}_{#2,\mspace{1mu}#3}}
\newcommand{\mat}[3][]{T^{#1}_{\mspace{-3mu}#2,#3}}
\newcommand{\kfct}[2]{k_{#1,#2}}
\begin{document}


\title{The $Q$-shaped derived category of a ring}

\author{Henrik Holm \ }

\address{Department of Mathematical Sciences, University of Copenhagen, Universitetsparken~5, 2100 Copenhagen {\O}, Denmark} 
\email{holm@math.ku.dk}

\urladdr{http://www.math.ku.dk/\~{}holm/}

\author{\ Peter J{\o}rgensen}

\address{Department of Mathematics, Aarhus University, Ny Munkegade 118, Building 1530, 8000 Aarhus C, Denmark}
\email{peter.jorgensen@math.au.dk}

\urladdr{https://sites.google.com/view/peterjorgensen}

\keywords{Cotorsion pair; derived category; (locally) Gorenstein category; homotopy category; mesh category; mesh homology; projective and injective model structures; stable translation quiver.}

\subjclass[2010]{16E35, 18E30, 18E35, 18G55.}



\begin{abstract}
  For any ring $\alg$ and a small, preadditive, Hom-finite, and locally bounded~category $Q$ that has a Serre functor and satisfies the (strong) retraction property, we show that the category of additive functors $Q \to \lMod{\alg}$ has a projective and an injective model structure. These model structures have the same trivial objects and weak equivalences, which in most cases can be naturally characterized in terms of certain (co)homology functors introduced in this paper. The associated homotopy category, which is triangulated, is called the $Q$-shaped derived category of $\alg$. The usual derived category of $\alg$ is one example;~more general examples arise by taking $Q$ to be the mesh category of a suitably nice stable translation quiver. This paper builds upon, and generalizes, works of Enochs,~Estrada,~and~Garc\'{\i}a-Rozas \cite{MR2404296} and Dell'Ambrogio, Stevenson, and \v{S}\v{t}ov\'{\i}\v{c}ek \cite{MR3719530}.
\end{abstract}

\maketitle


\section{Introduction}
\label{sec:Introduction}

Let $\mathcal{Q}$ and $\mathcal{M}$ be categories, where $\mathcal{Q}$ is small. It is well-known that in some cases---for example, if $\mathcal{Q}$ is a direct, an inverse, or a Reedy category---a model structure on $\mathcal{M}$, in the sense of Quillen \cite{Qui67}, induces a model structure on the category $\operatorname{Fun}(\mathcal{Q},\mathcal{M})$~of~functors from $\mathcal{Q}$ to $\mathcal{M}$. The category $\mathcal{M} = \lMod{\alg}$ of left modules over a ring $\alg$ does not, in general, have any non-trivial model structures (unless $\alg$ is special, e.g.~Gorenstein). Nevertheless, we show in this paper that  if $\mathcal{Q}=Q$ is a suitably nice preadditive category, then the category $\lMod{Q,\alg}$ of additive functors $Q \to \lMod{\alg}$ does always have two interesting model structures, the so-called \emph{projective} and  \emph{injective} model structures. These model structures have the same weak equivalences and hence the same homotopy category,
\begin{equation*}
  \QSD{Q}{\alg} \,:=\, \operatorname{Ho}(\lMod{Q,\alg}) \,=\, \{\text{weak equivalences}\}^{-1}(\lMod{Q,\alg})\;,
\end{equation*}
which we call \emph{$Q$-shaped derived category} of $\alg$. This terminology is inspired by the si\-tua\-tion where $Q$ is the mesh category of the repetitive quiver of \smash{$\vec{\mathbb{A}}_2 \,=\, \vtx \to \vtx$}\,. Indeed, in this case, $\lMod{Q,\alg}$ is equivalent to the category $\operatorname{Ch}\mspace{1mu}\alg$ of chain complexes of left $\alg$-modules and the $Q$-shaped derived category  is the ordinary derived category of $\alg$ (see \exaref{derived-cat}). However, the theory developed in this paper applies to many other types of categories as well; for example, let $Q$ be the path category of the quiver
  \begin{equation*}
  \xymatrix{
    \underset{1}{\vtx} \ar@<0.6ex>[r]^-{a^{}_1} &
    \underset{2}{\vtx} \ar@<0.6ex>[l]^-{a^*_1} \ar@<0.6ex>[r]^-{a^{}_2} &
    \underset{3}{\vtx} \ar@<0.6ex>[l]^-{a^*_2}
  }
  \end{equation*}
  modulo the (mesh) relations $a^*_1a^{}_1=0$, $a^{}_1a^*_1+a^*_2a^{}_2=0$, and $a^{}_2a^*_2=0$. Also in this case there is a $Q$-shaped derived category of any ring $\alg$ (and \prpref[Propositions~]{quiso-2} and \prpref[]{2H} yield explicit descriptions of the weak equivalences). More examples can be found in \secref{Example}.
  
The precise statements about the model structures we construct on $\lMod{Q,\alg}$ are~\mbox{contained} in the next result, which is a special case of \thmref{model-structures} and \prpref{we} with $\Bbbk=\mathbb{Z}$.

\begin{res*}[Theorem~A]
  Let $Q$ be a small preadditive category which is Hom-finite, locally bounded, has a Serre functor and has the Retraction Property (\stpref{Bbbk}). For any ring $\alg$ there is a class $\mathscr{E}$ of \emph{\emph{exact}} objects in $\lMod{Q,\alg}$ (\dfnref{E}) and two model structures as follows:
  \begin{itemlist}
  \item The \emph{\emph{projective model structure}} on $\lMod{Q,\alg}$, where ${}^\perp\mathscr{E}$ is the class of cofibrant objects, $\mathscr{E}$ is the class of trivial objects, and every object is fibrant.
  
  \item The \emph{\emph{injective model structure}} on $\lMod{Q,\alg}$, where $\mathscr{E}^\perp$ is the class of fibrant objects, $\mathscr{E}$ is the class of trivial objects, and every object is cofibrant.
  \end{itemlist}
These two model categories have the same weak equivalences.
\end{res*}

To prove Theorem~A we apply Hovey's theory \cite{Hovey02} for abelian model categories, which in this case boils down to demonstrating that we have complete hereditary cotorsion pairs $({}^\perp\mathscr{E},\mathscr{E})$ and $(\mathscr{E},\mathscr{E}^\perp)$ in $\lMod{Q,\alg}$ such that ${}^\perp\mathscr{E} \cap \mathscr{E} = \lPrj{Q,\alg}$ and $\mathscr{E} \cap \mathscr{E}^\perp = \lInj{Q,\alg}$; here $\lPrj{Q,\alg}$ and $\lInj{Q,\alg}$ denote the classes of projective and injective objects in $\lMod{Q,\alg}$. These arguments take up \secref[Sections~]{Existence} and \secref[]{Completeness} and \appref{Kaplansky}. In fact, for the cotorsion pair $(\mathscr{E},\mathscr{E}^\perp)$ we show in \thmref{completeness-2} an even stronger result:

\begin{res*}[Theorem~B]
  Let $Q$ be as in Theorem~A. The cotorsion pair $(\mathscr{E},\mathscr{E}^\perp)$ in $\lMod{Q,\alg}$ is \emph{\emph{perfect}} meaning that every object in $\lMod{Q,\alg}$ has an $\mathscr{E}$-cover and an $\mathscr{E}^\perp$-envelope.
\end{res*}

\noindent
Let us again consider the special case where $Q$ is the mesh category of the repetitive quiver of \smash{$\vec{\mathbb{A}}_2$} and $\lMod{Q,\alg}$ is the category of chain complexes of left $\alg$-modules. In this case, completeness of the cotorsion pairs $({}^\perp\mathscr{E},\mathscr{E})$ and $(\mathscr{E},\mathscr{E}^\perp)$ means that every chain complex has an epic DG-projective and a monic DG-injective resolution. This is of course well-known and goes back to Spaltenstein \cite{NSp88}. Theorem~B asserts in this case that every chain complex has an exact cover and a DG-injective envelope; this is also well-known and can be found in \cite{MR1644453} by Enochs, Jenda, and Xu.

Once the complete hereditary cotorsion pairs $({}^\perp\mathscr{E},\mathscr{E})$ and $(\mathscr{E},\mathscr{E}^\perp)$ in $\lMod{Q,\alg}$ have been established, the general theory of abelian model categories provides us with rich~information about the homotopy category of $\lMod{Q,\alg}$. For example, an application of main results from \cite{MR3608936} by Gillespie yields the following, which is contained in \thmref{Frobenius}.

\begin{res*}[Theorem~C]
  Let $Q$ be as in Theorem~A. There are equivalences of categories,\begin{equation*}
  \frac{{}^\perp\mathscr{E}}{\lPrj{Q,\alg}} \ \simeq \ 
  \QSD{Q}{\alg} \ \simeq \
  \frac{\mathscr{E}^\perp}{\lInj{Q,\alg}}\;.
\end{equation*}  
Here the leftmost, respectively, rightmost, category is the stable category of the Frobenius category ${}^\perp\mathscr{E}$, respectively, $\mathscr{E}^\perp$. In particular, $\QSD{Q}{\alg}$ is triangulated.
\end{res*}

The goal of \secref{cohomology} is to obtain (co)homological characterizations of the trivial~objects and the weak equivalences in the projective/injective model structure on $\lMod{Q,\alg}$. To that end, we consider a category $Q$ as in Theorem~A, only the 
Retraction Property (condition \eqref{retraction} in \stpref{Bbbk}) must now be replaced with the Strong Retraction Property (condition \eqmref{strong-retraction} in \dfnref{srp}). This slightly stronger assumption on $Q$ does not exclude any examples of interest to us. The power of the Strong Retraction Property is that it allows one to define the \emph{pseudo-radical} ideal $\mathfrak{r}$ of $Q$ and certain \emph{(co)homology} functors,
\begin{equation*}
  \cH[i]{q},\, \hH[i]{q} \colon \lMod{Q,\alg} \longrightarrow \lMod{\alg}\;,
\end{equation*}
for every object $q \in Q$ and $i \geqslant 0$. The next result is a special case of \thmref[Theorems~]{E-characterization-hereditary} and \thmref[]{quiso} (it also explains the terminology ``exact'' for the objects in $\mathscr{E}$, introduced in Theorem~A).

\begin{res*}[Theorem~D]
  Let $Q$ be a small preadditive category which is Hom-finite, locally bounded, has a Serre functor and has the Strong Retraction Property. Assume that the pseudo-radical $\mathfrak{r}$ is nilpotent, that is, $\mathfrak{r}^N=0$ for some $N \in \mathbb{N}$. Finally, let $\alg$ be any ring.
  
For every object $X$ in $\lMod{Q,\alg}$ the following conditions are equivalent:
\begin{eqc}
\item $X \in \mathscr{E}$.
\item $\cH[i]{q}(X)=0$ for every $q \in Q$ and $i>0$.
\item $\hH[i]{q}(X)=0$ for every $q \in Q$ and $i>0$.
\end{eqc}

For every morphism $\varphi$ in $\lMod{Q,\alg}$ the following conditions are equivalent:
  \begin{eqc}
  \item $\varphi$ is a weak equivalence.
  \item $\cH[i]{q}(\varphi)$ is an isomorphism for every $q \in Q$ and $i>0$.
  \item $\hH[i]{q}(\varphi)$ is an isomorphism for every $q \in Q$ and $i>0$.
  \end{eqc}    
\end{res*}

By definition, \smash{$\cH[i]{q}$}, respectively, \smash{$\hH[i]{q}$}, is the $i^\mathrm{th}$ right, respectively, left, derived of a~cer\-tain functor $\Kq{q}$, respectively, $\Cq{q}$, which is treated in \prpref[Propositions~]{adjoint-stalk} and \prpref[]{K-C-formulae}. Together with the class $\mathscr{E}$, the functors $\Cq{q}$ and $\Kq{q}$ provide useful characterizations of the projective and injective objects in $\lMod{Q,\alg}$. The following is a special case of \thmref{Prj-Inj}.

\begin{res*}[Theorem~E]
  Let $Q$ be as in Theorem~D and $\alg$ be any ring. For every $X \in \lMod{Q,\alg}$ one has:
\begin{prt}
\item $X \in \lPrj{Q,\alg}$ if and only if $X \in \mathscr{E}$ and the $\alg$-module $\Cq{q}(X)$ is projective for all $q \in Q$.
\item $X \in \lInj{Q,\alg}$ if and only if $X \in \mathscr{E}$ and the $\alg$-module $\Kq{q}(X)$ is injective for all $q \in Q$.
\end{prt}
\end{res*}

\noindent
In the special case where $Q$ is the mesh category of the repetitive quiver of \smash{$\vec{\mathbb{A}}_2$}, and hence $\lMod{Q,\alg} \simeq \operatorname{Ch}\mspace{1mu}\alg$, Theorem~E asserts that a chain complex $(X,\partial)$ is projective, respectively, injective, if and only if $X$ is exact and each cokernel $\Coker{\partial_q}$, respectively, kernel $\Ker{\partial_q}$, is a projective, respectively, an injective, $\alg$-module. These characterizations of the projective and injective objects in the category $\operatorname{Ch}\mspace{1mu}\alg$ are of course well-known.

In \secref{Example} we study some concrete examples of preadditive categories $Q$ that satisfy the assumptions in Theorem~D (and thus all the theorems in this Introduction apply to $Q$). More precisely, we consider the case where $Q$ is the (integral) mesh category $\mesh{\upGamma}$ of a stable translation quiver $\upGamma$. We prove in \thmref[Theorems~]{A-dou} and \thmref[]{A-rep} that if $\upGamma$ is the \emph{double quiver} or the \emph{repetitive quiver} of \smash{$\vec{\mathbb{A}}_n$}, then its mesh category does, in fact, fulfill the requirements in Theorem~D. We expect the same to be true if \smash{$\vec{\mathbb{A}}_n$} is replaced by, for example, \smash{$\vec{\mathbb{D}}_n$}.

For the mesh category $Q = \mesh{\upGamma}$ of a \emph{normal} stable translation quiver $\upGamma$ we show in \prpref{2H} that the homology functor $\hH{*}$ from \secref{cohomology} agrees with the \emph{mesh homology} $\mH{*}$. Combined with \thmref{E-characterization-hereditary} this gives a hands-on description of the trivial (= exact) objects in $\lMod{Q,\alg}$. As shown in \thmref[Theorems~]{A-dou-normal}~and \thmref[]{A-rep-normal} the double quiver and the repetitive quiver of \smash{$\vec{\mathbb{A}}_n$} are, in fact, normal, and we expect the same to be true if \smash{$\vec{\mathbb{A}}_n$} is replaced by e.g.~\smash{$\vec{\mathbb{D}}_n$}.

\vspace*{1ex}

We end this Introduction by explaining how our work is related to the existing literature. First of all, the entire theory developed in this paper is relative to a commutative base ring $\Bbbk$. This means that $Q$ is actually a $\Bbbk$-preadditive category, $\alg$ is a $\Bbbk$-algebra, and $\lMod{Q,\alg}$ is the category of $\Bbbk$-linear functors $Q \to \lMod{\alg}$, but in this Introduction we have focused on the special case $\Bbbk=\mathbb{Z}$. In this generality, the  assumptions needed on the $\Bbbk$-algebra $\alg$ in Theorems~A, B, and C are that $\Bbbk$ is Gorenstein and $\alg$ has finite projective/injective~dimension over~$\Bbbk$; in Theorems~D and E the ring $\Bbbk$ must be noetherian and hereditary but $\alg$ can be any $\Bbbk$-algebra. All these assumptions are satisfied if we take $\Bbbk$ to be $\mathbb{Z}$.

We emphasize that the conditions in \stpref{Bbbk} (which are the assumptions on $Q$ in Theorems~A, B, and C) come from the paper \cite{MR3719530} by Dell'Ambrogio, Stevenson, and \v{S}\v{t}ov\'{\i}\v{c}ek. If $\Bbbk$ is arbitrary and $\alg$ is {\bf Gorenstein}, then \cite[Thm.~4.6]{MR3719530} shows\footnote{More precisely, the assertion in \cite[Thm.~4.6]{MR3719530} is that the $\Bbbk$-preadditive category $\alg \otimes Q$, i.e.~the \emph{extension} of $Q$ by $\alg$, is \emph{Gorenstein}, which by \cite[Dfn.~2.1]{MR3719530} means that the category $\lMod{\alg \otimes Q}$ of $\Bbbk$-linear functors $\alg \otimes Q \to \lMod{\Bbbk}$ is \emph{locally Gorenstein}. However, it is easy to see that $\lMod{\alg \otimes Q}$ is equivalent to the category $\lMod{Q,\alg}$ of $\Bbbk$-linear functors $Q \to \lMod{\alg}$.} that for a $\Bbbk$-preadditive category $Q$ that satisfies \stpref{Bbbk}, the abelian category $\lMod{Q,\alg}$ is \emph{locally Gorenstein} in the sense of Enochs, Estrada, and Garc\'{\i}a-Rozas \cite[Dfn.~2.18]{MR2404296} (see also \dfnref{Gorenstein-cat}). Being a locally Gorenstein category with enough projectives, $\lMod{Q,\alg}$ has by \cite[Thm.~2.32]{MR2404296} both a projective and an injective model structure; thus Theorem~A is known in this case. 

In the case where $\Bbbk$ is a {\bf field} and $\alg$ is any $\Bbbk$-algebra, some results in this paper follow from our previous work \cite{MR4013804}. Indeed, if $\Bbbk$ is a field one can apply \cite[Thm.~3.2]{MR4013804}, combined with Theorem~E, to obtain the previously mentioned hereditary cotorsion pairs $({}^\perp\mathscr{E},\mathscr{E})$ and $(\mathscr{E},\mathscr{E}^\perp)$ in $\lMod{Q,\alg}$ such that ${}^\perp\mathscr{E} \cap \mathscr{E} = \lPrj{Q,\alg}$ and $\mathscr{E} \cap \mathscr{E}^\perp = \lInj{Q,\alg}$. It follows from \cite[Thm.~3.3(i)]{MR4013804} that the cotorsion pair $({}^\perp\mathscr{E},\mathscr{E})$ is complete, and hence one gets the projective model structure on $\lMod{Q,\alg}$ (cf.~the proof of \thmref{model-structures}). However, completeness of the other cotorsion pair $(\mathscr{E},\mathscr{E}^\perp)$, and hence the existence of the injective model structure on $\lMod{Q,\alg}$, was only proved under special circumstances in \cite[Thm.~3.3(ii)]{MR4013804}.

The reader should notice that condition \eqref{Serre} in \stpref{Bbbk}, i.e.~existence of a Serre functor, and \dfnref{normal}, i.e.~normality of a stable translation quiver, depend on the base ring $\Bbbk$. If
$\Bbbk$ is a {\bf field} and $\upGamma$ is the Auslander--Reiten quiver of a suitable $\Bbbk$-linear category $\mathcal{C}$, then the Auslander--Reiten theory of $\mathcal{C}$ can be used easily to prove that $\upGamma$ is normal and that the mesh category $Q = \mesh{\upGamma}$ has a Serre functor. However, this approach is not available if $\Bbbk$ is a general commutative ring (and we are mainly interested in the case $\Bbbk=\mathbb{Z}$). This is why we have provided proofs of \thmref[Theorems~]{A-dou} and \thmref[]{A-dou-normal}, which yield normality of the double quiver of \smash{$\vec{\mathbb{A}}_n$} and the existence of a Serre functor on its mesh category relative to any commutative base ring $\Bbbk$. The proofs are hands-on, but technical, so in order not to interrupt the flow of the paper, they have been relegated to \appref{proof}. \thmref[Theorems~]{A-rep} and \thmref[]{A-rep-normal}---which yield normality of the \textsl{repetitive} quiver of \smash{$\vec{\mathbb{A}}_n$} and the existence of a Serre functor on its mesh category relative to any commutative base ring $\Bbbk$---have similar (but easier) proofs, which are left to the reader.

As already pointed out, Theorems~A--E in this Introduction hold for any ring $\alg$ (corresponding to the case $\Bbbk=\mathbb{Z}$). On the one hand, Theorem~A generalizes the results from \cite{MR3719530} and \cite{MR2404296} mentioned above; on the other hand, these results play an important role in our arguments and even in the very definition of the class $\mathscr{E}$ of \emph{exact} objects (see \dfnref{E}). Nevertheless, existence and completeness of the cotorsion pairs \smash{$({}^\perp\mathscr{E},\mathscr{E})$} and \smash{$(\mathscr{E},\mathscr{E}^\perp)$} in $\lMod{Q,\alg}$ are far from automatic when $\alg$ is a general ring, and the proofs require a mix of known and new techniques. The (co)homological theory developed in \secref{cohomology} and the systematic treatment of examples found in \secref{Example} are new; however, the ideas go back to \cite{MR4013804} (which only treats the easier special case where $\alg$ is an algebra over a field).

\enlargethispage{2ex}

\section{Preliminaries and Notation}
\label{sec:Preliminaries}

Let $\mathcal{A}$ be an abelian category and $\mathcal{C}$ be a class of objects in $\mathcal{A}$. If $\mathcal{A}$ has enough projectives (resp., enough injectives), then $\mathcal{C}$ is called \emph{resolving} (resp., \emph{coresolving}) if it contains all projective (resp., all injective) objects in $\mathcal{A}$ and is closed under extensions and kernels of epimorphisms (resp., extensions and cokernels of monomorphisms). We set
\begin{align*}
  {}^\perp\mathcal{C} &\,=\, \{ X \in \mathcal{A} \ |\ \Ext{\mathcal{A}}{1}{X}{C}=0 \textnormal{ for all } C \in \mathcal{C} \} \quad \textnormal{and}
  \\
  \mathcal{C}^\perp &\,=\, \{ X \in \mathcal{A} \ |\ \Ext{\mathcal{A}}{1}{C}{X}=0 \textnormal{ for all } C \in \mathcal{C} \}\;.
\end{align*}
A \emph{cotorsion pair} in $\mathcal{A}$ consists of a pair $(\mathcal{C},\mathcal{D})$ of classes of objects in $\mathcal{A}$ such that $\mathcal{C}^\perp = \mathcal{D}$ and  $\mathcal{C}={}^\perp\mathcal{D}$. A cotorsion pair $(\mathcal{C},\mathcal{D})$ is \emph{hereditary} if $\Ext{\mathcal{A}}{i}{C}{D}=0$ for all $C \in \mathcal{C}$, $D \in \mathcal{D}$, and $i>0$. If $\mathcal{A}$ has enough projectives (resp., enough injectives), then a cotorsion pair $(\mathcal{C},\mathcal{D})$ is hereditary if and only if $\mathcal{C}$ is resolving (resp., $\mathcal{D}$ is coresolving); see (the~proof of) \cite[Thm.~1.2.10]{GR99} or \cite[Lem.~2.2.10]{GobelTrlifaj}. A cotorsion pair $(\mathcal{C},\mathcal{D})$ is \emph{complete} if it satisfies the following two
conditions:
\begin{eqc}
\item For every $X \in \mathcal{A}$ there is an exact sequence \mbox{$0 \to D \to C \to X \to 0$} with $C \in \mathcal{C}$, $D \in \mathcal{D}$.

\item For every $X \in \mathcal{A}$ there is an exact sequence \mbox{$0 \to X \to D \to C \to 0$} with $C \in \mathcal{C}$, $D \in \mathcal{D}$.
\end{eqc}
By Salce's lemma one has \eqclbl{i}\,$\Rightarrow$\,\eqclbl{ii} if the category $\mathcal{A}$ has enough injectives, and similarly \eqclbl{ii}\,$\Rightarrow$\,\eqclbl{i} holds if $\mathcal{A}$ has enough projectives; see \cite[(proof of) Lem.~2.2.6]{GobelTrlifaj}.

We write $\pd{\mathcal{A}}{X}$ and $\id{\mathcal{A}}{X}$ for the projective and injective dimensions of an object $X$ in $\mathcal{A}$. The finitistic projective and finitistic injective dimensions of $\mathcal{A}$ are defined as usual:
\begin{align*}
  \operatorname{FPD}(\mathcal{A}) &\,=\,
  \sup\{\pd{\mathcal{A}}{X} \,|\ \text{$X \in \mathcal{A}$ with $\pd{\mathcal{A}}{X}<\infty$} \} \quad \textnormal{and}
  \\
  \operatorname{FID}(\mathcal{A}) &\,=\,
  \sup\{\id{\mathcal{A}}{X} \,|\ \text{$X \in \mathcal{A}$ with $\id{\mathcal{A}}{X}<\infty$} \}\;.
\end{align*}  

The next definition is due to Enochs, Estrada, and Garc\'{\i}a-Rozas \cite{MR2404296}.  

\begin{dfn}[{\cite[Dfn.~2.18]{MR2404296}}]
  \label{dfn:Gorenstein-cat}
  A Grothendieck category $\mathcal{A}$ is said to be \emph{locally Gorenstein}\footnote{\,The authors of \cite{MR2404296} simply use the term \emph{Gorenstein} for such a category, but we have adopted the term \emph{locally Gorenstein} from Dell'Ambrogio, Stevenson, and \v{S}\v{t}ov\'{\i}\v{c}ek \cite[Dfn.~2.1]{MR3719530}. See \ref{DASS} for further details.} if it satisfies the following conditions:
\begin{rqm}
\item For every object $X$ in $\mathcal{A}$ one has $\pd{\mathcal{A}}{X}<\infty$ if and only if $\id{\mathcal{A}}{X}<\infty$.
\item $\operatorname{FPD}(\mathcal{A})$ and $\operatorname{FID}(\mathcal{A})$ are both finite.  
\item $\mathcal{A}$ has a generator of finite projective dimension.
\end{rqm}  
In this situation one sets $\Fin(\mathcal{A}) = \{X \in \mathcal{A} \,|\, \pd{\mathcal{A}}{X}<\infty\} = \{X \in \mathcal{A} \,|\, \id{\mathcal{A}}{X}<\infty\}$.
\end{dfn}

Notice that in the definition above, the category $\mathcal{A}$ is not assumed to have enough projec\-tives. Thus, the projective dimension of an object $X$ in $\mathcal{A}$ is defined in terms of vanishing of $\Ext{\mathcal{A}}{*}{X}{-}$ and not in terms of a projective resolution of $X$. However, the Grothendieck categories of interest in this paper do have enough projectives.

The following is a recap of some main results by Enochs, Estrada, and Garc\'{\i}a-Rozas~\cite{MR2404296}. The definition of Gorenstein projective and Gorenstein injective objects in abelian categories can be found in Garc{\'{\i}}a Rozas \cite[Dfn.~1.2.8]{GR99}. Gorenstein projective and Gorenstein injective modules over a ring were introduced and studied by Enochs and Jenda \cite{EEnOJn95b}.

\begin{thm}[{\cite[Thms.~2.25, 2.26, and 2.28]{MR2404296}}]
  \label{thm:G-cotorsion-pairs}
  Let $\mathcal{A}$ be a locally Gorenstein category with enough pro\-jectives. There exist two complete and hereditary cotorsions pairs,
  \begin{equation*}
    (\GPrj(\mathcal{A}),\Fin(\mathcal{A}))
    \quad \text{ and } \quad
    (\Fin(\mathcal{A}),\GInj(\mathcal{A}))\;,
  \end{equation*}
  where $\GPrj(\mathcal{A})$ and $\GInj(\mathcal{A})$ are the classes of Gorenstein projective and Gorenstein injective objects in $\mathcal{A}$. Moreover, there is an equality $\operatorname{FPD}(\mathcal{A}) = \operatorname{FID}(\mathcal{A})$.\footnote{\,The number $\operatorname{FPD}(\mathcal{A}) = \operatorname{FID}(\mathcal{A})$ also coincides with the global Gorenstein projective dimension $\operatorname{glGpd}(\mathcal{A})$ and the global Gorenstein injective dimension $\operatorname{glGid}(\mathcal{A})$; however, this is not important to us.}
\end{thm}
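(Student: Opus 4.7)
\begin{prf}[Proof plan]
The plan is to verify Ext-orthogonality, characterize the left/right classes of the two candidate cotorsion pairs as the Gorenstein projective/injective objects, tackle completeness as the main technical hurdle, and extract the dimension equality at the end.

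First I would observe that $\Fin(\mathcal{A})$ is both resolving and coresolving: every projective lies in it trivially, and also has finite injective dimension by combining conditions \rqmlbl{1} and \rqmlbl{2} of \dfnref{Gorenstein-cat}; dually for injectives. For $G \in \GPrj(\mathcal{A})$ sitting as a cocycle in a totally acyclic complex of projectives, and $F \in \Fin(\mathcal{A})$ with $\id{\mathcal{A}}{F}<\infty$, dimension shifting (realising $G$ as an arbitrarily high syzygy of a projective) forces $\Ext{\mathcal{A}}{i}{G}{F}=0$ for all $i \geqslant 1$; dually for $\GInj(\mathcal{A})$ and $\Fin(\mathcal{A})$. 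This yields the inclusions $\GPrj(\mathcal{A}) \subseteq {}^\perp\Fin(\mathcal{A})$ and $\GInj(\mathcal{A}) \subseteq \Fin(\mathcal{A})^\perp$, and the vanishing of all higher Ext's automatically makes both candidate pairs hereditary.

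For the reverse inclusions, take $X \in {}^\perp\Fin(\mathcal{A})$. Using that $\mathcal{A}$ has enough projectives I would build a left projective resolution of $X$; on the right, iteratively extend by projectives (each lying in $\Fin(\mathcal{A})$, and each emerging cokernel remaining in ${}^\perp\Fin(\mathcal{A})$) to produce a bi-infinite exact complex of projectives with $X$ as a cocycle. Exactness after applying $\Hom{\mathcal{A}}{-}{Q}$ for any projective $Q$ is built into the Ext-orthogonality of the cosyzygies against projectives, so $X \in \GPrj(\mathcal{A})$. The dual argument handles $\Fin(\mathcal{A})^\perp \subseteq \GInj(\mathcal{A})$, using that every injective has finite projective dimension.

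The main obstacle is completeness. I would attack the cotorsion pair $(\GPrj(\mathcal{A}),\Fin(\mathcal{A}))$ first, by a Kaplansky / Eklof--Trlifaj style argument: exploiting the Grothendieck generator of finite projective dimension supplied by condition \rqmlbl{3}, together with the uniform bound $\operatorname{FPD}(\mathcal{A})<\infty$, one shows that $\Fin(\mathcal{A})$ is deconstructible, i.e.\ closed under transfinite extensions of a fixed set of small objects of bounded projective dimension. The small-object argument then produces special $\GPrj(\mathcal{A})$-precovers (and $\Fin(\mathcal{A})$-preenvelopes), and Salce's lemma, whose hypotheses are met by the first paragraph, transfers completeness to the dual pair $(\Fin(\mathcal{A}),\GInj(\mathcal{A}))$.

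Finally, the equality $\operatorname{FPD}(\mathcal{A})=\operatorname{FID}(\mathcal{A})$ drops out once the first cotorsion pair is complete. Setting $n=\operatorname{FPD}(\mathcal{A})$, iteration of the special precover sequence $0 \to F' \to G \to Y \to 0$ (with $G \in \GPrj(\mathcal{A})$ and $F' \in \Fin(\mathcal{A})$) shows that every object $Y$ admits a Gorenstein projective resolution of length at most $n$, since one may splice an ordinary projective resolution of $F'$ of length at most $n-1$ onto $G$. Computing $\Ext{\mathcal{A}}{n+1}{Y}{F}$ for $F \in \Fin(\mathcal{A})$ from such a resolution and invoking the Ext-orthogonality of the first paragraph then gives $\Ext{\mathcal{A}}{n+1}{Y}{F}=0$ for all $Y$, i.e.\ $\id{\mathcal{A}}{F} \leqslant n$, so $\operatorname{FID}(\mathcal{A}) \leqslant \operatorname{FPD}(\mathcal{A})$. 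The reverse inequality is dual.
\end{prf}
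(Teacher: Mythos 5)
You should first note that the paper does not prove this statement at all: Theorem \ref{thm:G-cotorsion-pairs} is quoted verbatim from Enochs--Estrada--Garc\'{\i}a-Rozas \cite[Thms.~2.25, 2.26, 2.28]{MR2404296}, so the only thing to assess is whether your reconstruction would stand on its own --- and as written it has genuine gaps. The orthogonality/heredity paragraph is fine, but the identification ${}^\perp\Fin(\mathcal{A})=\GPrj(\mathcal{A})$ is not: to ``iteratively extend by projectives'' on the right you need, for $X\in{}^\perp\Fin(\mathcal{A})$, a monomorphism $0\to X\to P\to X'\to 0$ with $P$ projective and $X'\in{}^\perp\Fin(\mathcal{A})$, and in a Grothendieck category objects need not embed into projectives at all; the standard mechanism is to take a special $\Fin(\mathcal{A})$-preenvelope $0\to X\to F\to X'\to 0$ and observe that $F\in{}^\perp\Fin(\mathcal{A})\cap\Fin(\mathcal{A})=\Prj(\mathcal{A})$, so this step presupposes exactly the completeness you only address afterwards (and dually for $\Fin(\mathcal{A})^\perp\subseteq\GInj(\mathcal{A})$, which needs special $\Fin(\mathcal{A})$-precovers). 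Moreover the completeness argument itself is mis-wired: deconstructibility of $\Fin(\mathcal{A})$ is the hypothesis that makes the pair whose \emph{left} class is $\Fin(\mathcal{A})$, i.e.\ $(\Fin(\mathcal{A}),\GInj(\mathcal{A}))$, complete; it does not give completeness of $(\GPrj(\mathcal{A}),\Fin(\mathcal{A}))$. For that pair you must exhibit $\Fin(\mathcal{A})$ as $\mathcal{S}^\perp$ for a \emph{set} $\mathcal{S}$ (e.g.\ $n$-th syzygies of quotients of the generator, via a Baer-type criterion, as in Proposition \ref{prp:In}(a) of this paper), and Salce's lemma cannot ``transfer completeness to the dual pair'': it exchanges special preenvelopes and special precovers within one and the same cotorsion pair, so $(\Fin(\mathcal{A}),\GInj(\mathcal{A}))$ requires its own argument (which is precisely where your deconstructibility/Kaplansky input belongs).

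The final step also fails as stated. In the special precover sequence $0\to F'\to G\to Y\to 0$ one only knows $\pd{\mathcal{A}}{F'}\leqslant n=\operatorname{FPD}(\mathcal{A})$; no reason is given for the asserted bound $n-1$, and splicing a length-$n$ projective resolution of $F'$ onto $G$ only yields Gorenstein projective resolutions of length $n+1$, hence only $\operatorname{FID}(\mathcal{A})\leqslant\operatorname{FPD}(\mathcal{A})+1$. Removing this off-by-one is essentially the whole content of the equality, so it does not ``drop out''. A correct route uses the finiteness of $\operatorname{FID}(\mathcal{A})=:m$ directly: for any $Y$ and any $F\in\Fin(\mathcal{A})$ one has $\Ext{\mathcal{A}}{i}{\upOmega^m(Y)}{F}\cong\Ext{\mathcal{A}}{m+i}{Y}{F}=0$ for $i>0$ because $\id{\mathcal{A}}{F}\leqslant m$, so the $m$-th syzygy $\upOmega^m(Y)$ lies in ${}^\perp\Fin(\mathcal{A})=\GPrj(\mathcal{A})$; since Gorenstein projective dimension agrees with projective dimension on objects of finite projective dimension, this gives $\pd{\mathcal{A}}{F}\leqslant m$ for every $F\in\Fin(\mathcal{A})$, i.e.\ $\operatorname{FPD}(\mathcal{A})\leqslant\operatorname{FID}(\mathcal{A})$, and the dual argument with cosyzygies (using that $\mathcal{A}$ has enough injectives) gives the reverse inequality.
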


\begin{ipg}
 \label{DASS}
 In the situation of \thmref{G-cotorsion-pairs}, Dell'Ambrogio, Stevenson, and \v{S}\v{t}ov\'{\i}\v{c}ek \cite[Dfn.~2.5]{MR3719530} refer to $\mathcal{A}$ as a \emph{locally $n$-Gorenstein} category, where $n=\operatorname{FPD}(\mathcal{A}) = \operatorname{FID}(\mathcal{A})$. They reserve the term \emph{$n$-Gorenstein} for the following situation:
 
 Let $\Bbbk$ be a nontrivial commutative ring and $Q$ be a small \emph{$\Bbbk$-preadditive} category (also just called a \emph{$\Bbbk$-category}), that is, $Q$ is a category enriched over the symmetric monoidal category $\lMod{\Bbbk}$ of $\Bbbk$-modules. Consider the Grothendieck category $\mathcal{A}=\rMod{Q}$ of right $Q$-modules, that is, the category of $\Bbbk$-linear functors $Q^\mathrm{op} \to \lMod{\Bbbk}$. In the terminology of \cite[Dfn.~2.1]{MR3719530} the $\Bbbk$-preadditive $Q$ is called \emph{\textnormal{(}$n$-\textnormal{)}Gorenstein} if the associated Grothendieck category $\rMod{Q}$ is locally \textnormal{(}$n$-\textnormal{)}Gorenstein in the sense of \dfnref{Gorenstein-cat}.
\end{ipg}

Recall that a commutative ring $\Bbbk$ is said to be \emph{$n$-Gorenstein} if it is noetherian with self-injective dimension $n$. One says that $\Bbbk$ is \emph{Gorenstein} if it $n$-Gorenstein for some $n \in \mathbb{N}_0$.

A main result in \cite{MR3719530} by Dell'Ambrogio, Stevenson, and \v{S}\v{t}ov\'{\i}\v{c}ek is the following.

\begin{thm}[\mbox{\cite[Thm.~1.6\,/\,4.6]{MR3719530}}]
  \label{thm:Q-Gorenstein}
  Assume that a small $\Bbbk$-preadditive category $Q$ satisfies the conditions in \stpref{Bbbk} below and that $\Bbbk$ is $n$-Gorenstein. In this case, $Q$ is $n$-Go\-ren\-stein, i.e.~the Grothendieck category $\rMod{Q}$ is locally $n$-Gorenstein (as in \dfnref{Gorenstein-cat}).
\end{thm}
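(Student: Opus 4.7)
The plan is to verify the three conditions of \dfnref{Gorenstein-cat} for the Grothendieck category $\rMod{Q}$, in each case reducing the issue to the $n$-Gorenstein property of $\Bbbk$ via the structural features of $Q$ listed in \stpref{Bbbk}.

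First I would identify canonical projective and injective objects. By the enriched Yoneda lemma, the representable functors $\Hom{Q}{-}{q}$ with $q$ ranging over a skeleton of $Q$ form a set of compact projective generators of $\rMod{Q}$; Hom-finiteness and local boundedness ensure that each such representable is pointwise finitely presented over $\Bbbk$ and has finite support on $Q$. Next, I would use the Serre functor $\Serre\colon Q \to Q$ to transport these generators to injective cogenerators. A Serre structure furnishes a natural isomorphism of the form $D\Hom{Q}{p}{q}\cong\Hom{Q}{q}{\Serre p}$, with $D$ an appropriate $\Bbbk$-linear duality; this extends to identify certain functors built from $D$ and $\Serre$ as injective cogenerators in $\rMod{Q}$, producing an Auslander--Reiten-type duality at the functor-category level that pairs projective resolutions with injective coresolutions.

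Next I would establish the dimensional conditions. For $X \in \rMod{Q}$, Yoneda reduces $\Ext{\rMod{Q}}{i}{\Hom{Q}{-}{q}}{X}$ (and its injective counterpart obtained through $\Serre$) to pointwise $\Bbbk$-module Ext data, up to a bounded combinatorial correction coming from $Q$. Since $\Bbbk$ is $n$-Gorenstein, every $\Bbbk$-module of finite projective (resp., injective) dimension satisfies $\pd{\Bbbk}{-}\leq n$ (resp., $\id{\Bbbk}{-}\leq n$); the Retraction Property then lifts these pointwise bounds to global ones, yielding $\operatorname{FPD}(\rMod{Q})$, $\operatorname{FID}(\rMod{Q})\leq n$ (condition~(2)). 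The equivalence $\pd{\rMod{Q}}{X}<\infty \iff \id{\rMod{Q}}{X}<\infty$ (condition~(1)) then follows from the Serre-functor duality: a finite projective resolution, transported through $\Serre$ and $D$, becomes a finite injective coresolution, and conversely. Condition (3) is witnessed by the projective generator $\bigoplus_{q}\Hom{Q}{-}{q}$ over a skeleton of $Q$, whose projective dimension is zero.

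The main obstacle is the precise bookkeeping of the Serre duality when $\Bbbk$ is only $n$-Gorenstein rather than self-injective: $\Hom{\Bbbk}{-}{\Bbbk}$ is not an equivalence on $\lMod{\Bbbk}$ but only on a suitable subcategory of finitely presented/projective-like objects, and its derived functors contribute $n$ extra dimensions that must be tracked carefully. In parallel, the Retraction Property is invoked to guarantee that pointwise projective $\Bbbk$-modules assemble into genuine projective objects of $\rMod{Q}$, without any hidden obstructions in going from pointwise projectivity to global projectivity over $Q$. Threading these two controls through the resolvent calculations so that the bound $n$ is preserved — and not inflated to $n$ plus some combinatorial correction from $Q$ — is the technical crux of the argument.
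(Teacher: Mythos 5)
First, note that the paper does not prove this statement at all: it is quoted verbatim from Dell'Ambrogio--Stevenson--\v{S}\v{t}ov\'{\i}\v{c}ek \cite[Thm.~4.6]{MR3719530} (the case $R=\Bbbk$), so your sketch is an attempt at an independent proof of an imported result rather than a reconstruction of an argument in this paper. Judged on its own terms, the overall strategy (use the Serre functor and Hom-finiteness to exchange the representable projectives with injectives built from $\Hom{\Bbbk}{-}{\Bbbk}$-duals, and bound dimensions by $\id{\Bbbk}{\Bbbk}=n$) is the right one, but several of the steps you rely on do not work as stated.

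The most serious problem is your use of the Retraction Property ``to guarantee that pointwise projective $\Bbbk$-modules assemble into genuine projective objects of $\rMod{Q}$.'' That statement is false: projectivity in $\rMod{Q}$ is \emph{not} a pointwise condition (already for $Q$ the mesh category of the repetitive quiver of $\vec{\mathbb{A}}_2$, i.e.\ for chain complexes, a complex of projectives need not be a projective object). Compare \thmref{G-description}, which asserts exactly this pointwise behaviour for \emph{Gorenstein} projectives/injectives --- and that result is a consequence of the theorem you are trying to prove, not an ingredient available beforehand. The Retraction Property is used in the DSS argument (and in this paper, cf.\ \rmkref{necessary}) in the opposite direction: the splitting $Q(q,q)\cong\Bbbk\oplus\mathfrak{r}_q$ lets one extract $\Bbbk$ (or $\alg$) as a direct summand of an evaluation, passing from global finiteness to pointwise finiteness. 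Two further steps are too vague to stand: (i) ``Yoneda reduces $\Ext{\rMod{Q}}{i}{Q(-,q)}{X}$ \dots up to a bounded combinatorial correction'' is empty, since Ext out of a representable projective vanishes in positive degrees; what is actually needed is the statement that each representable $Q(-,q)\cong\Hom{\Bbbk}{Q(q,\Serre(-))}{\Bbbk}$ has $\id{}{}\leqslant n$, proved by applying $\Hom{\Bbbk}{Q(q,\Serre(-))}{-}$ to an injective resolution of $\Bbbk$ (Hom-finiteness gives exactness and injectivity of the terms, local boundedness controls the supports), together with the dual bound for injectives; and (ii) condition~(1) of \dfnref{Gorenstein-cat} cannot be obtained by ``transporting a finite projective resolution of $X$ through $\Serre$ and $D$,'' because $D=\Hom{\Bbbk}{-}{\Bbbk}$ is neither exact nor involutive on all of $\lMod{\Bbbk}$ when $\Bbbk$ is merely $n$-Gorenstein, and in any case it produces a coresolution of a dual object on the other side, not of $X$ itself. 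The standard route is instead: once projectives have finite injective dimension ($\leqslant n$) and injectives have finite projective dimension ($\leqslant n$), conditions (1) and (2) follow by resolving an arbitrary $X$ and dimension shifting. As it stands, your proposal has genuine gaps at precisely these points.
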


Actually the result mentioned above is the special case of \cite[Thm.~4.6]{MR3719530} where $R=\Bbbk$. See \secref{Introduction} for further details.

\begin{stp}
\label{stp:Bbbk}
  Throughout this paper, $\Bbbk$ denotes a nontrivial commutative ring and $Q$ a small $\Bbbk$-preadditive category which may or may not satisfy the following conditions coming from \cite[Dfns.~4.1~and~4.5 and Rmk.~4.7]{MR3719530}.
\begin{rqm}
\item \label{eq:Homfin} \emph{Hom-finiteness}: Each hom $\Bbbk$-module $Q(p,q)$ is finitely generated and projective. 

\item \label{eq:locbd} \emph{Local Boundedness}: For each $q \in Q$ there are only finitely many objects in $Q$ map\-ping nontrivially into or out of $q$, that is, the following sets are finite:
\begin{equation*}
  \operatorname{N}_-(q)=\{p \in Q \,|\,Q(p,q) \neq 0\}
  \quad \text{ and } \quad
  \operatorname{N}_+(q)=\{r \in Q \,|\,Q(q,r) \neq 0\}\;.
\end{equation*}

\item \label{eq:Serre} \emph{Existence of a Serre Functor \textnormal{(}relative to $\Bbbk$\textnormal{)}}: There exists a $\Bbbk$-linear autoequivalence $\Serre \colon Q \to Q$ and a natural isomorphism $Q(p,q) \cong \Hom{\Bbbk}{Q(q,\Serre(p))}{\Bbbk}$.

\item \label{eq:retraction} \emph{Retraction Property}: For each $q \in Q$ the unit map $\Bbbk \to Q(q,q)$ given by $x \mapsto x\cdot\mathrm{id}_q$ has a $\Bbbk$-module retraction; whence there is a $\Bbbk$-module decomposition:
\begin{equation*}
  Q(q,q) = (\Bbbk\cdot\mathrm{id}_q) \oplus \mathfrak{r}_q\;.
\end{equation*}  
\end{rqm}
\end{stp}

Notice that the defining properties of a Serre functor depend on the base ring $\Bbbk$. Also note that in part \eqref{retraction} of \stpref{Bbbk}, the complement, $\mathfrak{r}_q$, of $\Bbbk\cdot\mathrm{id}_q$ in $Q(q,q)$ is not unique; see \rmkref{not-unique} and \exaref{PJ-counter} for further details.

\begin{rmk}
\label{rmk:opposite}
The conditions in \stpref{Bbbk} are self-dual, i.e.~if $Q$ satisfies these conditions, then so does its opposite category $Q^\mathrm{op}$ (e.g.~if $\Serre$ is a Serre functor for $Q$, then $\Serre^{-1}$ is a Serre functor for $Q^\mathrm{op}$). Thus, if $Q$ satisfies the conditions in \stpref{Bbbk} and $\Bbbk$ is a Gorenstein ring, then \thmref{Q-Gorenstein} yields that both
$\lMod{Q}$ and $\rMod{Q}$ are locally Gorenstein categories. Here $\lMod{Q}=\rMod{Q^\mathrm{op}}$ is the category of left $Q$-modules, that is, the category of $\Bbbk$-linear functors $Q \to \lMod{\Bbbk}$. In this paper, we shall favour the category $\lMod{Q}$ over $\rMod{Q}$.
\end{rmk}

The final result we will need from \cite{MR3719530} is the following.

\begin{thm}[\mbox{\cite[Cor.~4.8]{MR3719530}}]
  \label{thm:G-description}
  Assume that $Q$ satisfies the conditions in \stpref{Bbbk} and that 
  $\Bbbk$ is Gorenstein. An object $X \in \lMod{Q}$ is Gorenstein projective (resp.~Gorenstein injective) if and only if the $\Bbbk$-module $X(q)$ is Gorenstein projective
(resp.~Gorenstein injective) for every $q \in Q$.
\end{thm}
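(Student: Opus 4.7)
The plan is to exploit the evaluation functor $\mathrm{ev}_q\colon \lMod{Q}\to\lMod{\Bbbk}$ at each object $q\in Q$ together with its two adjoints, combined with the Ext-characterization $\GPrj(\lMod{Q})={}^\perp\Fin(\lMod{Q})$ (and its dual for $\GInj$) furnished by \thmref{G-cotorsion-pairs}, which is applicable because \thmref{Q-Gorenstein} makes $\lMod{Q}$ locally Gorenstein. By Hom-finiteness \eqref{Homfin} the functor $\mathrm{ev}_q$ is exact, and its left adjoint $L_q=(-)\otimes_\Bbbk Q(q,-)$ and right adjoint $R_q=\Hom{\Bbbk}{Q(-,q)}{-}$ are both exact as well; hence $\mathrm{ev}_q$ preserves projectives and injectives. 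Serre duality \eqref{Serre} combined with the $\Bbbk$-projectivity of the hom modules yields a natural identification $R_q\cong L_{\Serre^{-1}(q)}$, so $R_q$ sends $\Bbbk$-projectives to $Q$-projectives and $L_q$ sends $\Bbbk$-injectives to $Q$-injectives. Deriving the two adjunctions along a projective resolution of $X$ (respectively, an injective resolution) yields the Ext-isomorphisms
\begin{equation*}
\Ext{\lMod{Q}}{i}{X}{R_q(M)}\,\cong\,\Ext{\lMod{\Bbbk}}{i}{X(q)}{M}
\quad\text{and}\quad
\Ext{\lMod{Q}}{i}{L_q(N)}{X}\,\cong\,\Ext{\lMod{\Bbbk}}{i}{N}{X(q)}
\end{equation*}
for all $M$, $N$, and $i\geqslant 0$.

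For the ``only if'' direction I would start from a totally acyclic complex $P^\bullet$ of projectives in $\lMod{Q}$ exhibiting $X$ as a syzygy, apply the exact, projective-preserving functor $\mathrm{ev}_q$ to obtain an exact complex of projective $\Bbbk$-modules with $X(q)$ as a syzygy, and then verify total acyclicity over $\Bbbk$: for projective $N\in\lMod{\Bbbk}$ the identity $\Hom{\Bbbk}{P^\bullet(q)}{N}\cong\Hom{\lMod{Q}}{P^\bullet}{R_q(N)}$ together with the projectivity of $R_q(N)\cong L_{\Serre^{-1}(q)}(N)$ in $\lMod{Q}$ forces the left-hand complex to be exact. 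The Gorenstein injective half is parallel, using the adjunction $L_q\dashv\mathrm{ev}_q$ and the fact that $L_q$ sends $\Bbbk$-injectives to $Q$-injectives.

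For the ``if'' direction for Gorenstein projectives I would verify $\Ext{\lMod{Q}}{i}{X}{F}=0$ for every $F\in\Fin(\lMod{Q})$ and $i>0$, which by \thmref{G-cotorsion-pairs} places $X$ in $\GPrj(\lMod{Q})$. A dimension shift along a finite projective resolution of $F$ reduces this to showing $\Ext{\lMod{Q}}{i}{X}{P}=0$ for every projective $P\in\lMod{Q}$. The step I expect to be the main obstacle is now the following: any such $P$ is a summand of a direct sum $\bigoplus_{q\in Q}L_q(F_q)$ with each $F_q$ free over $\Bbbk$, and $\mathrm{Ext}$ does not commute with direct sums in the second variable. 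The fix, and the decisive use of local boundedness \eqref{locbd}, is that for each $p\in Q$ only finitely many terms of $(\bigoplus_{q}L_q(F_q))(p)=\bigoplus_{q}F_q\otimes_\Bbbk Q(q,p)$ are nonzero, so the direct sum coincides with the product $\prod_{q}L_q(F_q)$. Since $\mathrm{Ext}$ commutes with products in the second variable and $L_q\cong R_{\Serre(q)}$, the Ext-adjunction gives
\begin{equation*}
\Ext{\lMod{Q}}{i}{X}{L_q(F_q)}\,\cong\,\Ext{\lMod{\Bbbk}}{i}{X(\Serre(q))}{F_q}\,=\,0
\end{equation*}
for $i>0$ by the hypothesis that $X(\Serre(q))$ is Gorenstein projective and the $\Bbbk$-projectivity of $F_q$. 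The Gorenstein injective case is dual: injectives in $\lMod{Q}$ are summands of $\bigoplus_{q}R_q(E_q)=\prod_{q}R_q(E_q)$ (now using local boundedness applied to $\operatorname{N}_+$) with $E_q$ injective over $\Bbbk$, $\mathrm{Ext}$ turns the direct sum in the first variable into a product of Ext's, and the other Ext-adjunction plus the fact that every injective $\Bbbk$-module has finite projective dimension (here the Gorenstein hypothesis on $\Bbbk$ is used) delivers the required vanishing.
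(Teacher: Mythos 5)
Your argument is essentially correct, but note that the paper does not prove this statement at all: it is quoted verbatim from Dell'Ambrogio--Stevenson--\v{S}\v{t}ov\'{\i}\v{c}ek \cite[Cor.~4.8]{MR3719530}, so there is no internal proof to compare against. What you have produced is a self-contained derivation from the ingredients the paper does import, namely the cotorsion pairs $(\GPrj,\Fin)$ and $(\Fin,\GInj)$ of \thmref{G-cotorsion-pairs} applied to $\lMod{Q}$, which is locally Gorenstein by \thmref{Q-Gorenstein} and \rmkref{opposite}, together with the adjoint triple $(\Fq{q},\Eq{q},\Gq{q})$ of \corref{adjoint-evaluation} and \lemref{Fq-Gq-preserve}. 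The two pivots of your proof are exactly the right ones: the Serre-duality identification $\Gq{q}(M)\cong \Fq{\Serre^{-1}(q)}(M)$, which needs both the natural isomorphism in \eqref{Serre} and the finite generation and projectivity of the hom modules from \eqref{Homfin} (for biduality $\Hom{\Bbbk}{\Hom{\Bbbk}{P}{\Bbbk}}{M}\cong P\otimes_\Bbbk M$), and the use of Local Boundedness \eqref{locbd} to identify $\bigoplus_{q}\Fq{q}(F_q)$ with $\prod_{q}\Fq{q}(F_q)$ objectwise, which is what lets you trade a coproduct (resp.\ product) in the wrong Ext variable for a product of Ext groups. The derived adjunction isomorphisms, the total-acyclicity argument for the ``only if'' directions, and the dimension shifts along finite projective (resp.\ injective) resolutions of $F\in\Fin$ are all sound; for the latter it is worth being explicit that in the Gorenstein injective case you must shift along a finite \emph{injective} coresolution of $F$, which is available precisely because $\Fin$ consists of objects of finite injective dimension in the locally Gorenstein category.

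Two small remarks. First, your appeal to ``every injective $\Bbbk$-module has finite projective dimension'' in the last step is superfluous: the required vanishing $\Ext{\Bbbk}{i}{E_q}{X(\Serre^{-1}(q))}=0$ for $i>0$ is immediate from the definition of a Gorenstein injective $\Bbbk$-module, since $E_q$ is injective; the Gorenstein hypothesis on $\Bbbk$ is genuinely used earlier, to make \thmref{Q-Gorenstein} and hence \thmref{G-cotorsion-pairs} available and to give $\Fin$ its two equivalent descriptions. Second, your route is not circular: you use \thmref{Q-Gorenstein} (DSS Thm.~4.6) only as a black box, and derive Cor.~4.8 from it rather than from its proof, which is a legitimate and somewhat more economical path than the original one.
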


In the special case where $Q$ is the mesh category of the repetitive quiver of \smash{$\vec{\mathbb{A}}_2$}, and~hence $\lMod{Q}$ is the category $\operatorname{Ch}\mspace{1mu}\Bbbk$ of chain complexes of $\Bbbk$-modules (see \exaref{derived-cat}), the result above is due to Enochs and Garc\'{\i}a-Rozas \cite[Thms.~2.7 and 4.5]{EEnJGR98}.

\section{The Category $\lMod{Q,\alg}$}

\label{sec:Mod}

In the rest of this paper, $\alg$ is any $\Bbbk$-algebra and $\lMod{\alg}$ is the category of left $\alg$-modules.

\begin{dfn}
  \label{dfn:Q-A-Mod}
  We introduce the following $\Bbbk$-preadditive categories:\footnote{\,The notation introduced here is in slight conflict with some of the notation introduced in \secref{Preliminaries}. For~example, according to \thmref{G-cotorsion-pairs} we could use the notation $\operatorname{GPrj}(\lMod{Q,\alg})$ for the category of Gorenstein projective objects in $\lMod{Q,\alg}$, however, the shorthand notation $\lGPrj{Q,\alg}$ is much more convenient.}
  \begin{align*}
    \lMod{Q,\alg} &\,=\, \text{the category of $\Bbbk$-linear functors $Q \to \lMod{\alg}$\;,} 
    \\
    {}_{Q,\alg}\mspace{-1mu}\operatorname{(G)Prj} &\,=\, \text{the category of (Gorenstein) projective objects in $\lMod{Q,\alg}$\;,} 
    \\
    {}_{Q,\alg}\mspace{-1mu}\operatorname{(G)Inj} &\,=\, \text{the category of (Gorenstein) injective objects in $\lMod{Q,\alg}$\;.} 
  \end{align*}
  The hom set ($\Bbbk$-module) in the category $\lMod{Q,\alg}$ is written $\operatorname{Hom}_{Q,\alg}$ and its right derived functors are denoted by $\operatorname{Ext}_{Q,\alg}^i$. For the projective and injective dimensions of  $X \in \lMod{Q,\alg}$ we write $\pd{Q,\alg}{X}$ and $\id{Q,\alg}{X}$.
  
  In the case where $\alg=\Bbbk$ we drop the subscript $\alg$ in the definitions above, that is, we write e.g.~$\lMod{Q}$, $\lGPrj{Q}$, $\operatorname{Ext}_{Q}^i$, and $\pd{Q}{X}$ instead of $\lMod{Q,\Bbbk}$, $\lGPrj{Q,\Bbbk}$, $\operatorname{Ext}_{Q,\Bbbk}^i$, and $\pd{Q,\Bbbk}{X}$.
  
  Beware that \smash{$\operatorname{Ext}_{Q}^0 = \operatorname{Hom}_{Q}$} is the hom set in the category $\lMod{Q}$ and \emph{not} the hom set in $Q$; the latter is written $Q(-,-)$ as indicated in \stpref{Bbbk}.
\end{dfn} 

\begin{dfn}
  \label{dfn:forgetful}
  We use the (same) symbol $(-)^\natural$ for the forgetful functors,
  \begin{equation*}
    (-)^\natural \colon \lMod{\alg} \longrightarrow \lMod{\Bbbk}
    \qquad \text{and} \qquad
    (-)^\natural \colon \lMod{Q,\alg} \longrightarrow \lMod{Q}\;.
  \end{equation*} 
\end{dfn}

\begin{rmk}
  \label{rmk:V}
    Let $\mathcal{V}=\lMod{\Bbbk}$ be the bicomplete closed symmetric monoidal category of $\Bbbk$-modules. In the language of enriched category theory, $Q$ and $\lMod{\alg}$ are both $\mathcal{V}$-categories and $\lMod{Q,\alg}$ is the $\mathcal{V}$-category of $\mathcal{V}$-functors $Q \to \lMod{\alg}$. Thus, by Kelly \cite[eq.~(2.10)]{Kelly} the hom set ($\Bbbk$-module) in $\lMod{Q,\alg}$ can be expressed by the following end in $\lMod{\Bbbk}$:
\begin{equation}
  \label{eq:Hom}
  \Hom{Q,\alg}{X}{Y} \,= \, \int_{q \in Q} \Hom{\alg}{X(q)}{Y(q)}\;.
\end{equation}
\end{rmk}

\begin{prp}
  \label{prp:adjunctions-1}
  For every $M \in \lMod{\alg}$ and $N \in \rMod{\alg}$ there are the following adjunctions, where the left adjoints are displayed in the top of the diagrams:
  \begin{equation*}
  \xymatrix@C=5.5pc{
    \lMod{Q}
    \ar@<0.7ex>[r]^-{- \,\otimes_\Bbbk\, M}
    &
    \lMod{Q,\alg}
    \ar@<0.7ex>[l]^-{\Hom{\alg}{M}{\,-}}
  }
  \qquad \text{and} \qquad
  \xymatrix@C=5.5pc{
    \lMod{Q,\alg}
    \ar@<0.7ex>[r]^-{N \,\otimes_\alg\, -}
    &
    \lMod{Q}
    \ar@<0.7ex>[l]^-{\Hom{\Bbbk}{N}{\,-}}
  }.  
  \end{equation*} 
  (These functors are defined ``objectwise'', for example, for $U \in \lMod{Q}$ the functor $U\otimes_\Bbbk M$ is given by $(U\otimes_\Bbbk M)(q) = U(q)\otimes_\Bbbk M$ for $q \in Q$.)
\end{prp}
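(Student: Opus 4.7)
The plan is to reduce the claim to the classical tensor--hom adjunctions $-\otimes_\Bbbk M \dashv \Hom{\alg}{M}{-}$ between $\lMod{\Bbbk}$ and $\lMod{\alg}$ and $N\otimes_\alg - \dashv \Hom{\Bbbk}{N}{-}$ between $\lMod{\alg}$ and $\lMod{\Bbbk}$, and to lift them objectwise along $Q$. Since all four functors above are $\Bbbk$-linear, post-composing with them preserves $\Bbbk$-linearity and hence sends $\Bbbk$-linear functors $Q \to \lMod{\Bbbk}$ to $\Bbbk$-linear functors $Q \to \lMod{\alg}$ (or back), so the candidate functors in the statement are well defined; they are also functorial in $X$, $Y$, $U$, $V$ because the underlying $\Bbbk$-linear functors are.

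For the first adjunction, given $U \in \lMod{Q}$ and $X \in \lMod{Q,\alg}$, I would exhibit a natural isomorphism
\begin{equation*}
  \Hom{Q,\alg}{U \otimes_\Bbbk M}{X} \,\cong\, \Hom{Q}{U}{\Hom{\alg}{M}{X}}
\end{equation*}
by applying the end formula \eqref{Hom} (valid in both categories) and the classical tensor--hom bijection objectwise:
\begin{equation*}
  \int_{q\in Q}\Hom{\alg}{U(q)\otimes_\Bbbk M}{X(q)}
  \;\cong\;
  \int_{q\in Q}\Hom{\Bbbk}{U(q)}{\Hom{\alg}{M}{X(q)}}.
\end{equation*}
The only thing to verify is that the classical isomorphism is natural in~$q$, i.e.~compatible with the action of morphisms $Q(p,q)$; this is immediate because the classical bijection is natural in both module variables, and the $Q$-actions on $U$ and $X$ are $\Bbbk$-module maps. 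Thus the end on the left is computed in $\lMod{\Bbbk}$ and agrees with the one on the right.

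The second adjunction is treated identically, producing
\begin{equation*}
  \Hom{Q}{N \otimes_\alg X}{V} \,\cong\, \Hom{Q,\alg}{X}{\Hom{\Bbbk}{N}{V}}
\end{equation*}
from the classical adjunction $N\otimes_\alg - \dashv \Hom{\Bbbk}{N}{-}$ by the same end manipulation. Naturality of both isomorphisms in $U$, $V$, $X$ follows from the naturality of the classical adjunctions, again verified componentwise and assembled over the end.

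I do not anticipate a real obstacle: everything reduces to the standard tensor--hom adjunctions, and the only nontrivial point is the bookkeeping that guarantees the objectwise isomorphisms assemble into morphisms in $\lMod{Q}$ and $\lMod{Q,\alg}$. This is precisely what the end formula \eqref{Hom} (\emph{cf.}~\rmkref{V}) encodes, so invoking it makes the verification mechanical. Alternatively, one can phrase the argument more conceptually: the classical adjunctions are $\mathcal{V}$-adjunctions for $\mathcal{V}=\lMod{\Bbbk}$, and post-composition with a $\mathcal{V}$-adjunction induces a $\mathcal{V}$-adjunction between the $\mathcal{V}$-functor categories out of any $\mathcal{V}$-category $Q$, which is exactly the content of the proposition.
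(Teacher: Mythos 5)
Your proposal is correct and follows essentially the same route as the paper: both express $\Hom{Q,\alg}{-}{-}$ as an end via \eqref{Hom}, apply the classical tensor--hom adjunction inside the end, and reassemble the result as $\Hom{Q}{U}{\Hom{\alg}{M}{X}}$, with the second adjunction handled symmetrically. The only difference is cosmetic: you make the naturality-in-$q$ bookkeeping (and the enriched-adjunction viewpoint) explicit, which the paper leaves implicit.
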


\begin{proof}
  For $U \in \lMod{Q}$ and $X \in \lMod{Q,\alg}$ there are by \eqref{Hom} isomorphisms:
  \begin{align*}
    \Hom{Q,\alg}{U \otimes_\Bbbk M}{X}
    &\,\cong\,
    \int_{q \in Q} \Hom{\alg}{U(q) \otimes_\Bbbk M}{X(q)}
    \\
    &\,\cong\,
    \int_{q \in Q} \Hom{\Bbbk}{U(q)}{\Hom{\alg}{M}{X(q)}}
    \\
    &\,\cong\,
    \Hom{Q}{U}{\Hom{\alg}{M}{X}}\;.
  \end{align*}
  This establishes the first adjunction; the other adjuntion is proved similarly.
\end{proof}

\begin{cor}
  \label{cor:adjoint-forgetful}
There is an adjoint triple $(- \otimes_\Bbbk \alg,\, (-)^\natural,\, \Hom{\Bbbk}{\alg}{-})$ as follows:
  \begin{equation*}
  \xymatrix@C=4pc{
    \lMod{Q,\alg}
    \ar[r]^-{(-)^\natural}
    &
    \lMod{Q}\;.
    \ar@/_1.8pc/[l]_-{- \,\otimes_\Bbbk\, \alg}
    \ar@/^1.8pc/[l]^-{\Hom{\Bbbk}{\alg}{\,-}}     
  }
  \end{equation*}
\end{cor}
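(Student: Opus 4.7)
The plan is to obtain the corollary as an immediate specialization of \prpref{adjunctions-1}. Concretely, I would apply the first adjunction of that proposition with $M = \alg$, regarded as a left $\alg$-module over itself, and the second adjunction with $N = \alg$, regarded as a right $\alg$-module over itself. In both cases, the point is to recognize that the relevant hom or tensor functor reduces canonically to the forgetful functor $(-)^\natural$.

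More precisely, with $M = \alg$ in the first adjunction, I get that $-\otimes_\Bbbk \alg \colon \lMod{Q} \to \lMod{Q,\alg}$ is left adjoint to $\Hom{\alg}{\alg}{-}$. For each $X \in \lMod{Q,\alg}$ and each $q \in Q$, the standard canonical $\Bbbk$-linear isomorphism $\Hom{\alg}{\alg}{X(q)} \xrightarrow{\cong} X(q)$, $f \mapsto f(1)$, is natural in $q$, and under this identification $\Hom{\alg}{\alg}{-}$ agrees with $(-)^\natural$ as a functor $\lMod{Q,\alg} \to \lMod{Q}$. This gives the first half of the adjoint triple. Dually, with $N = \alg$ in the second adjunction, $\alg \otimes_\alg - \colon \lMod{Q,\alg} \to \lMod{Q}$ is left adjoint to $\Hom{\Bbbk}{\alg}{-}$, and the natural isomorphism $\alg \otimes_\alg X(q) \xrightarrow{\cong} X(q)$, $a \otimes x \mapsto ax$, identifies $\alg\otimes_\alg -$ with $(-)^\natural$. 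This gives the second half.

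There is no genuine obstacle here; the only thing to check is that the two forgetful functors arising on the two sides (namely $\Hom{\alg}{\alg}{-}$ coming from the first adjunction and $\alg\otimes_\alg -$ coming from the second) really do coincide with the single functor $(-)^\natural$ defined in \dfnref{forgetful}, which is immediate from the canonical $\Bbbk$-linear isomorphisms noted above and their naturality in the argument (ensuring compatibility with the action of morphisms of $Q$). Once this identification is made, the two adjunctions of \prpref{adjunctions-1} concatenate into the asserted adjoint triple $(- \otimes_\Bbbk \alg,\, (-)^\natural,\, \Hom{\Bbbk}{\alg}{-})$.
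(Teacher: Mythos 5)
Your proposal is correct and is essentially the paper's own argument: apply \prpref{adjunctions-1} with $M={}_{\alg}\alg$ and $N=\alg_{\alg}$ and identify $\Hom{\alg}{\alg}{-}$ and $\alg\otimes_\alg-$ with the forgetful functor $(-)^\natural$ via the canonical natural isomorphisms. The extra detail you supply about these identifications is fine but adds nothing beyond what the paper's one-line proof already asserts.
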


\begin{proof}
  Apply \prpref{adjunctions-1} with $M={}_{\alg}\alg$ and $N=\alg_{\alg}$ and notice that $\Hom{\alg}{M}{-}$ and $N \otimes_\alg -$ are both naturally isomorphic to the forgetful functor $(-)^\natural \colon \lMod{Q,\alg} \to \lMod{Q}$.
\end{proof}

\begin{rmk}
  \label{rmk:V2}
  As in \rmkref{V} set $\mathcal{V}=\lMod{\Bbbk}$. The $\mathcal{V}$-category
  $\lMod{\alg}$ is both \emph{cotensored} and \emph{tensored}, and the cotensor and tensor products are given by
\begin{equation*}
  V \pitchfork M \,=\, \Hom{\Bbbk}{V}{M} \,\in\, \lMod{\alg}  
  \qquad \text{and} \qquad
  V \odot M \,=\, V \otimes_\Bbbk M \,\in\, \lMod{\alg}
\end{equation*}
for $V \in \lMod{\Bbbk}$ and $M \in \lMod{\alg}$. Indeed, the required/defining isomorphisms, see Kelly \cite[eq.~(3.42) and (3.44)]{Kelly}, in this case take the form
\begin{eqnarray*}
  \Hom{\alg}{N}{\Hom{\Bbbk}{V}{M}} 
  \hspace*{-1.4ex} &\cong& \hspace*{-1.4ex} 
  \Hom{\Bbbk}{V}{\Hom{\alg}{N}{M}} \qquad \text{and} \\
  \Hom{\alg}{V \otimes_\Bbbk M}{N}
  \hspace*{-1.4ex} &\cong& \hspace*{-1.4ex} 
  \Hom{\Bbbk}{V}{\Hom{\alg}{M}{N}}\;,
\end{eqnarray*}  
which are the well-known \emph{swap} and \emph{adjointness} isomorphims from \cite[(A.2.8) and (A.2.9)]{lnm}. 

Now, consider objects $U \in \lMod{Q}$, $W \in \rMod{Q}$, and $X \in \lMod{Q,\alg}$, that is, $\mathcal{V}$-functors $U \colon Q \to \lMod{\Bbbk}$, $W \colon Q^\mathrm{op} \to \lMod{\Bbbk}$, and $X \colon Q \to \lMod{\alg}$. As in \cite[\S3.1]{Kelly} we write:
\begin{itemlist}
\item[] $\{U,X\} \in \lMod{\alg}$ for the \emph{limit} of $X$ \emph{indexed} (or \emph{weighted}) by $U$, and  
\item[] $W \star X \in \lMod{\alg}$ for the \emph{colimit} of $X$ \emph{indexed} (or \emph{weighted}) by $W$.
\end{itemlist}
It follows from \cite[eq.~(3.69) and (3.70)]{Kelly} that the indexed (or weighted) limit and colimit can be computed by the following end and coend in $\lMod{\alg}$.
\begin{align}
  \label{eq:il}
  \{U,X\} &\,=\, \int_{q \in Q} \Hom{\Bbbk}{U(q)}{X(q)} \,=\, \Hom{Q}{U}{X}
  \\
  \label{eq:ic}
  W \star X &\,=\, \int^{q \in Q} W(q) \otimes_\Bbbk X(q) \,=\, \ORt{W}{X}
\end{align}
The last equality in \eqref{il} follows from \eqref{Hom}. The last equality in \eqref{ic} can be taken as a~definition of the symbol ``$\ORt{}{}$'', however, it is precisely the tensor product of $\Bbbk$-linear functors studied by Oberst and R\"{o}hrl \cite[p.~93]{OberstRohrl}, where the same symbol, ``$\ORt{}{}$'', is used.
\end{rmk}

\begin{lem}
  \label{lem:associativity}
  For $W \in \rMod{Q}$, $U \in \lMod{Q}$, and $M \in \lMod{\alg}$ there is an isomorphism,
\begin{equation*}
   \label{eq:associativity}
   \ORt{W}{(U \otimes_\Bbbk M)}
   \,\cong\, 
   (\ORt{W}{U}) \otimes_\Bbbk M\;.
\end{equation*}
\end{lem}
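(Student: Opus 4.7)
The plan is to prove the isomorphism by pulling the functor $-\otimes_\Bbbk M$ through the coend that defines $\ORt{W}{-}$, using the general principle that left adjoints preserve colimits (and in particular coends, which are certain colimits).

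First I would unfold the left-hand side directly from the definition in \eqref{ic}. Since $U\otimes_\Bbbk M \in \lMod{Q,\alg}$ is the functor $q\mapsto U(q)\otimes_\Bbbk M$, one has
\begin{equation*}
  \ORt{W}{(U\otimes_\Bbbk M)} \,=\, \int^{q\in Q} W(q)\otimes_\Bbbk \bigl(U(q)\otimes_\Bbbk M\bigr).
\end{equation*}
Then associativity and symmetry of $\otimes_\Bbbk$ rewrite the integrand as $(W(q)\otimes_\Bbbk U(q))\otimes_\Bbbk M$, so the display becomes $\int^{q} (W(q)\otimes_\Bbbk U(q))\otimes_\Bbbk M$.

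Next I would invoke \prpref{adjunctions-1}, whose objectwise/underlying form says that $-\otimes_\Bbbk M\colon \lMod{\Bbbk}\to \lMod{\alg}$ is left adjoint to $\Hom{\alg}{M}{-}$. Left adjoints preserve all colimits, and a coend is a particular (conical) colimit over the twisted-arrow category of $Q$; therefore $-\otimes_\Bbbk M$ commutes with the coend on the outside:
\begin{equation*}
  \int^{q\in Q}\bigl(W(q)\otimes_\Bbbk U(q)\bigr)\otimes_\Bbbk M
  \,\cong\,
  \Bigl(\int^{q\in Q} W(q)\otimes_\Bbbk U(q)\Bigr)\otimes_\Bbbk M,
\end{equation*}
which by \eqref{ic} is precisely $(\ORt{W}{U})\otimes_\Bbbk M$. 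Chaining the two isomorphisms gives the claim, and naturality in $W$, $U$, $M$ is automatic from the universal properties.

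The argument is essentially routine; the only minor subtlety is making sure the coend is interpreted in the correct ambient category so that the phrase ``left adjoints preserve colimits'' applies. If one prefers not to invoke this principle directly, an equivalent (and equally short) route is to test against an arbitrary $N\in\lMod{\alg}$: by the end formula \eqref{il} and the swap/adjointness isomorphisms recalled in \rmkref{V2}, both $\Hom{\alg}{\ORt{W}{(U\otimes_\Bbbk M)}}{N}$ and $\Hom{\alg}{(\ORt{W}{U})\otimes_\Bbbk M}{N}$ reduce to the common end $\int_{q}\Hom{\Bbbk}{W(q)\otimes_\Bbbk U(q)}{\Hom{\alg}{M}{N}}$, and Yoneda finishes the job. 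I do not anticipate a genuine obstacle here.
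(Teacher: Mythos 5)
Your proof is correct and is essentially the paper's argument: the paper also deduces the isomorphism from the coend formula \eqref{ic} together with the fact that $-\otimes_\Bbbk M$ preserves colimits (in particular coends); you merely spell out the associativity step and offer an optional Yoneda-style variant. No issues.
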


\begin{proof}
  The asserted isomorphism in $\lMod{\alg}$ follows from the formula \eqref{ic} and the fact that the functor $- \otimes_\Bbbk M$ preserves colimits (in particular, coends).
\end{proof}

\begin{prp}
  \label{prp:adjunctions-2}
  For every $U \in \lMod{Q}$ and $W \in \rMod{Q}$ there are the following adjunctions, where the left adjoints are displayed in the top of the diagrams:
  \begin{equation*}
  \xymatrix@C=5.5pc{
    \lMod{\alg}
    \ar@<0.7ex>[r]^-{U\, \otimes_\Bbbk\, -}
    &
    \lMod{Q,\alg}
    \ar@<0.7ex>[l]^-{\Hom{Q}{U}{\,-}}
  }
  \qquad \text{and} \qquad
  \xymatrix@C=5.5pc{
    \lMod{Q,\alg}
    \ar@<0.7ex>[r]^-{\ORt{W\,}{\,-}}
    &
    \lMod{\alg}
    \ar@<0.7ex>[l]^-{\Hom{\Bbbk}{W}{\,-}}
  }.  
  \end{equation*}    
\end{prp}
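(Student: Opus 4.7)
The plan is to prove both adjunctions by direct computation via ends, mirroring the method used in the proof of \prpref{adjunctions-1}, combined with the indexed (co)limit formulas \eqref{il} and \eqref{ic} from \rmkref{V2} and the swap/adjointness isomorphisms recorded there. The key observation is that the right adjoint $\Hom{Q}{U}{-}$ of the first adjunction takes values in $\lMod{\alg}$ via the formula $\Hom{Q}{U}{X} = \{U,X\} = \int_{q \in Q}\Hom{\Bbbk}{U(q)}{X(q)}$, where the $\alg$-action is inherited from the $\alg$-action on $X(q)$; similarly, $\Hom{\Bbbk}{W}{N}$ defines an object of $\lMod{Q,\alg}$ by $q \mapsto \Hom{\Bbbk}{W(q)}{N}$, with the functoriality in $q$ coming from the contravariant functor $W$.

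For the first adjunction, given $M \in \lMod{\alg}$ and $X \in \lMod{Q,\alg}$, I would start from \eqref{Hom} and apply the adjointness isomorphism inside the integrand to obtain
\[
\Hom{Q,\alg}{U \otimes_\Bbbk M}{X} \,\cong\, \int_{q \in Q}\Hom{\Bbbk}{U(q)}{\Hom{\alg}{M}{X(q)}}\;.
\]
Then apply the swap isomorphism to rewrite the integrand as $\Hom{\alg}{M}{\Hom{\Bbbk}{U(q)}{X(q)}}$, and pull $\Hom{\alg}{M}{-}$ out of the end (it is a right adjoint, hence preserves ends) to land on $\Hom{\alg}{M}{\Hom{Q}{U}{X}}$ via \eqref{il}. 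Naturality in $M$ and $X$ is routine to verify from the naturality of the constituent isomorphisms.

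For the second adjunction, given $X \in \lMod{Q,\alg}$ and $N \in \lMod{\alg}$, I would start from \eqref{ic} and use that $\Hom{\alg}{-}{N}$ converts the coend into an end:
\[
\Hom{\alg}{\ORt{W}{X}}{N} \,\cong\, \int_{q \in Q}\Hom{\alg}{W(q) \otimes_\Bbbk X(q)}{N}\;.
\]
Two successive applications of adjointness and swap from \rmkref{V2} rewrite the integrand as $\Hom{\alg}{X(q)}{\Hom{\Bbbk}{W(q)}{N}}$, and \eqref{Hom} identifies the resulting end with $\Hom{Q,\alg}{X}{\Hom{\Bbbk}{W}{N}}$.

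The proof is essentially bookkeeping: no part is a genuine obstacle once the machinery of \rmkref{V2} is in place. The only subtlety worth a sentence of justification is that $\Hom{Q}{U}{X}$ and $\Hom{\Bbbk}{W}{N}$ must be interpreted in the enriched sense so that they actually land in $\lMod{\alg}$ and $\lMod{Q,\alg}$ respectively; this is automatic from the end formula \eqref{il} and the pointwise construction.
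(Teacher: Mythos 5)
Your proposal is correct and follows essentially the same route as the paper: compute $\Hom{Q,\alg}{U\otimes_\Bbbk M}{X}$ via the end formula \eqref{Hom}, rewrite the integrand using the adjointness/swap isomorphisms from \rmkref{V2}, and pull $\Hom{\alg}{M}{-}$ out of the end by limit-preservation, with the second adjunction handled dually (the paper leaves that dual computation to the reader, which you have simply written out). No gaps.
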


\begin{proof}
  For $M \in \lMod{\alg}$ and $X \in \lMod{Q,\alg}$ there are by \eqref{Hom} and \eqref{il} isomorphisms:
  \begin{align*}
     \Hom{Q,\alg}{U \otimes_\Bbbk M}{X}
     &\,\cong\,
     \int_{q \in Q} \Hom{\alg}{U(q) \otimes_\Bbbk M}{X(q)}
     \\
     &\,\cong\,
     \int_{q \in Q} \Hom{\alg}{M}{\Hom{\Bbbk}{U(q)}{X(q)}}
     \\
     &\,\cong\,
     \operatorname{Hom}_{\alg}\!\Big(M,\int_{q \in Q}\Hom{\Bbbk}{U(q)}{X(q)}\Big)
     \\
     &\,\cong\,
     \Hom{\alg}{M}{\Hom{Q}{U}{X}}\;,
  \end{align*}
  where the third isomorphism holds as the functor $\Hom{\alg}{M}{-}$ preserves limits (in particular, ends). This proves the first adjunction, and the other is shown similarly.
\end{proof}

\begin{cor}
  \label{cor:adjoint-evaluation}
For every $q \in Q$ there is an adjoint triple $(\Fq{q},\Eq{q},\Gq{q})$ as follows:
  \begin{equation*}
  \xymatrix@C=4pc{
    \lMod{Q,\alg}
    \ar[r]^-{\Eq{q}}
    &
    \lMod{\alg}
    \ar@/_1.8pc/[l]_-{\Fq{q}}
    \ar@/^1.8pc/[l]^-{\Gq{q}}     
  }
  \qquad \text{given by} \qquad
  {\setlength\arraycolsep{1.5pt}
   \renewcommand{\arraystretch}{1.2}
  \begin{array}{rcl}
  \Fq{q}(M) &=& Q(q,-) \otimes_\Bbbk M \\
  \Eq{q}(X) &=& X\mspace{1.5mu}(q) \\ 
  \Gq{q}(M) &=& \Hom{\Bbbk}{Q(-,q)}{M}\;.
  \end{array}
  }
  \end{equation*}
  Moreover, the following assertions hold.
  \begin{prt}
  \item The functor $\Fq{q}$ is exact if the $\Bbbk$-module $Q(q,r)$ is flat for every $r \in Q$. 
  \item The functor $\Gq{q}$ is exact if the $\Bbbk$-module $Q(p,q)$ is projective for every $p \in Q$. 
  \end{prt}
\end{cor}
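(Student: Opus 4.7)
My plan is to deduce the adjoint triple directly from \prpref{adjunctions-2} by specialising the two adjunctions there to the representable functors. In the first adjunction I take $U = Q(q,-) \in \lMod{Q}$, so that the left adjoint $U \otimes_\Bbbk -$ becomes $\Fq{q}$ on the nose; in the second adjunction I take $W = Q(-,q) \in \rMod{Q}$, so that the right adjoint $\Hom{\Bbbk}{W}{-}$ becomes $\Gq{q}$. It then remains to identify the partner functor in each case with $\Eq{q}$.

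For this I would apply the enriched Yoneda lemma in its two dual forms, using the end/coend expressions \eqref{il} and \eqref{ic}: for every $X \in \lMod{Q,\alg}$,
\begin{align*}
  \Hom{Q}{Q(q,-)}{X}
  &\,=\, \int_{r \in Q}\Hom{\Bbbk}{Q(q,r)}{X(r)} \,\cong\, X(q) \,=\, \Eq{q}(X)\;, \\
  \ORt{Q(-,q)\,}{\,X}
  &\,=\, \int^{r \in Q} Q(r,q)\otimes_\Bbbk X(r) \,\cong\, X(q) \,=\, \Eq{q}(X)\;.
\end{align*}
The $\alg$-module structure on the right-hand sides is simply the one inherited from $X$, so these isomorphisms live in $\lMod{\alg}$ rather than merely in $\lMod{\Bbbk}$. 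Substituting them back into \prpref{adjunctions-2} then yields the asserted adjunctions $(\Fq{q},\Eq{q})$ and $(\Eq{q},\Gq{q})$.

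For parts (a) and (b) I would use that exactness in $\lMod{Q,\alg}$ is checked objectwise in $\lMod{\alg}$, together with the objectwise descriptions
\begin{equation*}
  \Fq{q}(M)(r) \,=\, Q(q,r)\otimes_\Bbbk M
  \qquad \text{and} \qquad
  \Gq{q}(M)(p) \,=\, \Hom{\Bbbk}{Q(p,q)}{M}\;.
\end{equation*}
Thus if every $Q(q,r)$ is $\Bbbk$-flat then each $Q(q,r)\otimes_\Bbbk -$ is exact, and hence so is $\Fq{q}$; and if every $Q(p,q)$ is $\Bbbk$-projective then each $\Hom{\Bbbk}{Q(p,q)}{-}$ is exact, and hence so is $\Gq{q}$.

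I do not anticipate any real obstacle: the crux is the enriched Yoneda identification, which is standard, and the exactness assertions are then immediate from the objectwise formulas above.
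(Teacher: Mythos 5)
Your proposal is correct and follows the paper's own route: specialise Proposition \ref{prp:adjunctions-2} to $U=Q(q,-)$ and $W=Q(-,q)$, identify both $\Hom{Q}{Q(q,-)}{-}$ and $\ORt{Q(-,q)}{-}$ with the evaluation functor $\Eq{q}$ via the (enriched) Yoneda isomorphisms and the formulae \eqref{il}, \eqref{ic}, and deduce parts (a) and (b) from the objectwise descriptions of $\Fq{q}$ and $\Gq{q}$. Nothing to add.
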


\begin{proof}
  Apply \prpref{adjunctions-2} with $U=Q(q,-)$ and $W=Q(-,q)$. In this case both functors $\Hom{Q}{U}{?} = \Hom{Q}{Q(q,-)}{?}$ and $\ORt{W\,}{{?}} = \ORt{Q(-,q)}{{?}}$ are naturally isomorphic to the evaluation functor $\Eq{q}(?)$ by the Yoneda isomorphisms \cite[eq.~(3.10)]{Kelly} and \eqref{il}, \eqref{ic}. By the way, the isomorphism \mbox{$\ORt{Q(-,q)\,}{{?}} \cong \Eq{q}(?)$} is also established in \cite[\S1, p.~93]{OberstRohrl}. The assertions in \prtlbl{a} and \prtlbl{b} follow directly from the formulae for $\Fq{q}$ and $\Gq{q}$.
\end{proof}

\begin{lem}
  \label{lem:Fq-Gq-preserve}
  Each functor $\Fq{q}$ preserves projective objects and finitely presentable objects. Each functor $\Gq{q}$ preserves injective objects.
\end{lem}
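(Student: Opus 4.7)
The plan is to leverage the adjoint triple $(F_q, E_q, G_q)$ established in \corref{adjoint-evaluation} together with formal properties of the evaluation functor $E_q$.

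The key observation is that $E_q$ is exact and preserves filtered colimits. This holds because both (co)kernels and filtered colimits in the functor category $\lMod{Q,\alg}$ are computed pointwise in $\lMod{\alg}$, so $E_q$ (which is literally evaluation at $q$) preserves all of these constructions. Equivalently, $E_q$ admits both a left adjoint $F_q$ and a right adjoint $G_q$, so it automatically preserves all limits and colimits.

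From the adjunction $F_q \dashv E_q$ with $E_q$ exact, I conclude that $F_q$ preserves projectives by the standard argument: for a projective $M \in \lMod{\alg}$ the functor
\begin{equation*}
  \Hom{Q,\alg}{F_q(M)}{-} \,\cong\, \Hom{\alg}{M}{E_q(-)}
\end{equation*}
is a composition of the exact functor $E_q$ and the exact functor $\Hom{\alg}{M}{-}$, hence is exact. Dually, from $E_q \dashv G_q$ and exactness of $E_q$, the functor $G_q$ preserves injectives, since for an injective $M$ the functor $\Hom{Q,\alg}{-}{G_q(M)} \cong \Hom{\alg}{E_q(-)}{M}$ is exact.

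For finitely presentable objects, recall that $M \in \lMod{\alg}$ is finitely presentable precisely when $\Hom{\alg}{M}{-}$ preserves filtered colimits, and analogously in $\lMod{Q,\alg}$. If $M$ is finitely presentable in $\lMod{\alg}$, then using the same adjunction isomorphism together with the fact that $E_q$ preserves filtered colimits, the functor $\Hom{Q,\alg}{F_q(M)}{-} \cong \Hom{\alg}{M}{E_q(-)}$ preserves filtered colimits, so $F_q(M)$ is finitely presentable in $\lMod{Q,\alg}$. I do not anticipate any real obstacle here; the whole lemma is a direct, formal consequence of the adjoint triple in \corref{adjoint-evaluation} and the pointwise nature of (co)limits in the functor category.
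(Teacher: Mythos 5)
Your argument is correct and is essentially the paper's own proof: both rely on the adjunction isomorphisms $\Hom{Q,\alg}{\Fq{q}(M)}{-} \cong \Hom{\alg}{M}{\Eq{q}(-)}$ and $\Hom{Q,\alg}{-}{\Gq{q}(M)} \cong \Hom{\alg}{\Eq{q}(-)}{M}$ together with the fact that $\Eq{q}$ is exact and preserves direct (filtered) colimits. No gaps to report.
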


\begin{proof}
  For every object $M$ in $\lMod{A}$ there is a natural isomorphism $\Hom{Q,\alg}{\Fq{q}(M)}{-} \cong \Hom{\alg}{M}{\Eq{q}(-)}$ by \corref{adjoint-evaluation}. The first assertion now follows as the functor $\Eq{q}$ is exact and preserves direct limits. The second assertion follows from the natural isomorphism $\Hom{Q,\alg}{-}{\Gq{q}(M)} \cong \Hom{\alg}{\Eq{q}(-)}{M}$ and exactness of the functor $\Eq{q}$.
\end{proof}

\begin{prp}
  \label{prp:lfp}
  The category $\lMod{Q,\alg}$ is Grothendieck and locally finitely presentable.
  \begin{prt}
  \item The objects $\Fq{q}(\alg) = Q(q,-) \otimes_{\Bbbk} \alg$, where $q \in Q$, are projective and finitely present\-able and they generate $\lMod{Q,\alg}$.
  \item The objects $\Gq{q}(I) = \Hom{\Bbbk}{Q(-,q)}{I}$, where $q \in Q$ and $I$ is any (fixed) faithfully injective left $\alg$-module, are injective and they cogenerate $\lMod{Q,\alg}$. 
  \end{prt}
\end{prp}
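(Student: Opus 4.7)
The plan is to first establish the Grothendieck/locally finitely presentable structure on $\lMod{Q,\alg}$ and then prove \prtlbl{a} and \prtlbl{b} simultaneously using the adjunctions from \corref{adjoint-evaluation} together with \lemref{Fq-Gq-preserve}.

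First I would observe that $\lMod{Q,\alg}$ is abelian with all limits and colimits computed pointwise in $\lMod{\alg}$ (this is standard for a functor category into a complete and cocomplete abelian category). Exactness of filtered colimits is thus inherited from $\lMod{\alg}$, so it only remains to exhibit a generating set in order to conclude that $\lMod{Q,\alg}$ is Grothendieck, and the stronger statement that $\lMod{Q,\alg}$ is locally finitely presentable will follow once we exhibit a generating set consisting of finitely presentable objects.

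For \prtlbl{a}, \lemref{Fq-Gq-preserve} immediately yields that $\Fq{q}(\alg)$ is projective and finitely presentable. To see that the family $\{\Fq{q}(\alg)\}_{q \in Q}$ generates, I would use the adjunction of \corref{adjoint-evaluation} to obtain the natural isomorphism
\begin{equation*}
  \Hom{Q,\alg}{\Fq{q}(\alg)}{X} \,\cong\, \Hom{\alg}{\alg}{X(q)} \,\cong\, X(q)
\end{equation*}
for every $X \in \lMod{Q,\alg}$. Hence if $X \neq 0$, then $X(q) \neq 0$ for some $q \in Q$, and any nonzero element of $X(q)$ gives a nonzero morphism $\Fq{q}(\alg) \to X$; this is the standard criterion for a generating set. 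For \prtlbl{b}, \lemref{Fq-Gq-preserve} gives injectivity of $\Gq{q}(I)$ (note $I$ is injective in $\lMod{\alg}$). Cogeneration follows dually from the adjunction isomorphism
\begin{equation*}
  \Hom{Q,\alg}{X}{\Gq{q}(I)} \,\cong\, \Hom{\alg}{X(q)}{I},
\end{equation*}
since faithful injectivity of $I$ means precisely that $\Hom{\alg}{-}{I}$ detects zero; if $X \neq 0$ and $X(q) \neq 0$, there is a nonzero map $X(q) \to I$, lifting to a nonzero morphism $X \to \Gq{q}(I)$.

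No step here looks particularly hard; the only point to be slightly careful about is the claim that $\Fq{q}(\alg)$ is finitely presentable, which requires that $\Hom{Q,\alg}{\Fq{q}(\alg)}{-}$ preserves filtered colimits. This is handled by \lemref{Fq-Gq-preserve} and can alternatively be seen from the identification above with the evaluation functor $\Eq{q}$, which preserves filtered colimits since these are computed pointwise in $\lMod{\alg}$ and filtered colimits commute with finite limits in $\lMod{\alg}$.
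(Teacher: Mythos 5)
Your proof is correct and follows essentially the same route as the paper: projectivity/injectivity and finite presentability of $\Fq{q}(\alg)$ and $\Gq{q}(I)$ come from \lemref{Fq-Gq-preserve}, (co)generation comes from the adjunction isomorphisms of \corref{adjoint-evaluation}, and local finite presentability is deduced from the existence of a generating set of finitely presentable objects. The only cosmetic difference is that the paper checks generation via joint faithfulness on morphisms (using $\Hom{Q,\alg}{\Fq{q}(\alg)}{\tau} \cong \Hom{\alg}{\alg}{\tau(q)}$ and faithful projectivity of $\alg$), whereas you use the ``every nonzero object receives a nonzero map'' criterion, which is legitimate here precisely because the $\Fq{q}(\alg)$ are projective (dually, the $\Gq{q}(I)$ are injective).
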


\begin{proof}
  \proofoftag{a} By \lemref{Fq-Gq-preserve} each object $\Fq{q}(\alg)$ is projective and finitely presentable. To see that these objects generate $\lMod{Q,\alg}$, let $\tau$ be any morphism in this category. By \corref{adjoint-evaluation} we have
  $\Hom{Q,\alg}{\Fq{q}(\alg)}{\tau} \cong \Hom{\alg}{\alg}{\tau(q)}$. Since $\alg$ is a faithfully projective left~$\alg$-module it follows that if $\Hom{Q,\alg}{\Fq{q}(\alg)}{\tau}=0$ holds for every $q \in Q$, then $\tau=0$.
  
  \proofoftag{b} Dual to the proof of part \prtlbl{a}.
  
    It is well-known that $\lMod{Q,\alg}$ is a Grothendieck category. By \prtlbl{a} the category $\lMod{Q,\alg}$ even has a \textsl{projective} generator.  
  Part \prtlbl{a} shows that $\lMod{Q,\alg}$ is generated by a set of finitely presentable 
objects, so it is a locally finitely presentable Grothendieck category in the sense of Breitsprecher \cite[Dfn.~(1.1)]{SBr70}. Hence it is also a locally finitely presentable category in the usual sense of Crawley-Boevey \cite[\S1]{WCB94} or Ad{\'a}mek~and Rosick{\'y} \cite[\mbox{Dfn.~1.17 with $\lambda=\aleph_0$}]{AdamekRosicky} (cf.~\appref{Kaplansky}). This follows from \cite[Satz~(1.5)]{SBr70} and is also pointed out in \cite[(2.4)]{WCB94}.
\end{proof}

\section{Existence and Heredity of the Cotorsion Pairs $({}^\perp\mathscr{E},\mathscr{E})$ and $(\mathscr{E},\mathscr{E}^\perp)$}

\label{sec:Existence}

Recall from \dfnref{forgetful} that we write $(-)^\natural$ for the forgetful functor. The next is the key definition in this paper.

\begin{dfn}
  \label{dfn:E}
  Let $Q$ satisfy the conditions in \stpref{Bbbk} and let
  $\Bbbk$ be Gorenstein. We set
  \begin{equation*}
    \lFin{Q} \,=\, 
    \{X \in \lMod{Q} \,|\, \pd{Q}{X}<\infty \} \,=\,
    \{X \in \lMod{Q} \,|\, \id{Q}{X}<\infty \}\;,
  \end{equation*}
  where the last equality holds by \thmref{Q-Gorenstein}\,/\,\rmkref{opposite} and \dfnref{Gorenstein-cat}. We also set
  \begin{equation*}
    \mathscr{E} \,=\, \{ X \in \lMod{Q,\alg} \,|\, X^\natural \in \lFin{Q} \}\;.
  \end{equation*}  
  The objects in $\mathscr{E}$ are said to be \emph{exact}; this terminology is justified by \thmref{E-characterization-hereditary}.
\end{dfn}

Our first goal is to investigate when there exist (hereditary) cotorsion pairs $({}^\perp\mathscr{E},\mathscr{E})$ and $(\mathscr{E},\mathscr{E}^\perp)$ in $\lMod{Q,\alg}$. As the next remark shows, a necessary condition for the existence of such cotorsion pairs is that $\alg$ has finite projective/injective dimension as a $\Bbbk$-module. As proved in \thmref{cotorsion-pairs-existence}, this condition is (perhaps surprisingly) also sufficient.

\begin{rmk}
  \label{rmk:necessary}
We claim that if there exists a cotorsion pair of the form $(\mathscr{E},\mathscr{E}^\perp)$ in $\lMod{Q,\alg}$ and $Q$ is not empty, then $\alg$ must have finite projective/injective dimension as a $\Bbbk$-module.

To see this, choose any $q \in Q$. As $X=Q(q,-) \otimes_\Bbbk \alg$ is in $\lPrj{Q,\alg}$, see \mbox{\prpref{lfp}}, it belongs to the left half, $\mathscr{E}$, of the assumed cotorsion pair. By \dfnref{E} this means that $X^\natural = Q(q,-) \otimes_\Bbbk \alg^\natural$ belongs to $\lFin{Q}$, that is,  $X^\natural$ has finite projective dimension in $\lMod{Q}$. Let $0 \to P_n \to \cdots \to P_0 \to X^\natural \to 0$ be an augmented projective resolution of $X^\natural$ in $\lMod{Q}$. The evalution functor $\Eq{q} \colon \lMod{Q} \to \lMod{\Bbbk}$ has by \corref{adjoint-evaluation} (with $\alg=\Bbbk$) a right adjoint $\Gq{q}$. By \corref{adjoint-evaluation}\prtlbl{b} and condition \eqref{Homfin} in \stpref{Bbbk}, the functor $\Gq{q}$ is exact; whence $\Eq{q}$ preserves projective objects. As $\Eq{q}$ is also exact, $0 \to P_n(q) \to \cdots \to P_0(q) \to X^\natural(q) \to 0$ is a projective resolution of $X^\natural(q)$ in $\lMod{\Bbbk}$. Thus, the $\Bbbk$-module $X^\natural(q) = Q(q,q) \otimes_\Bbbk \alg^\natural$ has finite projective (equivalently, finite injective) dimension. By condition \eqref{retraction} in \mbox{\stpref{Bbbk}} the $\Bbbk$-module $Q(q,q)$ has a direct summand isomorphic to $\Bbbk$, and thus $Q(q,q) \otimes_\Bbbk \alg^\natural$ has a direct summand isomorphic to $\Bbbk \otimes_\Bbbk \alg^\natural \cong \alg^\natural$. Hence also $\alg^\natural$ has finite projective dimension.
\end{rmk}

\begin{lem}
  \label{lem:Ext}
  Assume that $Q$ satisfies the conditions in \stpref{Bbbk},  
  $\Bbbk$ is Gorenstein, and $\alg$ has finite projective/injective dimension as a $\Bbbk$-module. For every $X \in \lMod{Q,\alg}$ there are isomorphisms of\, $\Bbbk$-modules:
\begin{prt}
\item $\Ext{Q,\alg}{i}{G \otimes_\Bbbk \alg}{X} \cong \Ext{Q}{i}{G}{X^\natural}$ for every $G \in \lGPrj{Q}$ and $i \geqslant 0$.

\item $\Ext{Q,\alg}{i}{X}{\Hom{\Bbbk}{\alg}{H}} \cong \Ext{Q}{i}{X^\natural}{H}$ for every $H \in \lGInj{Q}$ and $i \geqslant 0$.
\end{prt}  
\end{lem}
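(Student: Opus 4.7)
The strategy is to reduce both isomorphisms to the adjunction $(-\otimes_\Bbbk \alg,\, (-)^\natural)$ from \corref{adjoint-forgetful} by producing appropriate resolutions on the $\lMod{Q,\alg}$-side.

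For part \prtlbl{a}, I would pick a projective resolution $P_\bullet \to G$ in $\lMod{Q}$ and argue that $P_\bullet \otimes_\Bbbk \alg \to G \otimes_\Bbbk \alg$ is a projective resolution in $\lMod{Q,\alg}$. Projectivity of each $P_i \otimes_\Bbbk \alg$ is automatic because $-\otimes_\Bbbk \alg$ is the left adjoint (\corref{adjoint-forgetful}) to the exact functor $(-)^\natural$. For exactness, note that (co)limits in $\lMod{Q,\alg}$ are computed objectwise, so it suffices to check that for each $q \in Q$ the sequence $P_\bullet(q) \otimes_\Bbbk \alg^\natural \to G(q) \otimes_\Bbbk \alg^\natural \to 0$ is exact in $\lMod{\Bbbk}$. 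Since each $Q(p,-)$ is objectwise $\Bbbk$-projective by \eqref{Homfin}, the evaluated complex $P_\bullet(q) \to G(q) \to 0$ is a genuine projective resolution of $G(q)$ over $\Bbbk$, so exactness after tensoring with $\alg^\natural$ amounts to $\Tor{i}{\Bbbk}{G(q)}{\alg^\natural} = 0$ for $i > 0$. By \thmref{G-description} the $\Bbbk$-module $G(q)$ is Gorenstein projective, while by hypothesis $\alg^\natural$ has finite projective (hence finite flat) dimension over the Gorenstein ring $\Bbbk$; the required Tor-vanishing is a standard instance of the orthogonality between Gorenstein projective modules and modules of finite homological dimension over a Gorenstein commutative ring.

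With the projective resolution of $G \otimes_\Bbbk \alg$ in hand, I apply $\Hom{Q,\alg}{-}{X}$ and invoke the adjunction isomorphism
\begin{equation*}
  \Hom{Q,\alg}{P_i \otimes_\Bbbk \alg}{X} \,\cong\, \Hom{Q}{P_i}{X^\natural}
\end{equation*}
to identify the resulting complex with $\Hom{Q}{P_\bullet}{X^\natural}$. Taking $i$-th cohomology of both sides then yields the claimed isomorphism $\Ext{Q,\alg}{i}{G \otimes_\Bbbk \alg}{X} \cong \Ext{Q}{i}{G}{X^\natural}$.

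Part \prtlbl{b} is entirely dual: I would take an injective resolution $H \to I^\bullet$ in $\lMod{Q}$ and argue that $\Hom{\Bbbk}{\alg}{H} \to \Hom{\Bbbk}{\alg}{I^\bullet}$ is an injective resolution in $\lMod{Q,\alg}$. The functor $\Hom{\Bbbk}{\alg}{-}$ preserves injectives as the right adjoint (\corref{adjoint-forgetful}) to the exact functor $(-)^\natural$; objectwise exactness reduces to the vanishing $\Ext{\Bbbk}{i}{\alg^\natural}{H(q)} = 0$ for $i > 0$, which holds because $H(q)$ is Gorenstein injective over $\Bbbk$ by \thmref{G-description} and $\alg^\natural$ has finite projective dimension over the Gorenstein ring $\Bbbk$. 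The adjunction then rewrites $\Hom{Q,\alg}{X}{\Hom{\Bbbk}{\alg}{I^i}}$ as $\Hom{Q}{X^\natural}{I^i}$, and cohomology supplies the asserted isomorphism. The main technical hurdle in both parts is the objectwise Tor/Ext-vanishing; once this is secured via the pointwise description of Gorenstein (pro)injectivity in \thmref{G-description} together with classical homological algebra over a Gorenstein commutative ring, everything else amounts to bookkeeping with adjunctions and resolutions.
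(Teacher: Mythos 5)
Your proposal is correct and follows essentially the same route as the paper: build the resolution on the $\lMod{Q,\alg}$-side by applying $-\otimes_\Bbbk\alg$ (resp.\ $\Hom{\Bbbk}{\alg}{-}$) to a projective (resp.\ injective) resolution in $\lMod{Q}$, check objectwise exactness via \thmref{G-description} and the orthogonality of Gorenstein projective/injective $\Bbbk$-modules against $\alg^\natural$ (the paper phrases the Tor-vanishing via $\id{\Bbbk}{\alg}<\infty$, but over a Gorenstein ring this is the same standard fact you invoke), and then conclude by the adjunction of \corref{adjoint-forgetful}. No gaps.
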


\begin{proof}
\proofoftag{a} Let $G$ be in $\lGPrj{Q}$ and let $\cpx{P} = \cdots \to P_1 \to P_0 \to 0$ be a projective of  resolution of $G$ in $\lMod{Q}$. We show that $\cpx{P} \otimes_\Bbbk \alg$ is a projective resolution of $G \otimes_\Bbbk \alg$ in $\lMod{Q,\alg}$.

By \corref{adjoint-forgetful} the functor $- \otimes_\Bbbk \alg \colon \lMod{Q} \to \lMod{Q,\alg}$ has a right adjoint, namely the forgetful functor $(-)^\natural$, which is exact. Hence the functor $- \otimes_\Bbbk \alg$ preserves projective objects, so each $P_i \otimes_\Bbbk \alg$ is a projective object in $\lMod{Q,\alg}$. It remains to see that the se\-quen\-ce $\cdots \to P_1 \otimes_\Bbbk \alg \to P_0 \otimes_\Bbbk \alg \to G \otimes_\Bbbk \alg \to 0$ is  exact, i.e.~that 
\begin{equation}
  \label{eq:PGA}
\cdots \longrightarrow P_1(q) \otimes_\Bbbk \alg \longrightarrow P_0(q) \otimes_\Bbbk \alg \longrightarrow G(q) \otimes_\Bbbk \alg \longrightarrow 0
\end{equation}  
is an exact sequence of modules for every $q \in Q$. We know that the sequence 
\begin{equation}
  \label{eq:PG}
  \cdots \longrightarrow P_1(q) \longrightarrow P_0(q) \longrightarrow G(q) \longrightarrow 0
\end{equation}  
is exact. As noted in \rmkref{necessary}, each $\Bbbk$-module $P_i(q)$ is projective, so \eqref{PG} is an augmented projective resolution of $G(q)$ in $\lMod{\Bbbk}$. Since the $\Bbbk$-module $G(q)$ is Gorenstein projective by   \thmref{G-description} and one has $\id{\Bbbk}{\alg}<\infty$ by assumption, we have $\Tor{\Bbbk}{i}{G(q)}{\alg}=0$ for all $i>0$ by \cite[Cor.~10.3.10 and Thm.~10.3.8, (1)$\Leftrightarrow$(9)]{rha}. Hence \eqref{PGA} is exact.

The asserted isomorphism now follows from the computation below, where the first and last isomorphisms hold by the definition of Ext and the middle isomorphism holds as $(-)^\natural$ is the right adjoint of $-\otimes_\Bbbk \alg$ by \corref{adjoint-forgetful},
\begin{align*}
  \Ext{Q,\alg}{i}{G \otimes_\Bbbk \alg}{X} 
  \,\cong\, 
  \operatorname{H}^i\Hom{Q,\alg}{\cpx{P} \otimes_\Bbbk \alg}{X} 
  \,\cong\, 
  \operatorname{H}^i\Hom{Q}{\cpx{P}}{X^\natural} 
  \,\cong\,
  \Ext{Q}{i}{G}{X^\natural}\;.
\end{align*}
\proofoftag{b} Dual to the proof of part \prtlbl{a}.
\end{proof}

\begin{thm}
  \label{thm:cotorsion-pairs-existence}
  Assume that $Q$ satisfies the conditions in \stpref{Bbbk},  
  $\Bbbk$ is Gorenstein, and $\alg$ has finite projective/injective dimension as a $\Bbbk$-module. The following assertions hold.
\begin{prt}
\item $({}^\perp\mathscr{E},\mathscr{E})$ is a cotorsion pair in $\lMod{Q,\alg}$; in fact, it is the cotorsion pair generated by $\{G \otimes_\Bbbk \alg\,|\,G \in \lGPrj{Q}\}$, that is, $\{G \otimes_\Bbbk \alg\,|\,G \in \lGPrj{Q}\}^\perp = \mathscr{E}$. 

Moreover, the cotorsion pair $({}^\perp\mathscr{E},\mathscr{E})$ is hereditary and one has ${}^\perp\mathscr{E} \cap \mathscr{E} = \lPrj{Q,\alg}$. 

\item $(\mathscr{E},\mathscr{E}^\perp)$ is a cotorsion pair in $\lMod{Q,\alg}$; in fact, it is the cotorsion pair cogene\-ra\-ted by $\{\Hom{\Bbbk}{\alg}{H}\,|\,H \in \lGInj{Q}\}$, that is, ${}^\perp\{\Hom{\Bbbk}{\alg}{H}\,|\,H \in \lGInj{Q}\} = \mathscr{E}$.

Moreover, the cotorsion pair $(\mathscr{E},\mathscr{E}^\perp)$ is hereditary and one has $\mathscr{E} \cap \mathscr{E}^\perp = \lInj{Q,\alg}$.
\end{prt}  
Furthermore, the class $\mathscr{E}$ is \emph{\emph{thick}}, i.e.~it is closed under direct summands and if two out of three objects in a short exact sequence $0 \to X' \to X \to X'' \to 0$ in $\lMod{Q,\alg}$ belong to $\mathscr{E}$, then so does the third.
\end{thm}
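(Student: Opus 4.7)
The plan is to transfer the Gorenstein cotorsion pairs $(\lGPrj{Q},\lFin{Q})$ and $(\lFin{Q},\lGInj{Q})$ of \thmref{G-cotorsion-pairs} from $\lMod{Q}$ to $\lMod{Q,\alg}$ by means of the $\operatorname{Ext}$-comparison from \lemref{Ext} together with the adjunctions of \corref{adjoint-forgetful}. The very definition of $\mathscr{E}$ via the forgetful functor is engineered so that this transfer is essentially automatic.

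For \prtlbl{a}, set $S=\{G\otimes_\Bbbk \alg \,|\, G \in \lGPrj{Q}\}$. \lemref{Ext}\prtlbl{a} supplies a natural isomorphism $\Ext{Q,\alg}{1}{G\otimes_\Bbbk\alg}{X}\cong \Ext{Q}{1}{G}{X^\natural}$, and since $(\lGPrj{Q},\lFin{Q})$ is a cotorsion pair one has
\begin{equation*}
  X\in S^{\perp}\ \Longleftrightarrow\ X^\natural\in \lFin{Q}\ \Longleftrightarrow\ X\in\mathscr{E}\;.
\end{equation*}
Thus $S^\perp=\mathscr{E}$, which automatically makes $({}^\perp\mathscr{E},\mathscr{E})$ the cotorsion pair in $\lMod{Q,\alg}$ generated by $S$. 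Heredity will follow once we verify that $\mathscr{E}$ is coresolving: injectives of $\lMod{Q,\alg}$ lie in $\mathscr{E}=S^\perp$ for trivial reasons, and closure under extensions and cokernels of monomorphisms is inherited via the exact functor $(-)^\natural$ from the corresponding properties of $\lFin{Q}$, which is coresolving as the right-hand class of the hereditary cotorsion pair $(\lGPrj{Q},\lFin{Q})$.

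To identify ${}^\perp\mathscr{E}\cap\mathscr{E}$ with $\lPrj{Q,\alg}$, observe first that every free object $\Fq{q}(\alg)=Q(q,-)\otimes_\Bbbk\alg$ belongs to $\mathscr{E}$: a finite projective resolution of $\alg^\natural$ over $\Bbbk$ (available because $\pd{\Bbbk}{\alg^\natural}<\infty$) stays exact after pointwise tensoring with the projective $\Bbbk$-module $Q(q,r)$, and $Q(q,-)$ tensored with a projective $\Bbbk$-module is a summand of a coproduct of representables, hence projective in $\lMod{Q}$; passing to direct sums and direct summands yields $\lPrj{Q,\alg}\subseteq\mathscr{E}$. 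Conversely, if $X\in{}^\perp\mathscr{E}\cap\mathscr{E}$, pick $0\to K\to P\to X\to 0$ with $P$ projective. The crucial technical point, and the step that makes the whole theorem go, is that $\lFin{Q}$ is \emph{also} resolving, being the left-hand class of the second hereditary Gorenstein cotorsion pair $(\lFin{Q},\lGInj{Q})$; hence $\mathscr{E}$ is closed under kernels of epimorphisms as well, giving $K\in\mathscr{E}$. Then $\Ext{Q,\alg}{1}{X}{K}=0$ splits the sequence and $X$ is a summand of $P$.

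Part \prtlbl{b} is entirely dual: apply \lemref{Ext}\prtlbl{b} with $T=\{\Hom{\Bbbk}{\alg}{H}\,|\,H \in \lGInj{Q}\}$ to get ${}^\perp T=\mathscr{E}$, then dualize the splitting argument (using that $\lInj{Q,\alg}\subseteq\mathscr{E}$ follows from the dual tensor/hom computation) to identify $\mathscr{E}\cap\mathscr{E}^\perp$ with $\lInj{Q,\alg}$. Finally, thickness of $\mathscr{E}$ reduces via the exact, summand-preserving functor $(-)^\natural$ to thickness of $\lFin{Q}$, which holds because $\lFin{Q}$ is both resolving and coresolving (so it enjoys the two-out-of-three property for short exact sequences) and is closed under direct summands.
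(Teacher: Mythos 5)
Your proposal is correct and follows essentially the same route as the paper: identify $\{G\otimes_\Bbbk\alg\}^\perp=\mathscr{E}$ and ${}^\perp\{\Hom{\Bbbk}{\alg}{H}\}=\mathscr{E}$ via \lemref{Ext}, transfer thickness/(co)resolving properties of $\lFin{Q}$ through the exact forgetful functor, show the generators $\Fq{q}(\alg)$ lie in $\mathscr{E}$ using exactness and projectivity-preservation of $\Fq{q}$, and conclude ${}^\perp\mathscr{E}\cap\mathscr{E}=\lPrj{Q,\alg}$ by the splitting argument, with (b) dual. The only cosmetic difference is that you invoke the resolving property of $\lFin{Q}$ from the pair $(\lFin{Q},\lGInj{Q})$ where the paper appeals directly to thickness of $\lFin{Q}$; these are interchangeable.
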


\begin{proof}
The final assertion in the theorem follows directly from \dfnref{E} and the well-known fact that 
$\lFin{Q}$ is a thick subcategory of $\lMod{Q}$. 

\proofoftag{a} The equality $\{G \otimes_\Bbbk \alg\,|\,G \in \lGPrj{Q}\}^\perp = \mathscr{E}$ follows from \lemref{Ext}\prtlbl{a}, the fact that $(\lGPrj{Q},\lFin{Q})$ is a cotorsion pair in $\lMod{Q}$ (see \thmref{Q-Gorenstein}\,/\,\rmkref{opposite} and \thmref{G-cotorsion-pairs}), and
the definition of the class $\mathscr{E}$ (see \dfnref{E}).

To see that $({}^\perp\mathscr{E},\mathscr{E})$ is hereditary we must argue that the class $\mathscr{E}$ in the right half of the cotorsion pair is coresolving. However, this is a special case of the final assertion in the theorem, which has already been proved. We now prove the equality ${}^\perp\mathscr{E} \cap \mathscr{E} = \lPrj{Q,\alg}$.

``$\supseteq$'': Evidently, ${}^\perp\mathscr{E} \supseteq \lPrj{Q,\alg}$. To prove $\mathscr{E} \supseteq \lPrj{Q,\alg}$ it suffices by \prpref{lfp}\prtlbl{a} to show that $\Fq{q}(\alg) = Q(q,-) \otimes_\Bbbk \alg$ is in $\mathscr{E}$ for every $q \in Q$. Thus we must argue that the object $\Fq{q}(\alg)^\natural = Q(q,-) \otimes_\Bbbk \alg^\natural = \Fq{q}(\alg^\natural)$ has finite projective dimension in $\lMod{Q}$ (here the first $\Fq{q}$ is viewed as a functor $\lMod{\alg} \to \lMod{Q,\alg}$ and the second as a functor $\lMod{\Bbbk} \to \lMod{Q}$). By assumption, $\alg^\natural$ has finite projective dimension in $\lMod{\Bbbk}$. Since $\Fq{q} \colon \lMod{\Bbbk} \to \lMod{Q}$ is exact and preserves projective objects, see \corref{adjoint-evaluation}\prtlbl{a} (and condition \eqref{Homfin} in \stpref{Bbbk}) and \lemref{Fq-Gq-preserve}, it follows that $\Fq{q}(\alg^\natural)$ has finite projective dimension in $\lMod{Q}$.

``$\subseteq$'': Assume that $X$ belongs to ${}^\perp\mathscr{E} \cap \mathscr{E}$. Take an exact sequence $0 \to Y \to P \to X \to 0$ in $\lMod{Q,\alg}$ with $P$ projective. As argued above one has $P \in \mathscr{E}$. Since $X \in \mathscr{E}$ by assumption, it follows from the final assertion in the theorem that $Y \in \mathscr{E}$ as well. We also have $X \in {}^\perp\mathscr{E}$; consequently $\Ext{Q,\alg}{1}{X}{Y}=0$ and the sequence $0 \to Y \to P \to X \to 0$ splits. Hence $X$ is a direct summand in the projective object $P$, so $X$ is projective too.

\proofoftag{b} Dual to the proof of part \prtlbl{a}. 
\end{proof}

\section{Completeness of the Cotorsion Pairs $({}^\perp\mathscr{E},\mathscr{E})$ and $(\mathscr{E},\mathscr{E}^\perp)$}

\label{sec:Completeness}

In this section we prove completeness of the cotorsion pairs established in \thmref{cotorsion-pairs-existence} (see \thmref[Theorems~]{completeness-1} and \thmref[]{completeness-2} below). This completeness relies, in part, on some general~properties of the class of objects of injective dimension $\leqslant n$ (for some fixed $n \in \mathbb{N}_0$) in a locally noetherian Grothendieck category, which we now establish.

\begin{dfn}
  For a Grothendieck category $\mathcal{A}$ and a natural number $n \in \mathbb{N}_0$ we set
  \begin{equation*}
    \mathcal{I}_n \,=\, \mathcal{I}_n(\mathcal{A}) \,=\,
    \{X \in \mathcal{A} \,|\, \id{\mathcal{A}}{X} \leqslant n \}\;.
  \end{equation*}
\end{dfn}

\begin{lem}
\label{lem:idlen}
Let $\mathcal{A}$ be a locally finitely presented Grothendieck category. For $X \in \mathcal{A}$ and $n \in \mathbb{N}_0$ one has $X \in \mathcal{I}_n$ if and only if\, $\Ext{\mathcal{A}}{n+1}{F}{X}=0$ for every finitely~ge\-ne\-rated $F \in \mathcal{A}$.
\end{lem}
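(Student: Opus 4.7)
The ``only if'' direction is immediate from the definition of injective dimension, so the content lies in the ``if'' direction.

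My plan for the ``if'' direction is to reduce to the case $n=0$ by dimension shifting. Since $\mathcal{A}$ is Grothendieck it has enough injectives, so I can form a partial injective coresolution $0 \to X \to E^0 \to \cdots \to E^{n-1} \to Y \to 0$. Standard dimension shifting yields $\Ext{\mathcal{A}}{1}{F}{Y} \cong \Ext{\mathcal{A}}{n+1}{F}{X} = 0$ for every finitely generated $F$; once $Y$ is shown to be injective, this coresolution witnesses $\id{\mathcal{A}}{X} \leqslant n$. So the whole argument boils down to the case $n=0$: if an object $Y$ satisfies $\Ext{\mathcal{A}}{1}{F}{Y}=0$ for every finitely generated $F$, then $Y$ is injective.

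For this I would run the classical Baer-style Zorn argument, adapted to a Grothendieck category. Given a monomorphism $A \hookrightarrow B$ and a morphism $f \colon A \to Y$, consider the poset of pairs $(A',f')$ with $A \subseteq A' \subseteq B$ and $f' \colon A' \to Y$ extending $f$. Chains have upper bounds, obtained as directed unions of subobjects with the compatible family of maps glued through the colimit in $\mathcal{A}$, so Zorn delivers a maximal element $(A',f')$. Suppose for contradiction that $A' \subsetneq B$. Since $\mathcal{A}$ is locally finitely presented, and in particular locally finitely generated, $B$ is the directed union of its finitely generated subobjects, so there exists a finitely generated $F \subseteq B$ with $F \not\subseteq A'$. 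The quotient $F/(F \cap A')$ is finitely generated, being a quotient of $F$, so $\Ext{\mathcal{A}}{1}{F/(F \cap A')}{Y}=0$ by hypothesis. Applying $\Hom{\mathcal{A}}{-}{Y}$ to $0 \to F \cap A' \to F \to F/(F \cap A') \to 0$ then shows that $f'|_{F \cap A'}$ extends to some $g \colon F \to Y$. Since $f'$ and $g$ agree on $F \cap A'$, they glue through the pushout $A' + F = A' \sqcup_{F \cap A'} F$ to a morphism $A' + F \to Y$ properly extending $(A',f')$, contradicting maximality. Hence $A' = B$, which provides the desired extension.

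The two points demanding some care will be verifying that chains in the Zorn poset genuinely have upper bounds (routine via filtered colimits in a Grothendieck category) and producing the finitely generated $F \subseteq B$ escaping $A'$ (which uses local finite generation of $\mathcal{A}$). Neither should present a serious obstacle; pleasantly, the argument avoids any appeal to Eklof's lemma or to a specific generator, because finite generation is inherited by the successive quotient $F/(F \cap A')$ and thus lets the hypothesis apply directly.
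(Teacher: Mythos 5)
Your proof is correct and follows essentially the same route as the paper: dimension shifting along a partial injective coresolution reduces everything to the Baer-type criterion that an object $Y$ with $\Ext{\mathcal{A}}{1}{F}{Y}=0$ for all finitely generated $F$ is injective. The only difference is that the paper simply cites Krause's version of Baer's criterion \cite[Lem.~2.5]{MR1612398} at that point, whereas you reprove it via the classical Zorn/pushout argument, which is indeed sound here since a locally finitely presented Grothendieck category is locally finitely generated.
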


\begin{proof}
  Let $0 \to X \to I^0 \to I^1 \to \cdots$ be an augmented injective resolution of $X$ in $\mathcal{A}$. Write $\upOmega^i(X) = \Ker{(I^i \to I^{i+1})}$ for the $i^\mathrm{th}$ cosyzygy of $X$. Note that $X \in \mathcal{I}_n$ if and only if $\upOmega^n(X)$ is injective. By Baer's criterion in $\mathcal{A}$, see Krause \cite[Lem.~2.5]{MR1612398}, and by dimension shifting, this happens if and only if $\Ext{\mathcal{A}}{n+1}{F}{X} \cong \Ext{\mathcal{A}}{1}{F}{\upOmega^n(X)}=0$ for every finitely generated object $F \in \mathcal{A}$.
\end{proof}

In the special case where $\mathcal{A}$ is the category of modules over a (noetherian) ring, parts \prtlbl{a} and \prtlbl{b} in the next result can be found in \cite[Thm.~4.1.7]{GobelTrlifaj} and \cite[Lem.~9.1.5]{rha}.

\begin{prp}
  \label{prp:In}
  Let $\mathcal{A}$ be a locally finitely presented Grothendieck category and $n \in \mathbb{N}_0$.
\begin{prt}
\item If $\mathcal{A}$ has enough projectives, then $({}^\perp\mathcal{I}_n,\mathcal{I}_n)$ is a hereditary cotorsion pair in $\mathcal{A}$; this cotorsion pair is generated by a set and hence it is complete.

\item If $\mathcal{A}$ is generated by a set of projective noetherian objects, then $\mathcal{I}_n$ is closed under pure subobjects and pure quotients. This means that for every pure exact sequence $0 \to X' \to X \to X'' \to 0$ in $\mathcal{A}$ (see \ref{purity}) with $X \in \mathcal{I}_n$, one has $X',X'' \in \mathcal{I}_n$.
\end{prt}   
\end{prp}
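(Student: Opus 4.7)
For part \prtlbl{a}, the plan is to present $\mathcal{I}_n$ as the right $\mathrm{Ext}$-orthogonal of a set of objects. Since $\mathcal{A}$ is locally finitely presentable and Grothendieck, the isomorphism classes of finitely generated objects of $\mathcal{A}$ form a set, so I fix a representative set $\mathcal{F}$. Using that $\mathcal{A}$ has enough projectives, pick for each $F\in\mathcal{F}$ a projective resolution and write $\Omega^n F$ for its $n$-th syzygy; set $\mathcal{S}=\{\Omega^n F\mid F\in\mathcal{F}\}$. The dimension-shift isomorphism $\Ext{\mathcal{A}}{1}{\Omega^n F}{X}\cong\Ext{\mathcal{A}}{n+1}{F}{X}$ combined with \lemref{idlen} shows $\mathcal{S}^{\perp}=\mathcal{I}_n$, and since $\mathcal{S}\subseteq{}^{\perp}\mathcal{I}_n$ a short check gives $({}^{\perp}\mathcal{I}_n)^{\perp}=\mathcal{I}_n$. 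Hence $({}^{\perp}\mathcal{I}_n,\mathcal{I}_n)$ is the cotorsion pair generated by the set $\mathcal{S}$, and completeness follows from the Eklof--Trlifaj-type small-object argument in its form for locally presentable Grothendieck categories with enough projectives. For heredity, the Grothendieck category $\mathcal{A}$ has enough injectives, so the criterion recalled in \secref{Preliminaries} reduces the problem to verifying that $\mathcal{I}_n$ is coresolving; this is immediate from the long exact sequence of $\mathrm{Ext}$: $\mathcal{I}_n$ contains all injectives, is closed under extensions, and is closed under cokernels of monomorphisms whose other two terms lie in $\mathcal{I}_n$.

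For part \prtlbl{b}, I first exploit that the stronger hypothesis makes $\mathcal{A}$ locally noetherian and that every finitely generated $F$ admits a projective resolution by noetherian projectives: cover $F$ by a finite direct sum $P_0$ of the given noetherian projective generators, observe that $\Ker{(P_0\to F)}$ is a subobject of the noetherian $P_0$ hence noetherian, and iterate. Each $P_i$ in the resulting resolution is finitely presentable, and since $\Hom{\mathcal{A}}{P_i}{-}$ commutes with directed colimits and cohomology of a complex commutes with filtered colimits in abelian groups, I conclude that $\Ext{\mathcal{A}}{i}{F}{-}$ commutes with directed colimits for every $i\geqslant 0$ and every finitely generated $F$.

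Now given a pure exact sequence $0\to X'\to X\to X''\to 0$ with $X\in\mathcal{I}_n$, the classical theorem of Crawley-Boevey presents it as the directed colimit of split short exact sequences $0\to A_\alpha\to B_\alpha\to C_\alpha\to 0$ of finitely presentable objects. Fix a finitely generated $F$. Applying $\Ext{\mathcal{A}}{n+1}{F}{-}$ to each split sequence yields a split short exact sequence of abelian groups; passing to the directed colimit and using the commutation established above produces a split short exact sequence
\begin{equation*}
0\longrightarrow\Ext{\mathcal{A}}{n+1}{F}{X'}\longrightarrow\Ext{\mathcal{A}}{n+1}{F}{X}\longrightarrow\Ext{\mathcal{A}}{n+1}{F}{X''}\longrightarrow 0.
\end{equation*}
The middle term vanishes by \lemref{idlen}, so both outer terms vanish, and a final application of \lemref{idlen}, varying $F$ over all finitely generated objects, gives $X',X''\in\mathcal{I}_n$. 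The main obstacle I anticipate is the justification that $\Ext{\mathcal{A}}{i}{F}{-}$ commutes with directed colimits for noetherian $F$ and the colimit characterization of pure exactness in locally finitely presentable Grothendieck categories; both are classical, but they deserve precise attribution in the final write-up.
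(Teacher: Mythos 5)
Your part \prtlbl{a} is essentially the paper's own argument: a representative set of finitely generated objects, $n$-th syzygies, dimension shifting plus \lemref{idlen} to get $\mathcal{S}^\perp=\mathcal{I}_n$, completeness from the small-object argument (the paper cites Saor\'{\i}n--\v{S}\v{t}ov\'{\i}\v{c}ek), and heredity from $\mathcal{I}_n$ being coresolving. For part \prtlbl{b} you take a genuinely different, but correct, route. The paper stays with the fixed finitely presented syzygy $\Omega_n(F)$ and uses only the definition of purity: applying $\Hom{\mathcal{A}}{\Omega_n(F)}{-}$ to the pure sequence gives surjectivity onto $\Hom{\mathcal{A}}{\Omega_n(F)}{X''}$, and the long exact sequence together with $\Ext{\mathcal{A}}{1}{\Omega_n(F)}{X}=0$ forces $\Ext{\mathcal{A}}{1}{\Omega_n(F)}{X'}=0$, so $X'\in\mathcal{I}_n$; then $X''\in\mathcal{I}_n$ because $\mathcal{I}_n$ is coresolving. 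You instead prove that $\Ext{\mathcal{A}}{i}{F}{-}$ commutes with directed colimits for finitely generated (noetherian) $F$, via a resolution by finitely presentable noetherian projectives, and then invoke the characterization of pure exact sequences as directed colimits of split exact sequences, obtaining the vanishing of $\Ext{\mathcal{A}}{n+1}{F}{X'}$ and $\Ext{\mathcal{A}}{n+1}{F}{X''}$ simultaneously. Your approach needs the colimit characterization of purity (which the paper itself uses elsewhere, citing Ad\'amek--Rosick\'y) and the commutation result, but it buys a reusable fact: closure of $\mathcal{I}_n$ under direct limits (the content of \thmref{Simson}) drops out immediately, whereas the paper's argument is shorter and uses only the Hom-exactness definition of purity. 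Two small points to fix in the write-up: the directed colimit of the split sequences of $\operatorname{Ext}$-groups need not be split, only exact, which is all you use, so say ``exact''; and the split sequences in the colimit presentation of a pure sequence need not have finitely presentable terms (the standard construction fixes $X'$ and pulls back along a presentation $X''=\varinjlim C_\alpha$ with $C_\alpha$ finitely presentable), but your argument never uses finite presentability of those terms, so this claim should simply be dropped. Also note, as you implicitly do, that the projective noetherian generators are finitely presentable because a finitely generated projective object in a locally finitely presentable category is a retract of a finitely presentable one.
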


\begin{proof}
\proofoftag{a} By assumption, $\mathcal{A}$ is generated by a set $\mathcal{X}$ of finitely presented (and hence~fi\-nite\-ly generated) objects. An object in $\mathcal{A}$ is finitely generated if and only if it is a quotient of a finite direct sum of objects from $\mathcal{X}$, see e.g.~Breitsprecher \cite[Satz~(1.6)]{SBr70}. Thus, up to isomorphism, there is only a set, $\mathcal{F}$, of finitely generated objects in $\mathcal{A}$. As $\mathcal{A}$ has enough projectives we can choose for every $F \in \mathcal{F}$ a projective resolution \mbox{$\cdots \to P_1^F \to P_0^F \to F \to 0$}. Let $\upOmega_i(F) = \Coker{(P_{i+1}^F \to P_i^F)}$ be the $i^\mathrm{th}$ syzygy of $F$ in this resolution. For $X \in \mathcal{A}$ one has \smash{$\Ext{\mathcal{A}}{n+1}{F}{X} \cong \Ext{\mathcal{A}}{1}{\upOmega_n(F)}{X}$} by dimension shifting. In view of this and \lemref{idlen} it follows that the set $\upOmega_n(\mathcal{F}) = \{ \upOmega_n(F) \,|\, F \in \mathcal{F}\}$ satisfies $\upOmega_n(\mathcal{F})^\perp = \mathcal{I}_n$. Thus $({}^\perp\mathcal{I}_n,\mathcal{I}_n)$ is the cotorsion pair generated by the set $\upOmega_n(\mathcal{F})$. Hence Saor{\'{\i}}n and {\v{S}}{\v{t}}ov{\'{\i}}{\v{c}}ek \cite[Cor.~2.15(3)]{SaorinStovicek} yields that $({}^\perp\mathcal{I}_n,\mathcal{I}_n)$ is complete. Evidently $\mathcal{I}_n$ is coresolving so $({}^\perp\mathcal{I}_n,\mathcal{I}_n)$ is hereditary.

\proofoftag{b} Since $\mathcal{A}$ is generated by a set of projective noetherian objects we may assume that all the projective objects $P_i^F$ and all the syzygies $\upOmega_i(F)$ in the proof of part \prtlbl{a} are finitely presented (= finitely generated = noetherian, as the category $\mathcal{A}$ is locally noetherian). Let $F \in \mathcal{F}$ be given. Applying the functor $\Hom{\mathcal{A}}{\upOmega_n(F)}{-}$ to the given pure exact sequence, we get an exact sequence
\begin{equation*}
  \Hom{\mathcal{A}}{\upOmega_n(F)}{X} 
  \twoheadrightarrow
  \Hom{\mathcal{A}}{\upOmega_n(F)}{X''}
  \to
  \Ext{\mathcal{A}}{1}{\upOmega_n(F)}{X'}
  \to
  \Ext{\mathcal{A}}{1}{\upOmega_n(F)}{X} = 0\;.
\end{equation*}
In this exact sequence, the first homomorphism is surjective as $0 \to X' \to X \to X'' \to 0$ is pure exact and $\upOmega_n(F)$ is finitely presented. Furthermore, $\Ext{\mathcal{A}}{1}{\upOmega_n(F)}{X}=0$ as $X \in \mathcal{I}_n$, cf.~the proof of part \prtlbl{a}. It follows that $\Ext{\mathcal{A}}{1}{\upOmega_n(F)}{X'}$ and hence $X' \in \mathcal{I}_n$. Since $\mathcal{I}_n$ is coresolving we also get $X'' \in \mathcal{I}_n$.
\end{proof}

  In a locally noetherian Grothendieck category $\mathcal{A}$, the class of injective objects is closed under coproducts; see Gabriel \cite[Chap.~IV\S2, Prop.~6, p.~387]{PGb62} or Stenstr{\"o}m \cite[V\S4, Prop.~4.3]{Stenstrom}. It follows from Roos \cite[Thm.~1, p.~201]{MR0407092} that $\mathcal{A}$ has a \emph{strict cogenerator}, so Simson \cite{MR364398} yields the next result, which also follows easily from \prpref{In}\prtlbl{b}.

\begin{thm}[{\cite[Cor.~p.~163]{MR364398}}]
  \label{thm:Simson}
  Let $\mathcal{A}$ be a locally noetherian Grothendieck category and let $n \in \mathbb{N}_0$. The subcategory $\mathcal{I}_n$ is closed under direct limits. \qed
\end{thm}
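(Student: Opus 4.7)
The plan is to reduce the statement, via \lemref{idlen}, to showing that $\operatorname{Ext}^{n+1}_{\mathcal{A}}(F,-)$ commutes with filtered colimits whenever $F \in \mathcal{A}$ is finitely generated (equivalently, noetherian, since $\mathcal{A}$ is locally noetherian). Indeed, given a filtered diagram $\{X_i\}$ in $\mathcal{I}_n$ with colimit $X = \varinjlim X_i$, and a finitely generated $F$, one would then have
\begin{equation*}
\operatorname{Ext}^{n+1}_{\mathcal{A}}(F, X) \,\cong\, \varinjlim \operatorname{Ext}^{n+1}_{\mathcal{A}}(F, X_i) \,=\, 0\;,
\end{equation*}
and \lemref{idlen} yields $X \in \mathcal{I}_n$.

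The heart of the argument is thus the commutation of Ext with filtered colimits in the second variable. First I would invoke the classical theorem of Matlis--Gabriel (see \cite[IV.\S2, Prop.~6]{PGb62} or \cite[V.\S4]{Stenstrom}) that in a locally noetherian Grothendieck category, arbitrary direct sums of injectives are injective. From this I would upgrade to: filtered colimits of injectives are injective. This follows by Baer's criterion in $\mathcal{A}$ (\cite[Lem.~2.5]{MR1612398}), since any morphism $F \to \varinjlim I_j$ from a noetherian (hence finitely presented) object $F$ factors through some $I_{j_0}$, where it admits the required extension along a monomorphism into $F$'s ambient object.

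Next I would choose, once and for all, a functorial injective coresolution in $\mathcal{A}$ (standard in any Grothendieck category, e.g.\ via a fixed injective envelope functor and iterated cosyzygies). Applying it termwise to $\{X_i\}$ yields a diagram of coresolutions $0 \to X_i \to I^0_i \to I^1_i \to \cdots$ whose filtered colimit is still exact (filtered colimits in a Grothendieck category are exact) and whose terms $\varinjlim_i I^k_i$ are still injective by the previous step. Thus $0 \to X \to \varinjlim I^\bullet_i$ is an injective coresolution of $X$. Because $F$ is finitely presented, $\operatorname{Hom}_{\mathcal{A}}(F,-)$ commutes with filtered colimits, so
\begin{equation*}
\operatorname{Ext}^{k}_{\mathcal{A}}(F,X) \,\cong\, H^k \operatorname{Hom}_{\mathcal{A}}(F, \varinjlim I^\bullet_i) \,\cong\, \varinjlim H^k \operatorname{Hom}_{\mathcal{A}}(F, I^\bullet_i) \,\cong\, \varinjlim \operatorname{Ext}^{k}_{\mathcal{A}}(F, X_i)\;,
\end{equation*}
and specializing to $k = n+1$ finishes the proof.

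The main obstacle I anticipate is the bookkeeping required to make the injective coresolution genuinely functorial on the filtered diagram; once that is in place everything else is formal. An alternative route, hinted at in the paragraph preceding the theorem, would be to bypass functorial resolutions altogether and instead present the filtered colimit as a pure quotient of a coproduct and mimic \prpref{In}\prtlbl{b}, using that coproducts of $\mathcal{I}_n$-objects lie in $\mathcal{I}_n$ (by iterating the Matlis--Gabriel theorem via dimension shift) and that pure quotients preserve $\mathcal{I}_n$; however, this route requires reproving the purity stability of $\mathcal{I}_n$ without the projective noetherian generators used in \prpref{In}\prtlbl{b}, which seems no easier than the Ext approach above.
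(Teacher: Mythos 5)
Your argument is correct, but it is worth saying that the paper does not prove this statement at all: it is quoted from Simson's paper (the preceding paragraph only supplies, via Roos, the existence of a strict cogenerator so that Simson's corollary applies), and the paper merely remarks that it ``also follows easily from'' \prpref{In}\prtlbl{b} --- i.e.\ the route you sketch at the end and rightly set aside: write $\varinjlim X_i$ as a pure quotient of $\bigoplus X_i$, use closure of $\mathcal{I}_n$ under coproducts (direct sums of injectives are injective in the locally noetherian case, plus dimension shifting) and under pure quotients. That shortcut needs the hypotheses of \prpref{In}\prtlbl{b}, in particular projective noetherian generators, which hold in the paper's application $\lMod{Q}$ but not in an arbitrary locally noetherian Grothendieck category; your proof instead works in the full generality of the statement. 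Your route --- reduce via \lemref{idlen} to showing $\Ext{\mathcal{A}}{n+1}{F}{-}$ commutes with filtered colimits for $F$ noetherian, using that filtered colimits of injectives are injective (Baer's criterion plus finite presentability of noetherian objects) and a termwise colimit of injective coresolutions --- is sound. The only inaccuracy is the parenthetical justification of functoriality: injective \emph{envelopes} are not functorial, so you cannot literally iterate an ``injective envelope functor''; what you need is a functorial monomorphism into an injective object, which every Grothendieck category does admit (e.g.\ by the small object argument applied to the generating monomorphisms furnished by Baer's criterion), and with that replacement the rest of your argument --- exactness of filtered colimits, injectivity of $\varinjlim_i I^k_i$, and $\Hom{\mathcal{A}}{F}{-}$ commuting with filtered colimits --- goes through verbatim.
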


\begin{thm}
  \label{thm:completeness-1}
  Assume that $Q$ satisfies the conditions in \stpref{Bbbk}, $\Bbbk$ is Gorenstein, and $\alg$ has finite projective/injective dimension as a $\Bbbk$-module. The cotorsion pair $({}^\perp\mathscr{E},\mathscr{E})$ in $\lMod{Q,\alg}$ from \thmref{cotorsion-pairs-existence}\prtlbl{a} is generated by a set; whence it is complete.
\end{thm}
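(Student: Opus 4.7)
The plan is to upgrade the proper-class generator $\{G\otimes_\Bbbk\alg \mid G \in \lGPrj{Q}\}$ supplied by \thmref{cotorsion-pairs-existence}\prtlbl{a} to a genuine \textsl{set} of generators, after which completeness follows directly from the Saor{\'{\i}}n--{\v{S}}{\v{t}}ov{\'{\i}}{\v{c}}ek theorem \cite[Cor.~2.15(3)]{SaorinStovicek}---the same result already invoked inside the proof of \prpref{In}\prtlbl{a}. The key observation is that ``being generated by a set'' can be transported, via the adjunction $(-\otimes_\Bbbk\alg,\,(-)^\natural)$ of \corref{adjoint-forgetful}, from the corresponding statement for the Gorenstein-projective cotorsion pair inside $\lMod{Q}$ to the cotorsion pair $({}^\perp\mathscr{E},\mathscr{E})$ in $\lMod{Q,\alg}$.

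First, I would note that by \thmref{Q-Gorenstein} (applied via \rmkref{opposite}) the Grothendieck category $\lMod{Q}$ is locally Gorenstein, so $n = \operatorname{FID}(\lMod{Q})$ is finite and $\lFin{Q} = \mathcal{I}_n(\lMod{Q})$ by \dfnref{Gorenstein-cat}. Since $\lMod{Q}$ is locally finitely presented and has enough projectives by \prpref{lfp} (with $\alg = \Bbbk$), \prpref{In}\prtlbl{a} applies to $\mathcal{A} = \lMod{Q}$ and yields a set $\mathcal{S}$ with $\mathcal{S}^\perp = \mathcal{I}_n(\lMod{Q}) = \lFin{Q}$. Combined with \thmref{G-cotorsion-pairs}, this forces $\mathcal{S} \subseteq {}^\perp\lFin{Q} = \lGPrj{Q}$, so $\mathcal{S}$ is a set of Gorenstein projective objects generating the cotorsion pair $(\lGPrj{Q},\lFin{Q})$.

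Next, set $\mathcal{T} = \{G \otimes_\Bbbk \alg \mid G \in \mathcal{S}\}$, a set in $\lMod{Q,\alg}$. The hypotheses of the present theorem are precisely those of \lemref{Ext}, so for every $G \in \mathcal{S}$ and every $X \in \lMod{Q,\alg}$ part \prtlbl{a} of that lemma gives
\begin{equation*}
  \Ext{Q,\alg}{1}{G \otimes_\Bbbk \alg}{X} \,\cong\, \Ext{Q}{1}{G}{X^\natural}\;.
\end{equation*}
Hence $X \in \mathcal{T}^\perp$ iff $X^\natural \in \mathcal{S}^\perp = \lFin{Q}$, which by \dfnref{E} is equivalent to $X \in \mathscr{E}$. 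Thus $\mathcal{T}^\perp = \mathscr{E}$, so $({}^\perp\mathscr{E},\mathscr{E})$ is generated by the set $\mathcal{T}$, and completeness follows from \cite[Cor.~2.15(3)]{SaorinStovicek}, applicable because $\lMod{Q,\alg}$ is a locally finitely presented Grothendieck category by \prpref{lfp}. The only substantive ingredient is the existence of $\mathcal{S}$ in the first step, which is the standard Baer-criterion/syzygy argument packaged in \prpref{In}\prtlbl{a}; everything else is a mechanical transfer along the adjunction $(-\otimes_\Bbbk\alg,\,(-)^\natural)$ and I do not anticipate any further obstacle.
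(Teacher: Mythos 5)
Your argument is correct and is essentially the paper's own proof: both obtain a set $\mathcal{G}\subseteq\lGPrj{Q}$ with $\mathcal{G}^\perp=\lFin{Q}$ by applying \prpref{In}\prtlbl{a} to $\lMod{Q}$ (using \thmref{Q-Gorenstein} to identify $\lFin{Q}$ with $\mathcal{I}_n$), transfer it along \lemref{Ext}\prtlbl{a} to see that $\{G\otimes_\Bbbk\alg\,|\,G\in\mathcal{G}\}^\perp=\mathscr{E}$, and then invoke \cite[Cor.~2.15(3)]{SaorinStovicek}. No gaps; your extra remark that $\mathcal{S}\subseteq{}^\perp(\mathcal{S}^\perp)=\lGPrj{Q}$ just makes explicit the containment the paper states when it says $\mathcal{G}\subseteq\lGPrj{Q}$.
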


\begin{proof}
Note that \prpref{In}\prtlbl{a} applies to $\mathcal{A}=\lMod{Q}$ by \prpref{lfp} (with $\alg=\Bbbk$). Let $n$ be the finitistic injective dimension of this category, which is finite by \dfnref{Gorenstein-cat} and \thmref{Q-Gorenstein}\,/\,\rmkref{opposite}. Now the class $\mathcal{I}_n$ is equal to $\lFin{Q} = \{ X \in \lMod{Q} \,|\, \id{Q}{X}<\infty\}$, so the cotorsion pairs $({}^\perp\mathcal{I}_n,\mathcal{I}_n)$ and $(\lGPrj{Q},\lFin{Q})$ (see \thmref{G-cotorsion-pairs}) in $\lMod{Q}$ coincide. By \prpref{In}\prtlbl{a} there is a set $\mathcal{G} \subseteq \lGPrj{Q}$ with $\mathcal{G}^\perp = \lFin{Q}$. It follows from \lemref{Ext}\prtlbl{a} that one has $\{G \otimes_\Bbbk \alg\,|\,G \in \mathcal{G}\}^\perp = \mathscr{E}$ in $\lMod{Q,\alg}$, cf.~the proof of \thmref{cotorsion-pairs-existence}\prtlbl{a}, so the cotorsion pair $({}^\perp\mathscr{E},\mathscr{E})$ in $\lMod{Q,\alg}$ is generated by the set $\{G \otimes_\Bbbk \alg\,|\,G \in \mathcal{G}\}$. Hence \cite[Cor.~2.15(3)]{SaorinStovicek} yields completeness of the cotorsion pair $({}^\perp\mathscr{E},\mathscr{E})$.
\end{proof}

In the language of relative homological algebra, the result above shows that  ${}^\perp\mathscr{E}$ is \emph{special precovering} and $\mathscr{E}$ is \emph{special preenveloping} in $\lMod{Q,\alg}$. In general,  ${}^\perp\mathscr{E}$ is not \emph{covering} and $\mathscr{E}$ is not \emph{enveloping}, see \cite[Thm.~3.4]{MR1679688} by Enochs and Garc\'{\i}a Rozas.

\begin{lem}
  \label{lem:locally-noetherian}
  Assume that a $\Bbbk$-preadditive category $Q$ satisfies the following requirements.
  \begin{rqm}
  \item Each $\Bbbk$-module $Q(q,r)$ is finitely generated.
  \item For every $q \in Q$ the set $\operatorname{N}_+(q)=\{r \in Q \,|\,Q(q,r) \neq 0\}$ is finite.
  \item The ring $\Bbbk$ is noetherian.
  \end{rqm}
  In this situation, every $Q(q,-)$ is a noetherian object in $\lMod{Q}$. In particular, the category $\lMod{Q}$ is generated by a set of projective noetherian objects.
\end{lem}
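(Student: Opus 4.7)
The plan is to verify the noetherian property directly from the definition: any ascending chain of subobjects of $Q(q,-)$ in $\lMod{Q}$ must stabilize. A subobject $U \hookrightarrow Q(q,-)$ is, by the usual analysis of subobjects in a functor category over an abelian target, determined by a family of $\Bbbk$-submodules $U(r) \subseteq Q(q,r)$ indexed by the objects $r\in Q$, subject to the obvious compatibility with the action of $Q$. The key observation is that hypothesis \eqref{locbd}$'$ (that $\operatorname{N}_+(q)$ is finite) forces $U(r)=0$ whenever $r\notin \operatorname{N}_+(q)$, so $U$ is determined by only \emph{finitely many} submodules.

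First I would combine hypotheses \eqref{Homfin}$'$ and the noetherian assumption on $\Bbbk$ to conclude that each $\Bbbk$-module $Q(q,r)$ is noetherian, and hence so is the finite direct sum
\begin{equation*}
  M \,:=\, \bigoplus_{r\in \operatorname{N}_+(q)} Q(q,r)\;.
\end{equation*}
Then, given an ascending chain $U_1\subseteq U_2 \subseteq \cdots$ of subobjects of $Q(q,-)$, I would pass to the corresponding ascending chain $\bigoplus_{r\in \operatorname{N}_+(q)} U_n(r) \subseteq M$ of $\Bbbk$-submodules. Since $M$ is noetherian this chain stabilizes, forcing each $U_n(r)$ to stabilize separately; consequently the chain of subfunctors stabilizes. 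This proves that $Q(q,-)$ is noetherian in $\lMod{Q}$.

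For the second assertion, I would appeal to the enriched Yoneda isomorphism $\Hom{Q}{Q(q,-)}{X}\cong X(q)$, which is natural in $X$. Since the evaluation $X\mapsto X(q)$ is exact, every $Q(q,-)$ is projective; and if $\tau\colon X \to Y$ satisfies $\Hom{Q}{Q(q,-)}{\tau}=0$ for every $q$, then $\tau(q)=0$ for all $q$, whence $\tau=0$. As $Q$ is small, the (set-sized) family $\{Q(q,-)\}_{q\in Q}$ is therefore a set of projective noetherian generators of $\lMod{Q}$.

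There is no real obstacle here: the argument is almost a tautology once one unpacks subobjects in $\lMod{Q}$ and invokes local boundedness to reduce to a finite direct sum of noetherian $\Bbbk$-modules. The only thing to be mildly careful about is the compatibility condition on subfunctors (which does not interfere with the chain-stabilization argument), and the fact that generator + projectivity of the $Q(q,-)$ come together cheaply from Yoneda.
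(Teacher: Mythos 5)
Your argument is correct, but it takes a different formal route from the paper's. You verify the ascending chain condition directly: since $Q(q,r)=0$ for $r\notin\operatorname{N}_+(q)$, a subfunctor $U\subseteq Q(q,-)$ is determined by the finitely many $\Bbbk$-submodules $U(r)\subseteq Q(q,r)$ with $r\in\operatorname{N}_+(q)$, and the assignment $U\mapsto\bigoplus_{r\in\operatorname{N}_+(q)}U(r)$ embeds the subobject lattice of $Q(q,-)$ into the submodule lattice of the noetherian $\Bbbk$-module $\bigoplus_{r\in\operatorname{N}_+(q)}Q(q,r)$; ACC follows, and, as you note, the compatibility of $U$ with the $Q$-action never interferes. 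The paper instead proves the equivalent statement that every subobject $I\subseteq Q(q,-)$ is finitely generated as an object of $\lMod{Q}$ (citing Stenstr\"om for the equivalence of this with noetherianity): it chooses finitely many $\Bbbk$-module generators $g_{ij}\in I(r_i)$ for each $r_i\in\operatorname{N}_+(q)$ and shows that the induced morphism $\bigoplus_{i,j}Q(r_i,-)\to Q(q,-)$ has image exactly $I$, so that $I$ is a quotient of a finite direct sum of finitely generated representables. Both proofs consume the three hypotheses in exactly the same way; yours is more elementary (no categorical notion of finitely generated object and no covering morphism to construct), while the paper's yields the slightly more explicit structural fact about subobjects of $Q(q,-)$. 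Your handling of the final assertion---projectivity and generation of the representables via the enriched Yoneda isomorphism $\Hom{Q}{Q(q,-)}{X}\cong X(q)$---is the same argument the paper obtains from its Proposition 3.9(a) with $\alg=\Bbbk$.
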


\begin{proof}
Let $q \in Q$ be given. We must show that every subobject $I$ of $Q(q,-)$ is finitely~generated; see \cite[Chap. V\S4, Prop.~4.1]{Stenstrom}. Such a subobject $I$ is the same as a left ideal in the the category $Q$ at $q$, that is, each~$I(r)$ is a $\Bbbk$-submodule of $Q(q,r)$ and for every $h \in Q(r,p)$ and $g \in I(r)$ one has $hg \in I(p)$. By \prtlbl{2} the set $\operatorname{N}_+(q)$ is finite, say, $\operatorname{N}_+(q)=\{r_1,\ldots,r_n\}$. By \prtlbl{1} each $\Bbbk$-module $Q(q,r_i)$ is finitely generated, and as $\Bbbk$ is noetherian by \prtlbl{3}, the $\Bbbk$-submodule $I(r_i) \subseteq Q(q,r_i)$ is finitely generated as well, say, $I(r_i) = \Bbbk g_{i1} + \cdots +\Bbbk g_{i\ell(i)}$. Consider the morphism
\begin{equation*}
  \textstyle
  \tau \colon \bigoplus_{i=1}^n \bigoplus_{j=1}^{\ell(i)} \, Q(r_i,-) \longrightarrow Q(q,-)
\end{equation*} 
given by $Q(g_{ij},-) \colon Q(r_i,-) \to Q(q,-)$ on the component corresponding to $i \in \{1,\ldots,n\}$ and $j \in \{1,\ldots,\ell(i)\}$. We will show that $\Im{\tau} = I$, and hence $I$ is finitely generated by \cite[Chap.~V\S3, Lem.~3.1]{Stenstrom} as each of the finitely many objects $Q(r_i,-)$ is finitely generated. To prove the equality $\Im{\tau} = I$ we must show that for every $p \in Q$ one has 
\begin{equation}
  \label{eq:sum-sum}
  \textstyle
  \sum_{i=1}^n \sum_{j=1}^{\ell(i)} \, \Im{Q(g_{ij},p)} \,=\, I(p)\;.
\end{equation}  
Let $p \in Q$ be given. To prove the equality displayed above, we argue as follows.

``$\subseteq$'': Consider any of the morphisms $Q(g_{ij},p) \colon Q(r_i,p) \to Q(q,p)$. For any $h \in Q(r_i,p)$ one has $Q(g_{ij},p)(h) = hg_{ij}$, which belongs to $I(p)$ as $g_{ij}$ is in $I(r_i)$ and $I$ is a left ideal.

``$\supseteq$'': If $p \notin \operatorname{N}_+(q)$, then $Q(q,p)=0$ and hence $I(p)=0$. So we may assume that $p=r_i$ for some $i \in \{1,\ldots,n\}$. To prove that $I(p) = I(r_i)$ is contained in the left-hand side of \eqref{sum-sum} (with $p=r_i$), it suffices to show that every generator $g_{i1},\ldots,g_{i\ell(i)}$ of $I(r_i)$ is in the left-hand side. But $g_{ij}$ is in the image of 
$Q(g_{ij},r_i) \colon Q(r_i,r_i) \to Q(q,r_i)$ as $Q(g_{ij},r_i)(\mathrm{id}_{r_i}) = g_{ij}$.
\end{proof}

\begin{lem}
  \label{lem:forgetful-preserves}
  The forgetful functor $(-)^\natural \colon \lMod{Q,\alg} \to \lMod{Q}$ preserves colimits and pure~exact sequences (see \ref{purity}).
\end{lem}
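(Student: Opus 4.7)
The plan is to exploit the adjoint triple $(-\otimes_\Bbbk \alg,\,(-)^\natural,\,\Hom{\Bbbk}{\alg}{-})$ established in \corref{adjoint-forgetful}. Since $(-)^\natural$ is a left adjoint (to $\Hom{\Bbbk}{\alg}{-}$), it will automatically preserve every colimit in $\lMod{Q,\alg}$; this settles the first claim.

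For the preservation of pure exactness, I would invoke the standard characterization of purity in a locally finitely presentable Grothendieck category (presumably recorded in \ref{purity}): a short exact sequence is pure exact if and only if the functor $\Hom{}{F}{-}$ is exact on it for every finitely presentable object $F$. Both $\lMod{Q}$ and $\lMod{Q,\alg}$ are locally finitely presentable by \prpref{lfp}, so this characterization applies on both sides.

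The key step is to verify that the left adjoint $-\otimes_\Bbbk \alg \colon \lMod{Q} \to \lMod{Q,\alg}$ sends finitely presentable objects to finitely presentable objects. This is purely formal: its right adjoint $(-)^\natural$ preserves filtered colimits—indeed all colimits, as noted above—and a left adjoint whose right adjoint preserves filtered colimits must preserve finitely presentable objects. With this in hand, given a pure exact sequence $0 \to X' \to X \to X'' \to 0$ in $\lMod{Q,\alg}$ and any finitely presentable $U \in \lMod{Q}$, the object $U \otimes_\Bbbk \alg$ is finitely presentable in $\lMod{Q,\alg}$, so $\Hom{Q,\alg}{U\otimes_\Bbbk\alg}{-}$ is exact on the sequence. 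The adjunction isomorphism
$$\Hom{Q,\alg}{U \otimes_\Bbbk \alg}{-} \,\cong\, \Hom{Q}{U}{(-)^\natural}$$
from \prpref{adjunctions-1} then transports this to exactness of $\Hom{Q}{U}{-}$ on $0 \to (X')^\natural \to X^\natural \to (X'')^\natural \to 0$, so the forgotten sequence is pure exact in $\lMod{Q}$.

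I do not anticipate a serious obstacle. The only mildly delicate point is the preservation of finitely presentable objects by $-\otimes_\Bbbk\alg$, and that falls out formally from the two-sided adjointness of $(-)^\natural$ recorded in \corref{adjoint-forgetful}; the rest of the argument is a single application of adjunction combined with the $\Hom$-based characterization of purity.
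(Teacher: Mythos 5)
Your proposal is correct and coincides with the paper's own argument: the paper also derives colimit-preservation from \corref{adjoint-forgetful} and then gives exactly your ``direct'' purity argument, noting that $F \otimes_\Bbbk \alg$ is finitely presented because $\Hom{Q,\alg}{F \otimes_\Bbbk \alg}{-} \cong \Hom{Q}{F}{(-)^\natural}$ preserves direct limits, and transporting exactness through this adjunction isomorphism. (The paper additionally mentions an alternative route via the fact that pure exact sequences are direct limits of split exact ones, but your argument is the one it spells out in detail.)
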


\begin{proof}
  The forgetful functor $(-)^\natural$ is a left adjoint by \corref{adjoint-forgetful}, so it preserves colimits. By \cite[Prop.~3]{AR04} (with $\lambda = \aleph_0$)~the pure exact sequences are precisely the sequences that are direct limits of split exact sequen\-ces. As $(-)^\natural$ preserves direct limits and split exact sequences, it preserves pure exact se\-quen\-ces as well. Here is a more direct argument, which does not use \cite[Prop.~3]{AR04}: Let $\xi = 0 \to X' \to X \to X'' \to 0$ be a pure exact sequence in $\lMod{Q,\alg}$ and let $F \in \lMod{Q}$ be finitely presented. Notice that $F \otimes_\Bbbk \alg$ is finitely presented in $\lMod{Q,\alg}$ as the functor $\Hom{Q,\alg}{F \otimes_\Bbbk \alg}{-} \cong \Hom{Q}{F}{(-)^\natural}$ preserves direct limits. Thus the sequence $\Hom{Q,\alg}{F \otimes_\Bbbk \alg}{\xi} \cong \Hom{Q}{F}{\xi^\natural}$ is exact, so $\xi^\natural$ is pure exact in $\lMod{Q}$.
\end{proof}

The results in \cite{MR1850650} by Aldrich, Enochs, Garc\'{\i}a~Rozas, and Oyonarte are valid for a~Gro\-then\-dieck category \textsl{with enough projectives} (see the Introduction of \emph{loc.~cit.}). In view of the definition of a (minimal) generator of $\mathrm{Ext}$, see \cite[Dfn.~2.8]{MR1850650}, the following is a simply a reformulation of \cite[Thm.~2.9]{MR1850650} in the language of relative homological algebra.

\begin{thm}[{\cite[Thm.~2.9]{MR1850650}}]
  \label{thm:envelope}
  Let $\mathcal{A}$ be a Grothendieck category with enough projectives and  $\mathcal{F}$ be a class of objects in $\mathcal{A}$. If $\mathcal{F}$ is closed under well-ordered direct limits and every object in $\mathcal{A}$ has a special $\mathcal{F}^\perp$-preenvelope, then every object in $\mathcal{A}$ has an $\mathcal{F}^\perp$-envelope. \qed
\end{thm}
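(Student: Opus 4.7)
The plan is to follow the classical Enochs--Xu technique for extracting an envelope from a special preenvelope, adapted to the Grothendieck setting. Fix $X \in \mathcal{A}$. By hypothesis there is a special $\mathcal{F}^\perp$-preenvelope $\iota \colon X \to E_0$ fitting into an exact sequence $0 \to X \to E_0 \to F_0 \to 0$ with $E_0 \in \mathcal{F}^\perp$ and $F_0 \in \mathcal{F}$. The task is to ``trim'' $E_0$ to an envelope of $X$, i.e.~to a preenvelope $X \to E$ such that every endomorphism of $E$ fixing $X$ is an automorphism.

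I would proceed by transfinite induction, constructing a chain $(E_\alpha)$ of preenvelopes of $X$ together with transition maps $E_\alpha \to E_\beta$ for $\alpha \leqslant \beta$, each equipped with an $\mathcal{F}^\perp$-structure inherited from $E_0$. At a successor ordinal, if $E_\alpha$ is not already an envelope there exists an endomorphism $\psi \colon E_\alpha \to E_\alpha$ fixing $X$ that is not an automorphism; pass to a strictly smaller quotient $E_{\alpha+1}$ of $E_\alpha$ (for instance via the image factorisation of $\psi$, iterated to stability) that still factors $\iota$. At a limit ordinal $\lambda$ take the direct colimit $E_\lambda = \varinjlim_{\alpha < \lambda} E_\alpha$; writing $K_\lambda = \ker(E_0 \twoheadrightarrow E_\lambda)$, the cokernel of $X \hookrightarrow E_\lambda$ equals the colimit of the cokernels $F_\alpha$, so the hypothesis that $\mathcal{F}$ is closed under well-ordered direct limits implies this cokernel is in $\mathcal{F}$, and $K_\lambda$ itself is seen to lie in $\mathcal{F}$ by a parallel argument using the kernels of $E_0 \to E_\alpha$. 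Applying $\operatorname{Hom}_{\mathcal{A}}(F,-)$ for $F \in \mathcal{F}$ to $0 \to K_\lambda \to E_0 \to E_\lambda \to 0$ and combining $\mathrm{Ext}^1(F,E_0) = 0$ with the enough-projectives hypothesis shows $E_\lambda \in \mathcal{F}^\perp$, so the limit step stays within the desired class.

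A cardinality bound from the locally presentable structure of $\mathcal{A}$ (every object, and in particular every candidate preenvelope of $X$, is determined up to isomorphism by a set's worth of data bounded in terms of $X$ and $E_0$) guarantees that this strictly descending chain of quotients must stabilise at some ordinal $\gamma$; the value $E_\gamma$ is then a preenvelope in $\mathcal{F}^\perp$ that admits no proper reduction, hence is an $\mathcal{F}^\perp$-envelope of $X$. The main obstacle is the limit step: closure of $\mathcal{F}^\perp$ under direct limits is \emph{not} given, so one cannot simply take $E_\lambda \in \mathcal{F}^\perp$ for free. The trick is to track both $E_\lambda$ \emph{and} the short exact sequence $0 \to K_\lambda \to E_0 \to E_\lambda \to 0$ with $K_\lambda \in \mathcal{F}$, and to read off $E_\lambda \in \mathcal{F}^\perp$ from the resulting long exact $\mathrm{Ext}$-sequence. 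It is precisely here that the closure of $\mathcal{F}$ under well-ordered direct limits, together with enough projectives in $\mathcal{A}$, is essential.
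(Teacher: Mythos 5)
Your proposal does not follow the paper, which offers no independent proof here: the statement is quoted from Aldrich--Enochs--Garc\'{\i}a Rozas--Oyonarte \cite[Thm.~2.9]{MR1850650} and is only reformulated in the language of special preenvelopes, the actual argument in \emph{loc.~cit.} running through (minimal) generators of $\operatorname{Ext}$. Your direct ``trimming'' scheme has a genuine gap at the limit step. Your chain consists of quotients $E_0 \twoheadrightarrow E_\alpha$, and you assert that $K_\lambda = \ker(E_0 \twoheadrightarrow E_\lambda)$ lies in $\mathcal{F}$ ``by a parallel argument using the kernels of $E_0 \to E_\alpha$''. There is no such argument: kernels of endomorphism-induced quotients of an object of $\mathcal{F}^\perp$ have no reason to belong to $\mathcal{F}$ (for the flat/cotorsion pair over a ring, kernels of maps between cotorsion modules are almost never flat), and closure of $\mathcal{F}$ under well-ordered direct limits can only propagate membership that was already secured at successor stages, which you never establish. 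Even if one grants $K_\lambda \in \mathcal{F}$, the long exact sequence obtained from $0 \to K_\lambda \to E_0 \to E_\lambda \to 0$ traps $\operatorname{Ext}^1_{\mathcal{A}}(F,E_\lambda)$ between $\operatorname{Ext}^1_{\mathcal{A}}(F,E_0)=0$ and $\operatorname{Ext}^2_{\mathcal{A}}(F,K_\lambda)$, so you would need the orthogonality to be hereditary; that is not among the hypotheses, and ``enough projectives'' does not supply it. Hence you cannot conclude $E_\lambda \in \mathcal{F}^\perp$ at limit ordinals, and the transfinite induction collapses.

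The successor step is also defective: the image of a non-automorphism $\psi$ fixing $X$ need not lie in $\mathcal{F}^\perp$, and the cokernel of $X \to \operatorname{Im}\psi$ is a quotient of $F_\alpha$, which need not lie in $\mathcal{F}$ (no closure under quotients is assumed), so the trimmed object is no longer a special preenvelope; moreover, if $\psi$ is epic but not monic the image gives no progress, and ``iterated to stability'' hides a further transfinite process with the same limit-step problem. The proof in \cite{MR1850650} sidesteps all of this by \emph{not} insisting that the middle terms stay in $\mathcal{F}^\perp$ along the chain: one works with extensions $0 \to X \to G \to F \to 0$ subject only to $F \in \mathcal{F}$ which are generators of $\operatorname{Ext}$ (every such extension maps to them over $X$); this property passes trivially to well-ordered colimits, the direct-limit closure of $\mathcal{F}$ being used exactly to keep the cokernel in $\mathcal{F}$, and a cardinality argument produces a minimal generator. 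Only at the very end does the assumed special $\mathcal{F}^\perp$-preenvelope enter, exhibiting the minimal generator's middle term as a retract of an object of $\mathcal{F}^\perp$ (hence in $\mathcal{F}^\perp$), after which minimality makes $X \to G$ an $\mathcal{F}^\perp$-envelope. If you want a self-contained argument you should follow that route, or simply cite \cite{MR1850650} as the paper does; as written, your construction does not close.
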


\begin{thm}
  \label{thm:completeness-2}
    Assume that $Q$ satisfies the conditions in \stpref{Bbbk}, $\Bbbk$ is Gorenstein, and $\alg$ has finite projective/injective dimension as a $\Bbbk$-module. The cotorsion pair $(\mathscr{E},\mathscr{E}^\perp)$ in $\lMod{Q,\alg}$ from \thmref{cotorsion-pairs-existence}\prtlbl{b} is complete, in fact, it is \emph{\emph{perfect}}, meaning that every object in $\lMod{Q,\alg}$ has an $\mathscr{E}$-cover and an $\mathscr{E}^\perp$-envelope.
\end{thm}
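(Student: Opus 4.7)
The strategy is to reduce the existence of $\mathscr{E}$-covers and $\mathscr{E}^\perp$-envelopes to two standard principles: Enochs' cover theorem (direct-limit closure of the left class plus precovers implies covers) and the envelope theorem \thmref{envelope} of Aldrich--Enochs--Garc\'{\i}a~Rozas--Oyonarte. Both require that (i) $\mathscr{E}$ be closed under direct limits in $\lMod{Q,\alg}$ and that (ii) the cotorsion pair $(\mathscr{E},\mathscr{E}^\perp)$ be complete.

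For (i), a Gorenstein ring is noetherian, so Hom-finiteness and local boundedness in \stpref{Bbbk} let \lemref{locally-noetherian} apply; consequently $\lMod{Q}$ is locally noetherian. Together with \thmref{Q-Gorenstein} and \rmkref{opposite} this identifies $\lFin{Q}$ with $\mathcal{I}_n(\lMod{Q})$ for $n=\operatorname{FID}(\lMod{Q})<\infty$. By \thmref{Simson} this class is closed under direct limits in $\lMod{Q}$, and since the forgetful functor $(-)^\natural$ preserves colimits by \lemref{forgetful-preserves}, the class $\mathscr{E}$ inherits the same closure in $\lMod{Q,\alg}$.

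Step (ii) is the crux. The goal is to show that $\mathscr{E}$ is \emph{deconstructible} in the Grothendieck category $\lMod{Q,\alg}$: there exist a regular cardinal $\kappa$ and a set $\mathcal{S}\subseteq\mathscr{E}$ of $\kappa$-presentable objects such that every object of $\mathscr{E}$ admits a continuous well-ordered filtration whose consecutive factors lie in $\mathcal{S}$. The verification is a Kaplansky-type argument relying on \prppartref{In}{b}, which says that $\mathcal{I}_n$ is closed under pure subobjects in the locally noetherian category $\lMod{Q}$, transported through $(-)^\natural$ (which preserves both direct limits and pure exact sequences by \lemref{forgetful-preserves}). The set-theoretic machinery needed to extract a set of building blocks of bounded size is precisely what \appref{Kaplansky} is designed to supply. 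Once deconstructibility is in hand, the small-object argument in the form of \cite[Cor.~2.15(3)]{SaorinStovicek} upgrades this to completeness of the hereditary cotorsion pair $(\mathscr{E},\mathscr{E}^\perp)$ from \thmref{cotorsion-pairs-existence}\prtlbl{b}.

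With (i) and (ii) secured, \thmref{envelope} applied to $\mathcal{F}=\mathscr{E}$ produces $\mathscr{E}^\perp$-envelopes from the special $\mathscr{E}^\perp$-preenvelopes provided by completeness, while the special $\mathscr{E}$-precovers furnished by completeness upgrade to $\mathscr{E}$-covers via Enochs' cover theorem in view of (i). The decisive difficulty is (ii): in contrast to $({}^\perp\mathscr{E},\mathscr{E})$, which is controlled in \thmref{completeness-1} because $\mathscr{E}$ sits on the right and is described by $\operatorname{Ext}$-vanishing against a set of Gorenstein projectives, placing $\mathscr{E}$ on the left forces one to bound the size of filtration building blocks for $\mathscr{E}$ itself, and this is the sole purpose of \appref{Kaplansky}.
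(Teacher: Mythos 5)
Your proposal takes essentially the same route as the paper: establish that $\lFin{Q}$ is closed under pure subobjects/quotients and under direct limits (\prppartref{In}{b} via \lemref{locally-noetherian}, and \thmref{Simson}), transfer these closure properties to $\mathscr{E}$ through the forgetful functor (\lemref{forgetful-preserves}), feed them into the Kaplansky/deconstructibility machinery of \appref{Kaplansky} (packaged there as \thmref{pure-sub-quo}) to obtain completeness and $\mathscr{E}$-covers, and finish with \thmref{envelope} for $\mathscr{E}^\perp$-envelopes. The differences are only citation-level (the appendix gets completeness from \v{S}\v{t}ov\'{\i}\v{c}ek's result on deconstructible classes and covers from El~Bashir, rather than from Saor\'{\i}n--\v{S}\v{t}ov\'{\i}\v{c}ek and Enochs' cover theorem), so this is essentially the paper's proof.
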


\begin{proof}
We start by proving the following assertions:
\begin{prt}
\item[($*$)] $\lFin{Q}$ is closed under pure subobjects and pure quotients in $\lMod{Q}$.

\item[($**$)] $\lFin{Q}$ is closed under direct limits in $\lMod{Q}$.
\end{prt}

Ad ($*$): As in the proof of \thmref{completeness-1} we have $\mathcal{I}_n = \lFin{Q}$ where $n$ is the finitistic~injec\-tive dimension of $\lMod{Q}$. By \lemref{locally-noetherian} (and \stpref{Bbbk}) the category $\mathcal{A}=\lMod{Q}$ is generated by a set of projective noetherian objects, so \prpref{In}\prtlbl{b} yields the conclusion.

Ad ($**$): By applying \thmref{Simson} to $\mathcal{A}=\lMod{Q}$ (which is locally noetherian, as just noted), it follows that the class $\mathcal{I}_n = \lFin{Q}$ is closed under direct limits.

Next note that $\mathcal{M}=\lMod{Q,\alg}$ is a locally finitely presented Grothendieck category by \prpref{lfp}. We show that $\mathcal{F}=\mathscr{E}$ satisfies the requirements \rqmlbl{1}, \rqmlbl{2} in \thmref{pure-sub-quo}.
  
  Ad \rqmlbl{1}: It follows from ($*$) above, \dfnref{E}, and \lemref{forgetful-preserves}, that $\mathscr{E}$ is closed under pure subobjects and pure quotients in $\lMod{Q,\alg}$.
  
  Ad \rqmlbl{2}: The class $\mathscr{E}$ contains all projective objects in $\lMod{Q,\alg}$ and hence also a projective generator of $\lMod{Q,\alg}$, which exists by \prpref{lfp}\prtlbl{a}. It follows from ($**$) above, \dfnref{E}, and \lemref{forgetful-preserves} that $\mathscr{E}$ is closed under direct limits (and hence coproducts).
  
We conclude from \thmref{pure-sub-quo} that the cotorsion pair $(\mathscr{E},\mathscr{E}^\perp)$ in $\lMod{Q,\alg}$ is complete and that every object in $\lMod{Q,\alg}$ has an $\mathscr{E}$-cover. It remains to prove that every object has an $\mathscr{E}^\perp$-envelope, however, this follows immediately from \thmref{envelope} applied to $\mathcal{F}=\mathscr{E}$.
\end{proof}

\section{The Projective and Injective Model Structures}
\label{sec:model-structures}

We show that for any category $Q$ satisfying the conditions in \stpref{Bbbk} and any ring $\alg$, the category $\lMod{Q,\alg}$ from \dfnref{Q-A-Mod} has two different model structures with the same homotopy category $\QSD{Q}{\alg} := \operatorname{Ho}(\lMod{Q,\alg})$. Indeed, this is a special case of \thmref{model-structures} with $\Bbbk=\mathbb{Z}$. The proof is easy: we essentially just have to combine results from the~pre\-vious \secref[Sections~]{Existence} and \secref[]{Completeness} with Gillespie/Hovey's theory of abelian model categories~\cite{MR3608936,Hovey02}.

To parse the next result, recall that $\mathscr{E}$ denotes the class of exact objects in $\lMod{Q,\alg}$ in the sense of \dfnref{E}.

\begin{thm}
  \label{thm:model-structures}
  Assume that $Q$ satisfies the conditions in \stpref{Bbbk}, $\Bbbk$ is Gorenstein, and $\alg$ has finite projective/injective dimension as a $\Bbbk$-module (e.g.~$\Bbbk=\mathbb{Z}$ and $\alg$ is any ring).
  \begin{prt}
  \item There exists an abelian model structure on $\lMod{Q,\alg}$ where ${}^\perp\mathscr{E}$ is the class of cofibrant objects, $\mathscr{E}$ is the
class of trivial objects, and every object is fibrant.
  
  \item There exists an abelian model structure on $\lMod{Q,\alg}$ where $\mathscr{E}^\perp$ is the class of fibrant objects, $\mathscr{E}$ is the
class of trivial objects, and every object is cofibrant.
  \end{prt}
\end{thm}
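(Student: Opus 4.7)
The plan is to invoke Hovey's correspondence \cite{Hovey02} between abelian model structures and compatible pairs of complete cotorsion pairs. Recall that Hovey's theorem states that an abelian model structure on $\lMod{Q,\alg}$ is the same as a triple $(\mathcal{C},\mathcal{W},\mathcal{F})$ of classes of objects with $\mathcal{W}$ \emph{thick} such that $(\mathcal{C}\cap\mathcal{W},\mathcal{F})$ and $(\mathcal{C},\mathcal{W}\cap\mathcal{F})$ are both complete cotorsion pairs; in the resulting model structure, $\mathcal{C}$ is the class of cofibrant objects, $\mathcal{F}$ the fibrant ones, and $\mathcal{W}$ the trivial ones. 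All the heavy lifting has been done in Sections~\ref{sec:Existence} and~\ref{sec:Completeness}; the present theorem is essentially a bookkeeping exercise.

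For part \prtlbl{a}, I would take $\mathcal{C}={}^\perp\mathscr{E}$, $\mathcal{W}=\mathscr{E}$, and $\mathcal{F}=\lMod{Q,\alg}$. The class $\mathscr{E}$ is thick by the last assertion of \thmref{cotorsion-pairs-existence}. The cotorsion pair $(\mathcal{C},\mathcal{W}\cap\mathcal{F})=({}^\perp\mathscr{E},\mathscr{E})$ is hereditary by \thmref{cotorsion-pairs-existence}\prtlbl{a} and complete by \thmref{completeness-1}. For the other pair, note that $\mathcal{C}\cap\mathcal{W}={}^\perp\mathscr{E}\cap\mathscr{E}=\lPrj{Q,\alg}$ by \thmref{cotorsion-pairs-existence}\prtlbl{a}, while $\lPrj{Q,\alg}^\perp=\lMod{Q,\alg}$, so $(\mathcal{C}\cap\mathcal{W},\mathcal{F})=(\lPrj{Q,\alg},\lMod{Q,\alg})$; this trivial cotorsion pair is complete because $\lMod{Q,\alg}$ has enough projectives by \prpref{lfp}\prtlbl{a} (any short exact sequence of the form $0\to K\to P\to X\to 0$ with $P$ projective provides a special $\lPrj{Q,\alg}$-precover, and the identity $X\to X$ gives a special $\lMod{Q,\alg}$-preenvelope of any $X$).

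Part \prtlbl{b} is entirely dual. I would take $\mathcal{C}=\lMod{Q,\alg}$, $\mathcal{W}=\mathscr{E}$, and $\mathcal{F}=\mathscr{E}^\perp$. Again $\mathscr{E}$ is thick. The cotorsion pair $(\mathcal{C}\cap\mathcal{W},\mathcal{F})=(\mathscr{E},\mathscr{E}^\perp)$ is hereditary by \thmref{cotorsion-pairs-existence}\prtlbl{b} and complete by \thmref{completeness-2}. Using $\mathcal{W}\cap\mathcal{F}=\mathscr{E}\cap\mathscr{E}^\perp=\lInj{Q,\alg}$ from \thmref{cotorsion-pairs-existence}\prtlbl{b}, the remaining cotorsion pair is $(\lMod{Q,\alg},\lInj{Q,\alg})$, whose completeness follows from the fact that the Grothendieck category $\lMod{Q,\alg}$ (see \prpref{lfp}) has enough injectives.

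There is no real obstacle here: every ingredient needed to feed into Hovey's theorem has already been isolated (existence, heredity, completeness of the two cotorsion pairs, thickness of $\mathscr{E}$, and the computation of the intersections with $\mathscr{E}$). The only point that merits a brief remark is that the two resulting model structures are \emph{abelian} in Hovey's sense (fibrations are epimorphisms with fibrant kernels, cofibrations are monomorphisms with cofibrant cokernels), which is automatic from the cotorsion-pair description. The equality of weak equivalences in both model structures--and hence the coincidence of their homotopy categories, yielding the common $Q$-shaped derived category $\QSD{Q}{\alg}$--also follows formally from Hovey's theorem since both model structures share the same class $\mathscr{E}$ of trivial objects.
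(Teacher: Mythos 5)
Your proposal is correct and follows essentially the same route as the paper: both exhibit the Hovey triples $({}^\perp\mathscr{E},\mathscr{E},\lMod{Q,\alg})$ and $(\lMod{Q,\alg},\mathscr{E},\mathscr{E}^\perp)$, citing \thmref{cotorsion-pairs-existence} for thickness of $\mathscr{E}$ and the intersection computations, \thmref[Theorems~]{completeness-1} and \thmref[]{completeness-2} for completeness of the nontrivial cotorsion pairs, \prpref{lfp} (enough projectives/injectives) for the trivial ones, and then Hovey's theorem \cite[Thm.~2.2]{Hovey02}. The closing remark about the shared weak equivalences is extra to the statement (it is the content of \prpref{we}) but harmless.
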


\begin{proof}
  \proofoftag{a} We claim that $(\mathcal{C},\mathcal{W},\mathcal{F}) = ({}^\perp\mathscr{E},\mathscr{E},\lMod{Q,\alg})$ is a \emph{Hovey triple} in $\lMod{Q,\alg}$:
\begin{itemlist}  
\item The class $\mathcal{W}=\mathscr{E}$ is thick by the last assertion in \thmref{cotorsion-pairs-existence}.

\item By \thmref{cotorsion-pairs-existence}\prtlbl{a} one has $(\mathcal{C} \cap \mathcal{W},\mathcal{F}) = ({}^\perp\mathscr{E} \cap \mathscr{E}, \lMod{Q,\alg}) = (\lPrj{Q,\alg},\lMod{Q,\alg})$.~It is a complete cotorsion pair as $\lMod{Q,\alg}$ has enough projectives by \prpref{lfp}\prtlbl{a}.

\item Clearly, $(\mathcal{C}, \mathcal{W} \cap \mathcal{F}) = ({}^\perp\mathscr{E}, \mathscr{E})$. This is a complete cotorsion pair by \thmref{completeness-1}.
\end{itemlist}  
The desired conclusion now follows from Hovey \cite[Thm.~2.2]{Hovey02}.
    
\proofoftag{b} By arguing as above, it follows from \thmref[Theorems~]{cotorsion-pairs-existence}\prtlbl{b} and \thmref[]{completeness-2} that $(\mathcal{C},\mathcal{W},\mathcal{F}) = (\lMod{Q,\alg},\mathscr{E},\mathscr{E}^\perp)$ is a Hovey triple in $\lMod{Q,\alg}$.  Now apply \cite[Thm.~2.2]{Hovey02} once more.
\end{proof}

\begin{dfn}
  \label{dfn:prj-inj-ms}
Following the terminology in \cite[Dfn.~4.5 and Lem.~4.6]{MR2811572} by Gillespie, we refer to the model structures on $\lMod{Q,\alg}$ established in parts \prtlbl{a} and \prtlbl{b} in \thmref{model-structures} as the \emph{projective model structure} and the \emph{injective model structure}.
\end{dfn}

Having established the model structures above, the general theory of abelian model~cat\-e\-go\-ries provides us with rich information about the associated homotopy categories. We recall (with appropriate references) the most important facts below.

\begin{prp}
  \label{prp:we}
    Assume that $Q$ satisfies the conditions in \stpref{Bbbk}, $\Bbbk$ is Gorenstein, and $\alg$ has finite projective/injective dimension as a $\Bbbk$-module. The two model categories
\begin{equation*}
  (\lMod{Q,\alg}, \text{projective model structure}) \quad \text{and} \quad (\lMod{Q,\alg}, \text{injective model structure})
\end{equation*}    
have the same weak equivalences, in fact, a morphism $\varphi$ in $\lMod{Q,\alg}$ is a weak equivalence in either of the two model structures if and only if it satisfies the next equivalent conditions:
\begin{eqc}
\item $\varphi = \pi\iota$ where $\iota$ is monic with $\Coker{\iota} \in \mathscr{E}$ and $\pi$ is epic with $\Ker{\pi} \in \mathscr{E}$.

\item $\varphi = \pi\iota$ where $\iota$ is monic with $\Coker{\iota} \in \lPrj{Q,\alg}$ and $\pi$ is epic with $\Ker{\pi} \in \mathscr{E}$.

\item $\varphi = \pi\iota$ where $\iota$ is monic with $\Coker{\iota} \in \mathscr{E}$ and $\pi$ is epic with $\Ker{\pi} \in \lInj{Q,\alg}$.
\end{eqc}
\end{prp}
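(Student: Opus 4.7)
The plan is to derive this from the two Hovey triples used in \thmref{model-structures} by standard abelian model category arguments, with the main content being the implication (i)$\Rightarrow$(ii),(iii).

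First I would characterize the weak equivalences of each structure separately. In the projective model structure, trivial cofibrations are, by \thmref{cotorsion-pairs-existence}\prtlbl{a}, the monomorphisms with cokernel in ${}^\perp\mathscr{E}\cap\mathscr{E}=\lPrj{Q,\alg}$, and trivial fibrations are the epimorphisms with kernel in $\mathscr{E}$ (since every object is fibrant). Quillen's factorization axiom, together with the two-out-of-three property and closure of weak equivalences under composition, then shows that a morphism is a weak equivalence in this structure if and only if it admits a factorization of the form (ii). The analogous argument in the injective structure uses $\mathscr{E}\cap\mathscr{E}^\perp=\lInj{Q,\alg}$ and yields (iii) as the characterization of its weak equivalences.

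The implications (ii)$\Rightarrow$(i) and (iii)$\Rightarrow$(i) are immediate because both $\lPrj{Q,\alg}$ and $\lInj{Q,\alg}$ are contained in $\mathscr{E}$ by \thmref{cotorsion-pairs-existence}. To close the loop I would prove the following lemma: in the projective model structure, every monomorphism $\iota$ with $\Coker\iota\in\mathscr{E}$ and every epimorphism $\pi$ with $\Ker\pi\in\mathscr{E}$ is a weak equivalence. Given $\iota\colon X\hookrightarrow Z$ with $\Coker\iota\in\mathscr{E}$, I would factor $\iota=\pi_1\iota_1$ with $\iota_1\colon X\hookrightarrow V$ a cofibration (so $\Coker\iota_1\in{}^\perp\mathscr{E}$) and $\pi_1\colon V\twoheadrightarrow Z$ a trivial fibration (so $\Ker\pi_1\in\mathscr{E}$). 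The snake lemma applied to the morphism of short exact sequences with rows $0\to X\xrightarrow{\iota_1}V\to\Coker\iota_1\to 0$ and $0\to X\xrightarrow{\iota}Z\to\Coker\iota\to 0$ (vertical maps $\mathrm{id}_X$, $\pi_1$, and the induced map) yields a short exact sequence
\begin{equation*}
  0\longrightarrow\Ker\pi_1\longrightarrow\Coker\iota_1\longrightarrow\Coker\iota\longrightarrow 0.
\end{equation*}
Since the outer terms lie in $\mathscr{E}$ and $\mathscr{E}$ is thick by \thmref{cotorsion-pairs-existence}, so does $\Coker\iota_1$; hence $\Coker\iota_1\in{}^\perp\mathscr{E}\cap\mathscr{E}=\lPrj{Q,\alg}$, making $\iota_1$ a trivial cofibration. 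Thus $\iota=\pi_1\iota_1$ is a trivial cofibration followed by a trivial fibration, hence a weak equivalence. The statement for $\pi$ is dual.

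Granting the lemma, condition (i) for a morphism $\varphi=\pi\iota$ exhibits $\varphi$ as a composition of two weak equivalences in the projective structure, so $\varphi$ is itself a weak equivalence there and consequently admits a factorization of the form (ii). Running the same argument inside the injective model structure (with $\lInj{Q,\alg}$ in place of $\lPrj{Q,\alg}$, and the role of monics/epics swapped in the dual snake-lemma step) yields (i)$\Rightarrow$(iii). In particular both model structures share the same weak equivalences, namely the morphisms satisfying (i). The only mildly delicate point is the snake-lemma identification of the displayed short exact sequence, but this is routine once thickness of $\mathscr{E}$ is available.
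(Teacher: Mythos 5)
Your proof is correct and follows essentially the same route as the paper: identify (ii) and (iii) as, by the Hovey-triple description, exactly the weak equivalences of the projective and injective model structures, note that (ii) and (iii) imply (i) trivially since $\lPrj{Q,\alg},\lInj{Q,\alg}\subseteq\mathscr{E}$, and reduce (i)$\Rightarrow$(ii),(iii) to the fact that a monic with cokernel in $\mathscr{E}$ and an epic with kernel in $\mathscr{E}$ are weak equivalences in either structure. The only difference is that where the paper simply cites Hovey's Lemma 5.8 for this last fact, you reprove it by hand via the factorization axiom, the snake lemma, thickness of $\mathscr{E}$, and ${}^\perp\mathscr{E}\cap\mathscr{E}=\lPrj{Q,\alg}$ (respectively $\mathscr{E}\cap\mathscr{E}^\perp=\lInj{Q,\alg}$), which is a correct, self-contained substitute.
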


\begin{proof}
As seen in the proof of \thmref{model-structures}, the projective model structure on $\lMod{Q,\alg}$ is given by the Hovey triple $(\mathcal{C}_\mathrm{p},\mathcal{W}_\mathrm{p},\mathcal{F}_\mathrm{p}) = ({}^\perp\mathscr{E},\mathscr{E},\lMod{Q,\alg})$, and the injective model structure is given by $(\mathcal{C}_\mathrm{i},\mathcal{W}_\mathrm{i},\mathcal{F}_\mathrm{i}) = (\lMod{Q,\alg},\mathscr{E},\mathscr{E}^\perp)$. Thus by definition, see \cite[Dfn.~5.1]{Hovey02}, the morphisms described in \eqclbl{ii}, respectively, \eqclbl{iii}, are precisely the weak equivalences in the projective, respectively, injective, model structure on $\lMod{Q,\alg}$. Evidently one has \eqclbl{ii}$\,\Rightarrow\,$\eqclbl{i} and \eqclbl{iii}$\,\Rightarrow\,$\eqclbl{i} as 
$\lPrj{Q,\alg} \subseteq \mathscr{E}$ and $\lInj{Q,\alg} \subseteq \mathscr{E}$. By \cite[Lem.~5.8]{Hovey02} a monic with cokernel in $\mathcal{W}_\mathrm{p} = \mathscr{E} = \mathcal{W}_\mathrm{i}$ is a weak equivalence in both model structures and so is an epic with kernel in $\mathscr{E}$. Hence, if \eqclbl{i} holds, then $\varphi$ is a composition of weak two equivalences in either model structure, and as already noted this means that \eqclbl{ii} and \eqclbl{iii} hold.
\end{proof}

\begin{dfn}
  \label{dfn:homotopy-cat}
    Assume that $Q$ satisfies the conditions in \stpref{Bbbk}, $\Bbbk$ is Gorenstein, and $\alg$ has finite projective/injective dimension as a $\Bbbk$-module. The \emph{$Q$-shaped derived category of $\alg$} is defined to be the homoptopy category of the model category $\lMod{Q,\alg}$ (see \thmref{model-structures} and \prpref{we}), that is,
\begin{equation*}
  \QSD{Q}{\alg} \,:=\, \operatorname{Ho}(\lMod{Q,\alg}) \,=\, \{\text{weak equivalences}\}^{-1}(\lMod{Q,\alg})\;.
\end{equation*}
\end{dfn}
  
\begin{thm}
  \label{thm:Frobenius}
    Assume that $Q$ satisfies the conditions in \stpref{Bbbk}, $\Bbbk$ is Gorenstein, and $\alg$ has finite projective/injective dimension as a $\Bbbk$-module. The category ${}^\perp\mathscr{E}$, respectively, $\mathscr{E}^\perp$, is Frobenius with  $\lPrj{Q,\alg}$, respectively, $\lInj{Q,\alg}$, as the class of pro-injective objects, and there are equivalences of categories,
\begin{equation*}
  \frac{{}^\perp\mathscr{E}}{\lPrj{Q,\alg}} \ \simeq \ 
  \QSD{Q}{\alg} \ \simeq \
  \frac{\mathscr{E}^\perp}{\lInj{Q,\alg}}\;.
\end{equation*}  
Here the leftmost, respectively, rightmost, category is the stable category of the Frobenius category ${}^\perp\mathscr{E}$, respectively, $\mathscr{E}^\perp$. In particular, $\QSD{Q}{\alg}$ is triangulated.
\end{thm}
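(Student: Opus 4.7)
The plan is to obtain both Frobenius structures and both equivalences as direct applications of Gillespie's theorem \cite[Thm.~4.2 and Prop.~5.2]{MR3608936} (or its predecessors due to Hovey), using the two Hovey triples already built in the proof of \thmref{model-structures}. Gillespie's theorem says that for any abelian model structure on an abelian category $\mathcal{M}$ coming from a Hovey triple $(\mathcal{C},\mathcal{W},\mathcal{F})$, the full subcategory $\mathcal{C}\cap\mathcal{F}$ of bifibrant objects is a Frobenius exact category (with the exact structure inherited from $\mathcal{M}$) whose class of projective-injective objects is exactly $\mathcal{C}\cap\mathcal{W}\cap\mathcal{F}$, and the canonical functor induces an equivalence of categories between the stable category $(\mathcal{C}\cap\mathcal{F})/(\mathcal{C}\cap\mathcal{W}\cap\mathcal{F})$ and the homotopy category $\operatorname{Ho}(\mathcal{M})$.

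First I apply this to the projective model structure, whose Hovey triple is $(\mathcal{C}_{\mathrm p},\mathcal{W}_{\mathrm p},\mathcal{F}_{\mathrm p})=({}^\perp\mathscr{E},\mathscr{E},\lMod{Q,\alg})$ (see the proof of \thmref{model-structures}). Then $\mathcal{C}_{\mathrm p}\cap\mathcal{F}_{\mathrm p}={}^\perp\mathscr{E}$, and the equality ${}^\perp\mathscr{E}\cap\mathscr{E}=\lPrj{Q,\alg}$ from \thmref{cotorsion-pairs-existence}\prtlbl{a} identifies the projective-injective class as $\lPrj{Q,\alg}$. Gillespie's theorem then gives the left equivalence ${}^\perp\mathscr{E}/\lPrj{Q,\alg}\simeq\operatorname{Ho}(\lMod{Q,\alg})=\QSD{Q}{\alg}$. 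Next I apply the same theorem to the injective model structure, with Hovey triple $(\mathcal{C}_{\mathrm i},\mathcal{W}_{\mathrm i},\mathcal{F}_{\mathrm i})=(\lMod{Q,\alg},\mathscr{E},\mathscr{E}^\perp)$; here $\mathcal{C}_{\mathrm i}\cap\mathcal{F}_{\mathrm i}=\mathscr{E}^\perp$ and, by \thmref{cotorsion-pairs-existence}\prtlbl{b}, $\mathscr{E}\cap\mathscr{E}^\perp=\lInj{Q,\alg}$, which yields the right equivalence $\mathscr{E}^\perp/\lInj{Q,\alg}\simeq\QSD{Q}{\alg}$. A priori the two equivalences land in two homotopy categories, but these are identified because, by \prpref{we}, the projective and injective model structures share the same class of weak equivalences, so the two localizations coincide as the single category $\QSD{Q}{\alg}$ of \dfnref{homotopy-cat}.

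The final assertion, that $\QSD{Q}{\alg}$ is triangulated, then follows at once from the standard fact that the stable category of any Frobenius category carries a canonical triangulated structure (with suspension given by a choice of conflation $X\hookrightarrow I\twoheadrightarrow\Sigma X$ into a projective-injective). Alternatively, one could invoke \cite[Thm.~7.2]{Hovey02}, which shows that the homotopy category of any pointed (in particular abelian) model category is naturally pre-triangulated, and becomes triangulated precisely when the suspension is an autoequivalence; in our setting this is automatic since ${}^\perp\mathscr{E}$ (resp.~$\mathscr{E}^\perp$) is Frobenius.

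The only real content is the verification of the hypotheses needed for Gillespie's/Hovey's result, but this has already been done in \secref{Existence} and \secref{Completeness}: the two Hovey triples exist (complete hereditary cotorsion pairs with the thick class $\mathscr{E}$), so there is no remaining obstacle beyond recording how the intersections $\mathcal{C}\cap\mathcal{F}$ and $\mathcal{C}\cap\mathcal{W}\cap\mathcal{F}$ specialize in each case. Thus the proof is essentially a bookkeeping exercise of plugging the two Hovey triples into a general theorem.
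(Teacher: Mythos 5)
Your argument is correct and is essentially the paper's own proof: both the Frobenius structures and the equivalences are obtained by feeding the two Hovey triples $({}^\perp\mathscr{E},\mathscr{E},\lMod{Q,\alg})$ and $(\lMod{Q,\alg},\mathscr{E},\mathscr{E}^\perp)$ into Gillespie's results in \cite{MR3608936}, the hereditary hypothesis being supplied by \thmref{cotorsion-pairs-existence} and the identification of the two homotopy categories by \prpref{we}. The only difference is bookkeeping in which statements of \cite{MR3608936} are cited (the paper uses Prop.~4.2 and Thm.~4.3), which does not affect the substance.
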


\begin{proof}
  The cotorsion pairs associated with the Hovey triples that define the projective and injective model structures (see the proof of \thmref{model-structures}) are hereditary by \thmref{cotorsion-pairs-existence}. Hence the assertions follow from Gillespie \cite[Prop.~4.2 and Thm.~4.3]{MR3608936}.
\end{proof}

\section{(Co)homology}
\label{sec:cohomology}

Our goal in this section is to obtain tractable descriptions of the trivial objects and the weak equivalences in the projective and injective model structures on $\lMod{Q,\alg}$. At this point the only available descriptions of these objects and morphisms come from \dfnref{E} and \prpref{we}, which are not particularly enlightening. 

Condition \eqref{retraction} in \stpref{Bbbk} is called the Retraction Property. In \dfnref{srp} below we introduce a stron\-ger condition, called the Strong Retraction Property, which holds in most natural examples. This property allows us to define the pseudo-radical $\mathfrak{r}$ (\lemref{ideal}) and (co)homology functors $\cH[i]{q}$ and $\hH[i]{q}$ for every $q \in Q$ and $i \geqslant 0$ (\dfnref{cH-hH}). The main results in this section are \thmref[Theorems~]{E-characterization-hereditary} and \thmref[]{quiso} below. The first result \mbox{characterizes}~the~tri\-vial (= exact) objects as the objects with vanishing (co)homology; the second one characterizes the weak equivalences as the morphisms that are isomorphisms in (co)homology.

The proofs of these theorems require preparations that take up most of this section. \thmref[Theorems~]{E-characterization-hereditary} and \thmref[]{quiso} themselves are pro\-ved towards the end of the section.

\begin{thm}
 \label{thm:E-characterization-hereditary}
 Assume that the following hold.
 \begin{itemlist}
 \item $Q$ satisfies conditions \eqref{Homfin}, \eqref{locbd}, \eqref{Serre} in \stpref{Bbbk} and condition \eqmref{strong-retraction} in \dfnref{srp}.
  \item The pseudo-radical $\mathfrak{r}$ is nilpotent, that is, $\mathfrak{r}^N=0$ for some $N \in \mathbb{N}$.
 \item The ring $\Bbbk$ is noetherian and hereditary (e.g.~$\Bbbk=\mathbb{Z}$).
 \end{itemlist}
 For every $\Bbbk$-algebra $\alg$ and object $X \in \lMod{Q,\alg}$ the following conditions are equivalent:
\begin{eqc}
\item $X \in \mathscr{E}$.
\item $\cH[i]{q}(X)=0$ for every $q \in Q$ and $i>0$.
\item[\eqclbl{ii'}] $\cH{q}(X)=0$ for every $q \in Q$.
\item $\hH[i]{q}(X)=0$ for every $q \in Q$ and $i>0$.
\item[\eqclbl{iii'}] $\hH{q}(X)=0$ for every $q \in Q$.
\end{eqc}
\end{thm}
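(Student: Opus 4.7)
The plan is to prove the theorem in three stages, moving from a structural reduction down to the concrete cotorsion-theoretic content.

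\emph{Stage 1: Reduction to $\alg=\Bbbk$.} The forgetful functor $(-)^\natural \colon \lMod{Q,\alg} \to \lMod{Q}$ sits inside the adjoint triple $\bigl(-\otimes_\Bbbk\alg,\,(-)^\natural,\,\Hom{\Bbbk}{\alg}{-}\bigr)$ of \corref{adjoint-forgetful}, so it is exact, and both of its adjoints are exact (as $\Bbbk$ is hereditary, $\alg^\natural$ is automatically of finite projective/injective $\Bbbk$-dimension). Combined with the explicit description of $\Kq{q}$ and $\Cq{q}$ from \prpref[Propositions~]{adjoint-stalk} and \prpref[]{K-C-formulae}, this should yield natural isomorphisms $\cH[i]{q}(X)^\natural \cong \cH[i]{q}(X^\natural)$ and $\hH[i]{q}(X)^\natural \cong \hH[i]{q}(X^\natural)$. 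Since (i) is, by \dfnref{E}, already a statement about $X^\natural$, and vanishing of an $\alg$-module can be tested after forgetting to $\Bbbk$, all five conditions reduce to the corresponding statements for $X^\natural \in \lMod{Q}$.

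\emph{Stage 2: The equivalences (ii)$\Leftrightarrow$(ii$'$) and (iii)$\Leftrightarrow$(iii$'$).} By \prpref{K-C-formulae} the functors $\cH[i]{q}$ and $\hH[i]{q}$ can be rewritten as $\Ext{Q}{i}{\stalkco{q}}{-}$ and $\Tor{Q}{i}{\stalkcn{q}}{-}$. Given (ii$'$) I would run the standard dimension-shifting argument: choose a projective (resp.\ injective) resolution of $X^\natural$, pass to a syzygy $\Omega$, and use the long exact sequence. Nilpotency of $\mathfrak{r}$ controls how the syzygies sit inside the $\mathfrak{r}$-adic filtration and allows the vanishing in degree $1$ to propagate upward. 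The converse is immediate.

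\emph{Stage 3: The main equivalences (i)$\Leftrightarrow$(ii)$\Leftrightarrow$(iii).} For (i)$\Rightarrow$(ii) and (iii): if $X^\natural \in \lFin{Q}$ then by \thmref{Q-Gorenstein} it has both finite projective and finite injective dimension, so $\Ext{Q}{i}{-}{X^\natural}$ and $\Tor{Q}{i}{-}{X^\natural}$ vanish in sufficiently high degree; Stage~2 then promotes this to vanishing for all $i>0$. For the converse, the plan is to invoke the complete hereditary cotorsion pair $(\lGPrj{Q},\lFin{Q})$ from \thmref{G-cotorsion-pairs}, which tells us that $X^\natural \in \lFin{Q}$ iff $\Ext{Q}{i}{G}{X^\natural}=0$ for every Gorenstein projective $G$ and every $i>0$. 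The crucial point is to deduce this vanishing from (ii). Using the Strong Retraction Property (which supplies the splitting $Q(q,q)=\Bbbk\cdot\mathrm{id}_q \oplus \mathfrak{r}_q$) together with the nilpotency $\mathfrak{r}^N=0$, I would construct a canonical finite filtration on each finitely generated $Q$-module whose associated graded is a direct sum of costalks $\stalkco{q}$; approximating $G$ by such filtrations and repeatedly applying the long exact sequence of $\Ext$ reduces the problem to vanishing of $\Ext{Q}{i}{\stalkco{q}}{X^\natural}=\cH[i]{q}(X^\natural)$, which is exactly (ii). The dual argument with stalks $\stalkcn{q}$, $\Tor$ and $\hH[i]{q}$ handles (iii)$\Rightarrow$(i).

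The main obstacle will be the filtration argument inside Stage~3: showing that every Gorenstein projective is sufficiently well approximated by costalks via an $\mathfrak{r}$-adic filtration, and that this approximation is compatible with higher $\Ext$. This is precisely the step where the three extra hypotheses beyond \stpref{Bbbk}---nilpotency of $\mathfrak{r}$, the Strong Retraction Property, and hereditariness of $\Bbbk$---must all work together, and I expect the bulk of the technical work of the proof to lie here.
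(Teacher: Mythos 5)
Your overall architecture — reduce everything to $X^\natural\in\lMod{Q}$ via the forgetful functor, characterize membership in $\lFin{Q}$ through the hereditary cotorsion pair $(\lGPrj{Q},\lFin{Q})$, and exploit nilpotency of $\mathfrak{r}$ through a finite filtration by stalk-type objects — is the same as the paper's (which first proves the more general \thmref{E-characterization} and then specializes), and Stage~1 is essentially \rmkref{cH-hH} (though your aside that both adjoints of $(-)^\natural$ are exact is false and unnecessary; e.g.\ $-\otimes_{\mathbb{Z}}\mathbb{Z}/2$ is not exact). But the two steps you single out as the core are not correct as written. In Stage~3, the associated graded of the $\mathfrak{r}$-adic filtration of a $Q$-module is a direct sum of objects $\Sq{q}(V_q)=\stalkco{q}\otimes_\Bbbk V_q$ with the $V_q$ arbitrary $\Bbbk$-modules, \emph{not} a direct sum of costalks $\stalkco{q}$; to reduce $\Ext{Q}{1}{\Sq{q}(V_q)}{X^\natural}$ to $\cH[1]{q}(X^\natural)$ one needs the $V_q$ projective over $\Bbbk$, and for the radical filtration of a Gorenstein projective $G$ (which is moreover not finitely generated, so "approximating by finitely generated modules" is itself a gap) the subquotients $(\mathfrak{r}^\ell G/\mathfrak{r}^{\ell+1}G)(q)$ are quotients of submodules of the projective module $G(q)$ and need not be projective. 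The paper filters from the other end instead: \conref{X-ell}\prtlbl{a} repeatedly peels off $\bigoplus_q\Sq{q}\Kq{q}(G^\ell)$, and the technical heart is \prpref{stalk-ses}\prtlbl{a}, which shows (using \thmref{G-description} and the fact that over a hereditary noetherian $\Bbbk$ the class $\lGPrj{\Bbbk}=\lPrj{\Bbbk}$ is closed under submodules and extensions) that all the pieces have projective values; \lemref{ell-computation} and \prpref{zero-criterion} then show the process terminates after $N$ steps, so finitely many long exact sequences give $\Ext{Q}{1}{G}{X^\natural}=0$.

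Stage~2 has a similar problem. A direct dimension-shifting proof of \eqclbl{ii'}$\Rightarrow$\eqclbl{ii} would require the syzygy $\mathfrak{r}(q,-)$ of $\stalkco{q}$ to be filtered by coproducts of stalk functors, i.e.\ the $\Bbbk$-modules $(\mathfrak{r}^\ell/\mathfrak{r}^{\ell+1})(p,q)$ to be free; that is exactly the hypothesis of \lemref{coproduct-stalk}, which the stated assumptions do not supply (the paper only invokes it when $\mathfrak{r}^2=0$, in \prpref{quiso-2}). In the paper \eqclbl{ii'}$\Rightarrow$\eqclbl{ii} is obtained by routing through \eqclbl{i}, not by shifting dimensions. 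Finally, your argument for \eqclbl{i}$\Rightarrow$\eqclbl{ii} — that $\Ext{Q}{i}{-}{X^\natural}$ vanishes in high degrees and "Stage~2 promotes this to all $i>0$" — runs in the wrong direction: high-degree vanishing never propagates downward, and Stage~2 (as you stated it) goes from degree $1$ upward. The correct and easy argument is that $\stalkco{q}$ has values $\Bbbk$ and $0$, hence is Gorenstein projective in $\lMod{Q}$ by \thmref{G-description}, so heredity of the cotorsion pair $(\lGPrj{Q},\lFin{Q})$ gives $\Ext{Q}{i}{\stalkco{q}}{X^\natural}=0$ for all $i>0$ simultaneously; the dual statement for \eqclbl{iii} uses $\Hom{\Bbbk}{\stalkcn{q}}{I}$ with $I$ injective and the standard $\operatorname{Ext}$--$\operatorname{Tor}$ duality, as in the paper's proof.
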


\begin{thm}
  \label{thm:quiso}
  Adopt the setup of \thmref{E-characterization-hereditary}. For every $\Bbbk$-algebra $\alg$ and morphism $\varphi$ in $\lMod{Q,\alg}$ the following conditions are equivalent:
  \begin{eqc}
  \item $\varphi$ is a weak equivalence.
  \item $\cH[i]{q}(\varphi)$ is an isomorphism for every $q \in Q$ and $i>0$.
  \item[\eqclbl{ii'}] $\cH{q}(\varphi)$ and $\cH[2]{q}(\varphi)$ are isomorphisms for every $q \in Q$.
  \item $\hH[i]{q}(\varphi)$ is an isomorphism for every $q \in Q$ and $i>0$.
  \item[\eqclbl{iii'}] $\hH{q}(\varphi)$ and $\hH[2]{q}(\varphi)$ are isomorphisms for every $q \in Q$.
  \end{eqc}
\end{thm}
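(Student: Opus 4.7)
The plan is to split the five-way equivalence into the trivial restrictions \eqclbl{ii}$\Rightarrow$\eqclbl{ii'} and \eqclbl{iii}$\Rightarrow$\eqclbl{iii'}, the easy implications \eqclbl{i}$\Rightarrow$\eqclbl{ii}, \eqclbl{i}$\Rightarrow$\eqclbl{iii}, and the crucial converses \eqclbl{ii'}$\Rightarrow$\eqclbl{i} and \eqclbl{iii'}$\Rightarrow$\eqclbl{i}. I sketch the cohomology side; the homology side is formally dual. For \eqclbl{i}$\Rightarrow$\eqclbl{ii}, the factorization from \prpref{we}\,\eqclbl{ii} writes $\varphi = \pi\iota$ with $\iota$ monic having $\Coker{\iota}\in\lPrj{Q,\alg}$, and $\pi$ epic with $\Ker{\pi}\in\mathscr{E}$. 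The defining short exact sequence for $\iota$ splits (projective cokernel), and by \thmref{E-characterization-hereditary} the vanishing $\cH[i]{q}(\Coker{\iota})=0$ for $i\geqslant 1$ (since projectives lie in $\mathscr{E}$) makes $\cH[i]{q}(\iota)$ an iso. The long exact sequence of $0 \to \Ker{\pi} \to \cod\iota \to Y \to 0$, together with $\cH[i]{q}(\Ker{\pi}) = \cH[i+1]{q}(\Ker{\pi}) = 0$ for $i\geqslant 1$, makes $\cH[i]{q}(\pi)$ an iso; composing yields \eqclbl{ii}.

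The crux is \eqclbl{ii'}$\Rightarrow$\eqclbl{i}. The plan is to use the triangulation of $\QSD{Q}{\alg}$ from \thmref{Frobenius}: replace $\varphi$ by an equivalent morphism between cofibrant objects in the Frobenius exact category ${}^\perp\mathscr{E}$ and form its mapping cone $C$, yielding a distinguished triangle $X \to Y \to C \to \Sigma X$ in $\QSD{Q}{\alg}$. The shift $\Sigma$ is the Frobenius syzygy $\Sigma A = P/A$ for an inflation $A\hookrightarrow P$ with $P\in\lPrj{Q,\alg}$; applying $\cH[i]{q}$ to the conflation $0 \to A \to P \to \Sigma A \to 0$ and using $\cH[i]{q}(P) = 0$ for $i\geqslant 1$ (\thmref{E-characterization-hereditary}) yields the shift identification
\begin{equation*}
\cH[i]{q}(\Sigma A) \,\cong\, \cH[i+1]{q}(A) \qquad (q\in Q,\ i\geqslant 1).
\end{equation*}
Inserting this into the long exact sequence of the triangle gives, for $i\geqslant 1$,
\begin{equation*}
\cdots \to \cH[i]{q}(X) \xrightarrow{\cH[i]{q}(\varphi)} \cH[i]{q}(Y) \to \cH[i]{q}(C) \to \cH[i+1]{q}(X) \xrightarrow{\cH[i+1]{q}(\varphi)} \cH[i+1]{q}(Y) \to \cdots
\end{equation*}
Under \eqclbl{ii'}, the iso's $\cH[1]{q}(\varphi)$ (in particular, surjective) and $\cH[2]{q}(\varphi)$ (in particular, injective) force $\cH[1]{q}(C)=0$ for every $q$. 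By \thmref{E-characterization-hereditary} applied to $C$, this yields $C\in\mathscr{E}$, so $C \cong 0$ in $\QSD{Q}{\alg}$. The third term of a distinguished triangle being zero makes $\varphi$ an isomorphism in $\QSD{Q}{\alg}$, which by \dfnref{homotopy-cat} is exactly the weak-equivalence condition.

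The dual \eqclbl{iii'}$\Rightarrow$\eqclbl{i} is carried out in the injective model structure via the Frobenius category $\mathscr{E}^\perp$, with the homology long exact sequence in place of the cohomology one. The main obstacle I anticipate lies here: the shift identification $\hH[i]{q}(\Sigma-)\cong\hH[i-1]{q}(-)$, derived from $0\to A\to I\to\Sigma A\to 0$ with $I\in\lInj{Q,\alg}$, is only valid for $i\geqslant 2$ because the conflation fails to control the degree-zero term. The $i=1$ end of the LES therefore has to be handled by invoking the Serre-functor structure \eqref{Serre} of \stpref{Bbbk} to reindex the vanishing criterion, or by running the argument on a suitable rotate of the cone triangle where the requisite shift identifications are in range. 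This degree-one subtlety is precisely why conditions \eqclbl{ii'} and \eqclbl{iii'} must involve two consecutive degrees $i=1,2$, even though the object-level criterion of \thmref{E-characterization-hereditary} detects $\mathscr{E}$-membership using only $i=1$: two consecutive assumed isomorphisms trap the cone's $\cH[1]{q}$ (respectively $\hH[1]{q}$) between a surjectivity and an injectivity assertion in the LES of the triangle.
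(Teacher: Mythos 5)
Your treatment of \eqclbl{i}$\Rightarrow$\eqclbl{ii} is the paper's argument verbatim. For the crucial converse \eqclbl{ii'}$\Rightarrow$\eqclbl{i} you take a genuinely different route: the paper never leaves the model category. It applies the factorization axiom to write $\varphi=\pi\iota$ with $\iota$ a cofibration and $\pi$ a trivial fibration; since $\pi$ is already a weak equivalence, the implication \eqclbl{i}$\Rightarrow$\eqclbl{ii} makes $\cH{q}(\iota)$ and $\cH[2]{q}(\iota)$ isomorphisms, and the long exact sequence of $0\to X\to Z\to\Coker{\iota}\to 0$ traps $\cH{q}(\Coker{\iota})$ between the surjectivity of $\cH{q}(\iota)$ and the injectivity of $\cH[2]{q}(\iota)$, so $\Coker{\iota}\in\mathscr{E}$ by \thmref{E-characterization-hereditary} and $\iota$ is a weak equivalence by \cite[Lem.~5.8]{Hovey02}. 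Your mapping-cone argument in the Frobenius category ${}^\perp\mathscr{E}$ reaches the same endpoint ($\cH{q}$ of the cone vanishes, hence the cone lies in $\mathscr{E}\cap{}^\perp\mathscr{E}=\lPrj{Q,\alg}$ and is zero in the stable category, so $\varphi$ becomes invertible in $\QSD{Q}{\alg}$); it is correct, but it buys nothing extra and costs more overhead: cofibrant replacement, the identification of the connecting maps of your triangle with $\cH[i]{q}(\varphi)$ (via the morphism of conflations defining the cone), and the saturation property that a morphism is a weak equivalence iff its image in $\operatorname{Ho}$ is an isomorphism.

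Where your proposal is genuinely incomplete is \eqclbl{iii'}$\Rightarrow$\eqclbl{i}, which you flag but do not resolve; the suggested Serre-functor ``reindexing'' is a red herring and is not needed. The paper's dual is painless because the factorization is dualized as well: write $\varphi=\pi\iota$ with $\iota$ a trivial cofibration and $\pi$ a fibration, so $\Ker{\pi}\in\mathscr{E}^\perp$; then injectivity of $\hH{q}(\pi)$ and surjectivity of $\hH[2]{q}(\pi)$ force $\hH{q}(\Ker{\pi})=0$ in the homology long exact sequence of $0\to\Ker{\pi}\to Z\to Y\to 0$, whence $\Ker{\pi}\in\mathscr{E}$ and $\pi$ is a weak equivalence. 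If you insist on the triangulated picture, the correct repair is your second suggestion made precise: in the Frobenius category $\mathscr{E}^\perp$ use the fibre instead of the cone, i.e.\ pull back a deflation $I\twoheadrightarrow Y$ with $I\in\lInj{Q,\alg}$ along $\varphi$ to get a conflation $0\to\Omega Y\to F\to X\to 0$; there the surjectivity of $\hH[2]{q}(\varphi)$ and injectivity of $\hH{q}(\varphi)$ kill $\hH{q}(F)$, so $F\in\mathscr{E}\cap\mathscr{E}^\perp=\lInj{Q,\alg}$ and $\varphi$ is invertible in the stable category. Finally, your closing diagnosis is only half right: the appearance of both degrees $1$ and $2$ in \eqclbl{ii'} and \eqclbl{iii'} has nothing to do with shift identifications; it is simply that the long exact sequence pins $\cH{q}$ (resp.\ $\hH{q}$) of the relevant cokernel, kernel, or cone between a surjectivity in degree $1$ and an injectivity in degree $2$ --- exactly as in the paper's proof.
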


Looking at \thmref{quiso} (in comparison with \thmref{E-characterization-hereditary}) one may wonder if a morphism $\varphi$ for which $\cH{q}(\varphi)$ (or 
$\hH{q}(\varphi)$) is an isomorphism for every $q \in Q$ necessarily~must be a weak equivalence. In general, this is \emph{not} the case; a counterexample is given in \exaref{counter}. However, it \emph{is} true under the assumption $\mathfrak{r}^2=0$, see \prpref{quiso-2}.
   
\begin{dfn}
\label{dfn:srp}
For a small $\Bbbk$-preadditive category $Q$, we consider the condition below.
\begin{rqmm}
\setcounter{rqmm}{3}
\item \label{eq:strong-retraction} \emph{Strong Retraction Property}: For every object $q \in Q$ the unit map $\Bbbk \to Q(q,q)$, given by $x \mapsto x\cdot\mathrm{id}_q$, has a $\Bbbk$-module retraction and there exists a collection $\{\mathfrak{r}_q\}_{q \in Q}$ of complements, i.e.~$\Bbbk$-modules $\mathfrak{r}_q$ such that 
\begin{equation*}
  Q(q,q) \,=\, (\Bbbk\cdot\mathrm{id}_q) \oplus \mathfrak{r}_q\;,
\end{equation*}  
compatible with composition in $Q$ in the following sense:
\begin{itemlist}
\item[($\dagger$)] $\mathfrak{r}_q \circ \mathfrak{r}_q \subseteq \mathfrak{r}_q$ for all $q \in Q$.
\item[($\ddagger$)] $Q(q,p) \circ Q(p,q) \subseteq \mathfrak{r}_p$ for all $p \neq q$ in $Q$.
\end{itemlist}
\end{rqmm}
\end{dfn}

\begin{rmk}
\label{rmk:not-unique}
If the unit map $\Bbbk \to Q(q,q)$ given by $x \mapsto x\cdot\mathrm{id}_q$ has a $\Bbbk$-module retraction, then there exists a $\Bbbk$-module decomposition $Q(q,q) = (\Bbbk\cdot\mathrm{id}_q) \oplus \mathfrak{r}_q$, but the complement $\mathfrak{r}_q$ it is not unique! Indeed, for every retraction (i.e.~left inverse) $\tau$ of the unit map, the kernel $\Ker{\tau}$ is a complement of $\Bbbk\cdot\mathrm{id}_q$ in $Q(q,q)$. The content of the Strong Retraction Property is that one can choose a collection $\{\mathfrak{r}_q\}_{q \in Q}$ of $\Bbbk$-submodules, where each $\mathfrak{r}_q$ is a complement of $\Bbbk\cdot\mathrm{id}_q$ in $Q(q,q)$, which is compatible with composition in $Q$ as described in ($\dagger$)~and~($\ddagger$). As the next example shows, this is not always possible, i.e.~the Strong Retraction Property does not always hold. When it does hold there may be more than one possible choice of such a compatible collection $\{\mathfrak{r}_q\}_{q \in Q}$. In this section, we consider the situation where $Q$ satisfies the Strong Retraction Property (\stpref{srp-setup}), and we tacitly assume that some fixed choice of a compatible collection $\{\mathfrak{r}_q\}_{q \in Q}$ has been made. Notice that the pseudo-radical  $\mathfrak{r}$ (\lemref{ideal}), the stalk functors $\stalkco{q}, \stalkcn{q}$ (\dfnref{stalk}), and the (co)homology functors $\cH[i]{q}, \cH[i]{q}$ (\dfnref{cH-hH}) all depend on this fixed choice. Of course, \thmref[Theorems~]{E-characterization-hereditary} and~\thmref[]{quiso} are true no matter which choice is made.
\end{rmk}

\begin{exa}
  \label{exa:PJ-counter}
  Even in the presence of the conditions Hom-finiteness, Local Boundedness, and Existence of a Serre Functor (conditions \eqref{Homfin}, \eqref{locbd}, and \eqref{Serre} in \stpref{Bbbk}), the Retraction Property (condition \eqref{retraction} in \stpref{Bbbk}) is strictly weaker than the Strong Retraction Property (condition \eqmref{strong-retraction} in \dfnref{srp}).
  
  Indeed, let $\Bbbk=\mathbb{R}$ and let $Q$ be the $\mathbb{R}$-preadditive category with one object, $*$, and hom set $Q(*,*)=\mathbb{C}$. Composition in $Q$ is given by multiplication of complex numbers. This~category satisfies all four conditions in \stpref{Bbbk} (the Serre functor is given by \mbox{$\Serre(*) = *$}). The unit map $\mathbb{R} \to Q(*,*)=\mathbb{C}$ is just the inclusion map and any $\mathbb{R}$-linear retraction of this map has the form $\tau^{\mspace{1mu}a} \colon \mathbb{C} \to \mathbb{R}$ given by $x+iy \mapsto x-ay$ for some $a \in \mathbb{R}$. The complement of $\mathbb{R}$ in $\mathbb{C}$ corresponding to this retraction is $\mathfrak{r}^{\mspace{1mu}a} = \Ker{(\tau^{\mspace{1mu}a})} = \mathrm{span}_{\mathbb{R}}\{a+i\} \subseteq \mathbb{C}$. For every $a \in \mathbb{R}$ we have $(a+i)^2 \notin \mathrm{span}_{\mathbb{R}}\{a+i\}$ and hence $\mathfrak{r}^{\mspace{1mu}a} \circ \mathfrak{r}^{\mspace{1mu}a} \nsubseteq \mathfrak{r}^{\mspace{1mu}a}$. Thus, $Q$ does not satisfy the Strong Retraction Property.
  
  From our viewpoint, this example is a bit ``exotic'', and as we shall see in \thmref[Theorems~]{A-dou} and \thmref[]{A-rep}, the Strong Retraction Property does hold in more ``natural'' examples. 
\end{exa}

\begin{rmk}
The Strong Retraction Property (condition \eqmref{strong-retraction} in \dfnref{srp}) implies that if two objects $p,q \in Q$ are isomorphic, then they are equal. Indeed, suppose for contradiction that there exists an isomorphism $f \colon p \to q$ with~$p \neq q$. By ($\ddagger$) one has  $\mathrm{id}_p = f^{-1}f \in \mathfrak{r}_p$, and clearly $\mathrm{id}_p \in \Bbbk\cdot\mathrm{id}_p$. By assumption we have $(\Bbbk\cdot\mathrm{id}_p) \cap \mathfrak{r}_p = 0$, and hence $\mathrm{id}_p = 0$. But this is impossible as the map $x \mapsto x\cdot\mathrm{id}_p$ is (even split) monic.
\end{rmk}

\begin{lem}
  \label{lem:ideal}
  Assume that $Q$ satisfies condition \eqmref{strong-retraction} in \dfnref{srp}.  The $\Bbbk$-modules
  \begin{equation*}
  \mathfrak{r}(p,q) \,=\, 
  \left\{\mspace{-5mu}
  \begin{array}{cl}
    \mathfrak{r}_q & \text{if $p = q$}
    \\
    Q(p,q) & \text{if $p \neq q$}
  \end{array}
  \right.
  \qquad (p,q \in Q)
\end{equation*}
constitute an ideal $\mathfrak{r}$ in $Q$; this ideal is called the \emph{\emph{pseudo-radical}} of $Q$.
\end{lem}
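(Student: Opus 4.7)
The plan is straightforward case analysis once we recall what it takes to be an ideal in a $\Bbbk$-preadditive category: the collection $\{\mathfrak{r}(p,q)\}_{p,q \in Q}$ must consist of $\Bbbk$-submodules $\mathfrak{r}(p,q) \subseteq Q(p,q)$ such that whenever $g \in \mathfrak{r}(p,q)$, one has $h \circ g \in \mathfrak{r}(p,r)$ for every $h \in Q(q,r)$ and $g \circ f \in \mathfrak{r}(s,q)$ for every $f \in Q(s,p)$. The submodule property is immediate: when $p \neq q$ we have $\mathfrak{r}(p,q) = Q(p,q)$, and when $p = q$ the module $\mathfrak{r}_q$ is a $\Bbbk$-submodule of $Q(q,q)$ by the definition of the Strong Retraction Property.

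For the left absorption property, I would fix $g \in \mathfrak{r}(p,q)$ and $h \in Q(q,r)$ and split into cases according to whether $p = r$ or not. If $p \neq r$ there is nothing to prove since $\mathfrak{r}(p,r) = Q(p,r)$. If $p = r$, I would split further on whether $p = q$. When $p \neq q$, the composite $hg$ lies in $Q(q,p) \circ Q(p,q) \subseteq \mathfrak{r}_p$ by property $(\ddagger)$. When $p = q = r$, I decompose $h = x \cdot \mathrm{id}_p + h'$ with $x \in \Bbbk$ and $h' \in \mathfrak{r}_p$ (using the direct sum decomposition $Q(p,p) = (\Bbbk \cdot \mathrm{id}_p) \oplus \mathfrak{r}_p$) and observe that $hg = xg + h'g$ lies in $\mathfrak{r}_p$: the first summand by $\Bbbk$-linearity of $\mathfrak{r}_p$, and the second by property $(\dagger)$, i.e.\ $\mathfrak{r}_p \circ \mathfrak{r}_p \subseteq \mathfrak{r}_p$.

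The right absorption property is verified symmetrically: for $f \in Q(s,p)$ and $g \in \mathfrak{r}(p,q)$, the case $s \neq q$ is trivial; the case $s = q \neq p$ follows from $(\ddagger)$ applied to $gf \in Q(p,q) \circ Q(q,p) \subseteq \mathfrak{r}_q$; and the case $s = p = q$ is handled by decomposing $f = y \cdot \mathrm{id}_p + f'$ with $f' \in \mathfrak{r}_p$ and using $(\dagger)$ together with $\Bbbk$-linearity of $\mathfrak{r}_p$.

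There is no real obstacle here; the only subtlety is keeping track of which of the two conditions $(\dagger)$ and $(\ddagger)$ applies in each case, and recognizing that the exhaustive list of cases is precisely what is covered by the two conditions in \dfnref{srp}. The proof is essentially bookkeeping and should fit in a short paragraph.
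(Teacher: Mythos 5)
Your proof is correct and follows essentially the same route as the paper's: reduce to the case where the target is an endomorphism module, dispatch the mixed case with ($\ddagger$), and handle the case $p=q$ by splitting $Q(q,q)=(\Bbbk\cdot\mathrm{id}_q)\oplus\mathfrak{r}_q$ and invoking $\Bbbk$-linearity together with ($\dagger$).
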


\begin{proof}
  We must prove the inclusions $Q(q,r) \circ \mathfrak{r}(p,q) \subseteq \mathfrak{r}(p,r)$ and $\mathfrak{r}(q,r) \circ Q(p,q) \subseteq \mathfrak{r}(p,r)$ for all $p,q,r \in Q$. For $p \neq r$ we have $\mathfrak{r}(p,r)=Q(p,r)$ and the inclusions are trivial. For $p=r$ the inclusions read $Q(q,p) \circ \mathfrak{r}(p,q) \subseteq \mathfrak{r}_p$ and $\mathfrak{r}(q,p) \circ Q(p,q) \subseteq \mathfrak{r}_p$. These inclusions hold for $p \neq q$ by ($\ddagger$). For $p=q$ the inclusions read $Q(q,q) \circ \mathfrak{r}_q \subseteq \mathfrak{r}_q$ and $\mathfrak{r}_q \circ Q(q,q) \subseteq \mathfrak{r}_q$. By using the equality $Q(q,q) = (\Bbbk\cdot\mathrm{id}_q) \oplus \mathfrak{r}_q$ both inclusions become $\Bbbk\cdot\mathfrak{r}_q + (\mathfrak{r}_q \circ \mathfrak{r}_q) \subseteq \mathfrak{r}_q$, and this inclusion holds by ($\dagger$).  
\end{proof}

\begin{exa}
Assume that $\Bbbk$ is a field and that each endomorphism $\Bbbk$-algebra $Q(q,q)$ is local with Jacobson radical $\mathfrak{r}_q := \operatorname{rad} Q(q,q)$. With this choice of $\mathfrak{r}_q$ ($q \in Q$) the \mbox{requirements} in the Strong Retraction Property are met and the ideal $\mathfrak{r}$ defined in \lemref{ideal} is precisely the \emph{radical}, $\operatorname{rad}_Q$, of the category $Q$ in the sense of Kelly \cite{Kelly64}. See e.g.~Assem, Simson,~and Skowro{\'n}ski \cite[Appn.~A.3, Prop.~3.5]{ASS1} for details. In more general situations,  $\mathfrak{r}$ need not~be the radical of $Q$, however, our terminology ``pseudo-radical'' for the ideal $\mathfrak{r}$ is inspired by the situation just mentioned.
\end{exa}

\begin{dfn}
  \label{dfn:stalk}
  Assume that $Q$ satisfies condition \eqmref{strong-retraction} in \dfnref{srp}. The \emph{stalk functors} (this name is explained by \lemref{stalk} below) at an object $q \in Q$ are defined to be
\begin{equation*}
  \stalkco{q}\,=\, Q(q,-)/\mathfrak{r}(q,-) \,\in\, \lMod{Q}
  \qquad \text{and} \qquad
  \stalkcn{q} \,=\, Q(-,q)/\mathfrak{r}(-,q) \,\in\, \rMod{Q}\,.
\end{equation*}
\end{dfn}

\noindent
Note that these stalk functors generalize the functors introduced in \cite[Setup 3.1]{MR4013804}.

\begin{lem}
  \label{lem:stalk}
Assume that $Q$ satisfies condition \eqmref{strong-retraction} in \dfnref{srp}. Let $q \in Q$ be given. For every object $p \in Q$ one has
\begin{equation*}
\stalkco{q}(p) \,=\, 
\left\{\mspace{-5mu}
\begin{array}{cl}
 \Bbbk & \text{if $p = q$}
 \\
 0 & \text{if $p \neq q$}\;.
\end{array}
\right.
\end{equation*}
For every morphism $f$ in $Q$ one has
\begin{equation*}
\stalkco{q}(f) \,=\, 
\left\{\mspace{-5mu}
\begin{array}{cl}
 x\cdot\mathrm{id}_\Bbbk & \text{if \ $f = x\cdot\mathrm{id}_q+g \in (\Bbbk\cdot\mathrm{id}_q) \oplus \mathfrak{r}_q = Q(q,q)$}
 \\
 0 & \text{otherwise}\;.
\end{array}
\right.
\end{equation*}
The (contravariant) functor $\stalkcn{q}$ can be described similarly. 
\end{lem}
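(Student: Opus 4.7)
The proof is essentially an unpacking of the definitions, combining the formula $\stalkco{q}=Q(q,-)/\mathfrak{r}(q,-)$ with the description of the ideal $\mathfrak{r}$ from \lemref{ideal} and the direct-sum decomposition $Q(q,q)=(\Bbbk\cdot\mathrm{id}_q)\oplus\mathfrak{r}_q$ provided by the Strong Retraction Property. No genuine obstacle is expected: the heavy lifting (checking that the $\Bbbk$-submodules $\mathfrak{r}(p,q)$ of \lemref{ideal} actually form an ideal, which is what makes $\stalkco{q}$ a well-defined object of $\lMod{Q}$) has already been carried out.

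For the object formula, I would simply evaluate $\stalkco{q}(p)=Q(q,p)/\mathfrak{r}(q,p)$. When $p\neq q$ the definition in \lemref{ideal} gives $\mathfrak{r}(q,p)=Q(q,p)$, so the quotient vanishes. When $p=q$ the definition gives $\mathfrak{r}(q,q)=\mathfrak{r}_q$, and the Strong Retraction Property yields
\[
Q(q,q)/\mathfrak{r}_q \,=\, \bigl((\Bbbk\cdot\mathrm{id}_q)\oplus\mathfrak{r}_q\bigr)/\mathfrak{r}_q \,\cong\, \Bbbk\cdot\mathrm{id}_q \,\cong\,\Bbbk,
\]
where the last isomorphism is the unit map $x\mapsto x\cdot\mathrm{id}_q$ (which is injective since it is split by the Strong Retraction Property, so the identification with $\Bbbk$ is canonical).

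For the morphism formula, let $f\colon r\to p$ be a morphism in $Q$. The $\Bbbk$-linear map $Q(q,f)\colon Q(q,r)\to Q(q,p)$ is post-composition with $f$, and $\stalkco{q}(f)$ is its induced map on the quotients. If $f\in Q(q,q)$, so that both source and target of $\stalkco{q}(f)$ equal $\Bbbk$ under the identification above, write $f=x\cdot\mathrm{id}_q+g$ with $x\in\Bbbk$ and $g\in\mathfrak{r}_q$. Then the representative $\mathrm{id}_q$ of $1\in\Bbbk$ is sent to
\[
f\circ\mathrm{id}_q \,=\, x\cdot\mathrm{id}_q+g \,\equiv\, x\cdot\mathrm{id}_q \pmod{\mathfrak{r}_q},
\]
so under the identification of $Q(q,q)/\mathfrak{r}_q$ with $\Bbbk$ the map $\stalkco{q}(f)$ is multiplication by $x$, i.e.\ $x\cdot\mathrm{id}_\Bbbk$. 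In the remaining cases either $r\neq q$ or $p\neq q$, so by the object formula already established at least one of $\stalkco{q}(r)$ and $\stalkco{q}(p)$ is zero; hence $\stalkco{q}(f)=0$.

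The description of the contravariant stalk functor $\stalkcn{q}=Q(-,q)/\mathfrak{r}(-,q)$ is entirely analogous: the Strong Retraction Property and \lemref{ideal} are self-dual in the sense that they apply equally to $Q^{\mathrm{op}}$, so the same argument with pre-composition in place of post-composition gives the mirror formulas $\stalkcn{q}(p)=\Bbbk$ if $p=q$ and $0$ otherwise, and $\stalkcn{q}(f)=x\cdot\mathrm{id}_\Bbbk$ when $f=x\cdot\mathrm{id}_q+g\in Q(q,q)$ while $\stalkcn{q}(f)=0$ in all other cases.
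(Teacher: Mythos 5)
Your proof is correct and follows the same route as the paper, which simply unpacks the definition of $\stalkco{q}$ together with the decomposition $Q(q,q)=(\Bbbk\cdot\mathrm{id}_q)\oplus\mathfrak{r}_q$ and the description of $\mathfrak{r}$ from \lemref{ideal}. Your write-up is just a more detailed version of the paper's one-line argument, including the correct observation that any morphism not lying in $Q(q,q)$ induces the zero map because its source or target stalk vanishes.
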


\begin{proof}
  The assertions follow from the definition, \dfnref[]{stalk}, of the stalk functors. Note that one has $\stalkco{q}(q) = Q(q,q)/\mathfrak{r}_q \cong \Bbbk$ by the $\Bbbk$-module decomposition
  $Q(q,q) = (\Bbbk\cdot\mathrm{id}_q) \oplus \mathfrak{r}_q$.
\end{proof}

The stalk functors allow us to define a notion of (co)homology for objects in $\lMod{Q,\alg}$. 

\begin{dfn}
  \label{dfn:cH-hH}
  Assume that $Q$ satisfies condition \eqmref{strong-retraction} in \dfnref{srp}. 
  Let $X \in \lMod{Q,\alg}$. For $q \in Q$ and $i \geqslant 0$ we define the \emph{$i^\mathrm{th}$ (co)homology} of $X$ at $q$ as follows:
  \begin{equation*}
    \cH[i]{q}(X) \,=\, \Ext{Q}{i}{\stalkco{q}}{X}
    \qquad \text{and} \qquad
    \hH[i]{q}(X) \,=\, \Tor{Q}{i}{\stalkcn{q}}{X}\;.
  \end{equation*}
  Notice that $\cH[i]{q}$ and $\hH[i]{q}$ are functors $\lMod{Q,\alg} \to \lMod{\alg}$.
\end{dfn}

\begin{rmk}
  \label{rmk:cH-hH}
  The Ext and Tor functors in the definition above are the right and left derived functors of $\operatorname{Hom}_Q$ and $\ORt{}{}$ from \eqref{il} and \eqref{ic}, and the functors \smash{$\cH[*]{q} = \Ext{Q}{*}{\stalkco{q}}{-}$} and \smash{$\hH[*]{q} = \Tor{Q}{*}{\stalkcn{q}}{-}$} are computed via projective resolutions of $\stalkco{q} \in \lMod{Q}$ and $\stalkcn{q} \in \rMod{Q}$. Notice that we can also consider \dfnref{cH-hH} in the special case where $\alg=\Bbbk$. It follows that for $X \in \lMod{Q,\alg}$ one has
  \begin{equation*}
    \cH[i]{q}(X)^\natural \,=\, \Ext{Q}{i}{\stalkco{q}}{X}^\natural
    \,=\, \Ext{Q}{i}{\stalkco{q}}{X^\natural} \,=\, \cH[i]{q}(X^\natural)\;,
  \end{equation*}
  and similarly, $\hH[i]{q}(X)^\natural = \hH[i]{q}(X^\natural)$. So (co)homology commutes with the forgetful functor.
\end{rmk}

\begin{stp}
  \label{stp:srp-setup}
Throughout this section, we asume without further mention that the $\Bbbk$-pre\-ad\-ditive category $Q$ satisfies the Strong Retraction Property (condition \eqmref{strong-retraction} in \dfnref{srp}) such that the pseudo-radical $\mathfrak{r}$ (\lemref{ideal}), the stalk functors $\stalkco{q}, \stalkcn{q}$ (\dfnref{stalk}), and the (co)homology functors $\cH[i]{q}, \hH[i]{q}$ (\dfnref{cH-hH}) are defined. If further conditions (e.g.~the ones from \stpref{Bbbk}) need to be imposed on $Q$, this will be explicitly men\-tioned.
\end{stp}

Recall the functors $\Fq{q}$ and $\Gq{q}$ from \corref{adjoint-evaluation}. The following result provides examples of objects in $\lMod{Q,\alg}$ with trivial (co)homology.

\begin{lem}
  \label{lem:FGH}
  Assume that the category $Q$ satisfies condition \eqref{Homfin} in \stpref{Bbbk}. For every $p \in Q$ and $M \in \lMod{\alg}$ one has:
  \begin{prt}
  \item $\hH[i]{q}(\Fq{p}(M))=0$ for every $q \in Q$ and $i>0$.
  \item $\cH[i]{q}(\Gq{p}(M))=0$ for every $q \in Q$ and $i>0$.
  \end{prt}
\end{lem}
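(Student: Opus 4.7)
The plan is to reduce both (co)homology computations to vanishing of $\Tor{\Bbbk}{i}{-}{M}$ or $\Ext{\Bbbk}{i}{-}{M}$ at the objects $\stalkcn{q}(p)$ and $\stalkco{q}(p)$, which by \lemref{stalk} are either $\Bbbk$ or $0$ and so are projective (hence flat) $\Bbbk$-modules. This reduction is entirely formal: it relies on the Yoneda isomorphism $W\otimes_Q Q(p,-)\cong W(p)$, the associativity of \lemref{associativity}, and the tensor-hom adjunction of \prpref{adjunctions-2} with $\alg=\Bbbk$.

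For part (a), I would take a projective resolution $\cpx{P}\to\stalkcn{q}$ in $\rMod{Q}$. Using \lemref{associativity} and the Yoneda isomorphism (as already used in the proof of \corref{adjoint-evaluation}), one rewrites
\begin{equation*}
  \cpx{P}\otimes_Q \Fq{p}(M) \,=\, \cpx{P}\otimes_Q \bigl(Q(p,-)\otimes_\Bbbk M\bigr) \,\cong\, \bigl(\cpx{P}\otimes_Q Q(p,-)\bigr)\otimes_\Bbbk M \,\cong\, \cpx{P}(p)\otimes_\Bbbk M.
\end{equation*}
The key observation is now that $\cpx{P}(p)\to\stalkcn{q}(p)$ is an augmented projective resolution in $\lMod{\Bbbk}$: evaluation at $p$ is exact (kernels and colimits are pointwise) and preserves projectives because its right adjoint is exact under condition \eqref{Homfin} (by the right-module analogue of \corref{adjoint-evaluation}\prtlbl{b}). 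Hence for $i>0$,
\begin{equation*}
  \hH[i]{q}(\Fq{p}(M)) \,=\, H_i\bigl(\cpx{P}(p)\otimes_\Bbbk M\bigr) \,=\, \Tor{\Bbbk}{i}{\stalkcn{q}(p)}{M} \,=\, 0,
\end{equation*}
since $\stalkcn{q}(p)$ is $\Bbbk$ (when $p=q$) or $0$ (when $p\neq q$) by the right-module analogue of \lemref{stalk}.

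Part (b) is strictly dual. I would take a projective resolution $\cpx{P}\to\stalkco{q}$ in $\lMod{Q}$ and use the second adjunction of \prpref{adjunctions-2} (with $\alg=\Bbbk$ and $W=Q(-,p)$), together with the Yoneda isomorphism $Q(-,p)\otimes_Q\cpx{P}\cong\cpx{P}(p)$, to obtain
\begin{equation*}
  \Hom{Q}{\cpx{P}}{\Gq{p}(M)} \,=\, \Hom{Q}{\cpx{P}}{\Hom{\Bbbk}{Q(-,p)}{M}} \,\cong\, \Hom{\Bbbk}{\cpx{P}(p)}{M}.
\end{equation*}
Again $\cpx{P}(p)$ is a projective resolution of $\stalkco{q}(p)$ in $\lMod{\Bbbk}$, so for $i>0$ one concludes $\cH[i]{q}(\Gq{p}(M))=\Ext{\Bbbk}{i}{\stalkco{q}(p)}{M}=0$.

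There is no real obstacle here; the only minor point to verify carefully is that evaluation at $p$ sends projectives in $\rMod{Q}$ (respectively $\lMod{Q}$) to projective $\Bbbk$-modules. This is where condition \eqref{Homfin} enters, exactly as in the remarks following \corref{adjoint-evaluation} and the proof of \rmkref{necessary}. Everything else is bookkeeping with the two end/coend formulas \eqref{il}, \eqref{ic} and the tensor-hom adjunctions already established in \secref{Mod}.
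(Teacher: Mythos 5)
Your proof is correct and takes essentially the same route as the paper: reduce via \lemref{associativity} and the Yoneda isomorphism to the complex $\cpx{P}(p)\otimes_\Bbbk M$ (resp.\ $\Hom{\Bbbk}{\cpx{P}(p)}{M}$), observe that evaluation at $p$ is exact and preserves projectives under condition \eqref{Homfin} (via the exactness of its right adjoint, with the right-module analogue needed in part (a)), and conclude from the fact that $\stalkcn{q}(p)$ and $\stalkco{q}(p)$ are either $\Bbbk$ or $0$. The only difference is cosmetic: the paper finishes by noting $\cpx{P}(p)$ is homotopy equivalent to $\Bbbk$ or $0$, whereas you identify the (co)homology with $\Tor{\Bbbk}{i}{\stalkcn{q}(p)}{M}$ and $\Ext{\Bbbk}{i}{\stalkco{q}(p)}{M}$ and invoke projectivity.
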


\begin{proof}
  \proofoftag{a} Let $\cpx{P} = \cdots \to P_1 \to P_0 \to 0$ be a projective of  resolution of $\stalkcn{q}$ in $\lMod{Q}$. By \dfnref{cH-hH} the homology of the complex $\ORt{\cpx{P}}{\Fq{p}(M)}$ computes $\hH[*]{q}(\Fq{p}(M))$. In the following computation, the first isomorphism holds by the definition of the functor $\Fq{p}$, the middle isomorphism follows from \lemref{associativity}, and the last isomorphism is established in the proof of \corref{adjoint-evaluation}:
\begin{equation*}
  \ORt{\cpx{P}}{\Fq{p}(M)} \,\cong\, \ORt{\cpx{P}}{(Q(p,-) \otimes_\Bbbk M)}
  \,\cong\,
  (\ORt{\cpx{P}}{Q(p,-)}) \otimes_\Bbbk M
  \,\cong\, \cpx{P}(p) \otimes_\Bbbk M\;.
\end{equation*}
The evaluation functor $\Eq{p}$ is exact. Its right adjoint $\Gq{p}$ is also exact by \corref{adjoint-evaluation}\prtlbl{b}, as $Q$ satisfies condition \eqref{Homfin} in \stpref{Bbbk}, so $\Eq{p}$ preserves projective objects. Hence the complex $\Eq{p}(\cpx{P}) = \cpx{P}(p)$ is a projective resolution of the $\Bbbk$-module $\Eq{p}(\stalkcn{q}) = \stalkcn{q}(p)$. But this $\Bbbk$-module is projective, as it is either $\Bbbk$ or $0$ by \lemref{stalk}, and thus the complex $\cpx{P}(p)$ is homotopy equivalent to either $\Bbbk$ or $0$. Thus the complex $\cpx{P}(p) \otimes_\Bbbk M$ is homotopy equivalent to either $M$ or $0$, in particular, it has zero homology in all degrees $i>0$.
  
  \proofoftag{b} Dual to the proof of part \prtlbl{a}.
\end{proof}

\begin{prp}
  \label{prp:adjoint-stalk}
For every $q \in Q$ there is an adjoint triple $(\Cq{q},\Sq{q},\Kq{q})$ as follows:
  \begin{equation*}
  \begin{gathered}
  \xymatrix@C=4pc{
    \lMod{\alg}
    \ar[r]^-{\Sq{q}} &
    \lMod{Q,\alg}
    \ar@/_1.8pc/[l]_-{\Cq{q}}
    \ar@/^1.8pc/[l]^-{\Kq{q}}    
  }  \qquad \textnormal{given by} \qquad
  {\setlength\arraycolsep{1.5pt}
   \renewcommand{\arraystretch}{1.2}
  \begin{array}{rcl}
  \Cq{q}(X) &=& \ORt{\stalkcn{q}}{X} \\
  \Sq{q}(M) &=& \stalkco{q} \otimes_\Bbbk M \\ 
  \Kq{q}(X) &=& \Hom{Q}{\stalkco{q}}{X}\;.
  \end{array}
  }
  \end{gathered}
  \end{equation*}
\end{prp}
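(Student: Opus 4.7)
The plan is to obtain the adjoint triple by two applications of \prpref{adjunctions-2}, followed by a natural identification of the two resulting descriptions of $\Sq{q}$.

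First, applying \prpref{adjunctions-2} with the left $Q$-module $U = \stalkco{q}$ yields at once the adjunction $\Sq{q} \dashv \Kq{q}$, since with this choice the left adjoint $U \otimes_\Bbbk -$ is literally $\Sq{q}$ and the right adjoint $\Hom{Q}{U}{-}$ is $\Kq{q}$. Second, applying \prpref{adjunctions-2} with the right $Q$-module $W = \stalkcn{q}$ yields an adjunction $\Cq{q} = \ORt{\stalkcn{q}}{-} \,\dashv\, \Hom{\Bbbk}{\stalkcn{q}}{-}$, where the right adjoint sends $M \in \lMod{\alg}$ to the left $Q$-module $p \mapsto \Hom{\Bbbk}{\stalkcn{q}(p)}{M}$. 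To conclude that $(\Cq{q},\Sq{q},\Kq{q})$ is an adjoint triple, it then suffices by uniqueness of right adjoints to exhibit a natural isomorphism $\Hom{\Bbbk}{\stalkcn{q}}{M} \cong \stalkco{q} \otimes_\Bbbk M$ in $\lMod{Q,\alg}$, natural in $M$.

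The substantive step is therefore this last identification, which I would verify pointwise via \lemref{stalk} (and its obvious contravariant counterpart for $\stalkcn{q}$): both functors take the value $M$ at $p = q$ and vanish at $p \neq q$, and on an endomorphism $f = x \cdot \mathrm{id}_q + g \in (\Bbbk \cdot \mathrm{id}_q) \oplus \mathfrak{r}_q = Q(q,q)$ each acts as multiplication by $x$ on $M$; the matching pointwise isomorphisms therefore assemble into a natural transformation of left $Q$-modules. I foresee no genuine obstacle: the entire argument reduces to an unpacking of \lemref{stalk}, with the only mild care being the verification that the pointwise identifications are $Q$-natural.
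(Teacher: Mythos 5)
Your proposal is correct and follows essentially the same route as the paper: both apply \prpref{adjunctions-2} with $U=\stalkco{q}$ and $W=\stalkcn{q}$, and both use \lemref{stalk} to identify $\Sq{q}=\stalkco{q}\otimes_\Bbbk-$ with $\Hom{\Bbbk}{\stalkcn{q}}{-}$, thereby obtaining $\Cq{q}=\ORt{\stalkcn{q}}{-}$ as the left adjoint of $\Sq{q}$. Your pointwise verification of that identification is just a slightly more explicit version of the paper's one-line appeal to \lemref{stalk}.
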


\begin{proof}
  Apply \prpref{adjunctions-2} with $U=\stalkco{q}$ and $W=\stalkcn{q}$. As $\Sq{q} = \stalkco{q} \otimes_\Bbbk -$ its right adjoint is $\Hom{Q}{\stalkco{q}}{-}$. From the descriptions of $\stalkco{q}$ and $\stalkcn{q}$ in \lemref{stalk}, we see that $\Sq{q}$ is also given by $\Sq{q} \cong \Hom{\Bbbk}{\stalkcn{q}}{-}$, so its left adjoint is $\ORt{\stalkcn{q}}{-}$. 
\end{proof}

Note that $\cH[i]{q}$ is the $i^\mathrm{th}$ right derived functor of $\Kq{q}$ and  $\hH[i]{q}$ is the $i^\mathrm{th}$ left derived functor of $\Cq{q}$; see \dfnref{cH-hH}. We wish to give more hands-on descriptions of the functors $\Kq{q}$ and $\Cq{q}$, and to this end, we first find concrete projective presentations of $\stalkco{q}$ and $\stalkcn{q}$.

\begin{dfn}
  \label{dfn:sets}
  For each $q \in Q$ we define sets of morphisms in $Q$ as follows:
  \begin{equation*}
    \textstyle
    \setco{q} \,=\, \bigcup_{r \,\in\, Q} \,\mathfrak{r}(q,r)   
    \qquad \text{and} \qquad
    \setcn{q} \,=\, \bigcup_{p \,\in\, Q} \,\mathfrak{r}(p,q)\;.    
  \end{equation*}
  So $\setco{q}$, respectively, $\setcn{q}$, contains all morphisms in $\mathfrak{r}$ with domain, respectively, codomain, $q$.
\end{dfn}

\begin{lem}
  \label{lem:projective-presentations}
  For every $q \in Q$ there are two exact sequences:
  \begin{eqnarray}
    \label{eq:seq-co}    
    \xymatrix{
    \bigoplus_{g \,\in\, \setco{q}} Q(\cod{g},-)
    \ar[r]^-{\mapco{q}} & Q(q,-) \ar[r] & \stalkco{q} \ar[r] & 0
    } 
    &\text{in \ \ $\lMod{Q}$\,\phantom{.}}& 
    \\
    \label{eq:seq-cn}
    \xymatrix{
    \bigoplus_{f \,\in\, \setcn{q}} Q(-,\dom{f})
    \ar[r]^-{\mapcn{q}} & Q(-,q) \ar[r] & \stalkcn{q} \ar[r] & 0
    }
    &\text{in \ \ $\rMod{Q}$\,.}&
  \end{eqnarray}
  Here $\mapco{q}$ is the unique morphism in $\lMod{Q}$ given by $Q(g,-)$ on the component correspon\-ding to $g \in \setco{q}$ and $\mapcn{q}$ is the unique morphism in $\rMod{Q}$ given by $Q(-,f)$ on the component correspon\-ding to $f \in \setcn{q}$.
\end{lem}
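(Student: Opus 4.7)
My plan is to prove the exactness of \eqref{seq-co} directly by showing $\Im \mapco{q} = \mathfrak{r}(q,-)$ as subfunctors of $Q(q,-)$, and then deduce \eqref{seq-cn} by duality. Indeed, since $\stalkco{q}$ is \emph{defined} as $Q(q,-)/\mathfrak{r}(q,-)$, exactness of \eqref{seq-co} is equivalent to this identification of images. The identification can be verified objectwise in $\lMod{\Bbbk}$, using only that $\mathfrak{r}$ is a two-sided ideal of $Q$ (\lemref{ideal}).

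Fix $r \in Q$. The component of $\mapco{q}$ at $r$ is the $\Bbbk$-linear map
\begin{equation*}
\bigoplus_{g \,\in\, \setco{q}} Q(\cod g, r) \,\longrightarrow\, Q(q,r), \qquad (h_g)_g \,\longmapsto\, \sum_{g} h_g \circ g.
\end{equation*}
For the inclusion $\Im \mapco{q}(r) \subseteq \mathfrak{r}(q,r)$, each summand $h_g \circ g$ lies in $Q(\cod g, r) \circ \mathfrak{r}(q, \cod g) \subseteq \mathfrak{r}(q, r)$ because $g \in \setco{q}$ means $g \in \mathfrak{r}(q,\cod g)$ and $\mathfrak{r}$ is an ideal. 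For the reverse inclusion, note that any $f \in \mathfrak{r}(q,r)$ is itself a morphism in $\mathfrak{r}$ with domain $q$, hence $f \in \setco{q}$ with $\cod f = r$; choosing the tuple whose $g = f$ component is $\mathrm{id}_r \in Q(\cod f, r) = Q(r,r)$ and whose other components vanish, we obtain $f$ in the image.

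The sequence \eqref{seq-cn} is formally dual. The Strong Retraction Property and the pseudo-radical $\mathfrak{r}$ are manifestly self-dual, and $\stalkcn{q}$ is precisely the ``co''-stalk functor at $q$ for the opposite category $Q^\mathrm{op}$. Applying the argument of the previous paragraph in $Q^\mathrm{op}$ (so that composition $h \circ g$ becomes $f \circ h$ in $Q$) gives $\Im \mapcn{q}(p) = \mathfrak{r}(p,q)$ for every $p \in Q$, from which exactness of \eqref{seq-cn} follows.

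There is no serious obstacle in this argument: it is a direct verification that the image of a coproduct of representables mapping into a representable, indexed by the generators of the ideal $\mathfrak{r}(q,-)$, equals that ideal. The only ingredients are the ideal property of $\mathfrak{r}$ and the defining quotient description of the stalk functors.
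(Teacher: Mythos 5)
Your argument is correct and is essentially the paper's own proof: both establish exactness of \eqref{seq-co} by verifying objectwise that $\Im\mapco{q}=\mathfrak{r}(q,-)$, using the ideal property of $\mathfrak{r}$ for one inclusion and, for the other, that any $f\in\mathfrak{r}(q,r)$ lies in $\setco{q}$ and is hit by $\mathrm{id}_r$ on its own component. Your handling of \eqref{seq-cn} by passing to $Q^{\mathrm{op}}$ matches the paper's ``proved similarly,'' so there is nothing to add.
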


\begin{proof}
  We only show exactness of the sequence \eqref{seq-co} as exactness of \eqref{seq-cn} is proved similar\-ly. By the definition, \dfnref[]{stalk}, of $\stalkco{q}$ we must argue that one has $\Im{\mapco{q}} = \mathfrak{r}(q,-)$, and by the definition of $\mapco{q}$ this is equivalent to proving that for every $r \in Q$ one has: 
\begin{equation}  
  \label{eq:sum}
  \textstyle\sum_{\,g \,\in\, \setco{q}} \Im{Q(g,r) \,=\, \mathfrak{r}(q,r)}\;.
\end{equation}  
    
  ``$\subseteq$'': For every $g \in \setco{q}$ one has $\Im{Q(g,r)} \subseteq \mathfrak{r}(q,r)$. Indeed, for any $h \in Q(\cod{g},r)$ the morphism $Q(g,r)(h) = hg$ is in $\mathfrak{r}(q,r)$ as $g$ belongs to $\mathfrak{r}(q,\cod{g})$ and $\mathfrak{r}$ is an ideal.
  
  ``$\supseteq$'': If $h \in \mathfrak{r}(q,r)$, then $h$ is in $\setco{q}$ and hence $\Im{Q(h,r)}$ is contained in the left-hand side of \eqref{sum}. Clearly $h$ is in the image of $Q(h,r) \colon Q(r,r) \to Q(q,r)$ as $Q(h,r)(\mathrm{id}_r) = h$.
\end{proof}

We now give some explicit formulae for the functors $\Kq{q}$ and $\Cq{q}$ from \prpref{adjoint-stalk}.

\begin{prp}
  \label{prp:K-C-formulae}
  For every $q \in Q$ and $X \in \lMod{Q,\alg}$ there are isomorphisms:
  \begin{align}
    \label{eq:K-formula}    
    \Kq{q}(X) &\,\cong\, 
    \textstyle
    \Ker{\big(\, X(q) \xrightarrow{\,\Mapco{q}{X}\,} \prod_{g \,\in\, \setco{q}}\, X(\cod{g}) \big) }
    \,=\
    \bigcap_{g \,\in\, \setco{q}} \Ker{X(g)}
    \\
    \label{eq:C-formula}
    \Cq{q}(X) &\,\cong\, 
    \textstyle
    \Coker{\big(\bigoplus_{f \,\in\, \setcn{q}}\, X(\dom{f}) \xrightarrow{\,\Mapcn{q}{X}} X(q)\big)} 
    \,=\ 
    X(q)\big/\big(\textstyle\sum_{f \,\in\, \setcn{q}} \Im{X(f)} \big)     
  \end{align}
 Here \smash{$\Mapco{q}{X}$} is the unique morphism whose coordinate map corresponding to $g \in \setco{q}$ is $X(g)$,~and $\Mapcn{q}{X}$ is the unique morphism given by $X(f)$ on the component correspon\-ding to $f \in \setcn{q}$.
\end{prp}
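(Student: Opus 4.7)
The plan is to derive the two formulae by applying the appropriate hom/tensor functor to the explicit projective presentations of $\stalkco{q}$ and $\stalkcn{q}$ provided by \lemref{projective-presentations}, and then to identify the resulting maps via (co-)Yoneda.

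First, I would prove \eqref{K-formula}. By \prpref{adjoint-stalk} we have $\Kq{q}(X) = \Hom{Q}{\stalkco{q}}{X}$, so I apply the left-exact functor $\Hom{Q}{-}{X}$ to the exact sequence \eqref{seq-co}. This yields a left-exact sequence
\[
0 \longrightarrow \Hom{Q}{\stalkco{q}}{X} \longrightarrow \Hom{Q}{Q(q,-)}{X} \xrightarrow{\ \Hom{Q}{\mapco{q}}{X}\ } \Hom{Q}{\textstyle\bigoplus_{g\in\setco{q}}Q(\cod{g},-)}{X}.
\]
The covariant Yoneda isomorphism (see \cite[eq.~(3.10)]{Kelly}) identifies $\Hom{Q}{Q(p,-)}{X} \cong X(p)$ naturally in $p$, and $\Hom{Q}{-}{X}$ turns coproducts into products. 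Under these identifications the right-hand term becomes $\prod_{g\in\setco{q}}X(\cod{g})$. It remains to check that $\Hom{Q}{\mapco{q}}{X}$ corresponds to $\Mapco{q}{X}$, which follows directly from the definition of $\mapco{q}$ in \lemref{projective-presentations}: on the $g$-component, $\mapco{q}$ is $Q(g,-) \colon Q(\cod{g},-) \to Q(q,-)$, and Yoneda sends this to $X(g) \colon X(q) \to X(\cod{g})$. This gives the kernel description in \eqref{K-formula}, and the equality with $\bigcap_{g}\Ker{X(g)}$ is just the fact that an element lies in the kernel of a product map iff each of its coordinates vanishes.

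Next I would establish \eqref{C-formula} by the dual argument. By \prpref{adjoint-stalk}, $\Cq{q}(X) = \ORt{\stalkcn{q}}{X}$, so I apply the right-exact functor $\ORt{-}{X}$ (recall from \rmkref{V2} and \eqref{ic} that this is an indexed colimit, hence preserves colimits) to the exact sequence \eqref{seq-cn}. This produces a right-exact sequence
\[
\textstyle\bigoplus_{f\in\setcn{q}}\bigl(\ORt{Q(-,\dom{f})\,}{\,X}\bigr) \xrightarrow{\ \ORt{\mapcn{q}}{X}\ } \ORt{Q(-,q)}{X} \longrightarrow \ORt{\stalkcn{q}}{X} \longrightarrow 0.
\]
The contravariant (co-)Yoneda isomorphism from \eqref{ic}, which is already recorded in the proof of \corref{adjoint-evaluation} (the isomorphism $\ORt{Q(-,q)\,}{\,?}\cong\Eq{q}(?)$), identifies $\ORt{Q(-,p)\,}{\,X}\cong X(p)$ naturally. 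Since $\mapcn{q}$ is given by $Q(-,f)$ on the $f$-component, the induced map $\ORt{\mapcn{q}}{X}$ corresponds under these identifications to $X(f) \colon X(\dom{f}) \to X(q)$ on the $f$-component, i.e.~to $\Mapcn{q}{X}$. Taking cokernels yields the first equality in \eqref{C-formula}, and the second equality is the elementary identification of a cokernel of a sum-of-maps with the quotient by the sum of their images.

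I do not foresee any real obstacle: once the projective presentations of \lemref{projective-presentations} are in hand, the whole argument is a routine application of (co-)Yoneda together with left/right exactness of hom/tensor. The only minor technical point worth verifying carefully is that under the Yoneda isomorphism the morphism $Q(g,-)$ corresponds to $X(g)$ with the correct variance and direction — but this is exactly the naturality of Yoneda and causes no trouble.
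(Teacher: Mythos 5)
Your proposal is correct and is essentially identical to the paper's proof: both apply $\Hom{Q}{-}{X}$ (left exact, turning coproducts into products) and $\ORt{-}{X}$ (right exact, preserving coproducts) to the projective presentations of \lemref{projective-presentations}, then use the Yoneda isomorphism $\Hom{Q}{Q(p,-)}{X}\cong X(p)$ and the co-Yoneda isomorphism $\ORt{Q(-,p)}{X}\cong X(p)$ to identify the induced maps with $\Mapco{q}{X}$ and $\Mapcn{q}{X}$. No gaps.
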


\begin{proof}
  The last equalities in \eqref{K-formula} and \eqref{C-formula} are evident from the definitions of \smash{$\Mapco{q}{X}$} and~\smash{$\Mapcn{q}{X}$}. To show the first iso\-mor\-phism in \eqref{K-formula}, apply the left exact functor $\Hom{Q}{?}{X}$ to the exact sequence \eqref{seq-co} to obtain the exact sequence
\begin{equation*}
  \xymatrix@C=1.3pc{
    0 \ar[r]
    &
    \Hom{Q}{\stalkco{q}}{X}
    \ar[r]
    & 
    \Hom{Q}{Q(q,-)}{X}
    \ar[rr]^-{\Hom{Q}{\mapco{q}}{X}}
    & &
    \Hom{Q}{\bigoplus_{g \,\in\, \setco{q}} Q(\cod{g},-)}{X}    
  }.
\end{equation*}
The functor $\Hom{Q}{?}{X}$ converts coproducts to products. Furthermore, for every \mbox{$p \in Q$}~one has $\Hom{Q}{Q(p,-)}{X} \cong X(p)$ by Yoneda's lemma. In view of this and the definition, \prpref[]{adjoint-stalk}, of the functor $\Kq{q}$, it follows that the sequence above is isomorphic to 
\begin{equation*}
  \xymatrix@C=1.5pc{
    0 \ar[r]
    &
    \Kq{q}(X)
    \ar[r]
    & 
    X(q)
    \ar[r]^-{\Mapco{q}{X}}
    & 
    \prod_{g \,\in\, \setco{q}}\, X(\cod{g})
  }.
\end{equation*}
Thus one has $\Kq{q}(X) \cong \Ker{\Mapco{q}{X}}$, as claimed. Similarly, by applying the right exact functor $\ORt{{?}}{X}$, which preserves coproducts, to the exact sequence \eqref{seq-cn} and using the isomorphism $\ORt{Q(-,p)}{X} \cong X(p)$ (see the proof of \corref{adjoint-evaluation}), it follows that $\Cq{q}(X) \cong \Coker{\Mapcn{q}{X}}$.
\end{proof}

Next we give a useful criterion to check if an object in $\lMod{Q,\alg}$ is zero.

\begin{prp}
  \label{prp:zero-criterion}
  Assume that the pseudo-radical $\mathfrak{r}$ is nilpotent, that is, $\mathfrak{r}^N=0$ for some $N \in \mathbb{N}$. For every $X \in \lMod{Q,\alg}$ the following conditions are equivalent:
  \begin{eqc}
  \item $X=0$, that is, $X(q)=0$ for every $q \in Q$.
  \item $\Kq{q}(X)=0$ for every $q \in Q$.
  \item $\Cq{q}(X)=0$ for every $q \in Q$.
  \end{eqc}
\end{prp}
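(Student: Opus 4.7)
The plan is to verify \proofofimp{i}{ii} and \proofofimp{i}{iii} trivially, and then handle the two non-trivial implications \proofofimp{ii}{i} and \proofofimp{iii}{i} using the explicit formulae for $\Kq{q}$ and $\Cq{q}$ from \prpref{K-C-formulae}, combined with the nilpotence hypothesis $\mathfrak{r}^N=0$.

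For \proofofimp{i}{ii} and \proofofimp{i}{iii}: if $X=0$, then $\Kq{q}(X)=0$ and $\Cq{q}(X)=0$ hold by functoriality (or directly from the formulae \eqref{K-formula} and \eqref{C-formula}). This is immediate.

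For \proofofimp{ii}{i}: I would argue by contradiction, building a non-vanishing chain of length $N$ of morphisms in $\mathfrak{r}$. Suppose $\Kq{q}(X)=0$ for every $q$, and suppose some $X(p_0)$ contains a nonzero element $x_0$. By formula \eqref{K-formula} the hypothesis $\Kq{p_0}(X)=0$ means $\bigcap_{g \in \setco{p_0}} \Ker X(g) = 0$, so there exists $g_1 \in \setco{p_0}$, say $g_1\colon p_0 \to p_1$ with $g_1 \in \mathfrak{r}$, such that $x_1 := X(g_1)(x_0) \neq 0$. Iterating this procedure with $\Kq{p_1}(X)=0$ applied to $x_1$, and so on, I obtain a sequence of composable morphisms $g_1, g_2, \ldots$ in $\mathfrak{r}$ with $X(g_k \cdots g_1)(x_0) \neq 0$ for every $k$. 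But after $N$ steps the composition $g_N \cdots g_1$ lies in $\mathfrak{r}^N = 0$, so $X(g_N \cdots g_1) = 0$, contradicting $X(g_N \cdots g_1)(x_0) \neq 0$. Hence $X(p)=0$ for all $p$.

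For \proofofimp{iii}{i}: I would run a dual, iterative expansion using formula \eqref{C-formula}. The hypothesis $\Cq{q}(X)=0$ says $X(q) = \sum_{f \in \setcn{q}} \Im X(f)$, so every $x \in X(q)$ can be written as a finite sum $x = \sum_i X(f_i)(y_i)$ with $f_i \in \mathfrak{r}$ terminating at $q$. Expanding each $y_i \in X(\dom{f_i})$ using $\Cq{\dom{f_i}}(X)=0$ and absorbing compositions (the composite of two morphisms with at least one factor in $\mathfrak{r}$ again lies in $\mathfrak{r}$, since $\mathfrak{r}$ is an ideal by \lemref{ideal}), I can write $x$ as a finite sum $x = \sum_k X(h_k)(z_k)$ where each $h_k$ is a composition of two morphisms in $\mathfrak{r}$. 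Iterating $N$ times yields $x = \sum_k X(h_k)(z_k)$ with each $h_k \in \mathfrak{r}^N = 0$; consequently $x=0$. Thus $X(q)=0$ for every $q$.

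The main obstacle I anticipate is purely bookkeeping in the iteration for \proofofimp{iii}{i}, where one needs to track that each expansion step strictly increases the $\mathfrak{r}$-length of the factors appearing, so that after $N$ iterations every morphism in the expression belongs to $\mathfrak{r}^N$; this is straightforward once one writes the induction carefully. Everything else follows directly from the concrete formulae in \prpref{K-C-formulae} and from $\mathfrak{r}$ being an ideal.
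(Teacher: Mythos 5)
Your proposal is correct and follows essentially the same route as the paper: both implications \eqclbl{ii}$\,\Rightarrow\,$\eqclbl{i} and \eqclbl{iii}$\,\Rightarrow\,$\eqclbl{i} rest on the explicit formulae of \prpref{K-C-formulae} and on producing, after at most $N$ steps, a composite of morphisms of $\mathfrak{r}$ lying in $\mathfrak{r}^N=0$ (your element-chasing chain for \eqclbl{ii}$\,\Rightarrow\,$\eqclbl{i} is just the elementwise form of the paper's image-based contradiction). Your direct expansion argument for \eqclbl{iii}$\,\Rightarrow\,$\eqclbl{i} is a perfectly sound substitute for the dual argument the paper leaves to the reader, provided you indeed track that after $k$ expansion steps every morphism appearing is a composite of $k$ morphisms of $\mathfrak{r}$, hence lies in $\mathfrak{r}^k$.
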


\begin{proof}
  As $\Kq{q}(X)$ is a submodule and $\Cq{q}(X)$ is a quotient module of $X(q)$, see \prpref{K-C-formulae}, it is clear that \eqclbl{i} implies both \eqclbl{ii} and \eqclbl{iii}. We now show that \eqclbl{ii}\,$\Rightarrow$\,\eqclbl{i}; the proof of \eqclbl{iii}\,$\Rightarrow$\,\eqclbl{i} is similar. Assume \eqclbl{ii} and supppose for contradiction that $X \neq 0$. Choose any $q_1 \in Q$ with $X(q_1) \neq 0$. Since $\Kq{q_1}(X) = 0$ we have \smash{$X(q_1) \nsubseteq \Kq{q_1}(X) = \bigcap_{g \,\in\, \setco{q_1}} \Ker{X(g)}$}, so there exists some morphism $g_1 \colon q_1 \to q_2$ in $\mathfrak{r}$ for which $X(q_1) \nsubseteq \Ker{X(g_1)}$. This means that the map $X(g_1) \colon X(q_1) \to X(q_2)$ is non-zero. Since $0 \neq \Im{X(g_1)} \subseteq X(q_2)$ and $\Kq{q_2}(X) = 0$ we have \smash{$\Im{X(g_1)} \nsubseteq \Kq{q_2}(X) = \bigcap_{g \,\in\, \setco{q_2}} \Ker{X(g)}$}, so there exists some $g_2 \colon q_2 \to q_3$ in $\mathfrak{r}$ for which $\Im{X(g_1)} \nsubseteq \Ker{X(g_2)}$. This means that $X(g_2)X(g_1) = X(g_2g_1)$ is non-zero. By continuing in this manner, we obtain a sequence of morphisms,
\begin{equation*}
  \xymatrix@C=1.5pc{
    q_1 \ar[r]^-{g_1} & q_2 \ar[r]^-{g_2} & q_3 \ar[r]^-{g_3} & \cdots  
  },
\end{equation*} 
where each $g_i$ is in $\mathfrak{r}$ and $X(g_1), X(g_2g_1), X(g_3g_2g_1),\ldots$ are all non-zero. But $g_N\cdots g_2g_1$ is in $\mathfrak{r}^N=0$; in particular, $X(g_N\cdots g_2g_1)=0$, which is a contradiction.
\end{proof}

\begin{prp}
  \label{prp:stalk-ses}
  For every $X \in \lMod{Q,\alg}$ the following assertions hold:
  \begin{prt}
  \item There is a short exact sequence in $\lMod{Q,\alg}$,
  \begin{equation*}
      \textstyle
      0 
      \longrightarrow 
      \bigoplus_{q \in Q} \, \Sq{q}\Kq{q}(X)
      \longrightarrow
      X \longrightarrow X' \longrightarrow 0\;. 
  \end{equation*}
  Now assume that the next conditions are satisfied:
  \begin{itemlist}
  \item Each hom $\Bbbk$-module $Q(q,r)$ is finitely generated.
  \item For every $q \in Q$ the set $\operatorname{N}_+(q)=\{r \in Q \,|\,Q(q,r) \neq 0\}$ is finite.
  \item $\mathcal{G}$ is a class of left $\alg$-modules closed under extensions and submodules.
  \end{itemlist}
  If $X(q) \in \mathcal{G}$ for every $q \in Q$, then $\Kq{q}(X), X'(q) \in \mathcal{G}$ for every $q \in Q$.
  \item There is a short exact sequence in $\lMod{Q,\alg}$,
  \begin{equation*}
      \textstyle
      0 
      \longrightarrow 
      X''
      \longrightarrow
      X \longrightarrow \prod_{q \in Q} \, \Sq{q}\Cq{q}(X) \longrightarrow 0\;. 
  \end{equation*}
  Now assume that the next conditions are satisfied:
  \begin{itemlist}
  \item Each hom $\Bbbk$-module $Q(p,q)$ is finitely generated.
  \item For every $q \in Q$ the set $\operatorname{N}_-(q)=\{p \in Q \,|\,Q(p,q) \neq 0\}$ is finite.
  \item $\mathcal{H}$ is a class of left $\alg$-modules closed under extensions and quotient modules.
  \end{itemlist}
  If $X(q) \in \mathcal{H}$ for every $q \in Q$, then $\Cq{q}(X), X''(q) \in \mathcal{H}$ for every $q \in Q$.
  \end{prt}
\end{prp}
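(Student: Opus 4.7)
The plan is to use the adjoint triple $(\Cq{q},\Sq{q},\Kq{q})$ of \prpref{adjoint-stalk} to produce the two canonical exact sequences, and then to combine the explicit formulas of \prpref{K-C-formulae} with a finite-generation argument to control pointwise membership in $\mathcal{G}$ (resp.~$\mathcal{H}$).

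For part \prtlbl{a}, I would assemble the counits $\Sq{q}\Kq{q}(X) \to X$ of the adjunction $(\Sq{q},\Kq{q})$ into a single morphism $\varepsilon \colon \bigoplus_{q \in Q} \Sq{q}\Kq{q}(X) \to X$. By \lemref{stalk} one has $\Sq{q}(M)(p)=0$ whenever $p \neq q$, so the evaluation of $\varepsilon$ at any $p \in Q$ receives a contribution only from the summand $q=p$, and that contribution is precisely the inclusion $\Kq{p}(X) = \bigcap_{g \in \setco{p}} \Ker X(g) \hookrightarrow X(p)$ furnished by \prpref{K-C-formulae}. Hence $\varepsilon$ is monic, and one sets $X' := \Coker \varepsilon$. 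Part \prtlbl{b} is dual: the units $X \to \Sq{q}\Cq{q}(X)$ of $(\Cq{q},\Sq{q})$ assemble into $\eta \colon X \to \prod_{q \in Q} \Sq{q}\Cq{q}(X)$, whose evaluation at $p$ is the canonical surjection $X(p) \twoheadrightarrow X(p)\big/\sum_{f \in \setcn{p}} \Im X(f) = \Cq{p}(X)$; hence $\eta$ is epic and $X'' := \Ker \eta$ gives the required sequence.

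For the claim about $\mathcal{G}$ in part \prtlbl{a}: $\Kq{q}(X)$ is, by \prpref{K-C-formulae}, a submodule of $X(q) \in \mathcal{G}$, hence lies in $\mathcal{G}$ by closure under submodules. The subtler point is $X'(q) = X(q)/\Kq{q}(X)$, since $\mathcal{G}$ is not assumed closed under quotients. Here the finiteness hypotheses intervene: the set $\operatorname{N}_+(q)$ is finite, and for each $r \in \operatorname{N}_+(q)$ the $\Bbbk$-module $\mathfrak{r}(q,r)$ is finitely generated (it equals $Q(q,r)$ for $r \neq q$, and for $r=q$ it is the direct summand $\mathfrak{r}_q$ of the finitely generated module $Q(q,q)$). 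Fixing finite generating sets $\{g_i^{(r)}\}_{i=1}^{n_r}$ of each $\mathfrak{r}(q,r)$ and using $\Bbbk$-linearity of $g \mapsto X(g)$, one obtains $\Kq{q}(X) = \bigcap_{r,i} \Ker X(g_i^{(r)})$, so $X'(q)$ embeds into the \emph{finite} direct sum $\bigoplus_{r \in \operatorname{N}_+(q)} X(r)^{n_r}$. This direct sum lies in $\mathcal{G}$ by iterated extensions (noting that $0 \in \mathcal{G}$ as a submodule of any $X(r)$), whence $X'(q) \in \mathcal{G}$ by closure under submodules.

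Part \prtlbl{b} is again entirely dual. By \prpref{K-C-formulae}, $\Cq{q}(X)$ is a quotient of $X(q) \in \mathcal{H}$ and hence lies in $\mathcal{H}$; meanwhile $X''(q) = \sum_{f \in \setcn{q}} \Im X(f)$ reduces, via the analogous argument (with $\operatorname{N}_-(q)$ finite and finite generating sets $\{f_i^{(p)}\}$ of each $\mathfrak{r}(p,q)$), to the finite sum $\sum_{p,i} \Im X(f_i^{(p)})$; each $\Im X(f_i^{(p)})$ is a quotient of $X(\dom f_i^{(p)}) \in \mathcal{H}$, hence in $\mathcal{H}$, and the whole sum is a quotient of the finite direct sum $\bigoplus_{p,i} \Im X(f_i^{(p)}) \in \mathcal{H}$, giving $X''(q) \in \mathcal{H}$ by closure under quotients. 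I anticipate no serious obstacle; the only mildly delicate ingredient is the reduction from potentially infinite-index (co)products to finite ones, which rests on the observation that each $\mathfrak{r}_q$ is finitely generated over $\Bbbk$ because it is a direct summand of $Q(q,q)$.
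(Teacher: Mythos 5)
Your proposal is correct and follows the paper's proof in all essentials: the two short exact sequences are obtained exactly as in the paper by assembling the (co)units of the adjunctions from \prpref{adjoint-stalk} and identifying them pointwise via \lemref{stalk} and \prpref{K-C-formulae}, and the reduction to finitely many morphisms $g_i^{(r)}$ (resp.\ $f_i^{(p)}$) using local boundedness, finite generation of the hom modules (hence of the direct summands $\mathfrak{r}(q,r)$), and $\Bbbk$-linearity of $g \mapsto X(g)$ is the same. The only divergence is the last membership step: where you embed $X'(q)=X(q)\big/\bigcap_j\Ker{X(g_j)}$ into the finite direct sum $\bigoplus_j X(\cod{g_j})$ and invoke closure of $\mathcal{G}$ under finite extensions and submodules, the paper argues by induction on the number of kernels, exhibiting $X(q)/(L\cap\Ker{X(g_n)})$ as an extension of the submodule $\Im{X(g_n)}\subseteq X(\cod{g_n})$ by a submodule of $X(q)/L$ via the second isomorphism theorem; both use exactly the same closure properties, and your packaging is a slightly shorter route to the same conclusion, with the dual treatment of part \prtlbl{b} likewise in order.
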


\begin{proof}
  \proofoftag{a} For $q \in Q$ we consider the counit $\eta_q^X \colon \Sq{q}\Kq{q}(X) \to X$ of the adjunction $(\Sq{q}, \Kq{q})$ from \prpref{adjoint-stalk}. By the universal property of the coproduct, we get an induced morphism \smash{$\eta^X \colon \bigoplus_{q \in Q} \Sq{q}\Kq{q}(X) \to X$} whose cokernel we denote by $X' = \Coker{\eta^X}$. To establish the asserted short exact sequence, we need to show that $\eta^X$ is monic. For $p \in Q$ we have 
\begin{equation*}  
  \Sq{q}\Kq{q}(X)(p) \,=\, \stalkco{q}(p) \otimes_\Bbbk \Kq{q}(X)
  \,=\,
  \left\{\mspace{-5mu}
\begin{array}{cl}
 \Kq{p}(X) & \text{if $p = q$}
 \\
 0 & \text{if $p \neq q$}
\end{array}
\right.
\end{equation*}  
by the definitions of $\Sq{q}$ and $\stalkco{q}$; see \prpref{adjoint-stalk} and \lemref{stalk}. Hence $\eta^X(p)$ is the ca\-no\-ni\-cal map $\Kq{p}(X) \to X(p)$, which is monic by \prpref{K-C-formulae}. Thus, $\eta^X$ is monic.

Next assume that the three conditions (marked with bullets in the Proposition) are satisfied and that $X(q) \in \mathcal{G}$ for every $q \in Q$.

\prpref{K-C-formulae} shows that $\Kq{q}(X)$ is a submodule of $X(q)$. Since one has $X(q) \in \mathcal{G}$ and $\mathcal{G}$ is closed under submodules, it follows that $\Kq{q}(X) \in \mathcal{G}$. 

To prove $X'(q) \in \mathcal{G}$ for every $q \in Q$, we argue as follows. Fix $q \in Q$. The set $\operatorname{N}_+(q)$ is finite, say, $\operatorname{N}_+(q)=\{r_1,\ldots,r_m\}$. In particular, $\mathfrak{r}(q,r)=0$ if $r \notin \{r_1,\ldots,r_m\}$. Thus, from the definition, \dfnref[]{sets}, of the set $\setco{q}$ and the formula for $\Kq{q}(X)$ given in \prpref{K-C-formulae}~we~get
\begin{equation}
\label{eq:K-intersection-1}
  \textstyle
  \Kq{q}(X) \,=\, \bigcap_{g \,\in\, \setco{q}} \Ker{X(g)}
  \,=\, 
  \big(\bigcap_{g \,\in\, \mathfrak{r}(q,r_1)} \Ker{X(g)}\big)
  \,\cap\, \ldots \,\cap\, 
  \big(\bigcap_{g \,\in\, \mathfrak{r}(q,r_m)} \Ker{X(g)}\big)\,.
\end{equation}
Each $\Bbbk$-module $Q(q,r_i)$ is finitely generated. Since $\mathfrak{r}(q,r_i)$ is a direct summand in $Q(q,r_i)$, see \lemref{ideal} and condition \eqmref{strong-retraction} in \dfnref{srp}, it follows that $\mathfrak{r}(q,r_i)$ is finitely generated as well, say, $\mathfrak{r}(q,r_i) = \Bbbk\mspace{1mu} g_{i1} + \cdots + \Bbbk\mspace{1mu} g_{i\ell(i)}$. Evidently, one now has
\begin{equation}
\label{eq:K-intersection-2}
  \textstyle
  \bigcap_{g \,\in\, \mathfrak{r}(q,r_i)} \Ker{X(g)}
  \,=\,
  \Ker{X(g_{i1})} \,\cap\, \ldots \,\cap\, \Ker{X(g_{i\ell(i)})}\;.
\end{equation}
Combining \eqref{K-intersection-1} and \eqref{K-intersection-2} we see that there exist finitely many morphisms $g_j \colon q \to \cod{g_j}$, $j=1,\ldots,n$, in the ideal $\mathfrak{r}$ such that $\Kq{q}(X) = \Ker{X(g_1)} \,\cap\, \ldots \,\cap\, \Ker{X(g_n)}$, and hence
\begin{equation*}
  X'(q) \,=\, X(q)\big/\Kq{q}(X) \,=\, 
  X(q)\big/\big(\Ker{X(g_1)} \,\cap\, \ldots \,\cap\, \Ker{X(g_n)}\big)\;,
\end{equation*}  
where the first equality follows from the definition of $X'$; cf.~the first part of the proof. To finish the proof we argue that given finitely many morphisms
$g_j \colon q \to \cod{g_j}$, $j=1,\ldots,n$, in $Q$ (they need not belong to the ideal $\mathfrak{r}$), the module
\begin{equation}
  \label{eq:module}
  X(q)\big/\big(\Ker{X(g_1)} \,\cap\, \ldots \,\cap\, \Ker{X(g_n)}\big)
\end{equation}  
belongs to $\mathcal{G}$. 
We use induction on $n$. For $n=0$ the intersection $\Ker{X(g_1)} \cap \ldots \cap \Ker{X(g_n)}$ is taken over the empty set, so the module in \eqref{module} is the zero module, which is in $\mathcal{G}$.

Next let $n>0$ and set $L = \Ker{X(g_1)} \,\cap\, \ldots \,\cap\, \Ker{X(g_{n-1})}$. By the induction hypothesis we have $X(q)/L \in \mathcal{G}$. We must show that $X(q)/(L \cap \Ker{X(g_n)})$ belongs to $\mathcal{G}$. To this end, consider the short exact sequence
  \begin{equation}
    \label{eq:ses-for-Xprime}
    0 
    \longrightarrow \frac{\Ker{X(g_n)}}{L \cap \Ker{X(g_n)}}
    \longrightarrow \frac{X(q)}{L \cap \Ker{X(g_n)}}
    \longrightarrow \frac{X(q)}{\Ker{X(g_n)}}
    \longrightarrow 0\;.
  \end{equation}
  The module $X(q)/\Ker{X(g_n)}$ is isomorphic to the submodule $\Im{X(g_n)}$ of $X(\cod{g_n})$. Since $X(\cod{g_n}) \in \mathcal{G}$ and $\mathcal{G}$ is closed under submodules, it follows that $X(q)/\Ker{X(g_n)} \in \mathcal{G}$. Noether's second isomorphism theorem shows that $\Ker{X(g_n)}/(L \cap \Ker{X(g_n)})$ is isomorphic to $(L + \Ker{X(g_n)})/L$, which is a submodule of $X(q)/L$. Since $X(q)/L$ belongs to $\mathcal{G}$, so does $\Ker{X(g_n)}/(L \cap \Ker{X(g_n)})$. Consequently, the short exact sequence \eqref{ses-for-Xprime} shows that $X(q)/(L \cap \Ker{X(g_n)})$ is an extension of modules from $\mathcal{G}$, and as $\mathcal{G}$ is closed under extensions, we get $X(q)/(L \cap \Ker{X(g_n)}) \in \mathcal{G}$.
  
\proofoftag{b} Dual to the proof of part \prtlbl{a}.  
\end{proof}

We will apply \prpref{stalk-ses} successively in the following construction.

\begin{con}
  \label{con:X-ell}
  Let $X \in \lMod{Q,\alg}$ be given.
  \begin{prt}
  \item Define $X^0, X^1, X^2,\ldots$ in $\lMod{Q,\alg}$ as follows. Set $X^0 = X$. Having defined $X^\ell$, let $X^{\ell+1}$ be the third term in the next short exact sequence coming from \prpref{stalk-ses}\prtlbl{a}:
  \begin{equation*}
      \textstyle
      0 
      \longrightarrow 
      \bigoplus_{q \in Q} \, \Sq{q}\Kq{q}(X^\ell)
      \longrightarrow
      X^\ell \longrightarrow X^{\ell+1} \longrightarrow 0\;. 
  \end{equation*}
  \item Define $X_0, X_1, X_2,\ldots$ in $\lMod{Q,\alg}$ as follows. Set $X_0 = X$. Having defined $X_\ell$, let $X_{\ell+1}$ be the first term in the next short exact sequence coming from \prpref{stalk-ses}\prtlbl{b}:
  \begin{equation*}
      \textstyle
      0 
      \longrightarrow 
      X_{\ell+1}
      \longrightarrow
      X_\ell \longrightarrow 
      \prod_{q \in Q} \, \Sq{q}\Cq{q}(X_\ell) \longrightarrow 0\;. 
  \end{equation*}
  \end{prt}
  Note that in \prtlbl{a} we use superscripts but in \prtlbl{b} we use subscripts on the constructed objects.
\end{con}

Consider the sequence of objects $X^0, X^1, X^2,\ldots$ from part \prtlbl{a} in the construction above. For each $q \in Q$ and $\ell \geqslant 0$ we can consider the modules $X^\ell(q)$ and $\Kq{q}(X^\ell)$. Below we show how these modules can be computed directly from $X^0=X$ (and from $q$ and $\ell$).

\begin{dfn}
  \label{dfn:ell}
  For $\ell \geqslant 0$ let $\mathfrak{r}^\ell$ be the $\ell^\mathrm{th}$ power of the pseudo-radical ideal $\mathfrak{r}$. We set 
  \begin{equation*}
    \textstyle
    \setco{q}^\ell \,=\, \bigcup_{r \,\in\, Q} \,\mathfrak{r}^\ell(q,r)    
    \qquad \text{and} \qquad
    \setcn{q}^\ell \,=\, \bigcup_{p \,\in\, Q} \,\mathfrak{r}^\ell(p,q)\;.   
  \end{equation*}
  For $X \in \lMod{Q,\alg}$ we also set
  \begin{equation*}
    \textstyle  
    \Kq{q}^\ell(X) \,=\, 
    \bigcap_{g \,\in\, \setco{q}^\ell} \Ker{X(g)}
    \qquad \text{and} \qquad
    \Cq{q}^\ell(X) \,=\, 
    X(q)\big/\big(\textstyle\sum_{f \,\in\, \setcn{q}^\ell} \Im{X(f)} \big)\;.
  \end{equation*}
\end{dfn}

\begin{rmk}
  \label{rmk:ell}
  For the sets $\setco{q}^\ell$ and the functors $\Kq{q}^\ell$ we observe the following:
  \begin{itemlist}
  \item $\setco{q}^0 \supseteq \setco{q}^1 \supseteq \setco{q}^2 \supseteq \cdots$ and hence $\Kq{q}^0(X) \rightarrowtail \Kq{q}^1(X) \rightarrowtail \Kq{q}^2(X) \rightarrowtail \cdots \rightarrowtail X(q)$.
  \item As $\mathfrak{r}^0(-,-)=Q(-,-)$ we have $\mathrm{id}_q \in \setco{q}^0$ and hence $\Kq{q}^0=0$.
  \item $\setco{q}^1=\setco{q}$ (see \dfnref{sets}) and $\Kq{q}^1=\Kq{q}$ (see \prpref{K-C-formulae}).
  \end{itemlist}
  Dually, for the sets $\setcn{q}^\ell$ and the functors $\Cq{q}^\ell$ one has:
  \begin{itemlist}
  \item $\setcn{q}^0 \supseteq \setcn{q}^1 \supseteq \setcn{q}^2 \supseteq \cdots$ and hence $X(q) \twoheadrightarrow \cdots \twoheadrightarrow \Cq{q}^2(X) \twoheadrightarrow \Cq{q}^1(X) \twoheadrightarrow \Cq{q}^0(X)$.
  \item $\Cq{q}^0=0$.
  \item $\setcn{q}^1=\setcn{q}$ and $\Cq{q}^1=\Cq{q}$.
  \end{itemlist}
\end{rmk}  

\begin{lem}
  \label{lem:ell-computation}
  Adopt the notation from \conref{X-ell}. For all $q \in Q$ and $\ell \geqslant 0$ one~has:
\begin{prt}  
\item $X^\ell(q) \,=\, \Coker{(\Kq{q}^\ell(X) \rightarrowtail X(q))}$ \,and\, $\Kq{q}(X^\ell) \,=\, \Coker{(\Kq{q}^\ell(X) \rightarrowtail \Kq{q}^{\ell+1}(X))}$.
\item $X_\ell(q) \,=\, \Ker{(X(q) \twoheadrightarrow \Cq{q}^\ell(X))}$ \,and\, $\Cq{q}(X_\ell) \,=\, \Ker{(\Cq{q}^{\ell+1}(X) \twoheadrightarrow \Cq{q}^\ell(X))}$.
\end{prt}  
\end{lem}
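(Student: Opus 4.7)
The plan is to prove both parts by induction on $\ell$, with part \prtlbl{b} obtained by a formally dual argument to part \prtlbl{a}. For the base case $\ell = 0$, we have $X^0 = X$ and $X_0 = X$ by definition, while \rmkref{ell} gives $\Kq{q}^0 = 0$, $\Kq{q}^1 = \Kq{q}$, $\Cq{q}^0 = 0$, and $\Cq{q}^1 = \Cq{q}$; both formulas then reduce to tautologies.

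For the inductive step in \prtlbl{a}, I would first evaluate the defining short exact sequence of $X^{\ell+1}$ at $q$. Combined with the pointwise description of $\Sq{q'}\Kq{q'}(X^\ell)$ established in the proof of \prpref{stalk-ses}\prtlbl{a}, this yields $X^{\ell+1}(q) = X^\ell(q)/\Kq{q}(X^\ell)$. The induction hypothesis then identifies the right-hand side with $(X(q)/\Kq{q}^\ell(X))/(\Kq{q}^{\ell+1}(X)/\Kq{q}^\ell(X))$, and a single application of the third isomorphism theorem gives $X^{\ell+1}(q) = X(q)/\Kq{q}^{\ell+1}(X)$, as required.

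The second formula of \prtlbl{a} at level $\ell+1$ is the main obstacle, since it requires passing from level $\ell+1$ to level $\ell+2$ on the right-hand side. Here I would apply \prpref{K-C-formulae} to write $\Kq{q}(X^{\ell+1}) = \bigcap_{g \in \setco{q}} \Ker{X^{\ell+1}(g)}$. Since each $X^{\ell+1}(g) : X(q)/\Kq{q}^{\ell+1}(X) \to X(\cod{g})/\Kq{\cod{g}}^{\ell+1}(X)$ is induced by $X(g)$, its preimage under the projection $X(q) \twoheadrightarrow X^{\ell+1}(q)$ is the set of $x \in X(q)$ with $X(g)(x) \in \Kq{\cod{g}}^{\ell+1}(X)$. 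Intersecting over $g \in \setco{q}$, the pull-back of $\Kq{q}(X^{\ell+1})$ becomes $\{x \in X(q) \mid X(hg)(x) = 0 \text{ for every } g \in \setco{q},\ h \in \setco{\cod{g}}^{\ell+1}\}$. The crucial ingredient now is the ideal identity $\mathfrak{r}^{\ell+2} = \mathfrak{r}^{\ell+1} \circ \mathfrak{r}$: together with $\Bbbk$-linearity of $X$, it implies that the compositions $hg$ appearing above $\Bbbk$-linearly span $\mathfrak{r}^{\ell+2}(q,-)$, so this pull-back equals $\Kq{q}^{\ell+2}(X)$. Since $\Kq{q}^{\ell+1}(X) \subseteq \Kq{q}^{\ell+2}(X)$, the quotient is $\Kq{q}^{\ell+2}(X)/\Kq{q}^{\ell+1}(X)$, completing the step.

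Part \prtlbl{b} is handled by the dual procedure: evaluate the SES in \conref{X-ell}\prtlbl{b} at $q$, use that $\Sq{q'}\Cq{q'}(X_\ell)(q)$ is $\Cq{q}(X_\ell)$ for $q' = q$ and zero otherwise, and invoke the cokernel formula of \prpref{K-C-formulae} together with $\mathfrak{r}^{\ell+2} = \mathfrak{r} \circ \mathfrak{r}^{\ell+1}$ to pass from $\Cq{q}^{\ell+1}$ to $\Cq{q}^{\ell+2}$. No new difficulty arises beyond the ideal identity already highlighted.
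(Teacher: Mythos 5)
Your proposal is correct and follows essentially the same route as the paper: induction on $\ell$, evaluating the defining short exact sequence at $q$ for the first formula (the paper packages this via the Snake Lemma rather than your third isomorphism theorem, a cosmetic difference), and for the second formula computing $\Kq{q}(X^{\ell+1})$ as a preimage intersection and identifying it with $\Kq{q}^{\ell+2}(X)/\Kq{q}^{\ell+1}(X)$ via the identity $\mathfrak{r}^{\ell+2}=\mathfrak{r}^{\ell+1}\circ\mathfrak{r}$, exactly the two-inclusion argument in the paper.
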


\begin{proof}
  \proofoftag{a} By induction on $\ell$. As $X^0=X$, $\Kq{q}^0 = 0$, and $\Kq{q}^1 = \Kq{q}$, see \rmkref{ell}, the formulae hold for $\ell=0$. Assume that they hold for some $\ell$. In the next commutative diagram, the upper row is exact by the induction hypothesis \mbox{$\Kq{q}(X^\ell) = \Coker{(\Kq{q}^\ell(X) \rightarrowtail \Kq{q}^{\ell+1}(X))}$}.
\begin{equation*}
  \xymatrix{
  0 \ar[r] & \Kq{q}^\ell(X) \ar@{>->}[d] \ar[r] & \Kq{q}^{\ell+1}(X) \ar@{>->}[d] \ar[r] & \Kq{q}(X^\ell) \ar[d] \ar[r] & 0
  \\
  0 \ar[r] & X(q) \ar[r]^-{=} & X(q) \ar[r] & 0 \ar[r] & 0  
  }
\end{equation*}  
Applying the Snake Lemma and the induction hypothesis $X^\ell(q) = \Coker{(\Kq{q}^\ell(X) \rightarrowtail X(q))}$ to this diagram, we get the short exact sequence
\begin{equation*}
  0 \longrightarrow 
  \Kq{q}(X^\ell) \longrightarrow
  X^\ell(q) \longrightarrow
  \Coker{(\Kq{q}^{\ell+1}(X) \rightarrowtail X(q))} \longrightarrow 0\;.
\end{equation*}
By definition of $X^{\ell+1}$ (see also the first part of the proof of \prpref{stalk-ses}), the module $X^{\ell+1}(q)$ is precisely the cokernel of the homomorphism $\Kq{q}(X^\ell) \rightarrowtail X^\ell(q)$, so we conclude that $X^{\ell+1}(q) = \Coker{(\Kq{q}^{\ell+1}(X) \rightarrowtail X(q))}$. Hence, the first of the asserted formulae hold for $\ell+1$. For a morphism $g \colon q \to \cod{g}$ in $Q$ the homomorphism 
\begin{equation*}   
  \xymatrix@C=4pc{
    X(q) \big/ \Kq{q}^{\ell+1}(X) \,=\, X^{\ell+1}(q) \ar[r]^-{X^{\ell+1}(g)} & 
    X^{\ell+1}(\cod{g}) \,=\, X(\cod{g}) \big/ \Kq{\cod{g}}^{\ell+1}(X)
  }  
\end{equation*}  
    is induced by $X(g) \colon X(q) \to X(\cod{g})$. Consequently, there is an equality
\begin{equation*}
  \Ker{X^{\ell+1}(g)} \,=\, \big(\, X(g)^{-1}\big(\Kq{\cod{g}}^{\ell+1}(X)\big) \,\big) \big/ \Kq{q}^{\ell+1}(X)\;.
\end{equation*}  
It follows that 
\begin{equation*}
  \textstyle
  \Kq{q}(X^{\ell+1}) \,=\, \bigcap_{g \,\in\, \setco{q}} \Ker{X^{\ell+1}(g)}
  \,=\,
  \big(\, \bigcap_{g \,\in\, \setco{q}} X(g)^{-1}\big(\Kq{\cod{g}}^{\ell+1}(X)\big) \,\big) 
  \big/ \Kq{q}^{\ell+1}(X)\;.
\end{equation*}
To finish the proof we must show that the numerator in the last expression above is equal to $\Kq{q}^{\ell+2}(X)$. To this end, consider the following computation:
\begin{align*}
  \textstyle
  \bigcap_{g \,\in\, \setco{q}} X(g)^{-1}\big(\Kq{\cod{g}}^{\ell+1}(X)\big)
  &\,=\  
  \textstyle
  \bigcap_{g \,\in\, \setco{q}} X(g)^{-1}
  \big(\, \bigcap_{h \,\in\, \setco{\cod{g}}^{\ell+1}} \Ker{X(h)} \,\big)
  \\
  &\,=\
  \textstyle
  \bigcap_{g \,\in\, \setco{q}} \ \bigcap_{h \,\in\, \setco{\cod{g}}^{\ell+1}} 
  X(g)^{-1}(\Ker{X(h)})
  \\
  &\,=\
  \textstyle
  \bigcap_{g \,\in\, \setco{q}} \ \bigcap_{h \,\in\, \setco{\cod{g}}^{\ell+1}} 
  \Ker{X(hg)} \,=:\, M\;.
\end{align*}
For $g \in \setco{q}$ and \smash{$h \in \setco{\cod{g}}^{\ell+1}$} we have by definition $g \in \mathfrak{r}(q,\cod{g})$ and $h \in \mathfrak{r}^{\ell+1}(\cod{g},\cod{h})$ and thus $hg \in \mathfrak{r}^{\ell+2}(q,\cod{h})$, that is, $hg \in \setco{q}^{\ell+2}$. Hence there is an inclusion,
\begin{equation}
  \label{eq:M}
\textstyle
M \ \supseteq\ \bigcap_{k \,\in\, \setco{q}^{\ell+2}} \,\Ker{X(k)} \,=\, \Kq{q}^{\ell+2}(X)\;.
\end{equation}
On the other hand, every morphism \smash{$k \in \setco{q}^{\ell+2}$} belongs to $\mathfrak{r}^{\ell+2}(q,r)$ for some $r \in Q$. By definition of the ideal $\mathfrak{r}^{\ell+2}$, the morphism $k$ is therefore a $\Bbbk$-linear combination of morphisms of the form $g_{\ell+2}\cdots g_2g_1$, i.e.~compositions,
\begin{equation*}
  \xymatrix@C=1.5pc{
    q \ar[r]^-{g_1} & p_1 \ar[r]^-{g_2} & p_2 \ar[r] & 
    \ \cdots \ \ar[r] 
    & p_{\ell+1} \ar[r]^-{g_{\ell+2}} & r
  },
\end{equation*}
where each morphism $g_i$ is in $\mathfrak{r}$. With $h=g_{\ell+2}\cdots g_2$ and $g=g_1$ we have $g_{\ell+2}\cdots g_2g_1 = hg$ where $g \in \setco{q}$ and \smash{$h \in \setco{p_1}^{\ell+1} = \setco{\cod{g}}^{\ell+1}$}. Therefore, $k$ is a $\Bbbk$-linear combination of morphisms of the form $hg$ where $g \in \setco{q}$ and \smash{$h \in \setco{\cod{g}}^{\ell+1}$}. Thus, equality holds in \eqref{M}, as desired.

\proofoftag{b} Dual to the proof of part \prtlbl{a}.
\end{proof}

\begin{thm}
  \label{thm:E-characterization}
 Assume that the following hold.
 \begin{itemlist}
 \item $Q$ satisfies condition \eqref{Homfin}, \eqref{locbd}, \eqref{Serre} in \stpref{Bbbk} and condition \eqmref{strong-retraction} in \dfnref{srp}.
  \item The pseudo-radical $\mathfrak{r}$ is nilpotent, that is, $\mathfrak{r}^N=0$ for some $N \in \mathbb{N}$.
 \item The ring $\Bbbk$ is $1$-Gorenstein, that is, $\Bbbk$ is noetherian and $\id{\Bbbk}{\Bbbk} \leqslant 1$.
 \item The $\Bbbk$-algebra $\alg$ has finite projective/injective dimension as a $\Bbbk$-module.
 \end{itemlist}
 For every $X \in \lMod{Q,\alg}$ the following conditions are equivalent.
\begin{eqc}
\item $X \in \mathscr{E}$ (see \dfnref{E}).
\item $\Ext{Q}{i}{\Sq{q}(G)}{X^\natural}=0$ for every $G \in \lGPrj{\Bbbk}$, $q \in Q$, and $i>0$.
\item[\eqclbl{ii'}] $\Ext{Q}{1}{\Sq{q}(G)}{X^\natural}=0$ for every $G \in \lGPrj{\Bbbk}$ and $q \in Q$.
\item $\Ext{Q}{i}{X^\natural}{\Sq{q}(H)}=0$ for every $H \in \lGInj{\Bbbk}$, $q \in Q$, and $i>0$.
\item[\eqclbl{iii'}] $\Ext{Q}{1}{X^\natural}{\Sq{q}(H)}=0$ for every $H \in \lGInj{\Bbbk}$ and $q \in Q$.
\end{eqc}
\end{thm}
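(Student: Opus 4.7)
The plan is to reduce to $\alg=\Bbbk$, since conditions \eqclbl{ii}, \eqclbl{ii'}, \eqclbl{iii}, \eqclbl{iii'} only involve $Y:=X^\natural$, while \eqclbl{i} means precisely $Y \in \lFin{Q}$ by \dfnref{E}. Then I would prove the cycle \eqclbl{i}$\,\Rightarrow\,$\eqclbl{ii}$\,\Rightarrow\,$\eqclbl{ii'}$\,\Rightarrow\,$\eqclbl{i}, with an analogous cycle linking \eqclbl{iii} and \eqclbl{iii'} to \eqclbl{i}.

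For \eqclbl{i}$\,\Rightarrow\,$\eqclbl{ii}, \lemref{stalk} shows that $\Sq{q}(G)$ takes value $G$ at $q$ and vanishes elsewhere, so \thmref{G-description} yields $\Sq{q}(G) \in \lGPrj{Q}$ for every $G \in \lGPrj{\Bbbk}$. Combined with hereditarity of the cotorsion pair $(\lGPrj{Q},\lFin{Q})$ from \thmref{G-cotorsion-pairs} (applicable via \thmref{Q-Gorenstein}), this gives $\Ext{Q}{i}{\Sq{q}(G)}{Y}=0$ for all $i>0$. The implication \eqclbl{ii}$\,\Rightarrow\,$\eqclbl{ii'} is trivial.

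The substantive step is \eqclbl{ii'}$\,\Rightarrow\,$\eqclbl{i}. By the above cotorsion pair, it suffices to show $\Ext{Q}{1}{Z}{Y}=0$ for every $Z \in \lGPrj{Q}$. The idea is to filter such a $Z$ by stalk objects of the shape $\Sq{q}(G)$ with $G \in \lGPrj{\Bbbk}$ using \conref{X-ell}\prtlbl{a}, which produces short exact sequences
\begin{equation*}
   0 \longrightarrow \textstyle\bigoplus_{q \in Q}\, \Sq{q}\Kq{q}(Z^\ell) \longrightarrow Z^\ell \longrightarrow Z^{\ell+1} \longrightarrow 0
\end{equation*}
that terminate at $Z^N=0$ by \lemref{ell-computation} and the hypothesis $\mathfrak{r}^N=0$. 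To iterate \prpref{stalk-ses}\prtlbl{a} with $\mathcal{G}=\lGPrj{\Bbbk}$ I need $\lGPrj{\Bbbk}$ closed under extensions (standard) and under submodules; the latter follows from $\Bbbk$ being $1$-Gorenstein, since then every $\Bbbk$-module has Gorenstein projective dimension $\leqslant 1$, and a dimension-shift in $0 \to A \to B \to C \to 0$ with $B$ Gorenstein projective forces $A$ Gorenstein projective. Hence each $\Kq{q}(Z^\ell) \in \lGPrj{\Bbbk}$, so \eqclbl{ii'} together with the fact that $\operatorname{Ext}_Q^1(-,Y)$ turns coproducts into products yields $\Ext{Q}{1}{\bigoplus_q \Sq{q}\Kq{q}(Z^\ell)}{Y}=0$. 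A descending induction on $\ell$ through the long exact $\operatorname{Ext}$-sequence, with base $Z^N=0$, then gives $\Ext{Q}{1}{Z}{Y}=\Ext{Q}{1}{Z^0}{Y}=0$.

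The equivalences \eqclbl{i}$\,\Leftrightarrow\,$\eqclbl{iii}$\,\Leftrightarrow\,$\eqclbl{iii'} follow by dualization: replace \conref{X-ell}\prtlbl{a}, \prpref{stalk-ses}\prtlbl{a} and the cotorsion pair $(\lGPrj{Q},\lFin{Q})$ by \conref{X-ell}\prtlbl{b}, \prpref{stalk-ses}\prtlbl{b} and $(\lFin{Q},\lGInj{Q})$, this time applied with $Z$ ranging over $\lGInj{Q}$ (so $Z(q) \in \lGInj{\Bbbk}$ by the injective version of \thmref{G-description}); closure of $\lGInj{\Bbbk}$ under extensions and quotients over the $1$-Gorenstein base $\Bbbk$ is the dual dimension-shift statement. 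The essential obstacle is really just verifying these closure properties of Gorenstein projectives/injectives over the $1$-Gorenstein base and then observing that the descending induction extracts \eqclbl{i} from only $\operatorname{Ext}^1$-vanishing, which is precisely what lets the weaker hypothesis \eqclbl{ii'} (respectively \eqclbl{iii'}) suffice.
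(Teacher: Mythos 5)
Your proposal is correct and follows essentially the same route as the paper: establish $(\lGPrj{Q},\lFin{Q})$ via \thmref{G-description}, note $\Sq{q}(G)\in\lGPrj{Q}$ for the easy implications, and for \eqclbl{ii'}$\Rightarrow$\eqclbl{i} filter an arbitrary $Z\in\lGPrj{Q}$ by stalk objects using \conref{X-ell}\prtlbl{a} and \prpref{stalk-ses}\prtlbl{a} (with closure of $\lGPrj{\Bbbk}$ under extensions and submodules over the $1$-Gorenstein base), terminate the filtration using $\mathfrak{r}^N=0$ and \lemref{ell-computation}, and run a descending induction on $\operatorname{Ext}^1$; the dual argument handles \eqclbl{iii}, \eqclbl{iii'}. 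The only cosmetic difference is that the paper deduces the termination $Y^N=0$ via $\Kq{q}(Y^N)=0$ and \prpref{zero-criterion}, whereas you read it off directly from the first formula in \lemref{ell-computation}\prtlbl{a}, which is equally valid.
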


\begin{proof}
Let $\mathcal{Y}$ be the class of $Y \in \lMod{Q}$ for which the $\Bbbk$-module $Y(q)$ is Gorenstein projective for every $q \in Q$. By our  assumptions and \thmref{G-description} we have $\mathcal{Y}=\lGPrj{Q}$, so $(\mathcal{Y},\lFin{Q})$ is a hereditary cotorsion pair in $\lMod{Q}$ by 
by \thmref{Q-Gorenstein}\,/\,\rmkref{opposite} and \thmref{G-cotorsion-pairs}. We now prove the equivalence between \eqclbl{i}, \eqclbl{ii}, and \eqclbl{ii'} in the theorem.

\proofofimp{i}{ii} If $X \in \mathscr{E}$ then, by \dfnref{E}, one has $X^\natural \in \lFin{Q}$. As $(\mathcal{Y},\lFin{Q})$ is a hereditary cotorsion pair, this means that $\Ext{Q}{i}{Y}{X^\natural}=0$ for every $Y \in \mathcal{Y}$ and $i>0$. Since $\Sq{q}(G) \in \mathcal{Y}$ for every $G \in \lGPrj{\Bbbk}$, we conclude that \eqclbl{ii} holds.

\proofofimp{ii}{ii'} This implication is trivial.

\proofofimp{ii'}{i} Since $(\mathcal{Y},\lFin{Q})$ is a cotorsion pair, we know that $\mathcal{Y}^\perp = \lFin{Q}$. The assumption in \eqclbl{ii'} is that $X^\natural$ is in $\{\, \Sq{q}(G) \,|\ G \in \lGPrj{\Bbbk} \,\}^\perp$. We need to show $X \in \mathscr{E}$, that is, $X^\natural \in \lFin{Q}$, so the desired implication follows if we can prove the next inclusion in $\lMod{Q}$:
\begin{equation}
  \label{eq:inclusion}
  \{\, \Sq{q}(G) \,|\ G \in \lGPrj{\Bbbk} \,\}^\perp \,\subseteq\, \mathcal{Y}^\perp\;.
\end{equation}
To this end, consider any $Y \in \mathcal{Y}$. Applying \conref{X-ell}\prtlbl{a} (with $\alg=\Bbbk$) to this object, we get objects $Y^0,Y^1,Y^2,\ldots$ with $Y^0=Y$ and short exact sequences in $\lMod{Q}$,
   \begin{equation*}
    \xymatrix@R=0.5pc@C=1.5pc{
      \text{(0)} &
      0 \ar[r] & \bigoplus_{q \in Q} \, \Sq{q}\Kq{q}(Y^0) \ar[r] &
      Y^0 \ar[r] & Y^1 \ar[r] & 0
      \\
      \text{(1)} & 0 \ar[r] & \bigoplus_{q \in Q} \, \Sq{q}\Kq{q}(Y^1) \ar[r] &
      Y^1 \ar[r] & Y^2 \ar[r] & 0
      \\
      \vdots & {} & \vdots & \vdots & \vdots & {}
    } 
  \end{equation*}
  As $\Bbbk$ is $1$-Gorenstein, the class $\mathcal{G}=\lGPrj{\Bbbk}$ is closed under submodules. Indeed, by \cite[Thm.~10.2.14]{rha} every $\Bbbk$-module has Gorenstein projective dimension $\leqslant 1$, so the claim follows from \cite[Thm.~2.20]{HHl04a}. The class $\mathcal{G}=\lGPrj{\Bbbk}$ is always closed under extensions by \cite[Thm.~2.5]{HHl04a}. By assumption, $Y(q) \in \lGPrj{\Bbbk}$ for every $q \in Q$, so it follows from successive applications of \prpref{stalk-ses}\prtlbl{a} that $\Kq{q}(Y^\ell),Y^\ell(q) \in \lGPrj{\Bbbk}$
for every $q \in Q$ and $\ell \geqslant 0$. In particular, every  $\Sq{q}\Kq{q}(Y^\ell)$ belongs to the class $\{\, \Sq{q}(G) \,|\ G \in \lGPrj{\Bbbk} \,\}$.

Consider the sets $\setco{q}^\ell$ and the functors $\Kq{q}^\ell$ from \dfnref{ell}. As $\mathfrak{r}^N=0$ it follows that for every $\ell \geqslant N$ the set \smash{$\setco{q}^\ell$} consists of all zero morphisms in $Q$ with domain $q$, and therefore $\Kq{q}^\ell(Y)=Y(q)$. Thus \lemref{ell-computation}\prtlbl{a} yields that for every $q \in Q$ one has:
\begin{equation*}
  \Kq{q}(Y^N) \ = \ 
  \Coker{(\Kq{q}^N(Y) \rightarrowtail \Kq{q}^{N+1}(Y))} \ = \
  \Coker{(Y(q) \xrightarrow{\mathrm{id}} Y(q))} \ = \ 0\;.
\end{equation*}
Now \prpref{zero-criterion} implies $Y^N=0$ and thus the short exact sequence number ($N-1$) in the display above shows that $\bigoplus_{q \in Q} \, \Sq{q}\Kq{q}(Y^{N-1}) \cong Y^{N-1}$. Consequently, the short exact sequence number ($N-2$) reads:
   \begin{equation*}
    \xymatrix@C=1.0pc{
      \text{($N-2$)} &
      0 \ar[r] & \bigoplus_{q \in Q} \, \Sq{q}\Kq{q}(Y^{N-2}) \ar[r] &
      Y^{N-2} \ar[r] & \bigoplus_{q \in Q} \, \Sq{q}\Kq{q}(Y^{N-1}) \ar[r] & 0
    }.
  \end{equation*}
As already mentioned, every  $\Sq{q}\Kq{q}(Y^\ell)$ belongs to the class $\{\, \Sq{q}(G) \,|\ G \in \lGPrj{\Bbbk} \,\}$. Hence, if an object $U \in \lMod{Q}$ belongs to the left-hand side in \eqref{inclusion}, then 
\begin{equation*}
  \textstyle
  \Ext{Q}{1}{\bigoplus_{q \in Q} \, \Sq{q}\Kq{q}(Y^\ell)}{U} \,=\,
  \prod_{q \in Q}\Ext{Q}{1}{\Sq{q}\Kq{q}(Y^\ell)}{U} \,=\, 0
\end{equation*}
for all $\ell \geqslant 0$. Thus the short exact sequence number ($N-2$) shows that $\Ext{Q}{1}{Y^{N-2}}{U}=0$. Then the short exact sequence number ($N-3$) shows that \smash{$\Ext{Q}{1}{Y^{N-3}}{U}=0$}. Continuing in this way, we arrive at the conclusion that \smash{$\Ext{Q}{1}{Y^{0}}{U}=0$}; and since $Y^0=Y$, which was an arbitrary object in $\mathcal{Y}$, we get $U \in \mathcal{Y}^\perp$. This proves the desired inclusion \eqref{inclusion}.

The equivalence between \eqclbl{i}, \eqclbl{iii}, and \eqclbl{iii'} is proved by arguments dual to the ones above, using \prpref[Propositions~]{zero-criterion} and \prpref[]{stalk-ses}\prtlbl{b}, \conref{X-ell}\prtlbl{b}, and \lemref{ell-computation}\prtlbl{b}. In this case one needs to know that the class $\mathcal{H}=\lGInj{\Bbbk}$ is closed under quotient modules. As $\Bbbk$ is $1$-Gorenstein this follows from \cite[Thm.~10.1.13(1)]{rha} and \cite[Thm.~2.22]{HHl04a}. The class $\mathcal{H}=\lGInj{\Bbbk}$ is closed under extensions by \cite[Thm.~2.6]{HHl04a} (see also \cite[Thm.~10.1.4]{rha}).
\end{proof}

At this point we are finally ready to prove the main theorems of this section.

\begin{proof}[Proof of \thmref{E-characterization-hereditary}]
  As $\Bbbk$ is noetherian and hereditary it is, in particular, $1$-Gorenstein and $\alg$ has projective/injective dimension at most $1$ as a $\Bbbk$-module. Thus we can apply \thmref{E-characterization}. Furthermore, in this case the class $\lGPrj{\Bbbk}$ (respectively, $\lGInj{\Bbbk}$) of Gorenstein projective (respectively, Gorenstein injective) $\Bbbk$-modules coincides with the class $\lPrj{\Bbbk}$ (respectively, $\lInj{\Bbbk}$) of projective (respectively, injective) $\Bbbk$-modules by \cite[Props.~10.2.3 and 10.1.2]{rha}. By \prpref{adjoint-stalk} and its proof we have $\Sq{q} = \stalkco{q} \otimes_\Bbbk - \cong \Hom{\Bbbk}{\stalkcn{q}}{-}$, and thus conditions \eqclbl{ii}, \eqclbl{ii'}, \eqclbl{iii}, and \eqclbl{iii'} in \thmref{E-characterization} take the form:
\begin{eqc}
\item[\eqclbl{ii\phantom{'}$_{\mspace{-7mu}*}$}] $\Ext{Q}{i}{\stalkco{q} \otimes_\Bbbk P}{X^\natural}=0$ for every $P \in \lPrj{\Bbbk}$, $q \in Q$, and $i>0$.

\item[\eqclbl{ii'$_{\mspace{-7mu}*}$}] $\Ext{Q}{1}{\stalkco{q} \otimes_\Bbbk P}{X^\natural}=0$ for every $P \in \lPrj{\Bbbk}$ and $q \in Q$.

\item[\eqclbl{iii\phantom{'}$_{\mspace{-7mu}*}$}] $\Ext{Q}{i}{X^\natural}{\Hom{\Bbbk}{\stalkcn{q}}{I}}=0$ for every $I \in \lInj{\Bbbk}$, $q \in Q$, and $i>0$.

\item[\eqclbl{iii'$_{\mspace{-7mu}*}$}] $\Ext{Q}{1}{X^\natural}{\Hom{\Bbbk}{\stalkcn{q}}{I}}=0$ for every $I \in \lInj{\Bbbk}$ and $q \in Q$.
\end{eqc}
As every projective $\Bbbk$-module $P$ is a direct summand in a coproduct of copies of $\Bbbk$, it~follows that, for fixed $q \in Q$ and $i>0$, the vanishing of $\Ext{Q}{i}{\stalkco{q} \otimes_\Bbbk P}{X^\natural}$ for every projective $\Bbbk$-module $P$ is equivalent to the vanishing of 
\begin{equation*}
  \Ext{Q}{i}{\stalkco{q} \otimes_\Bbbk \Bbbk}{X^\natural} \,\cong\,
  \cH[i]{q}(X^\natural) \,=\, \cH[i]{q}(X)^\natural\;;
\end{equation*}
cf.~\dfnref{cH-hH} and \rmkref{cH-hH}. Hence conditions \eqclbl{ii\phantom{'}$_{\mspace{-7mu}*}$} and \eqclbl{ii'$_{\mspace{-7mu}*}$} above are equivalent to conditions \eqclbl{ii} and \eqclbl{ii'} in the theorem we are proving. 

For every injective $\Bbbk$-module $I$, $q \in Q$, and $i>0$ there are isomorphisms
\begin{equation*}
  \Ext{Q}{i}{X^\natural}{\Hom{\Bbbk}{\stalkcn{q}}{I}} \,\cong\,
  \Hom{\Bbbk}{\Tor{Q}{i}{\stalkcn{q}}{X^\natural}}{I} \,=\,
  \Hom{\Bbbk}{\hH[i]{q}(X)^\natural}{I}\;,
\end{equation*}
so conditions \eqclbl{iii\phantom{'}$_{\mspace{-7mu}*}$} and \eqclbl{iii'$_{\mspace{-7mu}*}$} above are equivalent to \eqclbl{iii} and \eqclbl{iii'} in this theorem. 
\end{proof}

\begin{proof}[Proof of \thmref{quiso}.]
  We start by proving the equivalence between \eqclbl{i}, \eqclbl{ii}, and \eqclbl{ii'}.
  
  \proofofimp{i}{ii} Assume that $\varphi \colon X \to Y$ is a weak equivalence. By \prpref{we} there exists a factorization $\varphi = \pi\iota$ where $\iota \colon X \rightarrowtail Z$ is monic with $\Coker{\iota} \in \lPrj{Q,\alg}$ and $\pi \colon Z \twoheadrightarrow Y$ is epic with $\Ker{\pi} \in \mathscr{E}$. The short exact sequence \smash{$0 \to X \xrightarrow{\iota} Z \to \Coker{\iota} \to 0$} is split exact, and hence so is the sequence
\begin{equation*}
  \xymatrix@C=1.7pc{
  0 \ar[r] & 
  \cH[i]{q}(X) \ar[r]^-{\cH[i]{q}(\iota)} & 
  \cH[i]{q}(Z) \ar[r] & 
  \cH[i]{q}(\Coker{\iota}) \ar[r] & 
  0
  }
\end{equation*}  
for every $q \in Q$ and $i>0$. As $\Coker{\iota} \in \lPrj{Q,\alg} \subseteq \mathscr{E}$ we have $\cH[i]{q}(\Coker{\iota})=0$ by \thmref{E-characterization-hereditary}, so the short exact sequence above shows that $\cH[i]{q}(\iota)$ is an isomorphism. The short exact sequence \smash{$0 \to \Ker{\pi} \to Z \xrightarrow{\pi} Y \to 0$} induces a long exact Ext-sequence,
\begin{equation*}
  \xymatrix@C=1.8pc{
  \cdots \ar[r] &
  \cH[i]{q}(\Ker{\pi}) \ar[r] & 
  \cH[i]{q}(Z) \ar[r]^-{\cH[i]{q}(\pi)} & 
  \cH[i]{q}(Y) \ar[r] & 
  \cH[i+1]{q}(\Ker{\pi}) \ar[r] & 
  \cdots
  }
\end{equation*}  
For $q \in Q$ and $i>0$ we have $\cH[i]{q}(\Ker{\pi})=0$ by \thmref{E-characterization-hereditary} as $\Ker{\pi} \in \mathscr{E}$, so the long exact sequence shows that \smash{$\cH[i]{q}(\pi)$} is an isomorphism. Having proved that \smash{$\cH[i]{q}(\iota)$} and \smash{$\cH[i]{q}(\pi)$} are isomorphisms, it follows that $\cH[i]{q}(\varphi) = \cH[i]{q}(\pi)\cH[i]{q}(\iota)$ is an isomorphism too.

\proofofimp{ii}{ii'} This implication is trivial.

\proofofimp{ii'}{i} Let $\varphi \colon X \to Y$ be a morphism in 
$\lMod{Q,\alg}$ such that $\cH[i]{q}(\varphi)$ is an isomorphism for every $q \in Q$ and $i=1,2$. As the factorization axiom \cite[Dfn.~1.1.3]{modcat} holds in any model category, $\varphi$ admits a factorization $\varphi = \pi\iota$ where $\iota \colon X \rightarrowtail Z$ is a cofibration and $\pi \colon Z \twoheadrightarrow Y$ is a trivial fibration. We will show that $\iota$ is a weak equivalence (and hence a trivial cofibration), as this implies that the composite $\varphi = \pi\iota$ is a weak equivalence.

By assumption, \smash{$\cH[i]{q}(\varphi)$} is an isomorphism for every $q \in Q$ and $i=1,2$. The already established implication \eqclbl{i}\,$\Rightarrow$\,\eqclbl{ii}, applied to the weak equivalence $\pi$, yields that $\cH[i]{q}(\pi)$ is an isomorphism for every $q \in Q$ and $i>0$. By the identity \smash{$\cH[i]{q}(\varphi) = \cH[i]{q}(\pi)\cH[i]{q}(\iota)$} it thus follows that $\cH[i]{q}(\iota)$ is an isomorphism every $q \in Q$ and $i=1,2$. As $\iota$ is an monomorphism (the cofibrations in any abelian model structure are, in particular, monomorphisms by \cite[Dfn.~5.1]{Hovey02}), it follows from \cite[Lem.~5.8]{Hovey02} that $\iota$ is a weak equivalence if and only if $\Coker{\iota}$ belongs to $\mathscr{E}$, equivalently, $\cH{q}(\Coker{\iota})=0$ for every $q \in Q$ by \thmref{E-characterization-hereditary}. To show this we consider the following part of the long exact Ext-sequence induced by the short exact sequence \smash{$0 \to X \xrightarrow{\iota} Y \to \Coker{\iota} \to 0$},
\begin{equation*}
  \xymatrix{
  \cH{q}(X) \ar[r]^-{\cH{q}(\iota)}_{\cong} & 
  \cH{q}(Y) \ar[r] & 
  \cH{q}(\Coker{\iota}) \ar[r] & 
  \cH[2]{q}(X) \ar[r]^-{\cH[2]{q}(\iota)}_{\cong} & 
  \cH[2]{q}(Y)
  }.
\end{equation*}  
As $\cH{q}(\iota)$ and $\cH[2]{q}(\iota)$ are isomorphisms, we get $\cH{q}(\Coker{\iota})=0$ as desired.

  The equivalence between \eqclbl{i}, \eqclbl{iii}, and \eqclbl{iii'} is proved similarly.
\end{proof}

\enlargethispage{1ex}

This concludes the proofs of the main results, \thmref[Theorems~]{E-characterization-hereditary} and \thmref[]{quiso}, of this section; note that  Theorem~D in the Introduction is a special case of these results. We end this section with a strengthening of \thmref{quiso} in the special case where the pseudo-radical squared is zero, see \prpref{quiso-2} below; we also prove Theorem~E from the Introduction, see \thmref{Prj-Inj} below.

It follows from \lemref{stalk} that a necessary condition for $X \in \lMod{Q}$ to be a coproduct of copies of stalk functors $\stalkco{*}$ is that the functor $X$ takes values in the (sub)category of free $\Bbbk$-modules. In some cases, this condition is also sufficient, as the next result shows.

\begin{lem}
  \label{lem:coproduct-stalk}
  Let $\ell \geqslant 0$. Assume that the $\Bbbk$-module $(\mathfrak{r}^\ell/\mathfrak{r}^{\ell+1})(p,q)$ is free for all $p,q \in Q$. Then the following assertions hold.
\begin{prt}
\item For every $p \in Q$ there exists a collection $\{U(q)\}_{q \in Q}$ of index sets and an isomorphism $(\mathfrak{r}^\ell/\mathfrak{r}^{\ell+1})(p,-) \,\cong\, \bigoplus_{q \in Q} \stalkco{q}^{(U(q))}$ in $\lMod{Q}$.

\item For every $q \in Q$ there exists a collection $\{V(p)\}_{p \in Q}$ of index sets and an isomorphism $(\mathfrak{r}^\ell/\mathfrak{r}^{\ell+1})(-,q) \,\cong\, \bigoplus_{p \in Q} \stalkcn{p}^{(V(p))}$ in $\rMod{Q}$.

\end{prt}  
\end{lem}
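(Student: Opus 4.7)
The plan is to exhibit both isomorphisms pointwise, exploiting the fact that any morphism in the ideal $\mathfrak{r}$ acts as zero on $\mathfrak{r}^\ell/\mathfrak{r}^{\ell+1}$ by post- or pre-composition. I will describe part (a) in detail; part (b) is obtained by the symmetric argument applied to right $Q$-modules.

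First I would fix $p \in Q$ and set $F = (\mathfrak{r}^\ell/\mathfrak{r}^{\ell+1})(p,-) \in \lMod{Q}$. For a morphism $h \in Q(q,q')$ and a class $[f] \in F(q)$ represented by some $f \in \mathfrak{r}^\ell(p,q)$, one has $F(h)([f]) = [hf]$, and if $h \in \mathfrak{r}(q,q')$ then $hf \in \mathfrak{r}(q,q') \circ \mathfrak{r}^\ell(p,q) \subseteq \mathfrak{r}^{\ell+1}(p,q')$, so $F(h) = 0$. In particular, when $q \neq q'$ every $h \in Q(q,q')$ lies in $\mathfrak{r}(q,q')$, so $F(h) = 0$. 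For an endomorphism I would use the decomposition $Q(q,q) = (\Bbbk \cdot \mathrm{id}_q) \oplus \mathfrak{r}_q$ to write $h = x\cdot \mathrm{id}_q + g$ with $g \in \mathfrak{r}_q$, and conclude that $F(h) = x \cdot \mathrm{id}_{F(q)}$. Comparing with the description of $\stalkco{q}$ in \lemref{stalk}, these action formulas coincide exactly with those of any functor of the shape $\bigoplus_{r \in Q} \stalkco{r}^{(U(r))}$.

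Next, the freeness hypothesis on $F(q) = (\mathfrak{r}^\ell/\mathfrak{r}^{\ell+1})(p,q)$ lets me choose an index set $U(q)$ together with a $\Bbbk$-basis, yielding an isomorphism $\alpha_q \colon F(q) \xrightarrow{\cong} \Bbbk^{(U(q))}$. Since $\stalkco{r}(q) = \Bbbk$ if $r = q$ and $0$ otherwise, the target equals $\bigl(\bigoplus_{r \in Q} \stalkco{r}^{(U(r))}\bigr)(q)$. I would then observe that naturality in $Q$ of the family $(\alpha_q)_{q \in Q}$ is forced by the action formulas established in the previous paragraph (both sides have the identical prescription: zero on morphisms between distinct objects, multiplication by $x$ on endomorphisms $x \cdot \mathrm{id}_q + g$), so the $\alpha_q$ assemble into the required isomorphism in $\lMod{Q}$.

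The only delicate point is really a bookkeeping check: the bases at distinct objects are chosen independently, but this is harmless precisely because every morphism of $Q$ with distinct endpoints and every morphism in $\mathfrak{r}$ acts as zero on $F$, so no compatibility between the bases at different objects is needed. Part (b) follows by the entirely dual argument applied to the right $Q$-module $(\mathfrak{r}^\ell/\mathfrak{r}^{\ell+1})(-,q)$, using that $f \circ h \in \mathfrak{r}^{\ell+1}(p',q)$ whenever $f \in \mathfrak{r}^\ell(p,q)$ and $h \in \mathfrak{r}(p',p)$, together with the analogous description of $\stalkcn{p}$.
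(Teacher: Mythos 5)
Your argument is correct: the key inclusion $\mathfrak{r}\circ\mathfrak{r}^{\ell}\subseteq\mathfrak{r}^{\ell+1}$ (and its mirror for precomposition) does show that every morphism of $\mathfrak{r}$ acts as zero on $(\mathfrak{r}^\ell/\mathfrak{r}^{\ell+1})(p,-)$, so by the decomposition $Q(q,q)=(\Bbbk\cdot\mathrm{id}_q)\oplus\mathfrak{r}_q$ and \lemref{ideal} this module has exactly the same action pattern as a coproduct of stalk functors described in \lemref{stalk}; consequently any family of $\Bbbk$-linear isomorphisms $\alpha_q\colon(\mathfrak{r}^\ell/\mathfrak{r}^{\ell+1})(p,q)\to\Bbbk^{(U(q))}$, obtained from the freeness hypothesis, is automatically a natural isomorphism, and the dual argument gives part (b). The paper's proof rests on the same inclusion but packages it differently: there one lifts a chosen basis of $(\mathfrak{r}^\ell/\mathfrak{r}^{\ell+1})(p,q)$ to elements $\varepsilon_{q,u}\in\mathfrak{r}^\ell(p,q)$, notes that each $Q(\varepsilon_{q,u},-)\colon Q(q,-)\to Q(p,-)$ lands in $\mathfrak{r}^\ell(p,-)$ and carries $\mathfrak{r}(q,-)$ into $\mathfrak{r}^{\ell+1}(p,-)$, hence induces $\stalkco{q}\to(\mathfrak{r}^\ell/\mathfrak{r}^{\ell+1})(p,-)$, and then assembles these by the universal property of the coproduct and checks pointwise that the resulting map sends the standard basis to the chosen one. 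So the paper constructs an explicit natural transformation out of representables (which requires choosing lifts of the basis vectors), whereas you dispense with lifts and with any explicit construction by observing that naturality is forced once both sides are seen to be ``annihilated by $\mathfrak{r}$''; your version is a bit lighter and makes transparent why no compatibility between the bases at different objects is needed, while the paper's version has the mild virtue of exhibiting the comparison map concretely in terms of composition in $Q$. Either route is complete.
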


\begin{proof}
  \proofoftag{a} Fix  $p \in Q$. For every $q \in Q$ let \smash{$\{\varepsilon_{q,u}\}_{u \mspace{1mu}\in\mspace{1mu} U(q)}$} be a subset of $\mathfrak{r}^\ell(p,q)$ such that $\{\bar{\varepsilon}_{q,u}\}_{u \mspace{1mu}\in\mspace{1mu} U(q)}$ is a basis of the free $\Bbbk$-module $(\mathfrak{r}^\ell/\mathfrak{r}^{\ell+1})(p,q)$; here $\bar{\varepsilon}_{q,u}$ denotes~the image of $\varepsilon_{q,u}$ in $(\mathfrak{r}^\ell/\mathfrak{r}^{\ell+1})(p,q)$. As $\mathfrak{r}^\ell$ is an ideal in $Q$ containing $\varepsilon_{q,u}$, the image of the natural transformation \mbox{$Q(\varepsilon_{q,u},-) \colon Q(q,-) \to Q(p,-)$} is contained in $\mathfrak{r}^\ell(p,-)$ and $Q(\varepsilon_{q,u},-)$ maps the subfunctor $\mathfrak{r}(q,-)$ to $\mathfrak{r}^{\ell+1}(p,-)$. Thus, $Q(\varepsilon_{q,u},-)$ induces a natural transformation,
\begin{equation*}
  \xymatrix{
    \stalkco{q} \,=\, Q(q,-)/\mathfrak{r}(q,-)
    \ar[r]^-{\tau_{q,u}} & 
    (\mathfrak{r}^\ell/\mathfrak{r}^{\ell+1})(p,-)
  }.
\end{equation*}
By the universal property of the coproduct, we get an induced natural transformation,
\begin{equation*}
  \xymatrix{
    \bigoplus_{q \in Q} \stalkco{q}^{(U(q))}
    \ar[r]^-{\tau} & 
    (\mathfrak{r}^\ell/\mathfrak{r}^{\ell+1})(p,-)
  }.
\end{equation*}
We claim that $\tau$ is an isomorphism, i.e.~for every $r \in Q$ the $\Bbbk$-module homomorphism below is an isomorphism (the equalities in this display follow from \lemref{stalk}),
\begin{equation*}
  \xymatrix{
    \Bbbk^{(U(r))} \,=\,
    \stalkco{r}(r)^{(U(r))} \,=\,     
    \bigoplus_{q \in Q} \stalkco{q}(r)^{(U(q))}
    \ar[r]^-{\tau^r} & 
    (\mathfrak{r}^\ell/\mathfrak{r}^{\ell+1})(p,r)
  }.
\end{equation*}
Indeed, by definition, $\tau^r$ maps an element $(x_u)_{u \mspace{1mu}\in\mspace{1mu} U(r)}$ in \smash{$\Bbbk^{(U(r))}$} to the sum $\sum_{u \mspace{1mu}\in\mspace{1mu} U(r)}x_u\mspace{1mu} \bar{\varepsilon}_{r,u}\mspace{2mu}$, and this map is an isomorphism by construction.

\proofoftag{b} Similar to the proof of part \prtlbl{a}.
\end{proof}

\begin{prp}
  \label{prp:quiso-2}
   Adopt the setup of \thmref{E-characterization-hereditary} but assume that $N=2$, that is, $\mathfrak{r}^2=0$, and that $\Bbbk$ is a PID (e.g.~$\Bbbk=\mathbb{Z}$). For every $\Bbbk$-algebra $\alg$ and morphism $\varphi$ in $\lMod{Q,\alg}$ the following conditions are equivalent:
  \begin{eqc}
  \item $\varphi$ is a weak equivalence.
  \item $\cH{q}(\varphi)$ is an isomorphism for every $q \in Q$.
  \item $\hH{q}(\varphi)$ is an isomorphism for every $q \in Q$.
  \end{eqc}
\end{prp}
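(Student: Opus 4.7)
The plan is to reduce the proposition to \thmref{quiso}. That theorem already supplies (i)\,$\Rightarrow$\,(ii) and (i)\,$\Rightarrow$\,(iii) for free, and it establishes (i)\,$\Leftrightarrow$\,(ii')\,$\Leftrightarrow$\,(iii') where the primed conditions require isomorphisms in \emph{both} degrees $1$ and $2$. So the entire task is to show that, under the additional hypotheses $\mathfrak{r}^2=0$ and $\Bbbk$ a PID, condition (ii) of this proposition forces $\cH[2]{q}(\varphi)$ to be an isomorphism for every $q\in Q$, and dually for (iii).

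The key observation is that these hypotheses make \lemref{coproduct-stalk} applicable at $\ell=1$. Indeed, $\mathfrak{r}^2=0$ gives $\mathfrak{r}^1/\mathfrak{r}^2 = \mathfrak{r}$, and for every $p,q\in Q$ the $\Bbbk$-module $\mathfrak{r}(p,q)$ is a direct summand of $Q(p,q)$ (by condition \eqmref{strong-retraction} when $p=q$ and tautologically otherwise). Since $Q(p,q)$ is finitely generated projective over $\Bbbk$ by condition \eqref{Homfin} and $\Bbbk$ is a PID, $\mathfrak{r}(p,q)$ is finitely generated and free. Hence \lemref{coproduct-stalk} supplies, for every $q\in Q$, isomorphisms
\begin{equation*}
  \mathfrak{r}(q,-)\;\cong\;\bigoplus_{r\in Q}\stalkco{r}^{(U_q(r))}\ \ \text{in }\lMod{Q}
  \qquad\text{and}\qquad
  \mathfrak{r}(-,q)\;\cong\;\bigoplus_{p\in Q}\stalkcn{p}^{(V_q(p))}\ \ \text{in }\rMod{Q}\,.
\end{equation*}

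Next I would combine these with the short exact sequences $0\to\mathfrak{r}(q,-)\to Q(q,-)\to\stalkco{q}\to 0$ and $0\to\mathfrak{r}(-,q)\to Q(-,q)\to\stalkcn{q}\to 0$ whose middle terms are projective by Yoneda (\prpref{lfp}\prtlbl{a} with $\alg=\Bbbk$). A one-step dimension shift, together with the fact that $\Ext{Q}{1}{-}{X}$ converts coproducts in the first slot to products and $\Tor{Q}{1}{-}{X}$ preserves coproducts, yields natural isomorphisms
\begin{equation*}
  \cH[2]{q}(X)\;\cong\;\prod_{r\in Q}\cH{r}(X)^{U_q(r)}
  \qquad\text{and}\qquad
  \hH[2]{q}(X)\;\cong\;\bigoplus_{p\in Q}\hH{p}(X)^{(V_q(p))}
\end{equation*}
for every $X\in\lMod{Q,\alg}$.

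Finally, if $\cH{r}(\varphi)$ is an isomorphism for every $r$, then the first display shows $\cH[2]{q}(\varphi)$ is an isomorphism for every $q$, so condition (ii') of \thmref{quiso} holds and $\varphi$ is a weak equivalence; the implication (iii)\,$\Rightarrow$\,(i) is entirely dual via the second display. The one genuinely pivotal step — and the reason the hypotheses must be strengthened from ``noetherian hereditary'' in \thmref{quiso} to ``PID'' — is the verification that $\mathfrak{r}(p,q)$ is \emph{free} (not just projective) over $\Bbbk$, since that is exactly what \lemref{coproduct-stalk} demands to decompose the first syzygy of each stalk functor as a coproduct of stalks. Everything else is a bookkeeping consequence of machinery already in the paper.
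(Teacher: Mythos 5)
Your proposal is correct and follows essentially the same route as the paper's proof: apply \lemref{coproduct-stalk} with $\ell=1$ (valid since $\mathfrak{r}/\mathfrak{r}^2=\mathfrak{r}$ and each $\mathfrak{r}(p,q)$ is free over the PID $\Bbbk$), dimension-shift along $0\to\mathfrak{r}(q,-)\to Q(q,-)\to\stalkco{q}\to 0$ to identify $\cH[2]{q}$ with a product of copies of the functors $\cH{r}$, and dually for $\hH[2]{q}$, thereby reducing to conditions \eqclbl{ii'} and \eqclbl{iii'} of \thmref{quiso}. The only cosmetic difference is how freeness of $\mathfrak{r}(p,q)$ is verified (you use that it is a direct summand of the finitely generated projective $Q(p,q)$, the paper uses that submodules of free modules over a PID are free), which changes nothing of substance.
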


\begin{proof}
  We know from \thmref{quiso} that \eqclbl{i} implies both \eqclbl{ii} and \eqclbl{iii}.
  
  \proofofimp{ii}{i} By \thmref{quiso} it suffices to show that also \smash{$\cH[2]{q}(\varphi)$} is an isomorphism for every $q \in Q$. For every $p,q \in Q$ the $\Bbbk$-module $Q(p,q)$ is projective (= free, as $\Bbbk$ is a PID) by condition \eqref{Homfin} in \stpref{Bbbk}, and hence so is the submodule $\mathfrak{r}(p,q)$. Since $\mathfrak{r}^2=0$ we have  $(\mathfrak{r}/\mathfrak{r}^2)(p,q) = \mathfrak{r}(p,q)$, so we can apply
\lemref{coproduct-stalk}\prtlbl{a} with $\ell=1$ to get an isomorphism $\mathfrak{r}(q,-) \,\cong\, \bigoplus_{r \in Q} \stalkco{r}^{(U(r))}$ in $\lMod{Q}$ for suitable index sets $U(r)$. This yields the third equality below; the first and last equalities holds by \dfnref{cH-hH}, and the second equality holds by dimension shifting, as $\mathfrak{r}(q,-)$ is a first syzygy of $\stalkco{q}$ by \dfnref{stalk}.
\begin{equation*}
  \cH[2]{q}(?)
  \,=\,
  \Ext{Q}{2}{\stalkco{q}}{?}
  \,=\,
  \Ext{Q}{1}{\mathfrak{r}(q,-)}{?}  
  \,=\,
  \textstyle
  \prod_{r \in Q} \Ext{Q}{1}{\stalkco{r}}{?}^{U(r)}
  \,=\,
  \textstyle
  \prod_{r \in Q} \cH{r}(?)^{U(r)}
\end{equation*}
From this identity we see that if $\cH{r}(\varphi)$ is an ismorphism for every $r \in Q$, then $\cH[2]{q}(\varphi)$ is also an ismorphism for every $q \in Q$, as desired.

  \proofofimp{iii}{i} Similar to the proof of the implication \eqclbl{ii}\,$\Rightarrow$\,\eqclbl{i}. 
\end{proof}

We end this section by proving \thmref{Prj-Inj} below, which gives a useful characterization of the projective and injective objects in $\lMod{Q,\alg}$. Recall from \corref{adjoint-evaluation} the~functors $\Fq{q}$ and $\Gq{q}$ and from \prpref{adjoint-stalk} the functors $\Cq{q}$ and $\Kq{q}$.

\begin{lem}
  \label{lem:CF}
  For every $p,q \in Q$ the following assertions hold.
\begin{prt}
\item $\Cq{p}\Fq{p} = \mathrm{id}$ and $\Cq{p}\Fq{q}=0$ if $p \neq q$.

\item $\Kq{p}\Gq{p} = \mathrm{id}$ and $\Kq{p}\Gq{q}=0$ if $p \neq q$.
\end{prt}
Here \textnormal{``$\mathrm{id}$''} denotes the identity functor on $\lMod{\alg}$.
\end{lem}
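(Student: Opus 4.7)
The plan is to avoid a direct Yoneda-style manipulation in favour of an elegant adjointness argument. I would observe that the two adjoint triples $(\Fq{q},\Eq{q},\Gq{q})$ of \corref{adjoint-evaluation} and $(\Cq{p},\Sq{p},\Kq{p})$ of \prpref{adjoint-stalk} share the common ``middle'' composition $\Eq{q}\Sq{p}\colon \lMod{\alg}\to\lMod{\alg}$. Composing the adjunctions, the functor $\Cq{p}\Fq{q}$ (being a composition of left adjoints) is left adjoint to $\Eq{q}\Sq{p}$, and dually $\Kq{p}\Gq{q}$ (being a composition of right adjoints) is right adjoint to the very same functor $\Eq{q}\Sq{p}$.

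The key observation is that this common middle functor is trivial to compute. Indeed, by the definition of $\Sq{p}$ in \prpref{adjoint-stalk} one has
\begin{equation*}
  \Eq{q}\Sq{p}(M) \,=\, \Eq{q}(\stalkco{p}\otimes_\Bbbk M) \,=\, \stalkco{p}(q)\otimes_\Bbbk M\;,
\end{equation*}
and by \lemref{stalk} this equals $\Bbbk\otimes_\Bbbk M\cong M$ when $p=q$ and equals $0$ when $p\neq q$. Hence $\Eq{q}\Sq{p}$ is either the identity functor on $\lMod{\alg}$ or the zero functor.

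From here the conclusion is automatic by uniqueness of adjoints. When $p=q$, $\Cq{p}\Fq{p}$ is left adjoint to $\mathrm{id}$ and $\Kq{p}\Gq{p}$ is right adjoint to $\mathrm{id}$, so both coincide with $\mathrm{id}$. When $p\neq q$, both $\Cq{p}\Fq{q}$ and $\Kq{p}\Gq{q}$ are adjoints of the zero functor, and the zero functor has the zero functor as both its left and its right adjoint (e.g.\ for a left adjoint $F$ of $0$ one has $\Hom{\alg}{F(M)}{N}\cong\Hom{\alg}{M}{0(N)}=0$ for all $N$, forcing $F(M)=0$).

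There is no real obstacle; the only thing to be mindful of is bookkeeping the direction of the adjunctions when composing them, but this is routine. An alternative, equally short route would be a direct computation: for part \prtlbl{a}, \lemref{associativity} combined with the co-Yoneda isomorphism $W\otimes_Q Q(q,-)\cong W(q)$ (noted in the proof of \corref{adjoint-evaluation}) yields $\Cq{p}\Fq{q}(M)\cong \stalkcn{p}(q)\otimes_\Bbbk M$; for part \prtlbl{b}, the hom-tensor adjunction gives $\Kq{p}\Gq{q}(M)\cong \Hom{\Bbbk}{Q(-,q)\otimes_Q\stalkco{p}}{M}\cong \Hom{\Bbbk}{\stalkco{p}(q)}{M}$. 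In both cases \lemref{stalk} finishes the job. I prefer the adjointness argument as it makes the symmetry between \prtlbl{a} and \prtlbl{b} completely transparent.
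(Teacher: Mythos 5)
Your argument is correct, but it is genuinely different from the one in the paper. The paper proves \prtlbl{a} by a direct computation: using \lemref{associativity} and the coend evaluation $\ORt{\stalkcn{p}}{Q(q,-)} \cong \stalkcn{p}(q)$ from the proof of \corref{adjoint-evaluation} it obtains $\Cq{p}\Fq{q}(-) \cong \stalkcn{p}(q)\otimes_\Bbbk -$, and then invokes \lemref{stalk}; part \prtlbl{b} is declared dual. (Your ``alternative route'' at the end is essentially this proof.) Your primary route instead composes the adjunctions $\Fq{q}\dashv\Eq{q}\dashv\Gq{q}$ and $\Cq{p}\dashv\Sq{p}\dashv\Kq{p}$ to see that $\Cq{p}\Fq{q}$ and $\Kq{p}\Gq{q}$ are, respectively, the left and right adjoint of the single functor $\Eq{q}\Sq{p}(-) = \stalkco{p}(q)\otimes_\Bbbk -$, which \lemref{stalk} identifies with $\mathrm{id}$ or $0$; uniqueness of adjoints (and the observation that the zero functor is its own two-sided adjoint) then finishes both parts at once. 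The adjunction bookkeeping is right ($\Cq{p}\Fq{q}\dashv\Eq{q}\Sq{p}\dashv\Kq{p}\Gq{q}$), so the argument is sound, and its advantage is exactly what you say: \prtlbl{a} and \prtlbl{b} become literally symmetric consequences of one trivial evaluation, with no coend manipulation. What the paper's computation buys in exchange is an explicit formula $\Cq{p}\Fq{q}(-)\cong\stalkcn{p}(q)\otimes_\Bbbk -$ of the same flavour used elsewhere (e.g.\ in \lemref{FGH} and \thmref{Prj-Inj}), keeping the toolkit uniform. One small caveat applying equally to both proofs: what is actually produced is a natural isomorphism with $\mathrm{id}$, not an equality of functors, so the ``$=$'' in the statement should be read as ``$\cong$''.
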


\begin{proof}
\proofoftag{a} In the computation below, the first isomorphism holds by the definitions of $\Cq{p}$ and $\Fq{q}$, the middle isomorphism follows from \lemref{associativity}, and the last isomorphism is already mentioned in the proof of \corref{adjoint-evaluation}.
\begin{equation*}
   \Cq{p}\Fq{q}(?) 
   \,\cong\, 
   \ORt{\stalkcn{p}}{(Q(q,-) \otimes_\Bbbk {?})}
   \,\cong\,
   (\ORt{\stalkcn{p}}{Q(q,-)}) \otimes_\Bbbk {?}
   \,\cong\,
   \stalkcn{p}(q) \otimes_\Bbbk {?}
\end{equation*}
The desired conclusion now follows from \lemref{stalk}.

\proofoftag{b} Similar to the proof of part \prtlbl{a}.
\end{proof}

\begin{thm}
  \label{thm:Prj-Inj}
  Assume that $Q$ satisfies condition \eqref{Homfin} in \stpref{Bbbk} and that the pseudo-radical $\mathfrak{r}$ is nilpotent, that is, $\mathfrak{r}^N=0$ for some $N \in \mathbb{N}$. For every $X \in \lMod{Q,\alg}$ one has:
\begin{prt}
\item $X \in \lPrj{Q,\alg}$ if and only if\, $\hH{q}(X)=0$ and $\Cq{q}(X) \in \lPrj{\alg}$ for every $q \in Q$.
\item $X \in \lInj{Q,\alg}$ if and only if\, $\cH{q}(X)=0$ and $\Kq{q}(X) \in \lInj{\alg}$ for every $q \in Q$.
\end{prt}
\end{thm}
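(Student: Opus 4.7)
I will prove part \prtlbl{a}; part \prtlbl{b} is entirely dual. For the \emph{only if} direction, recall from \prpref{lfp}\prtlbl{a} that a projective object $X \in \lMod{Q,\alg}$ is a direct summand of some coproduct $\bigoplus_\lambda \Fq{q_\lambda}(\alg)$. Both $\hH{q} = \Tor{Q}{1}{\stalkcn{q}}{-}$ and $\Cq{q}$ (a left adjoint by \prpref{adjoint-stalk}) commute with coproducts, so \lemref{FGH}\prtlbl{a} gives $\hH{q}(X) = 0$, while \lemref{CF}\prtlbl{a} identifies $\Cq{q}\bigl(\bigoplus_\lambda \Fq{q_\lambda}(\alg)\bigr)$ with the free left $\alg$-module $\alg^{(\{\lambda\,:\,q_\lambda = q\})}$, whose direct summands are projective.

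For the \emph{if} direction, my plan is to exhibit a canonical isomorphism $\varphi \colon P \to X$ where $P := \bigoplus_{q \in Q} \Fq{q}(\Cq{q}(X))$; by \lemref{Fq-Gq-preserve} together with the hypothesis $\Cq{q}(X) \in \lPrj{\alg}$, the object $P$ is projective. Using projectivity of $\Cq{q}(X)$, pick for each $q \in Q$ a section $\sigma_q \colon \Cq{q}(X) \to X(q)$ of the canonical surjection $X(q) \twoheadrightarrow \Cq{q}(X)$. The $(\Fq{q},\Eq{q})$-adjunction of \corref{adjoint-evaluation} converts $\sigma_q$ into a morphism $\tilde\sigma_q \colon \Fq{q}(\Cq{q}(X)) \to X$, and these assemble into $\varphi \colon P \to X$.

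The crux of the proof---and the main technical obstacle, though it is ultimately routine---is to verify that the natural identification $\Cq{q'}(P) \cong \Cq{q'}(X)$ obtained from \lemref{CF}\prtlbl{a} (only the $q = q'$ summand survives) sends $\Cq{q'}(\varphi)$ to the identity of $\Cq{q'}(X)$. Unwinding the formula for $\Cq{q'}$ in \prpref{K-C-formulae} together with the explicit description of $\tilde\sigma_{q'}$, this reduces to the facts that $\sigma_{q'}$ is a section and that $\Im X(r)$ is killed in $\Cq{q'}(X)$ for every $r \in \mathfrak{r}_{q'}$. Granted this identity, right-exactness of $\Cq{q'}$ applied to $P \xrightarrow{\varphi} X \to \Coker\varphi \to 0$ yields $\Cq{q'}(\Coker\varphi) = 0$ for every $q'$, so $\Coker\varphi = 0$ by \prpref{zero-criterion}. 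With $K = \Ker\varphi$, the Tor long exact sequence
\[
  \hH{q'}(X) \longrightarrow \Cq{q'}(K) \longrightarrow \Cq{q'}(P) \xrightarrow{\,\cong\,} \Cq{q'}(X) \longrightarrow 0
\]
combined with the hypothesis $\hH{q'}(X) = 0$ forces $\Cq{q'}(K) = 0$ for every $q'$, and a second application of \prpref{zero-criterion} yields $K = 0$. Hence $\varphi$ is an isomorphism and $X \cong P \in \lPrj{Q,\alg}$.

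For part \prtlbl{b}, dualize: using injectivity of $\Kq{q}(X)$ choose retractions $\rho_q \colon X(q) \to \Kq{q}(X)$ of the canonical inclusion $\Kq{q}(X) \hookrightarrow X(q)$, convert them via the $(\Eq{q},\Gq{q})$-adjunction into $\tilde\rho_q \colon X \to \Gq{q}(\Kq{q}(X))$, and assemble into $\psi \colon X \to I := \prod_{q \in Q} \Gq{q}(\Kq{q}(X))$; then $I$ is injective by \lemref{Fq-Gq-preserve}. Since $\Kq{q'}$ commutes with products as a right adjoint and $\Ext{Q}{1}{-}{?}$ commutes with products in its second argument, \lemref{CF}\prtlbl{b} and \lemref{FGH}\prtlbl{b} together show that $\Kq{q'}(\psi)$ identifies with the identity of $\Kq{q'}(X)$ and that $\cH{q'}(I) = 0$. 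The Ext long exact sequence, combined with the hypothesis $\cH{q'}(X) = 0$ and two applications of \prpref{zero-criterion}, then forces $\Ker\psi = 0 = \Coker\psi$, so $\psi$ is an isomorphism and $X \cong I \in \lInj{Q,\alg}$.
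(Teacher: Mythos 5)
Your argument is correct and follows essentially the same route as the paper's proof: the ``only if'' direction via coproducts of the projective generators $\Fq{q}(\alg)$ together with \lemref[Lemmas~]{CF} and \lemref[]{FGH}, and the ``if'' direction by transposing a section of $X(q)\twoheadrightarrow\Cq{q}(X)$ through the $(\Fq{q},\Eq{q})$-adjunction to build $\varphi\colon\bigoplus_{q}\Fq{q}\Cq{q}(X)\to X$, checking $\Cq{q'}(\varphi)$ is the identity, and killing $\Coker{\varphi}$ and $\Ker{\varphi}$ with two applications of \prpref{zero-criterion} (using right-exactness and $\hH{q'}(X)=0$), exactly as in the paper; the dual treatment of part (b) matches as well. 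Only a cosmetic point: in your ``crux'' verification the morphisms whose images die in $\Cq{q'}(X)$ should range over all of $\setcn{q'}$ (every morphism in $\mathfrak{r}$ with codomain $q'$), not just the endomorphisms in $\mathfrak{r}_{q'}$.
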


\begin{proof}
\proofoftag{a} ``Only if'': It is immediate from \dfnref{cH-hH} and \prpref{adjoint-stalk} that the functors $\hH{q}$ and $\Cq{q}$ preserve coproducts. Thus, to show the ``only if'' part we can~by~\prpref{lfp}\prtlbl{a} assume that $X$ has the form $X=\Fq{p}(\alg)$ for some $p \in Q$. In this case we have $\Cq{q}(X) = \Cq{q}\Fq{p}(\alg)$, which is either $\alg$ or $0$ by \lemref{CF}\prtlbl{a}; in particular this $\alg$-module belongs to $\lPrj{\alg}$. We also have $\hH{q}(X)=\hH{q}(\Fq{p}(\alg))=0$ by \lemref{FGH}\prtlbl{a}.

``If'': Let $q \in Q$. Consider the canonical epimorphism $\pi^X_q \colon X(q)  \twoheadrightarrow \Cq{q}(X)$, see \prpref{K-C-formulae} (and its proof). Since the $\alg$-module $\Cq{q}(X)$ is projective, \smash{$\pi^X_q$} has a right inverse, say, $\iota_q \colon \Cq{q}(X) \rightarrowtail X(q)$. We define $\varphi_q$ be the composite
\begin{equation*}
  \xymatrix{
    \Fq{q}\Cq{q}(X) \ar[r]^-{\Fq{q}(\iota_q)} &
    \Fq{q}(X(q)) = \Fq{q}\Eq{q}(X) \ar[r]^-{\varepsilon^X_q} &
    X
  },
\end{equation*}
where $\varepsilon^X_q$ is the counit of the adjunction $(\Fq{q},\Eq{q})$ from \corref{adjoint-evaluation}. Note that $\Cq{q}(\varepsilon^X_q)=\pi^X_q$ and 
$\Cq{q}\Fq{q}(\iota_q) = \iota_q$, see \lemref{CF}\prtlbl{a}, and thus \smash{$\Cq{q}(\varphi_q) = \pi^X_q\,\iota_q = \mathrm{id}_{\Cq{q}(X)}$}. By the universal property of the coproduct, there is a unique morphism, 
\begin{equation*}
  \textstyle
  \varphi \colon \bigoplus_{q \in Q} \Fq{q}\Cq{q}(X) \longrightarrow X\;,
\end{equation*}
induced by the family $\{\varphi_q\}_{q \in Q}$. For every $p \in Q$ the functor $\Cq{p}$ is a left adjoint by \prpref{adjoint-stalk}, so it preserves coproducts. By \lemref{CF}\prtlbl{a} and the fact that \smash{$\Cq{p}(\varphi_p) =  \mathrm{id}_{\Cq{p}(X)}$}, it follows that $\Cq{p}(\varphi)$ is an isomorphism, in fact, it is the identity on $\Cq{p}(X)$.

Now, applying the right exact functor $\Cq{p}$ to the exact sequence
\begin{equation*}
  \xymatrix{
    \bigoplus_{q \in Q} \Fq{q}\Cq{q}(X) \ar[r]^-{\varphi} &
    X \ar[r] & \Coker{\varphi} \ar[r] & 0
  }
\end{equation*}  
and using that $\Cq{p}(\varphi)$ is surjective, it follows that $\Cq{p}(\Coker{\varphi})$ for every $p \in Q$, and therefore $\Coker{\varphi}=0$ by \prpref{zero-criterion}. Thus there is a short exact sequence,
\begin{equation}
  \label{eq:Ker}
  \xymatrix{
    0 \ar[r] & \Ker{\varphi} \ar[r] & 
    \bigoplus_{q \in Q} \Fq{q}(\Cq{q}(X)) \ar[r]^-{\varphi} &
    X \ar[r] & 0
  }.
\end{equation}
For every $p \in Q$ one has \smash{$\Cq{p} = \ORt{\stalkcn{p}}{-}$} and \smash{$\hH{p} = \Tor{Q}{1}{\stalkcn{p}}{-}$}, and as it is assumed that $\hH{p}(X)=0$, the functor $\Cq{p}$ leaves the~sequence \eqref{Ker} exact. As the homomorphism $\Cq{p}(\varphi)$ is injective, we get $\Cq{p}(\Ker{\varphi})=0$ for every $p \in Q$, and thus $\Ker{\varphi}=0$; again by \prpref{zero-criterion}. We have now shown that $\varphi$ is an isomorphism. Since $\bigoplus_{q \in Q} \Fq{q}\Cq{q}(X)$ is a projective object in $\lMod{Q,\alg}$ by \lemref{Fq-Gq-preserve}, we conclude that $X$ is projective too.

\proofoftag{b} Dual to the proof of part \prtlbl{a}.
\end{proof}

\section{Stable Translation Quivers}
\label{sec:Example}

In this section, we investigate natural examples of (small) $\Bbbk$-preadditive categories that satisfy conditions \eqref{Homfin}, \eqref{locbd}, \eqref{Serre} in \stpref{Bbbk} and condition \eqmref{strong-retraction} in \dfnref{srp} with a nilpotent pseudo-radical. Recall that for such a category, $Q$, \thmref[Theorems~]{model-structures}, \thmref[]{E-characterization-hereditary}, and \thmref[]{quiso} show that for any ring $\alg$ (if we take $\Bbbk=\mathbb{Z}$), the category $\lMod{Q,\alg}$ has two different model structures where the trivial objects and the weak equivalences can be naturally characterized by the (co)homology functors from \dfnref{cH-hH}. The examples we have in mind are mesh categories of (suitably nice) stable translation quivers.

Recall that a \emph{stable translation quiver} is a triple $(\upGamma,\tr,\str)$ where $\upGamma=(\upGamma_0,\upGamma_1)$ is a quiver and $\tr \colon \upGamma_0 \to \upGamma_0$ (the \emph{translation}) and $\str \colon \upGamma_1 \to \upGamma_1$ (the \emph{semitranslation}) are bijections such that for every arrow $a \colon p \to q$ in $\upGamma$ the arrow $\sigma(a) \colon \tau(q) \to p$ goes from $\tau(q)$ to $p$. The sets $\upGamma_0$ and $\upGamma_1$ may be infinite (this will always be the case in \exaref{rep} below if $\upDelta_0$ and $\upDelta_1$ are non-empty), but we assume that $\upGamma$ is \emph{locally finite}, that is, for every vertex $q \in \upGamma_0$ there are only finitely many arrows with target $q$. Note that $\upGamma$ may have \emph{oriented cycles}, that is,  paths of length \mbox{$\geqslant\mspace{-2mu}1$} starting and ending at the same vertex; an oriented cycle of length one is called a \emph{loop} (in fact, the stable translation quivers arising from \exaref{dou} below will always have oriented cycles of length \mbox{$>\mspace{-2mu}1$} if $\upDelta_1$ is non-empty).

For every $q \in \upGamma_0$ the \emph{mesh} associated with $q$ is the diagram:
\begin{equation}
  \label{eq:mesh}
  \begin{gathered}
  \xymatrix@!=1ex{
    {} & p_1 \ar[dr]^-{a_1} & {}
    \\
    \tr(q)
    \ar[ur]^-{\str(a_1)}
    \ar[dr]_-{\str(a_n)}
    & \vdots & q
    \\
    {} & p_n \ar[ur]_-{a_n} & {}
  }
  \end{gathered}
\end{equation}
where $a_1,\ldots,a_n$ are all the finitely many different arrows in $\upGamma$ with target $q$. It follows that $\str(a_1),\ldots,\str(a_n)$ are all the arrows in $\upGamma$ with source $\tr(q)$. 

There are a couple of standard ways to obtain a stable translation quiver from an ordinary quiver, which we now describe.

\begin{exa}[the double quiver]
  \label{exa:dou}
  Let $\upDelta=(\upDelta_0,\upDelta_1)$ be a quiver. The \emph{double} quiver $\upGamma = \dou{\upDelta}$ of $\upDelta$ has the same vertices as $\upDelta$, that is, $\upGamma_0 = \upDelta_0$, but twice as many arrows. More precisely, $\upGamma$ has all the original arrows of $\upDelta$ but also an arrow $x^* \colon q \to p$ for every arrow $x \colon p \to q$ in $\upDelta$ (note that $x^*$ goes in the opposite direction of $x$); in symbols:
\begin{equation*}
  \upGamma_1 \;=\; \upDelta_1 \,\uplus\,\{ x^* \colon q \to p \ |\, (x \colon p \to q) \in \upDelta_1 \}\;.
\end{equation*}  
  The double quiver $\upGamma = \dou{\upDelta}$ is a stable translation quiver: The translation $\tr \colon \upGamma_0 \to \upGamma_0$ is the identity and the semitranslation $\str \colon \upGamma_1 \to \upGamma_1$ is given by $\sigma(x) = x^*$ and $\sigma(x^*)=x$ for every arrow $x$ in $\upDelta$.
\end{exa}

\begin{exa}[the repetitive quiver]
  \label{exa:rep}
  Let $\upDelta=(\upDelta_0,\upDelta_1)$ be a quiver. The \emph{repetitive} quiver $\upGamma = \rep{\upDelta}$ (even though the symbol $\mathbb{Z}\upDelta$ is commonly used for this quiver, we avoid it in this paper) of $\upDelta$ has the vertex set $\upGamma_0 = \upDelta_0 \times \mathbb{Z}$ and arrows 
\begin{equation*}  
  x_i \colon (p,i) \longrightarrow (q,i)
  \qquad \text{and} \qquad 
  x^*_i \colon (q,i) \longrightarrow (p,i-1)
\end{equation*}  
for every arrow $x \colon p \to q$ in $\upDelta$. The repetitive quiver $\upGamma = \rep{\upDelta}$ is a stable translation quiver with translation \mbox{$\tr \colon \upGamma_0 \to \upGamma_0$} given by $\tr(q,i) = (q,i+1)$ for $(q,i) \in \upGamma_0 = \upDelta_0 \times \mathbb{Z}$ and semitranslation $\str \colon \upGamma_1 \to \upGamma_1$ given by $\sigma(x_i) = x^*_{i+1}$ and $\sigma(x^*_i)=x_i$ for every arrow $x$ in $\upDelta$.
\end{exa}

\begin{rmk}
  \label{rmk:graph}
A graph $G$ can be turned it into a quiver, $\upDelta = \vec{G}$, by choosing some orientation of its vertices. If $G$ is a \emph{tree}, i.e.~any two vertices in $G$ are connected by exactly one path (equivalently, $G$ is a connected acyclic graph), then the repetitive quiver \smash{$\rep{\upDelta} = \rep{(\vec{G})}$} does not depend (up to isomorphism of stable translation quivers) on the chosen orientation of $G$; see Happel \cite[p.~53]{Happel}. The same it true for for the double quiver \smash{$\dou{\upDelta} = \dou{(\vec{G})}$}.
\end{rmk}

The following definitions are standard.

\begin{dfn}
  \label{dfn:mesh}
Let $(\upGamma,\tr,\str)$ be a stable translation quiver and $\Bbbk$ be a commutative ring.

The \emph{path category} of $\upGamma$ over $\Bbbk$ is the $\Bbbk$-preadditive category $\Bbbk \upGamma$ whose objects are the vertices of $\upGamma$ and whose hom sets $\Bbbk \upGamma(p,q)$ are the free $\Bbbk$-modules with basis the set of paths in $\upGamma$ from $p$ to $q$. (Of course, this definition works for any quiver $\upGamma$.)

The \emph{mesh relation} associated with a vertex $q$ in $\upGamma$ is the (formal) sum $\mu_q$ below of paths from $\tr(q)$ to $q$, cf.~\eqref{mesh}. Note that $\mu_q \colon \tr(q) \to q$ is a morphism in the path category $\Bbbk \upGamma$.
\begin{equation*}
   \mu_q \,=\, a_1 \,\str(a_1) + \cdots + a_n \,\str(a_n)
\end{equation*}

 Let $\mathfrak{m}$ be the \emph{mesh ideal}, i.e.~the two-sided ideal in the $\Bbbk$-preadditive category $\Bbbk \upGamma$ generated by all mesh relations; in symbols, $\mathfrak{m} = \langle\, \mu_q \,|\, q \in \upGamma_0 \,\rangle$. The \emph{mesh category} of $\upGamma$ over $\Bbbk$ is the quotient category:
\begin{equation*}
  \mesh{\upGamma} \,=\, (\Bbbk \upGamma)/\mathfrak{m}\;. 
\end{equation*}  
\end{dfn}

\begin{rmk}
  \label{rmk:rep}
  Notice that for $Q=\mesh{\upGamma}$ the category $\lMod{Q,\alg}$ is equivalent to the category of $\lMod{\alg}$-valued representations of $\upGamma$ that satisfy the mesh relations.
\end{rmk}

As we now prove, the mesh category of a stable translation quiver always satisfies the Strong Retraction Property. In general, the associated pseudo-radical $\mathfrak{r}$ need not be nilpotent (as assumed in \thmref[Theorems~]{E-characterization-hereditary} and \thmref[]{quiso}); however, in several natural examples (see \thmref[Theorems~]{A-dou} and \thmref[]{A-rep}), it will be.

\begin{lem}
  \label{lem:arrow-ideal}
  The mesh category $Q=\mesh{\upGamma}$, over any commutative ring $\Bbbk$, 
  of a stable translation quiver $\upGamma$ satisfies condition \eqmref{strong-retraction} in \dfnref{srp}. The \emph{\emph{arrow ideal}}, i.e.~the ideal in $Q$ generated by all arrows in $\upGamma$, serves as a pseudo-radical.   
\end{lem}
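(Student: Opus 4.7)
My plan is to work primarily in the free path category $\Bbbk\upGamma$ and use the length filtration on paths, then pass to the quotient $Q=(\Bbbk\upGamma)/\mathfrak{m}$. For each pair $p,q \in \upGamma_0$, the $\Bbbk$-module $\Bbbk\upGamma(p,q)$ is freely generated by the (possibly empty) set of paths from $p$ to $q$, and this free module carries a natural direct sum decomposition by path length. Let $\Bbbk\upGamma(p,q)_{\geqslant\ell}$ denote the $\Bbbk$-submodule spanned by paths of length $\geqslant\ell$, and let $I \subseteq \Bbbk\upGamma$ be the two-sided ideal generated by the arrows of $\upGamma$; by unique decomposition of paths into arrows one has $I(p,q) = \Bbbk\upGamma(p,q)_{\geqslant 1}$ for all $p,q$. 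In particular, when $p = q$ there is a splitting $\Bbbk\upGamma(q,q) = (\Bbbk\cdot e_q) \oplus \Bbbk\upGamma(q,q)_{\geqslant 1}$, where $e_q$ denotes the empty path at $q$.

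Next I would observe that the mesh ideal $\mathfrak{m}$ is contained in $I^2$, and in particular $\mathfrak{m}(p,q) \subseteq \Bbbk\upGamma(p,q)_{\geqslant 2}$. Indeed, each generator $\mu_r$ is a $\Bbbk$-linear combination of paths of length exactly $2$, and any element of the two-sided ideal generated by the $\mu_r$'s is of the form $\sum_i f_i\mu_{r_i}g_i$, which (expanding $f_i, g_i$ in the basis of paths) is a linear combination of paths of length $\geqslant 2$. Consequently, under the quotient map $\Bbbk\upGamma(q,q) \twoheadrightarrow Q(q,q)$, the mesh ideal entirely lives inside the second summand $\Bbbk\upGamma(q,q)_{\geqslant 1}$, so
\begin{equation*}
Q(q,q) \,=\, (\Bbbk\cdot[e_q]) \,\oplus\, \bigl(\Bbbk\upGamma(q,q)_{\geqslant 1}/\mathfrak{m}(q,q)\bigr)\;.
\end{equation*}
Defining $\mathfrak{r}_q$ to be the second summand (which is precisely the value at $(q,q)$ of the image in $Q$ of the arrow ideal $I$) gives the required $\Bbbk$-module complement of $\Bbbk\cdot\mathrm{id}_q$. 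The projection onto $\Bbbk\cdot[e_q]$ composed with the inverse of the unit isomorphism $\Bbbk \cong \Bbbk\cdot[e_q]$ is the desired $\Bbbk$-linear retraction of the unit map.

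It remains to check the compatibility conditions ($\dagger$) and ($\ddagger$) for the collection $\{\mathfrak{r}_q\}_{q\in\upGamma_0}$. Let $\mathfrak{r}$ denote the image of the arrow ideal $I$ in $Q$; this is a two-sided ideal of $Q$, and by construction $\mathfrak{r}(q,q) = \mathfrak{r}_q$ while $\mathfrak{r}(p,q) \subseteq Q(p,q)$ for all $p,q$. For $p \neq q$ in $\upGamma_0$, every path from $p$ to $q$ has length $\geqslant 1$, so $\Bbbk\upGamma(p,q) = \Bbbk\upGamma(p,q)_{\geqslant 1} = I(p,q)$, giving $\mathfrak{r}(p,q) = Q(p,q)$. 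Thus ($\ddagger$) reads $Q(q,p) \circ Q(p,q) = \mathfrak{r}(q,p)\circ\mathfrak{r}(p,q) \subseteq \mathfrak{r}(p,p) = \mathfrak{r}_p$, which is just the ideal property of $\mathfrak{r}$; and ($\dagger$) is the analogous ideal-closure statement $\mathfrak{r}(q,q)\circ\mathfrak{r}(q,q) \subseteq \mathfrak{r}(q,q)$.

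The only real content is the second paragraph, where one must ensure that the mesh relations do not collapse the length-$0$ component into the length-$\geqslant 1$ component; this is immediate from $\mathfrak{m} \subseteq I^2$, which I expect to be the only place requiring care. The last claim of the lemma—that this $\mathfrak{r}$ coincides with the pseudo-radical provided by \lemref{ideal}—is automatic from the identifications $\mathfrak{r}(q,q) = \mathfrak{r}_q$ and $\mathfrak{r}(p,q) = Q(p,q)$ for $p\neq q$.
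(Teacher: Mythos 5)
Your proposal is correct and follows essentially the same route as the paper: decompose hom-modules of the path category by path length, observe that the mesh relations live in length $\geqslant 2$ so the length-zero summand survives in the quotient, take $\mathfrak{r}_q$ to be the degree-$\geqslant 1$ part (the arrow ideal at $(q,q)$), and deduce ($\dagger$) and ($\ddagger$) from the fact that the image of the arrow ideal is an ideal with $\mathfrak{r}(p,q)=Q(p,q)$ for $p\neq q$. The only cosmetic difference is that you use just the containment $\mathfrak{m}\subseteq I^2$ where the paper records that the whole length grading descends to $Q$ (which it also uses later); your weaker observation suffices for this lemma.
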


\begin{proof}
  Let $P = \Bbbk\upGamma$ be the path category of $\upGamma$. For every $\ell \geqslant 0$ let $P^\ell(p,q)$ and $Q^\ell(p,q)$~the $\Bbbk$-submodules of $P(p,q)$ and $Q(p,q)$ generated by all paths in $\upGamma$ of length $\ell$; here we~have included the trivial paths $\mathrm{id}_q$ for $q \in \upGamma_0$, which have length \mbox{$\ell=0$}. There is a direct sum decomposition of $\Bbbk$-modules, \smash{$P(p,q) =  \bigoplus_{\ell \geqslant 0} P^\ell(p,q)$}, and as the mesh relations are homogeneous (of degree two) and only involve unit coefficients $\pm 1 \in \Bbbk$, we get an induced direct sum decomposition for the quotient category $Q=P/\mathfrak{m}$, that is,
\begin{equation}  
  \label{eq:Q-decomp}
  Q(p,q) \,=\, \bigoplus_{\ell \geqslant 0}\, Q^\ell(p,q)\;.
\end{equation}  
Let $\mathfrak{r}$ be the arrow ideal in $Q$; thus $\mathfrak{r}(p,q)$ is the $\Bbbk$-submodule of $Q(p,q)$ generated by all paths in $\upGamma$ of length $\geqslant 1$. That is, $\mathfrak{r}(p,q) = \bigoplus_{\ell \geqslant 1}\, Q^\ell(p,q)$, and hence
\begin{equation*}
  Q(p,q) \,=\, Q^0(p,q) \oplus \mathfrak{r}(p,q) \,=\,
    \left\{\mspace{-5mu}
  \begin{array}{cl}
    (\Bbbk\cdot\mathrm{id}_q) \oplus \mathfrak{r}(q,q) & \text{if $p = q$}
    \\
    \mathfrak{r}(p,q) & \text{if $p \neq q$}
  \end{array}
  \right.
\end{equation*}  
in view of \eqref{Q-decomp}. It is evident that the $\Bbbk$-submodules $\mathfrak{r}_q := \mathfrak{r}(q,q)$ satisfy the requirements in condition \eqmref{strong-retraction} in \dfnref{srp}, and the pseudo-radical associated to these $\Bbbk$-submodules, in the sense of \lemref{ideal}, is precisely the arrow ideal $\mathfrak{r}$ we started with.
\end{proof}

For $n \geqslant 2$ we now consider the Dynkin graph $G=\mathbb{A}_n$ with \emph{linear orientation}, that is,
\begin{equation}
  \label{eq:An}
  \vec{\mathbb{A}}_n \;=\; 
  \xymatrix{
    \underset{1}{\vtx} \ar[r]^-{a^{}_1} &
    \underset{2}{\vtx} \ar[r]^-{a^{}_2} &
    \ \cdots \ \ar[r] &
    \underset{n-1}{\vtx} \ar[r]^-{a^{}_{n-1}} &
    \underset{n}{\vtx} 
  }.
\end{equation}
Below we study the double quiver $\dou{(\vec{\mathbb{A}}_n)}$ and the repetitive quiver $\dou{(\vec{\mathbb{A}}_n)}$ of $\vec{\mathbb{A}}_n$. As noted in \rmkref{graph}, these stable translation quivers do not depend on the chosen orientation.

\begin{exa}
  \label{exa:A-dou}
  Consider the double quiver of $\vec{\mathbb{A}}_n$ from \eqref{An}, that is,
  \begin{equation*}
  \dou{(\vec{\mathbb{A}}_n)} \;=\; 
  \xymatrix{
    \underset{1}{\vtx} \ar@<0.6ex>[r]^-{a^{}_1} &
    \underset{2}{\vtx} \ar@<0.6ex>[l]^-{a^*_1} \ar@<0.6ex>[r]^-{a^{}_2} &
    \ \cdots \ \ar@<0.6ex>[r] \ar@<0.6ex>[l]^-{a^*_2} &
    \underset{n-1}{\vtx} \ar@<0.6ex>[l] \ar@<0.6ex>[r]^-{a^{}_{n-1}} &
    \underset{n}{\vtx} \ar@<0.6ex>[l]^-{a^*_{n-1}}
  }.
  \end{equation*}
  By \dfnref{mesh} the mesh relations are:
  \begin{equation*}
     \mu_1 \,=\, a^*_1a^{}_1\;, \quad 
     \mu_q \,=\, a^{}_{q-1}a^*_{q-1}+a^*_{q}a^{}_{q} \ \, \text{ for } \, \ 1<q<n\;,
     \quad \text{and} \quad
     \mu_n \,=\, a^{}_{n-1}a^*_{n-1}\;.      
  \end{equation*}
\end{exa}
    
\begin{thm}
  \label{thm:A-dou}
  Let $\Bbbk$ be any commutative ring. The mesh category
  \begin{equation*}
  Q \,=\, \mesh{\dou{(\vec{\mathbb{A}}_n)}}
  \end{equation*}
  over $\Bbbk$ of the double quiver of $\vec{\mathbb{A}}_n$ satisfies conditions \eqref{Homfin}, \eqref{locbd}, \eqref{Serre} in \stpref{Bbbk} and condition \eqmref{strong-retraction} in \dfnref{srp}. More precisely, the following assertions hold.
  \begin{prt}
   \item Every hom $\Bbbk$-module $Q(p,q)$ is free of dimension
  \begin{equation*}
    d(p,q) \,=\, \min\{p,\,q,\,n+1-p,\,n+1-q\} \in \mathbb{N}\;.
  \end{equation*}  

  \item The Serre functor is given by $\Serre(q) = n+1-q$ on objects and its action on morphisms is determined by the formulae $\Serre(a^{}_q) = (-1)^q \mspace{1mu} a^*_{n-q}$ and $\Serre(a^*_q) = (-1)^{n-q} \mspace{1mu} a^{}_{n-q}$.
  
  \end{prt}
  Moreover, the arrow ideal $\mathfrak{r}$ (which by \lemref{arrow-ideal} is the pseudo-radical in $Q$) is nilpotent; in fact, one has $\mathfrak{r}^n=0$. 
\end{thm}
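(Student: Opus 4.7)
The overall plan is to establish an explicit normal form for morphisms in $Q$, from which part (a) follows directly, and then to deduce the remaining assertions. The detailed bookkeeping is technical but the structure is straightforward.

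For part (a), introduce the ``valley loop'' $\ell_p := a_{p-1}a_{p-1}^*$ at each vertex $p \in \{2, \ldots, n-1\}$, with the convention $\ell_1 = \ell_n = 0$. For each pair $(p, q)$ define candidate generators $\beta_{p,q}^k := \gamma_{p,q} \circ \ell_p^k$ for $k = 0, 1, \ldots, d(p,q)-1$, where $\gamma_{p,q}$ denotes the unique monotone shortest path from $p$ to $q$ (using only $a_i$'s if $p \leq q$, only $a_i^*$'s if $p > q$). Using the mesh relations $a_q^* a_q = -a_{q-1}a_{q-1}^*$ (for $1<q<n$), $a_1^* a_1 = 0$, and $a_{n-1}a_{n-1}^* = 0$, a peak-sliding rewriting argument shows that every path from $p$ to $q$ is, modulo $\mathfrak{m}$, a scalar multiple of some $\beta_{p,q}^k$, and that $\beta_{p,q}^k$ already vanishes once $k \geq d(p,q)$. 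This shows $Q(p,q)$ is spanned by $\{\beta_{p,q}^k\}_{k=0}^{d(p,q)-1}$.

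The harder half is the $\Bbbk$-linear independence of these generators, and this is what I expect to be the main obstacle. The cleanest strategy is to produce an explicit faithful $\Bbbk$-linear representation of $Q$: for each vertex $p$, take a free $\Bbbk$-module $M_p$ of rank $\min(p, n+1-p)$ and prescribe arrow actions $M_p \to M_{p \pm 1}$ so that (i) the mesh relations hold, giving a well-defined $\Bbbk$-linear functor $Q \to \lMod{\Bbbk}$, and (ii) the images of the $\beta_{p,q}^k$ acting on a chosen generator of $M_p$ are visibly $\Bbbk$-linearly independent. This reduces to a tedious but purely combinatorial verification. Equivalently, one can recognize $Q$ as the preprojective algebra of type $\mathbb{A}_n$ over $\Bbbk$, for which the stated rank formula is classical.

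With (a) in hand the remaining points follow quickly. Hom-finiteness is immediate; local boundedness is trivial since $Q$ has only $n$ objects; the Strong Retraction Property holds by \lemref{arrow-ideal}, the pseudo-radical being the arrow ideal $\mathfrak{r}$. For nilpotency, each $\beta_{p,q}^k$ has length $|q-p| + 2k$, and the constraint $k \leq d(p,q)-1$ gives the bound $|q-p| + 2k \leq n-1$ after a short case analysis, so $\mathfrak{r}^n = 0$. For part (b), one checks directly that $\Serre(\mu_q) = \pm \mu_{n+1-q} \in \mathfrak{m}$ for all $q$ (including the boundary cases $q = 1$ and $q = n$), so the prescription descends to a $\Bbbk$-linear endofunctor of $Q$; a quick computation gives $\Serre \circ \Serre = \mathrm{id}$ on both objects and generating arrows, so $\Serre$ is an autoequivalence. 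Finally, since $d(p,q) = d(q, \Serre(p))$, the hom modules on the two sides of the asserted isomorphism are free of the same finite rank; a natural nondegenerate $\Bbbk$-bilinear pairing is defined on the $\beta$-bases by $\langle \beta_{p,q}^k, \beta_{q,\Serre(p)}^j \rangle = \pm \delta_{k+j,\,d(p,q)-1}$ (with signs fixed by the sign convention for $\Serre$). Naturality of the induced isomorphism $Q(p,q) \cong \Hom{\Bbbk}{Q(q, \Serre(p))}{\Bbbk}$ in both $p$ and $q$ then reduces to checking that pre- and post-composition by arrows are adjoint with respect to this pairing, a final direct computation on basis elements.
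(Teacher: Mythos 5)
Your spanning argument, the nilpotency bound, and the treatment of the Serre functor via a nondegenerate pairing of explicit bases in complementary degrees all run parallel to the paper's proof in \appref{proof}: there, after replacing $a^{}_q$ by $(-1)^q a^{}_q$ the meshes become commutative squares, so each graded piece $Q^\ell(p,q)$ is free of rank at most one with an explicit signed-path generator (\dfnref{isoelt}), \prpref{dim-2} determines exactly which lengths $\ell=|p-q|+2t$, $0\leqslant t<d(p,q)$, survive, and naturality of the duality is checked on these basis elements. Your parametrization by powers of the ``valley loop'' is the same count in different coordinates. The genuine gap is exactly where you locate it: the $\Bbbk$-linear independence of your spanning set, i.e.\ that $Q(p,q)$ is free of rank exactly $d(p,q)$, and neither of your two proposed resolutions holds up.

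The explicit faithful representation you describe cannot exist for $n\geqslant 3$. Requiring that, for every $p$, the elements $\beta^k_{p,q}$ act on a single chosen generator $m_p\in M_p$ with $\Bbbk$-linearly independent images forces each evaluation map $Q(p,-)\to M$, $x\mapsto M(x)(m_p)$, to be injective (the $\beta$'s span). Already over a field and for $n=3$ this is impossible with $\operatorname{rank} M_q=\min\{q,n+1-q\}$: the dimension vector $(1,2,1)$ of $M$ equals that of $Q(2,-)$, so the embedding $Q(2,-)\hookrightarrow M$ is an isomorphism; but $Q(2,-)$ has simple socle concentrated at vertex $2$ (spanned by $a^{}_1a^*_1$), whereas an embedded copy of $Q(1,-)$ would place its socle, the simple at vertex $3$ (spanned by $a^{}_2a^{}_1$), inside it --- a contradiction. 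The same socle/dimension-vector obstruction (self-injectivity of the preprojective algebra with Nakayama permutation $q\mapsto n+1-q$) rules out your prescription for every $n\geqslant3$, so no amount of ``tedious combinatorial verification'' will produce it; a faithful object with these fiber ranks simply is not there. As for the fallback of citing the preprojective algebra of type $\mathbb{A}_n$: over a field the graded rank formula is indeed classical, but the whole point of the paper's appendix is to have an argument valid over an arbitrary commutative $\Bbbk$ (the case $\Bbbk=\mathbb{Z}$ being the one of interest), so at minimum you would need a precise reference for freeness over $\mathbb{Z}$ with these graded ranks, or an argument. The paper avoids the issue by working degree by degree: rank $\leqslant 1$ in each degree comes from the anticommutativity of the meshes, and independence then reduces to the purely combinatorial nonvanishing statement of \prpref{dim-2} (no path of the relevant length passes through $a^*_1a^{}_1$ or $a^{}_{n-1}a^*_{n-1}$). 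Your remaining points --- Hom-finiteness and local boundedness, the Strong Retraction Property via \lemref{arrow-ideal}, $\mathfrak{r}^n=0$ from the length bound, and part (b) via $\Serre(\mu_q)=\pm\mu_{n+1-q}$ and the dual-basis pairing --- are fine and essentially coincide with the paper, modulo the sign bookkeeping the paper carries out in detail.
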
  

\begin{proof}
  See \appref{proof}.
\end{proof}

\begin{cor}
  Let $\alg$ be any ring. Consider the category of $\lMod{\alg}$-valued repre\-sen\-tations of the double quiver of \smash{$\vec{\mathbb{A}}_n$} that satisfy the mesh relations. This category has two different abelian model structure where the trivial objects and the weak equivalences can be~characterized by the (co)homology functors $\cH[i]{q}$ and $\hH[i]{q}$ as in \thmref[Theorems~]{E-characterization-hereditary} and \thmref[]{quiso}.
\end{cor}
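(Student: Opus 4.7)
The plan is to deduce the corollary by specializing the general machinery from Sections \ref{sec:model-structures} and \ref{sec:cohomology} to the base ring $\Bbbk=\mathbb{Z}$ and to the particular category $Q = \mesh{\dou{(\vec{\mathbb{A}}_n)}}$, using \thmref{A-dou} to verify all hypotheses. No genuinely new work is required; the content is really in the verification of assumptions.

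First I would invoke \rmkref{rep} to identify the category under consideration: for $Q = \mesh{\dou{(\vec{\mathbb{A}}_n)}}$, the category $\lMod{Q,\alg}$ is equivalent to the category of $\lMod{\alg}$-valued representations of $\dou{(\vec{\mathbb{A}}_n)}$ satisfying the mesh relations. Thus any structural result about $\lMod{Q,\alg}$ transfers across this equivalence. Next, take $\Bbbk=\mathbb{Z}$, which is Gorenstein (in fact, hereditary and noetherian), and note that every ring $\alg$ is a $\mathbb{Z}$-algebra of projective/injective dimension at most $1$ over $\mathbb{Z}$. By \thmref{A-dou} the $\mathbb{Z}$-preadditive category $Q$ satisfies conditions \eqref{Homfin}, \eqref{locbd}, \eqref{Serre} of \stpref{Bbbk}, and by \lemref{arrow-ideal} together with the last assertion of \thmref{A-dou}, $Q$ satisfies the Strong Retraction Property \eqmref{strong-retraction} of \dfnref{srp} with nilpotent pseudo-radical $\mathfrak{r}$ (indeed $\mathfrak{r}^n=0$).

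Having verified the hypotheses, I would then apply \thmref{model-structures} to obtain the two abelian model structures on $\lMod{Q,\alg}$: the projective model structure with cofibrant objects ${}^\perp\mathscr{E}$, trivial objects $\mathscr{E}$, and every object fibrant; and the injective model structure with fibrant objects $\mathscr{E}^\perp$, trivial objects $\mathscr{E}$, and every object cofibrant. By \prpref{we} these two model structures share the same class of weak equivalences.

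Finally, since all hypotheses of \thmref{E-characterization-hereditary} and \thmref{quiso} are in force ($Q$ satisfies \eqref{Homfin}, \eqref{locbd}, \eqref{Serre}, and \eqmref{strong-retraction}; the pseudo-radical is nilpotent; $\Bbbk=\mathbb{Z}$ is noetherian and hereditary), these theorems apply and give the desired characterizations: an object $X \in \lMod{Q,\alg}$ is trivial (i.e.\ lies in $\mathscr{E}$) if and only if $\cH[i]{q}(X)=0$ for all $q\in Q$ and $i>0$, equivalently $\hH[i]{q}(X)=0$ for all $q\in Q$ and $i>0$; and a morphism $\varphi$ in $\lMod{Q,\alg}$ is a weak equivalence if and only if $\cH[i]{q}(\varphi)$ (equivalently $\hH[i]{q}(\varphi)$) is an isomorphism for every $q\in Q$ and every $i>0$. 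There is no real obstacle here, as the entire burden is carried by \thmref{A-dou}, whose verification is the technical content and is already relegated to \appref{proof}.
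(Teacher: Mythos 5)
Your proposal is correct and follows essentially the same route as the paper's own proof: set $\Bbbk=\mathbb{Z}$, identify the representation category with $\lMod{Q,\alg}$ for $Q=\mesh{\dou{(\vec{\mathbb{A}}_n)}}$ via \rmkref{rep}, verify the hypotheses with \thmref{A-dou} (plus \lemref{arrow-ideal} and nilpotence of $\mathfrak{r}$), and then quote \thmref[Theorems~]{model-structures}, \thmref[]{E-characterization-hereditary}, and \thmref[]{quiso}. Your explicit remarks that $\mathbb{Z}$ is noetherian hereditary and that any ring has projective/injective dimension at most $1$ over $\mathbb{Z}$ are exactly the verifications the paper leaves implicit.
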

 
\begin{proof}
  Set $\Bbbk=\mathbb{Z}$. As noted in \rmkref{rep}, the category in question is nothing but $\lMod{Q,\alg}$ where \smash{$Q = \mesh{\dou{(\vec{\mathbb{A}}_n)}}$}. By \thmref{A-dou} this 
  $Q$ satisfies conditions \eqref{Homfin}, \eqref{locbd}, \eqref{Serre} in \stpref{Bbbk} and condition \eqmref{strong-retraction} in \dfnref{srp} with a nilpotent pseudo-radical. The assertion therefore follows directly from \thmref[Theorems~]{model-structures}, \thmref[]{E-characterization-hereditary},  and \thmref[]{quiso}.
\end{proof}

\begin{exa}
  \label{exa:A-rep}
    We consider the repetitive quiver $\rep{(\vec{\mathbb{A}}_n)}$ of the quiver $\vec{\mathbb{A}}_n$ from \eqref{An}. For e.g.~$n=5$ it looks as follows (where the $\vtx$'s have been left out):
  \begin{equation*}
    \xymatrix@!=0.9pc{
      \text{\small $(1,\mspace{1mu}i)$} 
      \ar[dr]^(0.6){\mspace{-15mu}a^{}_{1,\mspace{1mu}i}}
      &
      {}
      &
      \text{\small $(1,\mspace{1mu}i-1)$}    
      \ar[dr]^(0.6){\mspace{-15mu}a^{}_{1,\mspace{1mu}i-1}}
      &
      {}
      &
      \text{\small $(1,\mspace{1mu}i-2)$}    
      \ar[dr]^(0.6){\mspace{-15mu}a^{}_{1,\mspace{1mu}i-2}}
      &
      {}
      &
      \text{\small $(1,\mspace{1mu}i-3)$}    
      \ar[dr]^(0.6){\mspace{-15mu}a^{}_{1,\mspace{1mu}i-3}}
      &
      {}
      &
      \text{\small $(1,\mspace{1mu}i-4)$}    
      \\
      \mspace{-50mu}\cdots
      &
      \text{\small $(2,\mspace{1mu}i)$}    
      \ar[dr]^(0.6){\mspace{-15mu}a^{}_{2,\mspace{1mu}i}}
      \ar@{->}[ur]_(0.59){\mspace{-13mu}a^*_{1,\mspace{1mu}i}}
      &
      {}
      &
      \text{\small $(2,\mspace{1mu}i-1)$}    
      \ar[dr]^(0.6){\mspace{-15mu}a^{}_{2,\mspace{1mu}i-1}}
      \ar@{->}[ur]_(0.59){\mspace{-13mu}a^*_{1,\mspace{1mu}i-1}}
      &
      {}
      &
      \text{\small $(2,\mspace{1mu}i-2)$}    
      \ar[dr]^(0.6){\mspace{-15mu}a^{}_{2,\mspace{1mu}i-2}}
      \ar@{->}[ur]_(0.59){\mspace{-13mu}a^*_{1,\mspace{1mu}i-2}}
      &
      {}
      &
      \text{\small $(2,\mspace{1mu}i-3)$}    
      \ar[dr]^(0.6){\mspace{-15mu}a^{}_{2,\mspace{1mu}i-3}}
      \ar@{->}[ur]_(0.59){\mspace{-13mu}a^*_{1,\mspace{1mu}i-3}}
      &
      \mspace{60mu}\cdots
      \\
      \text{\small $(3,\mspace{1mu}i+1)$}    
      \ar[dr]^(0.6){\mspace{-15mu}a^{}_{3,\mspace{1mu}i+1}}
      \ar@{->}[ur]_(0.59){\mspace{-13mu}a^*_{2,\mspace{1mu}i+1}}
      &
      {}
      &
      \text{\small $(3,\mspace{1mu}i)$}    
      \ar[dr]^(0.6){\mspace{-15mu}a^{}_{3,\mspace{1mu}i}}
      \ar@{->}[ur]_(0.59){\mspace{-13mu}a^*_{2,\mspace{1mu}i}}
      &
      {}
      &
      \text{\small $(3,\mspace{1mu}i-1)$}    
      \ar[dr]^(0.6){\mspace{-15mu}a^{}_{3,\mspace{1mu}i-1}}
      \ar@{->}[ur]_(0.59){\mspace{-13mu}a^*_{2,\mspace{1mu}i-1}}
      &
      {}
      &
      \text{\small $(3,\mspace{1mu}i-2)$}    
      \ar[dr]^(0.6){\mspace{-15mu}a^{}_{3,\mspace{1mu}i-2}}
      \ar@{->}[ur]_(0.59){\mspace{-13mu}a^*_{2,\mspace{1mu}i-2}}
      &
      {}
      &
      \text{\small $(3,\mspace{1mu}i-3)$}    
      \\
      \mspace{-50mu}\cdots
      &
      \text{\small $(4,\mspace{1mu}i+1)$}    
      \ar[dr]^(0.6){\mspace{-15mu}a^{}_{4,\mspace{1mu}i+1}}
      \ar@{->}[ur]_(0.59){\mspace{-13mu}a^*_{3,\mspace{1mu}i+1}}
      &
      {}
      &
      \text{\small $(4,\mspace{1mu}i)$}    
      \ar[dr]^(0.6){\mspace{-15mu}a^{}_{4,\mspace{1mu}i}}
      \ar@{->}[ur]_(0.59){\mspace{-13mu}a^*_{3,\mspace{1mu}i}}
      &
      {}
      &
      \text{\small $(4,\mspace{1mu}i-1)$}    
      \ar[dr]^(0.6){\mspace{-15mu}a^{}_{4,\mspace{1mu}i-1}}
      \ar@{->}[ur]_(0.59){\mspace{-13mu}a^*_{3,\mspace{1mu}i-1}}
      &
      {}
      &
      \text{\small $(4,\mspace{1mu}i-2)$}    
      \ar[dr]^(0.6){\mspace{-15mu}a^{}_{4,\mspace{1mu}i-2}}
      \ar@{->}[ur]_(0.59){\mspace{-13mu}a^*_{3,\mspace{1mu}i-2}}
      &
      \mspace{60mu}\cdots
      \\
      \text{\small $(5,\mspace{1mu}i+2)$}    
      \ar@{->}[ur]_(0.59){\mspace{-13mu}a^*_{4,\mspace{1mu}i+2}}      
      &
      {}
      &
      \text{\small $(5,\mspace{1mu}i+1)$}    
      \ar@{->}[ur]_(0.59){\mspace{-13mu}a^*_{4,\mspace{1mu}i+1}}      
      &
      {}
      &
      \text{\small $(5,\mspace{1mu}i)$}    
      \ar@{->}[ur]_(0.59){\mspace{-13mu}a^*_{4,\mspace{1mu}i}}      
      &
      {}
      &
      \text{\small $(5,\mspace{1mu}i-1)$}    
      \ar@{->}[ur]_(0.59){\mspace{-13mu}a^*_{4,\mspace{1mu}i-1}}      
      &
      {}
      &
      \text{\small $(5,\mspace{1mu}i-2)$}
    }
  \end{equation*}
  By \dfnref{mesh} the mesh relations are, for every $i \in \mathbb{Z}$,
  \begin{align*}
    \mu_{(1,i)} &\,=\, a^*_{1,\mspace{1mu}i+1}\,a^{}_{1,\mspace{1mu}i+1}\;,
    \\
    \mu_{(q,i)} &\,=\, a^{}_{q-1,\mspace{1mu}i}\,a^*_{q-1,\mspace{1mu}i+1}\,+\,a^*_{q,\mspace{1mu}i+1}\,a^{}_{q,\mspace{1mu}i+1} \ \ \text{ for } \ \ 1<q<n\;, \quad \text{and}
    \\
    \mu_{(n,i)} &\,=\, a^{}_{n-1,\mspace{1mu}i}\,a^*_{n-1,\mspace{1mu}i+1}\;.
  \end{align*}
\end{exa}

\begin{thm}
  \label{thm:A-rep}
  Let $\Bbbk$ be any commutative ring. The mesh category
  \begin{equation*}
  Q \,=\, \mesh{\rep{(\vec{\mathbb{A}}_n)}}
  \end{equation*}
  over $\Bbbk$ of the repetitive quiver of $\vec{\mathbb{A}}_n$ satisfies conditions \eqref{Homfin}, \eqref{locbd}, \eqref{Serre} in \stpref{Bbbk} and con\-dition \eqmref{strong-retraction} in \dfnref{srp}. More precisely, the following assertions hold.
  \begin{prt}
  \item Each hom $\Bbbk$-module $Q((p,i),(q,j))$ is either zero or free of dimension $1$. The latter happens if and only if the point $(q,j)$ lies in, or on the boundary of, the rectangle~\smash{$R_{p,\mspace{1mu}i}$} spanned by the following four vertices\footnote{\,The picture of $R_{p,\mspace{1mu}i}$ should be compared with the diagram in \exaref{A-rep}. Notice that in the picture of $R_{p,\mspace{1mu}i}$ the vertex $(p,\mspace{1mu}i)$ is located higher than $\Serre(p,\mspace{1mu}i)$, which of course is only the case if $p$ is smaller than \smash{$\frac{n+1}{2}$}. For $p=1$ the ``rectangle'' $R_{p,\mspace{1mu}i}$ is actually the line (with negative slope) from $(1,\mspace{1mu}i)$ to $\Serre(1,\mspace{1mu}i) = (n,\mspace{1mu}i)$, and for $p=n$ it is the line (with positive slope) from $(n,\mspace{1mu}i)$ to $\Serre(n,\mspace{1mu}i) = (1,\mspace{1mu}i+1-n)$.}, where \mbox{$\Serre(p,i) = (n+1-p,\,i+1-p)$}:
\begin{equation*}
  \xymatrix@!=0.1pc{
    {} & \stackrel{(1,\mspace{1mu}i+1-p)}{\vtx} 
    \ar@{}[dddr]|-{R_{p,\mspace{1mu}i}} \ar@{-}[ddrr] & {} & {}
    \\
    \mspace{-30mu}\text{$\scriptstyle(p,\mspace{1mu}i)$} \ \vtx \ar@{-}[ur] \ar@{-}[ddrr] & {} & {} & {}
    \\
    {} & {} & {} & \vtx \ \text{$\scriptstyle \Serre(p,\mspace{1mu}i)$} \mspace{-40mu}
    \\
    {} & {} & \underset{(n,\mspace{1mu}i)}{\vtx} \ar@{-}[ur] & {} 
  }
\end{equation*}

  \item The Serre functor is given by \mbox{$\Serre(q,i) = (n+1-q,\,i+1-q)$} on objects, and on morphisms it is determined by $\Serre(a^{}_{q,\mspace{1mu}i}) = (-1)^q \mspace{1mu} a^*_{n-q,\mspace{1mu}i+1-q}$ and $\Serre(a^*_{q,\mspace{1mu}i}) = (-1)^{n-q} \mspace{1mu} a^{}_{n-q,\mspace{1mu}i-q}$.
  
  \end{prt}
  Moreover, the arrow ideal $\mathfrak{r}$ (which by \lemref{arrow-ideal} is the pseudo-radical in $Q$) is nilpotent; in fact, one has $\mathfrak{r}^n=0$. 
\end{thm}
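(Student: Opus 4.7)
The proof follows the pattern of \thmref{A-dou}, only it is simpler because the repetitive quiver has no oriented cycles, so path lengths between fixed endpoints are automatically bounded. The plan is to first establish part \prtlbl{a} by a normal form argument for paths modulo the mesh relations, then derive \prtlbl{b} and the nilpotency bound essentially for free.

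First I would analyze the path category $\Bbbk\upGamma$ for $\upGamma=\rep{(\vec{\mathbb{A}}_n)}$. A direct combinatorial inspection of the arrows $a^{}_{q,i}\colon(q,i)\to(q+1,i)$ and $a^*_{q,i}\colon(q+1,i)\to(q,i-1)$ shows that a path from $(p,i)$ to $(q,j)$ exists in $\upGamma$ if and only if $(q,j)\in R_{p,\mspace{1mu}i}$, since moving along arrows can only decrease the $\mathbb{Z}$-coordinate and the $\mathbb{A}_n$-coordinate is constrained to the interval $[1,n]$. Inside the rectangle, I would single out the canonical ``hook'' path $\pi_{(q,j)}$ that first applies only downward arrows $a$ until the bottom of the rectangle is reached and then only upward arrows $a^*$. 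The mesh relations $\mu_{(r,k)}=a^{}_{r-1,\mspace{1mu}k}\,a^*_{r-1,\mspace{1mu}k+1}+a^*_{r,\mspace{1mu}k+1}\,a^{}_{r,\mspace{1mu}k+1}$ (and the boundary relations $\mu_{(1,k)}$, $\mu_{(n,k)}$) then allow, by a standard induction on path length, any parallel path to be reduced to $\pm\pi_{(q,j)}$. Thus $Q((p,i),(q,j))$ is cyclic, generated by (the class of) $\pi_{(q,j)}$, whenever $(q,j)\in R_{p,\mspace{1mu}i}$.

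To prove that $\pi_{(q,j)}$ actually generates a free $\Bbbk$-module of rank $1$ (and in particular is nonzero in $Q$), I would exhibit an explicit $\Bbbk$-linear representation of $\upGamma$ sending each $(r,k)$ to $\Bbbk$ and each generating arrow $a^{}_{r,\mspace{1mu}k},a^*_{r,\mspace{1mu}k}$ between points within the rectangle to $\mathrm{id}_\Bbbk$ (with suitable signs), and zero outside, then verify the mesh relations. This representation sends $\pi_{(q,j)}$ to a unit of $\Bbbk$, which proves injectivity of $\Bbbk\cdot\pi_{(q,j)} \hookrightarrow Q((p,i),(q,j))$ and hence \prtlbl{a}. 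Hom-finiteness, local boundedness, and nilpotency $\mathfrak{r}^n=0$ follow at once: the rectangle $R_{p,\mspace{1mu}i}$ is finite, and the canonical hook path of maximal length lands at $\Serre(p,\mspace{1mu}i)=(n+1-p,\,i+1-p)$ and has length exactly $n-1$, so any composition of $n$ arrows exits the rectangle and is therefore zero in $Q$.

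For \prtlbl{b}, I would first check that the formulae $\Serre(a^{}_{q,\mspace{1mu}i})=(-1)^q a^*_{n-q,\mspace{1mu}i+1-q}$ and $\Serre(a^*_{q,\mspace{1mu}i})=(-1)^{n-q}a^{}_{n-q,\mspace{1mu}i-q}$ send each mesh relation $\mu_{(q,i)}$ to (a sign times) $\mu_{\Serre(q,i)}$, using the parity $(-1)^{q}+(-1)^{q-1}=0$ to cancel the two summands in the interior meshes; this ensures $\Serre$ descends to a $\Bbbk$-linear endofunctor of $Q$, which is manifestly invertible on objects and on generating arrows and hence a $\Bbbk$-linear autoequivalence. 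The Serre duality isomorphism is constructed from the composition pairing
\begin{equation*}
  Q((p,i),(q,j)) \otimes_\Bbbk Q((q,j),\Serre(p,i)) \longrightarrow Q((p,i),\Serre(p,i)) \cong \Bbbk,
\end{equation*}
where the last isomorphism sends the canonical generator to $1$. By \prtlbl{a} both sides of the proposed isomorphism are free of rank $1$ on the same set of objects $(q,j)\in R_{p,\mspace{1mu}i}$, so it suffices to verify that the pairing is non-degenerate on the canonical generators. This reduces to observing that composing the canonical hook from $(p,i)$ to $(q,j)$ with the canonical hook from $(q,j)$ to $\Serre(p,i)$ gives, modulo the mesh relations, exactly $\pm\pi_{\Serre(p,i)}$, which I would check by the same local commuting-square manipulation used in paragraph one. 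The main obstacle throughout is consistent sign bookkeeping when pushing $a$'s past $a^*$'s via $\mu$; but because $\upGamma$ has no oriented cycles and the relations are local, the argument remains purely combinatorial and strictly easier than its double-quiver analogue in \appref{proof}.
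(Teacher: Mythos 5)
Your opening combinatorial claim---that a path from $(p,i)$ to $(q,j)$ exists in $\rep{(\vec{\mathbb{A}}_n)}$ if and only if $(q,j)\in R_{p,\mspace{1mu}i}$---is false, and it is precisely the false half that your proof of part \prtlbl{a} leans on. The two observations you cite (the $\mathbb{Z}$-coordinate never increases along arrows; the $\mathbb{A}_n$-coordinate stays in $[1,n]$) only produce two of the four sides of the rectangle: already for $p=1$ the length-two path $a^*_{1,\mspace{1mu}i}\,a^{}_{1,\mspace{1mu}i}\colon (1,i)\to(2,i)\to(1,i-1)$ ends at a vertex outside $R_{1,\mspace{1mu}i}$, and longer zigzags reach vertices arbitrarily far below any rectangle. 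Consequently the vanishing half of \prtlbl{a}, namely $Q((p,i),(q,j))=0$ for $(q,j)\notin R_{p,\mspace{1mu}i}$, is never proved in your proposal, and this is exactly the step where the boundary meshes enter as zero relations, in analogy with \prpref{dim-2}: writing $t=i-j$ for the number of $a^*$-arrows and $s=q-p+i-j$ for the number of $a$-arrows of any path from $(p,i)$ to $(q,j)$, the target lies outside $R_{p,\mspace{1mu}i}$ precisely when $t\geqslant p$ or $s\geqslant n-p+1$, and in that case one must exhibit one parallel path containing a factor $a^*_{1,\mspace{1mu}\cdot}\mspace{1mu}a^{}_{1,\mspace{1mu}\cdot}$ (respectively $a^{}_{n-1,\mspace{1mu}\cdot}\mspace{1mu}a^*_{n-1,\mspace{1mu}\cdot}$), whence all parallel paths vanish since they agree up to sign. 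Without this, your nilpotency argument also dangles, because ``any composition of $n$ arrows exits the rectangle and is therefore zero'' quotes exactly the missing vanishing statement.

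The remaining steps are essentially workable but need two corrections. First, $\Serre$ does not send an interior mesh to an expression whose two summands ``cancel''; the direct computation gives $\Serre(\mu_{(q,\mspace{1mu}i)})=(-1)^n\mspace{1mu}\mu_{\Serre(q,\mspace{1mu}i)}$, both summands acquiring the same sign (compare the identity $\Serre(\mu_q)=(-1)^n\mu_{n+1-q}$ in the proof of \thmref{A-dou}); cancellation would say the image is zero, which is not what is needed. Second, for the Serre functor you cannot stop at non-degeneracy of the composition pairing: the map $f\mapsto\lambda_{(p,i)}(-\circ f)$, with $\lambda_{(p,i)}\colon Q((p,i),\Serre(p,i))\to\Bbbk$ the chosen functional, is automatically natural in the second variable, but naturality in the first variable requires the trace-type compatibility $\lambda_{(p,i)}(\Serre(\alpha)\circ h)=\lambda_{(p',i')}(h\circ\alpha)$ for all $\alpha\colon(p',i')\to(p,i)$ and $h\colon(p,i)\to\Serre(p',i')$, a sign-sensitive verification of the same nature as the check of diagram \eqref{beta2} in \appref{proof}. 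On the positive side, your rectangle-representation device for proving that the nonzero hom modules are free of rank one is sound: one checks that the signed interval representation supported on $R_{p,\mspace{1mu}i}$ satisfies all meshes, using that the rows $q=1$ and $q=n$ meet $R_{p,\mspace{1mu}i}$ only in single corner vertices and that an interior mesh with source and target in $R_{p,\mspace{1mu}i}$ has both intermediate vertices in $R_{p,\mspace{1mu}i}$; with the missing vanishing argument supplied, the rest of the proof goes through along the lines of \appref{proof}.
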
  

\begin{proof}
  Similar to, but easier than, the proof of \thmref{A-dou}.
\end{proof}

\begin{cor}
  \label{cor:A-rep}
  Let $\alg$ be any ring. Consider the category of $\lMod{\alg}$-valued repre\-sen\-tations of the repetitive quiver of \smash{$\vec{\mathbb{A}}_n$} that satisfy the mesh relations. This category has two different abelian model structure where the trivial objects and the weak equivalences can be~cha\-rac\-terized by the (co)homology functors $\cH[i]{q}$ and $\hH[i]{q}$ as in \thmref[Theorems~]{E-characterization-hereditary} and \thmref[]{quiso}.
\end{cor}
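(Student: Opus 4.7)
The plan is to deduce this corollary directly from the machinery already developed, in complete parallel with the proof of the preceding corollary for the double quiver. There is no real obstacle, since all the hard work is in the supporting theorems; the only task is to verify that the hypotheses are met.

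First, I would set $\Bbbk=\mathbb{Z}$ and use \rmkref{rep} to identify the stated category of $\lMod{\alg}$-valued representations of $\rep{(\vec{\mathbb{A}}_n)}$ satisfying the mesh relations with $\lMod{Q,\alg}$ for $Q=\mesh{\rep{(\vec{\mathbb{A}}_n)}}$. This is the passage from a combinatorial/representation-theoretic description to the functor-category description on which all our theorems act.

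Next, I would invoke \thmref{A-rep}, which supplies exactly what is needed: the category $Q=\mesh{\rep{(\vec{\mathbb{A}}_n)}}$ satisfies Hom-finiteness, Local Boundedness, and has a Serre functor relative to $\Bbbk=\mathbb{Z}$ (parts \prtlbl{a} and \prtlbl{b} of that theorem), satisfies the Strong Retraction Property (by \lemref{arrow-ideal}, since $Q$ is a mesh category), and has nilpotent pseudo-radical ($\mathfrak{r}^n=0$).

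Finally, since $\Bbbk=\mathbb{Z}$ is Gorenstein and every ring $\alg$ is trivially of finite projective/injective dimension over $\mathbb{Z}$ (dimension at most one), \thmref{model-structures} provides the two abelian model structures on $\lMod{Q,\alg}$, the projective and the injective. Because $\mathbb{Z}$ is also noetherian and hereditary, the hypotheses of \thmref{E-characterization-hereditary} and \thmref{quiso} are in force, and these theorems then yield the asserted characterization of the trivial objects (those $X$ with $\cH[i]{q}(X)=0$, or equivalently $\hH[i]{q}(X)=0$, for all $q\in Q$ and $i>0$) and the weak equivalences (those $\varphi$ for which $\cH[i]{q}(\varphi)$, or equivalently $\hH[i]{q}(\varphi)$, is an isomorphism for all $q\in Q$ and $i>0$). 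This completes the proof.
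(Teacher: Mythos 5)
Your proposal is correct and follows exactly the paper's own route: identify the representation category with $\lMod{Q,\alg}$ for $Q=\mesh{\rep{(\vec{\mathbb{A}}_n)}}$ via \rmkref{rep}, cite \thmref{A-rep} for the conditions in \stpref{Bbbk} and the Strong Retraction Property with nilpotent pseudo-radical, and then apply \thmref[Theorems~]{model-structures}, \thmref[]{E-characterization-hereditary}, and \thmref[]{quiso} with $\Bbbk=\mathbb{Z}$. The only difference is that you spell out why $\mathbb{Z}$ meets the hypotheses (Gorenstein, noetherian, hereditary, and any ring having $\Bbbk$-projective/injective dimension at most one), which the paper leaves implicit.
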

 
\begin{proof}
  Set $\Bbbk=\mathbb{Z}$. As noted in \rmkref{rep}, the category in question is nothing but $\lMod{Q,\alg}$ where \smash{$Q = \mesh{\rep{(\vec{\mathbb{A}}_n)}}$}. By \thmref{A-rep} this 
  $Q$ satisfies conditions \eqref{Homfin}, \eqref{locbd}, \eqref{Serre} in \stpref{Bbbk} and condition \eqmref{strong-retraction} in \dfnref{srp} with a nilpotent pseudo-radical. The assertion therefore follows directly from \thmref[Theorems~]{model-structures}, \thmref[]{E-characterization-hereditary},  and \thmref[]{quiso}.
\end{proof}

\begin{exa}
  \label{exa:derived-cat}
  The category of $\lMod{\alg}$-valued represen\-tations of the repetitive quiver of~\smash{$\vec{\mathbb{A}}_2$} that satisfy the mesh relations is nothing but the category  $\operatorname{Ch}\mspace{1mu}\alg$ of chain complexes of left $\alg$-modules. The model structures on this category provided by \corref{A-rep} are classic and well-known, see e.g.~Hovey \cite[Thms.~2.3.11 and 2.3.13]{modcat}, and the associated homotopy category from \dfnref{homotopy-cat} is the usual derived category $\mathcal{D}(\alg)$.
\end{exa}

In the context of stable translation quivers, it is possible to give very concrete descriptions of the (co)homology functors $\cH{q}$ and $\hH{q}$. This is our goal for the rest of this~section. 

For representations of a stable translations quiver that satisfy the mesh relations, there is a natural notion of homology:

\begin{dfn}
  \label{dfn:mH}
  Let $\upGamma$ be a stable translation quiver and set $Q = \mesh{\upGamma}$.
  Consider the mesh \eqref{mesh} associated with $q \in \upGamma_0$. Since one has $a_1 \,\str(a_1) + \cdots + a_n \,\str(a_n)=0$ in $Q$, every $X \in \lMod{Q,\alg}$  induces a three term complex of left $\alg$-modules,
\begin{equation*}
  \xymatrix@C=6pc{
    X(\tr(q)) \ar[r]^-{
      \left(
      \begin{smallmatrix}
        X(\str(a_1)) \vspace{-0.6ex} \\ \text{\raisebox{2.5ex}{$\vdots$}}\vspace{-1ex} \\ X(\str(a_n)) 
      \end{smallmatrix}
      \right)
    }
    &
    \mspace{-8mu}
    {
    \begin{array}{c}
      X(p_1)
      \\
      \oplus
      \\
      \vdots
      \\
      \oplus
      \\
      X(p_n)
    \end{array}
    }
    \mspace{-8mu}
    \ar[r]^-{
      \big(\!
      \begin{smallmatrix}
        X(a_1) & \cdots & X(a_n) 
      \end{smallmatrix}
      \!\big)    
    } 
    &
    X(q)
  }.
\end{equation*}
We write $\mH{q}(X)$ for the homology of this three term complex and call it the \emph{mesh homology} of $X$ at $q$.
\end{dfn}

\begin{dfn}
  \label{dfn:normal}
  Let $\upGamma$ be a stable translation quiver and set $Q = \mesh{\upGamma}$. We say that $\upGamma$ is \emph{normal \textnormal{(}relative to $\Bbbk$\textnormal{)}} if one has $\mH{q}(Q(p,-)) = 0$ for all $p,q \in \upGamma_0$ (equivalently, every projective object in $\lMod{Q}$ has vanishing mesh homology).
\end{dfn}

Notice that the definition of normality depends on the base ring $\Bbbk$. As far as we know, most stable translation quivers are normal. Here we just note the following two results.

\begin{thm}
  \label{thm:A-dou-normal}
  The stable translation quiver $\dou{(\vec{\mathbb{A}}_n)}$ from \exaref{A-dou} is normal.
\end{thm}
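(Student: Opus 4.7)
The normality condition requires that $\mH{q}(Q(p,-)) = 0$ for every pair $p,q$ of vertices of $\upGamma = \dou{(\vec{\mathbb{A}}_n)}$. Since the translation is $\tau = \mathrm{id}$ on the double quiver (\exaref{dou}), the mesh complex at $q$ evaluated on $X = Q(p,-)$ takes the form
\begin{equation*}
Q(p,q) \xrightarrow{f} \textstyle\bigoplus_i Q(p,p_i) \xrightarrow{g} Q(p,q),
\end{equation*}
where the sum runs over the arrows $a_i \colon p_i \to q$ into $q$, $f(h) = (\sigma(a_i)h)_i$, and $g((u_i)_i) = \sum_i a_i u_i$. For interior $q$ the incoming arrows are $a_{q-1} \colon q-1 \to q$ and $a_q^* \colon q+1 \to q$, with $\sigma(a_{q-1}) = a_{q-1}^* \colon q \to q-1$ and $\sigma(a_q^*) = a_q \colon q \to q+1$; the vanishing $g \circ f = 0$ is the mesh relation $a_{q-1}a_{q-1}^* + a_q^* a_q = 0$ from \exaref{A-dou}. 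The goal is to establish $\Im{f} = \Ker{g}$.

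I would exploit the explicit structure provided by \thmref{A-dou}. Each $Q(p,q)$ is a free $\Bbbk$-module of rank $d(p,q) = \min(p,q,n+1-p,n+1-q)$ with a canonical basis independent of $\Bbbk$. Since the kernels and images appearing in the mesh complex turn out to be direct summands of the surrounding free modules, it suffices to prove exactness for $\Bbbk = \mathbb{Z}$ and then base change. I would use the fact---extracted from the proof of \thmref{A-dou}---that each $Q(p,q)$ admits a basis of the form $\pi, e_q\pi, e_q^2\pi, \ldots, e_q^{d(p,q)-1}\pi$, where $\pi$ is a canonical shortest path from $p$ to $q$ and $e_q := a_{q-1}a_{q-1}^* = -a_q^* a_q \in Q(q,q)$ is the canonical length-two element at $q$ (with the obvious adjustments at $q = 1, n$, where $e_1$ and $e_n$ vanish). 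In these coordinates, post-composition with $a_{q-1}^*, a_q$ (resp.\ $a_{q-1}, a_q^*$) realizes explicit shift operators between the free modules, and the matrices of $f$ and $g$ become block-diagonal after relabeling basis vectors. A direct calculation then shows $\Ker{g} = \Im{f}$: the natural syzygy $(a_{q-1}^* h, a_q h)$ coming from the mesh relation accounts for every element of $\Ker{g}$, as can be seen by writing $(u,v) \in \Ker{g}$ in the canonical basis and solving for $h$ coefficient by coefficient.

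The main obstacle is the case analysis forced by the four regimes in $d(p,q)$---namely which of $p, q, n+1-p, n+1-q$ attains the minimum---since the matching of basis vectors under the shift operators differs across regimes, and by the boundary vertices $q = 1, n$ where only one incoming arrow exists and the complex is shorter. Even so, in each regime the exactness check reduces to the same structural fact: the mesh relation at $q$ is the unique syzygy relating the outputs $a_{q-1}u$ and $a_q^* v$, and induction on the basis index using this relation produces the required preimage $h$.
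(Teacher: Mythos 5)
Your proposal is correct and takes essentially the same route as the paper: your basis $\pi, e_q\pi,\ldots,e_q^{\,d(p,q)-1}\pi$ is (up to signs) the canonical signed-path basis $B_{p,q}=\{\isoelt{|p-q|+2t}{p}{q}\}$ used there, composition with $a^{}_q$ and $a^*_q$ is computed as truncated identity/shift matrices (\prpref[Propositions~]{matrix} and \prpref[]{matrix-star}), and exactness of the three-term mesh complex is then verified directly, with the same case division into $q=1,n$ versus interior $q$ and the four regimes of $d(p,q)$. The only deviation, your reduction to $\Bbbk=\mathbb{Z}$ followed by base change, is unnecessary and would in any case require the explicit matrices to see that the images are spanned by basis vectors (so that cokernels are free and the Tor obstruction vanishes); since all matrix entries are $0,\pm1$, the coefficient-by-coefficient check works verbatim over any commutative ring, which is how the paper argues.
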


\begin{proof}
  See \appref{proof}.
\end{proof}

\begin{thm}
  \label{thm:A-rep-normal}
  The stable translation quiver $\rep{(\vec{\mathbb{A}}_n)}$ from \exaref{A-rep} is normal.
\end{thm}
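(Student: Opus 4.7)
The plan is to verify that $\mH{q}(Q(p,-)) = 0$ for all vertices $p, q$ of $\upGamma = \rep{(\vec{\mathbb{A}}_n)}$ by a direct calculation that exploits the concrete description of hom-spaces in $Q = \mesh{\upGamma}$ from \thmref{A-rep}. Since in the repetitive case every nonzero hom-space in $Q$ is free of rank exactly one, the combinatorics will be considerably simpler than in \thmref{A-dou-normal}.

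First I will fix $p = (p_0, i_0)$ and write $X = Q(p, -)$. By \thmref{A-rep}\prtlbl{a}, $X(q)$ is a free $\Bbbk$-module of rank $1$ when $q$ lies in the rectangle $R_p$ and is zero otherwise. For each $q \in R_p$ I will choose a generator $\varepsilon_q \in X(q)$ represented by a specific path from $p$ to $q$---for concreteness, the path that first uses only $a^*$-arrows to reach the northeastern edge of $R_p$ and then only $a$-arrows to descend to $q$. Any other path from $p$ to $q$ will differ from this canonical one only by a sign, since pushing it across one interior mesh inside $R_p$ using the relation $\mu_{q'} = 0$ changes it by $\pm 1$. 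With this convention, for every arrow $a \colon r \to s$ of $\upGamma$ the induced map $X(a) \colon X(r) \to X(s)$ is either zero (if $s \notin R_p$) or $\pm \mathrm{id}$ in the chosen bases, and the signs are consistent with the mesh relations at every interior vertex of $R_p$.

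Next, for each $q = (q_0, j_0) \in \upGamma_0$ I will write the three-term mesh complex
\begin{equation*}
  X(\tr(q)) \xrightarrow{\varphi_q} X(r_1) \oplus X(r_2) \xrightarrow{\psi_q} X(q),
\end{equation*}
where $r_1 = (q_0-1, j_0)$ and $r_2 = (q_0+1, j_0+1)$ (with the corresponding summand omitted when $q_0 = 1$ or $q_0 = n$). Because $\mu_q = 0$ in $Q$ one has $\psi_q\varphi_q = 0$, and it remains to show $\ker \psi_q \subseteq \mathrm{im}\,\varphi_q$. I will split into cases according to which of the four vertices $\tr(q), r_1, r_2, q$ lie in $R_p$. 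In the generic case where all four belong to $R_p$, the maps become $\varphi_q = (\epsilon_1, \epsilon_2)^{T}$ and $\psi_q = (\eta_1, \eta_2)$ with $\epsilon_i, \eta_i \in \{\pm 1\}$ satisfying $\eta_1 \epsilon_1 + \eta_2 \epsilon_2 = 0$; this forces $\ker \psi_q = \Bbbk \cdot (\epsilon_1, \epsilon_2) = \mathrm{im}\,\varphi_q$. The remaining boundary situations---where some of the four vertices lie outside $R_p$, or are missing from the quiver because $q_0 \in \{1, n\}$---each reduce to an even simpler one-line verification, using the fact that the rectangular shape of $R_p$ forces certain neighbours of $q$ to lie automatically either in or out of $R_p$ (for instance, if $\tr(q)$ and $q$ are both outside $R_p$ then both $r_1$ and $r_2$ are outside too, so the middle term of the complex is zero).

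The hard part will be the bookkeeping: pinning down a coherent sign convention for the basis elements $\varepsilon_q$ so that the mesh relations translate into the clean sign identities used above, and exhaustively enumerating the handful of boundary sub-cases that arise when $q$ sits near the perimeter of $R_p$ or on the quiver boundary $q_0 \in \{1, n\}$. Once these combinatorics are set up, each sub-case is a one-line exactness check over $\Bbbk$.
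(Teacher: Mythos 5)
Your plan is correct and is essentially the paper's own route: the paper proves \thmref{A-rep-normal} by repeating, in simplified form, the explicit computation it carries out for the double quiver in \appref{proof} (bases given by signed paths, composition-with-arrow maps equal to $0$ or $\pm 1$, and a case-by-case exactness check of each three-term mesh complex), which in the repetitive case is precisely your rank-one rectangle analysis. One small point for the write-up: the only dangerous configuration---exactly one of $r_1,r_2$ lying in the support rectangle while $q$ and $\tr(q)$ lie outside---is not ruled out by the rectangular (product) shape of $R_p$ alone, but by the additional observation that this pattern would force $q$ into row $0$ or $n+1$, i.e.\ outside the quiver.
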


\begin{proof}
  Similar to, but easier than, the proof of \thmref{A-dou-normal}.
\end{proof}

The next result and \rmkref{also-cH} compare the mesh homology $\mH{*}$ defined above with the first (co)homology functors $\cH{*}$ and $\hH{*}$ from 
\dfnref{cH-hH}.

\begin{prp}
  \label{prp:2H}
  Let $\upGamma$ be a stable translation quiver with mesh category $Q = \mesh{\upGamma}$. If\, $\upGamma$ is normal, then for every $X \in \lMod{Q,\alg}$ and $q \in \upGamma_0$ there is a natural isomorphism,
  \begin{equation*}
    \hH{q}(X) \,\cong\, \mH{q}(X)\;.
  \end{equation*}  
\end{prp}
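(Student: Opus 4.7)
The plan is to compute $\hH{q}(X) = \Tor{Q}{1}{\stalkcn{q}}{X}$ by exhibiting an explicit beginning of a projective resolution of $\stalkcn{q}$ in $\rMod{Q}$ dictated by the mesh at $q$. Let $a_1, \ldots, a_n$ be the arrows of $\upGamma$ with target $q$ and write $p_i = \dom{a_i}$, so the mesh \eqref{mesh} also supplies morphisms $\str(a_i) \colon \tr(q) \to p_i$ in $Q$. I consider the sequence of representables
\begin{equation*}
Q(-,\tr(q)) \xrightarrow{\ \partial_2\ } \bigoplus_{i=1}^n Q(-,p_i) \xrightarrow{\ \partial_1\ } Q(-,q) \longrightarrow \stalkcn{q} \longrightarrow 0
\end{equation*}
in $\rMod{Q}$, where $\partial_1$ has $i$-th component $Q(-,a_i)$ (post-composition with $a_i$) and $\partial_2$ has $i$-th component $Q(-,\str(a_i))$.

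Each representable $Q(-,r)$ is projective in $\rMod{Q}$ by Yoneda, so the first task is to verify exactness of the display. Exactness at $Q(-,q)$ is clear: by \lemref{arrow-ideal} the pseudo-radical $\mathfrak{r}$ coincides with the arrow ideal, so every element of $\mathfrak{r}(r,q)$ is a $\Bbbk$-linear combination of paths of length $\geqslant 1$ into $q$, each of which factors through the last arrow, i.e.~as $a_i \circ h$ for some $h \in Q(r,p_i)$. The composite $\partial_1 \partial_2$ equals post-composition with $\sum_i a_i \str(a_i) = \mu_q$, which is zero in $Q = \mesh{\upGamma}$. The crucial step is exactness at $\bigoplus_i Q(-,p_i)$: evaluating the complex at any $r \in Q$ reproduces precisely the three-term complex whose homology is $\mH{q}(Q(r,-))$ in \dfnref{mH}, and this vanishes by the normality hypothesis (\dfnref{normal}).

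Once this is established, I extend the display to a full projective resolution $\cdots \to P_3 \to P_2 \to P_1 \to P_0 \to \stalkcn{q} \to 0$ with $P_0, P_1, P_2$ as above, and apply $\ORt{-}{X}$. By the coend formula \eqref{ic} (equivalently, the identification $\ORt{Q(-,r)}{X} \cong X(r)$ recorded in the proof of \corref{adjoint-evaluation}), the three-term piece $P_2 \otimes X \to P_1 \otimes X \to P_0 \otimes X$ becomes
\begin{equation*}
X(\tr(q)) \longrightarrow \bigoplus_{i=1}^n X(p_i) \longrightarrow X(q),
\end{equation*}
with differentials given exactly by $X(\str(a_i))$ and $X(a_i)$, i.e.~the mesh complex of \dfnref{mH}. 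Since $H_1$ of $\ORt{P_\bullet}{X}$ at the $P_1$-slot is computed as $\Ker(\partial_1 \otimes X)/\Im(\partial_2 \otimes X)$ and depends only on the displayed three-term piece, we obtain the isomorphism $\hH{q}(X) \cong \mH{q}(X)$, which is natural in $X$ by functoriality of the coend identification. The only non-routine point is the middle exactness of the partial resolution, and this is exactly where normality is used in an essential way; every other step is formal.
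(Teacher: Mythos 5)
Your proposal is correct and follows essentially the same route as the paper: you build the same partial projective resolution $Q(-,\tr(q)) \to \bigoplus_i Q(-,p_i) \to Q(-,q) \to \stalkcn{q} \to 0$, use normality for exactness at the middle term and the arrow-ideal description of $\mathfrak{r}$ for exactness at $Q(-,q)$, and then identify $\ORt{Q(-,r)}{X} \cong X(r)$ to recognize $\Tor{Q}{1}{\stalkcn{q}}{X}$ as the mesh homology. No gaps.
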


\begin{proof}
  By \lemref{arrow-ideal} the Strong Retraction Property holds for $Q$ and the arrow ideal $\mathfrak{r}$ serves as the pseudo-radical. In particular it makes sense to consider $\stalkcn{q} \in \rMod{Q}$ from
\dfnref{stalk}. Consider the mesh \eqref{mesh} associated with the vertex $q$. We claim that there is an exact sequence in $\rMod{Q}$,
\begin{equation*}
  \xymatrix@C=1.75pc{
    Q(-,\tr(q)) \ar[rr]^-{
      \left(
      \begin{smallmatrix}
        Q(-,\,\str(a_1)) \vspace{-0.6ex} \\ \text{\raisebox{2.5ex}{$\vdots$}}\vspace{-1ex} \\ Q(-,\,\str(a_n)) 
      \end{smallmatrix}
      \right)
    }
    & &
    \mspace{-3mu}
    {
    \begin{array}{c}
      Q(-,p_1)
      \\
      \oplus
      \\
      \vdots
      \\
      \oplus
      \\
      Q(-,p_n)
    \end{array}
    }
    \mspace{-8mu}
    \ar[rrr]^-{
      \big(\!
      \begin{smallmatrix}
        Q(-,\,a_1) & \cdots & Q(-,\,a_n) 
      \end{smallmatrix}
      \!\big)    
    } 
    & & &
    Q(-,q) \ar[r] & \stalkcn{q} \ar[r] & 0
  }.
\end{equation*}
Indeed, exactness at $Q(-,p_1) \oplus \cdots \oplus Q(-,p_n)$ follows as $\mH{q}(Q(r,-)) = 0$ for all $r \in \upGamma_0$, as $\upGamma$ is assumed to be normal. To prove exactness at $Q(-,q)$ and at $\stalkcn{q}$ it suffices, by the definition of $\stalkcn{q}$ to show that one has
\begin{equation*}
  \Im{\big(Q(-,a_1) \cdots  Q(-,a_n)\big)} \,=\, \mathfrak{r}(-,q)\;.
\end{equation*}
However, this equality follows from the fact that $\mathfrak{r}$ is the arrow ideal and $a_1,\ldots,a_n$ is the complete list of arrows in $\upGamma$ with target $q$. Consequently, the sequence displayed above, which we write as $P_2 \to P_1 \to P_0 \to \stalkcn{q} \to 0$, is part of an augmented projective resolution of $\stalkcn{q}$ in $\rMod{Q}$. It follows that the left $\alg$-module \smash{$\hH{q}(X) = \Tor{Q}{1}{\stalkcn{q}}{X}$} can be computed as the homology of the three term complex $\ORt{P_2}{X} \to \ORt{P_1}{X} \to \ORt{P_0}{X}$. For every object $r \in Q$ one has $\ORt{Q(-,r)}{X} \cong X(r)$, see the proof of \corref{adjoint-evaluation}, and hence the three term complex $\ORt{P_2}{X} \to \ORt{P_1}{X} \to \ORt{P_0}{X}$ is isomorphic to the three term complex in \dfnref{mH}, whose homology is $\mH{q}(X)$ by definition.
\end{proof}

\begin{rmk}
  \label{rmk:also-cH}
  In \dfnref{mH} we could evidently also have defined the mesh homology $\mH{q}(Y)$ for $Y \in \bMod{\alg}{Q}$, that is, for contravariant $\Bbbk$-linear functors $Y \colon Q \to \lMod{\alg}$. If one extends the definition, \dfnref[]{normal}, of normality to mean that \smash{$\mH{q}(Q(p,-)) = 0 = \mH{q}(Q(-,p))$} for all $p,q \in \upGamma_0$, then one can also easily extend \prpref[Pro\-po\-si\-tion~]{2H} (and its proof) to get isomorphisms $\hH{q}(X) \cong \mH{q}(X) \cong \cH{\tr(q)}(X)$ for every $X \in \lMod{Q,\alg}$. The stable translation quivers \smash{$\dou{(\vec{\mathbb{A}}_n)}$} and \smash{$\rep{(\vec{\mathbb{A}}_n)}$} are also normal in this stronger sense, but this is not important.
\end{rmk}

By \thmref[Theorems~]{A-dou}, \thmref[]{A-rep}, \thmref[]{A-dou-normal}, and \thmref[]{A-rep-normal}, the next result applies e.g.~to the stable translation quivers \smash{$\dou{(\vec{\mathbb{A}}_n)}$} and \smash{$\rep{(\vec{\mathbb{A}}_n)}$}.

\begin{cor}
  \label{cor:E-mesh}
  Let $\upGamma$ be a stable translation quiver with mesh category $Q = \mesh{\upGamma}$ over any commutative ring $\Bbbk$ and assume that the following conditions hold.
 \begin{itemlist}
  \item $Q$ satisfies conditions \eqref{Homfin}, \eqref{locbd}, \eqref{Serre} in \stpref{Bbbk}.
 \item The arrow ideal $\mathfrak{r}$ is nilpotent, that is, $\mathfrak{r}^N=0$ for some $N \in \mathbb{N}$.
 \item The ring $\Bbbk$ is noetherian and hereditary (e.g.~$\Bbbk=\mathbb{Z}$).
  \item $\upGamma$ is normal (\dfnref{normal}).
 \end{itemlist}
 For any $\Bbbk$-algebra $\alg$, the class $\mathscr{E}$ of exact objects in $\lMod{Q,\alg}$ from \dfnref{E} satisfies:
 \begin{equation*}
   \mathscr{E} \,=\, \{\, X \in \lMod{Q,\alg} \ |\ \mH{q}(X)=0 \text{ for every $q \in \upGamma_0$}\, \}\;.
 \end{equation*}
\end{cor}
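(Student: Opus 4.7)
The plan is to combine \thmref{E-characterization-hereditary} with \prpref{2H}, since almost all the work has already been done. First I would verify that the hypotheses of \thmref{E-characterization-hereditary} are met in the present setting. Conditions \eqref{Homfin}, \eqref{locbd}, \eqref{Serre} of \stpref{Bbbk} are given, and the Strong Retraction Property \eqmref{strong-retraction} holds for the mesh category $Q = \mesh{\upGamma}$ by \lemref{arrow-ideal}, which moreover identifies the arrow ideal with the pseudo-radical $\mathfrak{r}$. By hypothesis this $\mathfrak{r}$ is nilpotent, and $\Bbbk$ is noetherian and hereditary, so all bullet-point assumptions of \thmref{E-characterization-hereditary} are in force for every $\Bbbk$-algebra $\alg$.

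Applying the equivalence \eqclbl{i}$\,\Leftrightarrow\,$\eqclbl{iii'} of \thmref{E-characterization-hereditary}, we obtain
\begin{equation*}
  \mathscr{E} \,=\, \{\,X \in \lMod{Q,\alg} \,|\, \hH{q}(X) = 0 \text{ for every $q \in Q$}\,\}\,.
\end{equation*}
Since $\upGamma$ is normal by assumption, \prpref{2H} supplies a natural isomorphism $\hH{q}(X) \cong \mH{q}(X)$ for every $X \in \lMod{Q,\alg}$ and every $q \in \upGamma_0$. Substituting this into the description of $\mathscr{E}$ above yields exactly the asserted equality.

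There is essentially no obstacle here; the corollary is a direct packaging of the main theorem of \secref{cohomology} with the homological comparison result of \prpref{2H}, and the only minor care needed is to check that the hypotheses of the two results are indeed covered by the four bullet points in the statement of the corollary.
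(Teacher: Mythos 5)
Your argument is correct and coincides with the paper's own proof: \lemref{arrow-ideal} verifies the Strong Retraction Property and identifies the pseudo-radical with the arrow ideal, \thmref{E-characterization-hereditary} (via \eqclbl{i}$\,\Leftrightarrow\,$\eqclbl{iii'}) gives the description of $\mathscr{E}$ in terms of $\hH{q}$, and \prpref{2H} converts this to mesh homology using normality. Nothing further is needed.
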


\begin{proof}
The hypotheses in \thmref{E-characterization-hereditary} are satisfied by  \lemref{arrow-ideal} and the assumptions made in the present result, and hence $\mathscr{E} = \{\, X \in \lMod{Q,\alg} \ |\ \hH{q}(X)=0 \text{ for every $q \in \upGamma_0$}\, \}$. The assertion now follows from \prpref{2H} as $\upGamma$ is assumed to be normal.
\end{proof}

\begin{exa}
  \label{exa:counter}
  Let $Q$ be the category \smash{$\mesh{\rep{(\vec{\mathbb{A}}_3)}}$}. The category $\lMod{Q}$ is the same as the category of $\lMod{\Bbbk}$-valued representations of the quiver \smash{$\rep{(\vec{\mathbb{A}}_3)}$} that satisfy the mesh relations; see \rmkref{rep}. Moreover, in $\lMod{Q}$ the abstract homology functors $\hH{*}$ agree with the mesh homology functors $\mH{*}$ by \thmref{A-rep-normal} and \prpref{2H}. 
  
  We give an elementary example of a morphism $\varphi \colon X \to Y$ in $\lMod{Q}$ such that $\mH{*}(\varphi)$ is an isomorphism but $\varphi$ is \emph{not} a weak equivalence in the projective/injective model structure. Notice that for the category $Q$ one has $\mathfrak{r}^3=0$ but $\mathfrak{r}^2 \neq 0$; thus our example shows that the conditions \eqclbl{i}--\eqclbl{iii} in \prpref{quiso-2} are, in general, not equivalent unless $\mathfrak{r}^2=0$.
  
Now, let $X$ and $Y$ be the representations:
  \begin{equation*}
    \xymatrix@!=0.5pc{
      0    
      \ar[dr]
      &
      {}
      &
      \Bbbk_1
      \ar[dr]^-{=}
      &
      {}
      &
      0_4
      \ar[dr]
      &
      {}
      &
      0
      \\
      \mspace{-50mu}\cdots
      &
      0
      \ar[dr]
      \ar[ur]
      &
      {}
      &
      \Bbbk_3
      \ar[dr]
      \ar[ur]
      &
      {}
      &
      0
      \ar[dr]
      \ar[ur]
      &
      \mspace{60mu}\cdots
      \\
      0
      \ar[ur]
      &
      {}
      &
      \Bbbk_2
      \ar[ur]_-{=}
      &
      {}
      &
      0
      \ar[ur]
      &
      {}
      &
      0   
    }
  \end{equation*}  
and
  \begin{equation*}
    \xymatrix@!=0.5pc{
      0    
      \ar[dr]
      &
      {}
      &
      \Bbbk_1
      \ar[dr]
      &
      {}
      &
      0_4
      \ar[dr]
      &
      {}
      &
      0
      \\
      \mspace{-50mu}\cdots
      &
      0
      \ar[dr]
      \ar[ur]
      &
      {}
      &
      0_3
      \ar[dr]
      \ar[ur]
      &
      {}
      &
      0
      \ar[dr]
      \ar[ur]
      &
      \mspace{60mu}\cdots
      \\
      0
      \ar[ur]
      &
      {}
      &
      0_2
      \ar[ur]
      &
      {}
      &
      0
      \ar[ur]
      &
      {}
      &
      0   
    }
  \end{equation*}  
where the subscripts $1\,\ldots,4$ just refer to four specific vertices in \smash{$\rep{(\vec{\mathbb{A}}_3)}$} with those labels. Let $\varphi$ be the morphism from $X$ to $Y$ where $\varphi(1) \colon X(1) \to Y(1)$ is the identity on $\Bbbk$ and $\varphi(q)=0$ for all vertices $q \neq 1$. Notice that $\mH{q}(X)=0=\mH{q}(Y)$ for all $q \neq 3$ and 
\begin{equation*}
  \mH{3}(X) \,=\, \Ker{\!\big(\Bbbk \oplus \Bbbk \xrightarrow{(1 \ \ 1)} \Bbbk\big)} 
  \qquad \text{and} \qquad
  \mH{3}(Y) \,=\, \Ker{(\Bbbk \longrightarrow 0)}\;.
\end{equation*}
The map $\mH{3}(\varphi)$ sends $(x,-x)$ to $x$, so it is clearly an isomorphism. 

To see that $\varphi$ is not a weak equivalence, notice that $\varphi$ is an epimorphism and that its kernel satisfies $\mH{4}(\Ker{\varphi}) = \Bbbk \neq 0$. Thus, $\Ker{\varphi}$ does not belong to $\mathscr{E}$, see \corref{E-mesh}, and hence $\varphi$ can not be a weak equivalence by \cite[Lem.~5.8]{Hovey02}.
\end{exa}

\appendix

\section{Purity and Kaplansky Classes}
\label{app:Kaplansky}

In this section, $\mathcal{M}$ denotes a Grothendieck category which is \emph{locally finitely presentable} in the sense of Crawley-Boevey \cite[\S1]{WCB94} or Ad{\'a}mek and Rosick{\'y} \cite[\mbox{Dfn.~1.17 with $\lambda=\aleph_0$}]{AdamekRosicky}.

\begin{bfhpg}[Purity]
  \label{purity}
A short exact sequence $0 \to M' \to M \to M'' \to 0$ in $\mathcal{M}$ is called \emph{pure exact} if it stays exact under the functor $\Hom{\mathcal{M}}{K}{-}$ for every finitely presentable object $K$ in $\mathcal{M}$.

The definition above can be found in \cite[\S3]{WCB94}. The more general concept of \emph{$\lambda$-pure exact} sequences has been studied by Ad{\'a}mek and Rosick{\'y} in \cite{AR04} and \cite[Chap.~2.D]{AdamekRosicky} (the situation above corresponds to $\lambda = \aleph_0$).
\end{bfhpg}

To parse the next definition recall from \cite[Dfns.~1.13 and 1.67]{AdamekRosicky} the definitions of \emph{$\kappa$-pre\-sentable} and \emph{$\kappa$-generated} objects, where $\kappa$ is any regular cardinal.

\begin{bfhpg}[Kaplansky classes]
A class $\mathcal{F}$ of objects in $\mathcal{M}$ is said to be
\emph{$\kappa$-Kaplansky} if for every $F \in \mathcal{F}$ and every $\kappa$-generated subobject
$X \subseteq F$ there exists a $\kappa$-presentable sub\-object $Y$ of $F$ such that $X \subseteq Y \subseteq F$ and $Y,\, F/Y \in \mathcal{F}$. One says that $\mathcal{F}$ is a \emph{Kaplansky class} if it is $\kappa$-Kaplansky for some regular cardinal $\kappa$. 

In this generality the definition is due to Gillespie \cite[Dfn.~4.9]{MR2342555}; see also {\v{S}}{\v{t}}ov{\'{\i}}{\v{c}}ek \cite[Dfn.~2.6]{Sto2013a}. Kaplansky classes were introduced and studied by Enochs and L{\'o}pez-Ramos \cite{EEnJLR02} in the special case where $\mathcal{M}$ is the category of (left) modules over a ring.
\end{bfhpg}

In the special case where $\mathcal{M}$ is the category of (left) modules over a ring, the next result follows from \cite[Thm.~3.4]{HJ08}. Note that the assumption in \cite[Thm.~3.4]{HJ08} that $\mathcal{F}$ should be closed under extensions is superfluous, indeed, it follows from the proof of \mbox{\thmref{pure-sub-quo}} below that under the hypotheses in that theorem, the class $\mathcal{F}$ is \emph{deconstructible}, and such a class is automatically closed under extensions by {{\v{S}}{\v{t}}ov{\'{\i}}{\v{c}}ek \cite[Lem.~1.6]{Sto2013a}. 

\begin{thm}
  \label{thm:pure-sub-quo}
  Let $\mathcal{F}$ be a class of objects in $\mathcal{M}$ that satisfies the following conditions: 
\begin{rqm}
\item $\mathcal{F}$ is closed under pure subobjects and pure quotients, meaning that for every pure exact sequence $0 \to M' \to M \to M'' \to 0$ in $\mathcal{M}$ with $M \in \mathcal{F}$, one has $M',M'' \in \mathcal{F}$.

\item $\mathcal{F}$ contains a generator of $\mathcal{M}$ and is closed under coproducts in $\mathcal{M}$.
\end{rqm}  
Then $(\mathcal{F},\mathcal{F}^\perp)$ is a complete cotorsion pair. In fact, every object in $\mathcal{M}$ even has an $\mathcal{F}$-cover.
\end{thm}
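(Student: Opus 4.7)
The strategy is to reduce the statement to standard machinery on Kaplansky classes and deconstructibility. The plan is to verify two structural properties of $\mathcal{F}$, namely closure under direct limits and the Kaplansky property, and then invoke known theorems to obtain the complete cotorsion pair together with the covers.

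First, I would observe that $\mathcal{F}$ is closed under direct limits. In any Grothendieck category (and certainly in the locally finitely presentable category $\mathcal{M}$) every direct limit of a directed system $(M_i)_{i\in I}$ is a pure quotient of the coproduct $\coprod_{i\in I}M_i$; this is standard (see e.g.~\cite[Prop.~3]{AR04}). Hypothesis \rqmlbl{2} provides closure under coproducts, and hypothesis \rqmlbl{1} gives closure under pure quotients, so direct limits of objects in $\mathcal{F}$ lie in $\mathcal{F}$.

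Next comes the main step: showing that $\mathcal{F}$ is a Kaplansky class. The key input is a purity-approximation lemma available in any locally finitely presentable Grothendieck category: there is a regular cardinal $\kappa$ (depending only on $\mathcal{M}$) such that for every object $F \in \mathcal{M}$ and every $\kappa$-generated subobject $X\subseteq F$, one can find a $\kappa$-presentable subobject $Y$ with $X\subseteq Y\subseteq F$ and such that the inclusion $Y\hookrightarrow F$ is a pure monomorphism; this is exactly the content used in \cite[Thm.~3.4]{HJ08} (and follows from \cite[Prop.~2.33]{AdamekRosicky} combined with standard arguments). If $F\in\mathcal{F}$, then purity of $Y\hookrightarrow F$ gives a pure exact sequence $0\to Y\to F\to F/Y\to 0$, so hypothesis \rqmlbl{1} forces both $Y\in\mathcal{F}$ and $F/Y\in\mathcal{F}$. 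This is precisely the Kaplansky condition at the cardinal $\kappa$. I expect this step to be the main obstacle, because one must pin down the correct regular cardinal and invoke the purity-approximation lemma in the generality of a locally finitely presentable Grothendieck category rather than just a module category; the argument in \cite{HJ08} does this in the module setting and the generalization is technical but routine once the right references are assembled.

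Finally, I would combine these two properties with results of {\v{S}}{\v{t}}ov{\'{\i}}{\v{c}}ek. By \cite[Thm.~2.9]{Sto2013a} (or the analogous deconstructibility result), any Kaplansky class containing a generator and closed under coproducts is deconstructible in the sense of \cite[Dfn.~1.4]{Sto2013a}; in particular it is closed under extensions (\cite[Lem.~1.6]{Sto2013a}), which gives the closure condition missing from the hypotheses. Deconstructibility together with the existence of a generator inside $\mathcal{F}$ yields completeness of the cotorsion pair $(\mathcal{F},\mathcal{F}^\perp)$ via the small object argument (\cite[Thm.~2.14]{SaorinStovicek} or \cite[Cor.~3.2.4]{Sto2013a}). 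Combining completeness with closure of $\mathcal{F}$ under direct limits established in the first step, Enochs' theorem in the generalized form of El~Bashir\,/\,{\v{S}}{\v{t}}ov{\'{\i}}{\v{c}}ek (see e.g.~\cite[Cor.~2.7]{Sto2013a}) gives that $\mathcal{F}$ is covering, which is the final claim.
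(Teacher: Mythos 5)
Your proposal is correct and follows essentially the same route as the paper's proof: closure under direct limits via pure quotients of coproducts, the Kaplansky property via the purity-approximation result in locally finitely presentable categories (Ad\'amek--Rosick\'y \cite[Thm.~2.33]{AdamekRosicky}, as in \cite{HJ08}), deconstructibility by \v{S}\v{t}ov\'{\i}\v{c}ek, completeness from a deconstructible class containing a generator, and covers by El~Bashir's theorem. The only deviations are minor citation details (the paper uses \cite[Cor.~2.7(2)]{Sto2013a}, \cite[Cor.~5.17]{Sto2013b}, and \cite[Thm.~1.2]{ElBashir}) and the glossed step of producing the $\kappa$-presentable pure subobject $Y \supseteq X$, which the paper carries out by factoring an epimorphism from a $\kappa$-presentable object onto $X$ through a pure monomorphism into $F$.
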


\begin{proof}
 We start by showing that condition \rqmlbl{1} implies that $\mathcal{F}$ is a Kaplansky class.
 
 By assumtion, $\mathcal{M}$ is locally finitely presentable (as $\mathcal{M}$ is also cocomplete, this is the same as being \emph{finitely accessible}, see \cite[Exa.~2.3(1)]{AdamekRosicky}), so we may apply \cite[Thm.~2.33 and its Remark]{AdamekRosicky} with $\lambda=\aleph_0$. The conclusion provided by this result holds for arbitrary large regular cardinals $\gamma$ \emph{sharply greater} than $\lambda=\aleph_0$ (in symbols: $\gamma \vartriangleright \aleph_0$, see   \cite[Dfn.~2.12]{AdamekRosicky})---which just means that $\gamma$ is an uncountable regular cardinal, see \cite[Exa.~2.13(1)]{AdamekRosicky}---but for our purpose it suffices to know that the conclusion holds for some regular cardinal $\gamma$. We let $\gamma$ be any such cardinal and we will show that $\mathcal{F}$ is a $\gamma$-Kaplansky class.
  
    Let $F \in \mathcal{F}$ and $X \subseteq F$ be a $\gamma$-generated subobject. Since $\mathcal{M}$ is also locally $\gamma$-presentable, see \cite[Remark after Thm.~1.20]{AdamekRosicky}, there is by \cite[Prop.~1.69(ii)]{AdamekRosicky} an epimorphism \mbox{$f \colon K \twoheadrightarrow X$} where $K$ is $\gamma$-presentable (this also follows from \cite[Lem.~A.3(1)]{Sto2013a} and \cite[Prop.~1.16]{AdamekRosicky}). Applying \cite[Thm.~2.33 and its Remark]{AdamekRosicky} to the composite morphism $K \twoheadrightarrow X \rightarrowtail F$, we get a factorization
\begin{equation*}
  \xymatrix@!=1pc{
     K \ar@{->>}[r]^-{f} 
     \ar@{.>}[dr]_-{g}
     & 
     X \ar@{>->}[r] 
     & 
     F
     \\   
     {}
     &
     \bar{K}
     \ar@{>.>}[ur]_-{\bar{f}}
     &
     {}
  }
\end{equation*}    
where $\bar{K}$ is $\gamma$-presentable and $\bar{f}$ is a $\aleph_0$-pure (mono)morphism in the sense of \cite[Dfn.~2.27 (see also Prop.~2.29)]{AdamekRosicky}. By \cite[Prop.~5(b) and Dfn.~1]{AR04} this means that that the exact sequence
\begin{equation*}
  \xymatrix@C=1.5pc{0 \ar[r] & \bar{K} \ar[r]^-{\bar{f}} & F \ar[r] & \Coker{\bar{f}} \ar[r] & 0}
\end{equation*}
is pure exact in the sense of \ref{purity}. If we set $Y=\Im{\bar{f}}$, then the sequence above is isomorphic to $0 \to Y \to F \to F/Y \to 0$, which is therefore also pure exact. As $F \in \mathcal{F}$ we get $Y,\, F/Y \in \mathcal{F}$ because  $\mathcal{F}$ is closed under pure subobjects and pure quotients. Finally, note that $Y \cong \bar{K}$ is $\gamma$-presentable and that one has $X \subseteq Y$, indeed, $X = \Im{f} = \Im{(\bar{f} \circ g)} \subseteq \Im{\bar{f}} = Y$.

 To finish the proof, note that for any $I$-direct system $\{M_i \to M_j\}$ in $\mathcal{M}$,  the canonical~map \smash{$\bigoplus_{i \in I} M_i \twoheadrightarrow \varinjlim_{i \in I}M_i$} is a pure epimorphism. As $\mathcal{F}$ is assumed to be closed~under pure quotients and coproducts, it follows that $\mathcal{F}$ is closed under direct limits as well. We have seen that $\mathcal{F}$ is a Kaplansky class; thus {\v{S}}{\v{t}}ov{\'{\i}}{\v{c}}ek \cite[Cor.~2.7(2)]{Sto2013a} implies that $\mathcal{F}$ is \emph{deconstructible}. As every split exact sequence is pure exact, $\mathcal{F}$ is also closed under direct summands. By assumption, $\mathcal{F}$ contains a generator, and thus {\v{S}}{\v{t}}ov{\'{\i}}{\v{c}}ek \cite[Cor.~5.17]{Sto2013b} yields that $(\mathcal{F},\mathcal{F}^\perp)$ is a (functorially) complete cotorsion pair. 
 
As noted above, $\mathcal{F}$ is closed under direct limits. As the cotorsion pair $(\mathcal{F},\mathcal{F}^\perp)$ is complete, every object in $\mathcal{M}$ has a (special) $\mathcal{F}$-precover. Thus, El Bashir \cite[Thm.~1.2]{ElBashir} shows that every object in $\mathcal{M}$ has an $\mathcal{F}$-cover. 
\end{proof}

\section{Proofs of \thmref[Theorems~]{A-dou} and \thmref[]{A-dou-normal}}
\label{app:proof}

Recall that we consider the double quiver $\dou{(\vec{\mathbb{A}}_n)}$ from \exaref{A-dou}. For the path and mesh categories of this stable translation quiver we write:
\begin{equation*}
  P \,=\, \Bbbk \dou{(\vec{\mathbb{A}}_n)}
  \qquad \text{and} \qquad
  Q \,=\, \mesh{\dou{(\vec{\mathbb{A}}_n)}}\;.
\end{equation*}
For vertices $p, q$ in \smash{$\dou{(\vec{\mathbb{A}}_n)}$} and $\ell \geqslant 0$ denote by $P^\ell(p,q)$ and $Q^\ell(p,q)$ the $\Bbbk$-submodules of $P(p,q)$ and $Q(p,q)$ generated by all paths in \smash{$\dou{(\vec{\mathbb{A}}_n)}$} of length $\ell$; here we include the trivial paths $\mathrm{id}_q$, which have length \mbox{$\ell=0$}. As we have seen in the proof of \lemref{arrow-ideal}, there is a direct sum decomposition \eqref{Q-decomp}.

  To keep track of all the possible paths in the quiver \smash{$\dou{(\vec{\mathbb{A}}_n)}$}, it is convenient to draw multiple copies of the vertices and arrows in \smash{$\dou{(\vec{\mathbb{A}}_n)}$} as follows (for e.g.~$n=5$):
  \begin{equation}
    \label{eq:all-paths-2}
    \begin{gathered}
    \xymatrix@!=0.1pc{
      \underset{1}{\vtx} \ar[dr]^(0.65){\mspace{-15mu}a^{}_1} & {} & 
      \underset{1}{\vtx} \ar[dr]^(0.65){\mspace{-15mu}a^{}_1} & {} & 
      \underset{1}{\vtx} \ar[dr]^(0.65){\mspace{-15mu}a^{}_1} & {} & 
      \underset{1}{\vtx}
      \\
      {} & 
      \underset{2}{\vtx} 
      \ar[ur]_(0.6){\mspace{-15mu}a^*_1} 
      \ar[dr]^(0.65){\mspace{-15mu}a^{}_2} & 
      {} & 
      \underset{2}{\vtx} 
      \ar[ur]_(0.6){\mspace{-15mu}a^*_1} 
      \ar[dr]^(0.65){\mspace{-15mu}a^{}_2} & 
      {} & 
      \underset{2}{\vtx} 
      \ar[ur]_(0.6){\mspace{-15mu}a^*_1} 
      \ar[dr]^(0.65){\mspace{-15mu}a^{}_2} & 
      \mspace{12mu} \cdots
      \\
      \underset{3}{\vtx} 
      \ar[ur]_(0.6){\mspace{-15mu}a^*_2}
      \ar[dr]^(0.65){\mspace{-15mu}a^{}_3} & 
      {} & 
      \underset{3}{\vtx} 
      \ar[ur]_(0.6){\mspace{-15mu}a^*_2}
      \ar[dr]^(0.65){\mspace{-15mu}a^{}_3} & 
      {} & 
      \underset{3}{\vtx} 
      \ar[ur]_(0.6){\mspace{-15mu}a^*_2}
      \ar[dr]^(0.65){\mspace{-15mu}a^{}_3} & 
      {} &
      \underset{3}{\vtx}
      \\
      {} & 
      \underset{4}{\vtx} 
      \ar[ur]_(0.6){\mspace{-15mu}a^*_3}
      \ar[dr]^(0.65){\mspace{-15mu}a^{}_4} & 
      {} & 
      \underset{4}{\vtx} 
      \ar[ur]_(0.6){\mspace{-15mu}a^*_3}
      \ar[dr]^(0.65){\mspace{-15mu}a^{}_4} & 
      {} & 
      \underset{4}{\vtx} 
      \ar[ur]_(0.6){\mspace{-15mu}a^*_3}
      \ar[dr]^(0.65){\mspace{-15mu}a^{}_4} & 
      \mspace{12mu} \cdots
      \\
      \underset{5}{\vtx} 
      \ar[ur]_(0.6){\mspace{-15mu}a^*_4} & 
      & 
      \underset{5}{\vtx} 
      \ar[ur]_(0.6){\mspace{-15mu}a^*_4} & 
      &       
      \underset{5}{\vtx} 
      \ar[ur]_(0.6){\mspace{-15mu}a^*_4} & 
      &       
      \underset{5}{\vtx}
    }
    \end{gathered}    
  \end{equation}  
  From this diagram, it is evident that one has:
  \begin{equation}
    \label{eq:P-nonzero}
    P^\ell(p,q) \neq 0 \quad \iff \quad 
    \ell = |p-q|+2t \ \text{ where } \ t \in \mathbb{N}_0\;.
  \end{equation}    
  We now take into account the mesh relations, that is, we consider \eqref{all-paths-2} as a diagram in $Q$.
\begin{itemlist}

\item[($\circ$)] The mesh relations $\mu_1$ and $\mu_n$ mean that $a^*_1a^{}_1=0$ and $a^{}_{n-1}a^*_{n-1}=0$ hold in $Q$.

\item[($\scriptstyle\Diamond$)] The mesh relations $\mu_2, \ldots, \mu_{n-1}$ mean that every square \,\text{\tiny $
\renewcommand{\arraystretch}{0.0}
\setlength{\arraycolsep}{0pt}
\begin{array}{ccccc}
{} & {} & \bullet & {} & {}
\\
{} & \nearrow & {} & \searrow & {}
\\
\bullet & {} & {} & {} & \bullet 
\\
{} & \searrow & {} & \nearrow & {}
\\
{} & {} & \bullet & {} & {}
\end{array}
$}\, is anticommutative.
\end{itemlist}  
Given vertices $p,q$ and $\ell \geqslant 0$, it follows from ($\scriptstyle\Diamond$) and \eqref{all-paths-2} that in $Q$, all paths from $p$ to $q$ of length $\ell$ (of course, there might not exist such a path) are equal up to a sign. Thus:
\begin{equation}
  \label{eq:1-dim}
  \text{If $Q^\ell(p,q) \neq 0$, then $Q^\ell(p,q)$ is a free $\Bbbk$-module of dimension $1$.}
\end{equation}
If $Q^\ell(p,q) \neq 0$, then a basis for the $1$-dimensional free $\Bbbk$-module $Q^\ell(p,q)$ is, for example, $\{ \pi\}$ or $\{-\pi\}$ where $\pi$ is any (possibly trivial) path in \smash{$\dou{(\vec{\mathbb{A}}_n)}$} from $p$ to $q$ of length $\ell$.

Given vertices $p$ and $q$ we now seek to determine which numbers $\ell$ satisfy $Q^\ell(p,q) \neq 0$. 

\begin{prp}
  \label{prp:dim-2}
  With $d(p,q) = \min\{p,\,q,\,n+1-p,\,n+1-q\} \in \mathbb{N}$ one has:
  \begin{equation*}
    Q^\ell(p,q) \neq 0 \quad \iff \quad 
    \ell = |p-q|+2t \ \text{ where } \ 0 \leqslant t < d(p,q)\;.
  \end{equation*}  
\end{prp}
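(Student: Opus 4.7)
The strategy proceeds as follows. From \eqref{P-nonzero} the $\Bbbk$-module $Q^{\ell}(p,q)$ vanishes unless $\ell=|p-q|+2t$ for some $t\geqslant 0$, and in that case \eqref{1-dim} asserts that $\dim_{\Bbbk} Q^{\ell}(p,q)\leqslant 1$, with the image of any single length-$\ell$ path in $P$ serving as a generator. So the task reduces to deciding, for each ordered pair $(p,q)$ and each $t\geqslant 0$, whether some (equivalently, every) chosen representative path has nonzero image in $Q$. Two symmetries of the data allow a convenient reduction: path reversal $(p,q)\mapsto(q,p)$ combined with $a_j\leftrightarrow a_j^*$ is an antiequivalence of $Q$ preserving the mesh relations, and the vertical flip $j\mapsto n+1-j$ (sending $a_j\mapsto a_{n-j}^*$ and $a_j^*\mapsto a_{n-j}$) is an equivalence doing the same; both leave $d(p,q)$ unchanged, so I may assume $p=d(p,q)$, i.e., $p\leqslant q$ with $p+q\leqslant n+1$.

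For the vanishing half ($t\geqslant d(p,q)=p$), set $y_j:=a_j^*a_j$ and $x_j:=a_ja_j^*$. The mesh relations become $y_1=0$, $x_{n-1}=0$, and $y_{j+1}=-x_j$ for $1\leqslant j\leqslant n-2$. Iterating the identity $y_p^{\,t}=(-1)^{t}\,a_{p-1}\,y_{p-1}^{\,t-1}\,a_{p-1}^*$ (obtained from $y_p=-x_{p-1}=-a_{p-1}a_{p-1}^*$) exactly $p-1$ times yields
\begin{equation*}
  y_p^{\,t}\,=\,(\pm)\,a_{p-1}\,a_{p-2}\cdots a_1\cdot y_1^{\,t-(p-1)}\cdot a_1^*\cdots a_{p-2}^*\,a_{p-1}^*,
\end{equation*}
which vanishes once $t\geqslant p$ because of the factor $y_1=0$. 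Dually, the symmetric identity $y_p^{\,t}=(-1)^{t-1}\,a_p^*\,y_{p+1}^{\,t-1}\,a_p$ iterates into a factor involving $x_{n-1}=0$ whenever $t\geqslant n+1-p$. For the off-diagonal case one iterates the commutation $a_j\,y_j=x_j\,a_j=-y_{j+1}\,a_j$ (valid when $j+1\leqslant n-1$, and whose left-hand side equals $0$ outright when $j+1=n$) from $j=p$ up to $j=q-1$, obtaining
\begin{equation*}
  (a_{q-1}\cdots a_p)\cdot y_p^{\,t}\,=\,(-1)^{t(q-p)}\,y_q^{\,t}\cdot(a_{q-1}\cdots a_p)
\end{equation*}
when $q\leqslant n-1$, and $0$ when $q=n$. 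Applying the diagonal vanishing of $y_p^{\,t}$ under our reduction $t\geqslant p=d(p,q)$ kills the representative path $(a_{q-1}\cdots a_p)\cdot y_p^{\,t}$, and by cyclicity $Q^{(q-p)+2t}(p,q)=0$.

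For the non-vanishing half ($0\leqslant t\leqslant p-1$), the natural candidate is the explicit ``down-then-up'' path
\begin{equation*}
  \pi_t\,=\,a_{q-1}\,a_{q-2}\cdots a_{p-t+1}\,a_{p-t}\,a_{p-t}^*\,a_{p-t+1}^*\cdots a_{p-1}^*\ \in\ P^{(q-p)+2t}(p,q),
\end{equation*}
which is well defined since $t\leqslant p-1$. The main obstacle is certifying that its image in $Q$ is actually nonzero despite the intricate web of mesh relations. My approach is to exploit the fact that, after the sign rescaling $a_j^*\rightsquigarrow(-1)^j a_j^*$, the mesh category $Q$ is isomorphic as a $\Bbbk$-algebra to the classical preprojective algebra $\Pi(\mathbb{A}_n)$, whose total $\Bbbk$-dimension is known to equal $\binom{n+2}{3}$. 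The vanishing half already supplies the inequality $\dim_{\Bbbk}Q(p,q)\leqslant d(p,q)$ for every pair, and a short combinatorial count verifies $\sum_{p,q=1}^{n}d(p,q)=\binom{n+2}{3}$; hence the global dimension match forces equality $\dim_{\Bbbk}Q(p,q)=d(p,q)$ and $\overline{\pi_t}\neq 0$ for every admissible $t$. A self-contained alternative avoids any appeal to preprojective algebras by exhibiting an explicit faithful representation $V=\bigoplus_j V_j$ of $Q$ (built from the ``interval'' indecomposables of $\Pi(\mathbb{A}_n)$) on which each $\pi_t$ acts as an explicit nonzero matrix, directly certifying linear independence of $\{\overline{\pi_t}\}_{t<d(p,q)}$ in $Q(p,q)$.
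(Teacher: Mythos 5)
Your symmetry reduction and your vanishing half are essentially sound: the identity $y_p^{\,t}=(-1)^t a_{p-1}y_{p-1}^{\,t-1}a_{p-1}^*$ iterated down to $y_1=a_1^*a_1=0$, together with the commutation of $y_j$ past $a_j$, does kill your representative path once $t\geqslant d(p,q)$, and by \eqref{1-dim} this kills all of $Q^\ell(p,q)$. This is a rewriting version of what the paper does more directly, namely exhibiting a path of the given length that visibly contains the subword $a_1^*a_1$ or $a_{n-1}a_{n-1}^*$.

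The gap is the non-vanishing half, which is the actual crux, and neither of your routes establishes it. Route (a) quotes the total dimension $\binom{n+2}{3}$ of the preprojective algebra $\Pi(\mathbb{A}_n)$; this is a fact of essentially the same depth as the proposition, and its standard proofs are representation-theoretic over a field, whereas here $\Bbbk$ is an arbitrary commutative ring --- the paper's stated reason for including \appref{proof} is precisely that such field-based results cannot simply be quoted in this generality. Moreover, before freeness is known, the ``$\Bbbk$-dimension'' of $Q(p,q)$ is not defined, so your count ``$\sum_{p,q} d(p,q)=\binom{n+2}{3}$ forces equality'' compares a number of generators with a rank; to make it work over general $\Bbbk$ you would additionally need, say, base change from the integral mesh algebra together with a surjection-of-free-modules (Cayley--Hamilton) argument, none of which appears in the proposal. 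Route (b) --- an explicit faithful representation on which each $\pi_t$ acts nontrivially --- would indeed yield a self-contained proof, but you do not construct it; as written it is a one-sentence promissory note, and building such a representation with signs compatible with the mesh relations is exactly the verification the proposition requires. The paper instead closes this half elementarily: by the anticommutative squares all length-$\ell$ paths from $p$ to $q$ agree up to sign (made coherent by the sign-normalized commutative diagram \eqref{all-paths-2-minus}), and it checks that a path of length $\ell=|p-q|+2t$ can contain $a_1^*a_1$ only when $t\geqslant\min\{p,q\}$ and can contain $a_{n-1}a_{n-1}^*$ only when $t\geqslant\min\{n+1-p,n+1-q\}$; hence for $t<d(p,q)$ no path of that length is killed and $Q^\ell(p,q)\neq 0$. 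You need either to supply an argument of this kind or to actually exhibit the representation promised in (b).
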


\begin{proof}
Certainly a necessary condition for $Q^\ell(p,q) \neq 0$ is that $P^\ell(p,q) \neq 0$, which by~\eqref{P-nonzero} means that $\ell$ has the form $\ell = |p-q|+2t$ for some $t \in \mathbb{N}_0$. If $t \geqslant \min\{p,q\}$, then there exists a path from $p$ to $q$ of length $\ell = |p-q|+2t$ that contains at least one copy of the product $a^*_1a^{}_1$, namely the following path with $t-\min\{p,q\}+1>0$ copies of $a^*_1a^{}_1$:
\begin{equation*}  
  (a^{}_{q-1}\cdots a^{}_1)(a^*_1a^{}_1)\cdots(a^*_1a^{}_1)(a^*_1\cdots a^*_{p-1})\;.
\end{equation*} 
Here we have used that $(q-1)+2(t-\min\{p,q\}+1)+(p-1)$ is equal to $\ell = |p-q|+2t$. In this case, one has $Q^\ell(p,q)=0$ by ($\circ$) above. Conversely, if $t < \min\{p,q\}$, then there is no path from $p$ to $q$ of length $\ell = |p-q|+2t$ that contains a copy of the product $a^*_1a^{}_1$.

Similarly, if $t \geqslant \min\{n+1-p,n+1-q\}$ then there exists a path from $p$ to $q$ of length $\ell = |p-q|+2t$ that contains at least one copy of the product $a^{}_{n-1}a^*_{n-1}$, namely the following path with $t-\min\{n+1-p,n+1-q\}+1>0$ copies of $a^{}_{n-1}a^*_{n-1}$:
\begin{equation*}
  (a^*_q\cdots a^*_{n-1})(a^{}_{n-1}a^*_{n-1})\cdots(a^{}_{n-1}a^*_{n-1})(a^{}_{n-1}\cdots a^{}_p)\;.
\end{equation*}
Here we have used that $(n-q)+2(t-\min\{n+1-p,n+1-q\}+1)+(n-p)$ is equal to $\ell=|p-q|+2t$. In this case, one has $Q^\ell(p,q)=0$ by ($\circ$) above. Conversely, if one has $t < \min\{n+1-p,n+1-q\}$, then there is no path from $p$ to $q$ of length $\ell = |p-q|+2t$ that contains a copy of the product $a^{}_{n-1}a^*_{n-1}$.

Thus, if both $t < \min\{p,q\}$ and $t < \min\{n+1-p,n+1-q\}$, equivalently, $t<d(p,q)$, then no path from $p$ to $q$ of length $\ell = |p-q|+2t$ contains  $a^*_1a^{}_1$ or $a^{}_{n-1}a^*_{n-1}$, and hence one has \smash{$Q^\ell(p,q) \neq 0$}.
\end{proof}

\begin{cor}
  \label{cor:dimension}
  $Q(p,q)$ is a free $\Bbbk$-module of dimension $d(p,q) \in \mathbb{N}$ for every $p,q \in Q$.
\end{cor}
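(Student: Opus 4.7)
The plan is to combine the decomposition \eqref{Q-decomp}, the one-dimensionality statement \eqref{1-dim}, and \prpref{dim-2}. From \eqref{Q-decomp} we have the $\Bbbk$-module direct sum decomposition
\begin{equation*}
  Q(p,q) \,=\, \bigoplus_{\ell \geqslant 0} Q^\ell(p,q)\;,
\end{equation*}
so the asserted result will follow once I count the nonzero summands and show each is free of rank one.

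First I would observe that for each $p,q \in Q$, the set $\{1,\,n+1-p,\,q,\,n+1-q\}$ consists of positive integers (since $1 \leqslant p,q \leqslant n$), so $d(p,q) = \min\{p,q,n+1-p,n+1-q\}$ is a positive integer, i.e.\ $d(p,q) \in \mathbb{N}$. Next, by \prpref{dim-2}, the set of indices $\ell \geqslant 0$ for which $Q^\ell(p,q) \neq 0$ is precisely
\begin{equation*}
  \bigl\{\, |p-q|+2t \ \big|\ 0 \leqslant t < d(p,q) \,\bigr\}\;,
\end{equation*}
which has exactly $d(p,q)$ elements (the map $t \mapsto |p-q|+2t$ is injective on $\{0,\ldots,d(p,q)-1\}$). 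By \eqref{1-dim} each of these nonzero summands is a free $\Bbbk$-module of rank $1$, while all other summands vanish. Hence $Q(p,q)$ is a direct sum of $d(p,q)$ copies of a free rank-one $\Bbbk$-module, so it is free of rank $d(p,q)$.

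This is essentially bookkeeping: all the real work has already been done in establishing \eqref{Q-decomp}, \eqref{1-dim}, and \prpref{dim-2}; the only ``obstacle'' is to note that the counting of $t$-values yields exactly $d(p,q)$ nonzero homogeneous components and that $d(p,q) \geqslant 1$ so the statement $d(p,q) \in \mathbb{N}$ is meaningful. No further case analysis or computation is required.
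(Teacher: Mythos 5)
Your proof is correct and follows exactly the route the paper takes: combine the decomposition \eqref{Q-decomp}, the rank-one statement \eqref{1-dim}, and \prpref{dim-2}, and count the indices $\ell = |p-q|+2t$ with $0 \leqslant t < d(p,q)$. The only blemish is a typo where you list the set as $\{1,\,n+1-p,\,q,\,n+1-q\}$ instead of $\{p,\,q,\,n+1-p,\,n+1-q\}$; the conclusion $d(p,q)\geqslant 1$ is of course unaffected.
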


\begin{proof}
  This follows immediately by combining the direct sum decomposition \eqref{Q-decomp} with the assertion \eqref{1-dim} and \prpref{dim-2}.
\end{proof}

\begin{cor}
  \label{cor:l-geq-n}
  For every $p,q \in Q$ there is a strict inequality, $|p-q|+2(d(p,q)-1)<n$. In particular, one has $Q^\ell(p,q)=0$ for all $\ell \geqslant n$.
\end{cor}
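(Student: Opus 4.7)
The proof will be a direct elementary case analysis, using the formula $d(p,q)=\min\{p,q,n+1-p,n+1-q\}$ and then deducing the ``In particular'' statement from \prpref{dim-2}.

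By the symmetry of $d(p,q)$ in $p$ and $q$, I would assume without loss of generality that $p\leqslant q$. Under this assumption one has $p=\min\{p,q\}$ and $n+1-q=\min\{n+1-p,n+1-q\}$, so the formula for $d(p,q)$ simplifies to
\begin{equation*}
  d(p,q)\,=\,\min\{p,\,n+1-q\}\;,
\end{equation*}
and $|p-q|=q-p$. I would then split into two subcases according to which of the two quantities attains the minimum.

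In the subcase $p\leqslant n+1-q$ (equivalently $p+q\leqslant n+1$), one has $d(p,q)=p$, so
\begin{equation*}
  |p-q|+2(d(p,q)-1)\,=\,(q-p)+2(p-1)\,=\,p+q-2\,\leqslant\,n-1\,<\,n\;.
\end{equation*}
In the subcase $p>n+1-q$ (equivalently $p+q\geqslant n+2$), one has $d(p,q)=n+1-q$, so
\begin{equation*}
  |p-q|+2(d(p,q)-1)\,=\,(q-p)+2(n-q)\,=\,2n-p-q\,\leqslant\,n-2\,<\,n\;.
\end{equation*}
This establishes the strict inequality in all cases.

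For the second assertion, by \prpref{dim-2} the non-zero graded pieces $Q^{\ell}(p,q)$ occur precisely for $\ell\in\{|p-q|,\,|p-q|+2,\,\ldots,\,|p-q|+2(d(p,q)-1)\}$. In particular, the largest such $\ell$ equals $|p-q|+2(d(p,q)-1)$, which by the inequality just proved is strictly less than $n$; hence $Q^{\ell}(p,q)=0$ for every $\ell\geqslant n$. I do not anticipate any real obstacle: the argument is pure bookkeeping, with the only subtlety being to notice that the WLOG reduction $p\leqslant q$ is legitimate because $d$ is symmetric in its arguments.
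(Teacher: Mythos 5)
Your proof is correct and follows exactly the route the paper intends: the paper simply asserts that the strict inequality ``is not hard to prove'' and then cites \prpref{dim-2} for the vanishing, and your WLOG reduction $p\leqslant q$ with the two subcases $p+q\leqslant n+1$ and $p+q\geqslant n+2$ supplies precisely the elementary verification that was left to the reader. No gaps.
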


\begin{proof}
  The strict inequality is not hard to prove, and the last assertion now follows from \prpref{dim-2}.
\end{proof}

\begin{cor}
  \label{cor:nonzero-symmetry}
  Let $p,q \in Q$ and set $\Serre(p)=n+1-p$. For every $0 \leqslant \ell < n$ one has 
  \begin{equation*}
     Q^\ell(p,q) \neq 0
     \quad \iff \quad
     Q^{n-1-\ell}(q,\Serre(p)) \neq 0\;.
  \end{equation*}   
\end{cor}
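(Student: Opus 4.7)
The plan is to reduce everything to the combinatorial characterization of nonvanishing of $Q^\ell(p,q)$ given by \prpref{dim-2}, which says that $Q^\ell(p,q) \neq 0$ iff $\ell = |p-q|+2t$ for some integer $t$ with $0 \leqslant t < d(p,q)$. Since applying \prpref[Proposition~]{dim-2} to the right-hand side requires knowing $d(q,\Serre(p))$, the first step is the observation that
\begin{equation*}
  d(q,\Serre(p)) \,=\, \min\{q,\mspace{2mu} n+1-q,\mspace{2mu} n+1-\Serre(p),\mspace{2mu} \Serre(p)\} \,=\, \min\{q,\mspace{2mu} n+1-q,\mspace{2mu} p,\mspace{2mu} n+1-p\} \,=\, d(p,q)\;,
\end{equation*}
so the upper bound on the ``parity parameter'' is the same on both sides.

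The core of the proof will then be the numerical identity
\begin{equation*}
  (n-1) \,-\, |p-q| \,-\, |p+q-(n+1)| \,=\, 2\bigl(d(p,q)-1\bigr)\;. \tag{$\ast$}
\end{equation*}
Granting $(\ast)$, the equivalence is immediate: by \prpref{dim-2}, the condition $Q^{n-1-\ell}(q,\Serre(p))\neq 0$ reads $n-1-\ell = |q-\Serre(p)|+2s = |p+q-(n+1)|+2s$ for some $0 \leqslant s < d(p,q)$, i.e.
\begin{equation*}
  \ell \,=\, (n-1-|p+q-(n+1)|) - 2s \,\stackrel{(\ast)}{=}\, |p-q| + 2\bigl((d(p,q)-1)-s\bigr)\;.
\end{equation*}
Setting $t := d(p,q)-1-s$ gives a bijection between the admissible values of $s$ and of $t$ in the respective ranges $\{0,\dots,d(p,q)-1\}$, which is exactly what is needed.

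The identity $(\ast)$ is proved by a four-fold case analysis on the signs of $p-q$ and $p+q-(n+1)$; in each case one identifies which of the four numbers $p$, $q$, $n+1-p$, $n+1-q$ realizes the minimum $d(p,q)$. For example, if $p \leqslant q$ and $p+q \leqslant n+1$, then $|p-q|=q-p$, $|p+q-(n+1)|=n+1-p-q$, and $d(p,q)=p$, so the left-hand side of $(\ast)$ equals $(n-1)-(q-p)-(n+1-p-q) = 2p-2 = 2(d(p,q)-1)$. The three remaining cases are analogous, and this case-checking is the only step that requires any work. No deeper obstacle is anticipated; the argument is purely arithmetic once \prpref{dim-2} is invoked.
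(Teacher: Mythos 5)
Your proof is correct and follows essentially the same route as the paper: your identity $(\ast)$ is just a rearrangement of the paper's key identity $|p-q|+|q-\Serre(p)|+2\mspace{1mu}d(p,q) = n+1$, and your substitution $t = d(p,q)-1-s$ is exactly the correspondence the paper uses after noting $d(q,\Serre(p))=d(p,q)$. The only difference is that you write out the four-case verification of the identity, which the paper dismisses as ``not hard to prove.''
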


\begin{proof} 
  It follows from the definitions that $d(q,\Serre(p)) = d(p,q)$; denote this number by $\delta$. It is not hard to prove the identity:
  \begin{equation}
    \label{eq:n-plus-1}
    |p-q|+|q-\Serre(p)|+2\delta \,=\, n+1\;.
  \end{equation}
  To prove the equivalence we need by \prpref{dim-2} to argue that one has $\ell = |p-q|+2t$ for some $0 \leqslant t < \delta$ if and only if $n-1-\ell = |q-\Serre(p)|+2s$ for some $0 \leqslant s < \delta$. If $\ell$ has the form $\ell = |p-q|+2t$ for some $0 \leqslant t < \delta$, then $s=\delta-t-1$ satisfies $0 \leqslant s < \delta$ and it follows from \eqref{n-plus-1} that $n-1-\ell = |q-\Serre(p)|+2s$. Conversely, if $n-1-\ell = |q-\Serre(p)|+2s$ for some $0 \leqslant s < \delta$, then $t=\delta-s-1$ satisfies $0 \leqslant t<\delta$ and $\ell = |p-q|+2t$.
\end{proof}

\begin{rmk}
  \label{rmk:negative-ell}
  By \corref{l-geq-n} one has $Q^\ell(p,q)=0$ for every $\ell \geqslant n$. Hence, if we define $Q^\ell(p,q):=0$ for all $\ell<0$, then the equivalence in   \corref{nonzero-symmetry} holds for all $\ell \in \mathbb{Z}$.
\end{rmk}

There is a (kind of) canonical way to choose a basis for the $1$-dimensional free $\Bbbk$-module $Q^\ell(p,q)$ in the case where it is non-zero, cf.~\eqref{1-dim}. Namely, consider the diagram obtained from \eqref{all-paths-2} by replacing all occurences of $a^{}_q$ with $(-1)^q\mspace{1mu}a^{}_q$. For $n=5$ it looks like this:
  \begin{equation}
    \label{eq:all-paths-2-minus}
    \begin{gathered}
    \xymatrix@!=0.1pc{
      \underset{1}{\vtx} \ar[dr]^(0.65){\mspace{-15mu}-a^{}_1} & {} & 
      \underset{1}{\vtx} \ar[dr]^(0.65){\mspace{-15mu}-a^{}_1} & {} & 
      \underset{1}{\vtx} \ar[dr]^(0.65){\mspace{-15mu}-a^{}_1} & {} & 
      \underset{1}{\vtx}
      \\
      {} & 
      \underset{2}{\vtx} 
      \ar[ur]_(0.6){\mspace{-15mu}a^*_1} 
      \ar[dr]^(0.65){\mspace{-15mu}a^{}_2} & 
      {} & 
      \underset{2}{\vtx} 
      \ar[ur]_(0.6){\mspace{-15mu}a^*_1} 
      \ar[dr]^(0.65){\mspace{-15mu}a^{}_2} & 
      {} & 
      \underset{2}{\vtx} 
      \ar[ur]_(0.6){\mspace{-15mu}a^*_1} 
      \ar[dr]^(0.65){\mspace{-15mu}a^{}_2} & 
      \mspace{12mu} \cdots
      \\
      \underset{3}{\vtx} 
      \ar[ur]_(0.6){\mspace{-15mu}a^*_2}
      \ar[dr]^(0.65){\mspace{-15mu}-a^{}_3} & 
      {} & 
      \underset{3}{\vtx} 
      \ar[ur]_(0.6){\mspace{-15mu}a^*_2}
      \ar[dr]^(0.65){\mspace{-15mu}-a^{}_3} & 
      {} & 
      \underset{3}{\vtx} 
      \ar[ur]_(0.6){\mspace{-15mu}a^*_2}
      \ar[dr]^(0.65){\mspace{-15mu}-a^{}_3} & 
      {} &
      \underset{3}{\vtx}
      \\
      {} & 
      \underset{4}{\vtx} 
      \ar[ur]_(0.6){\mspace{-15mu}a^*_3}
      \ar[dr]^(0.65){\mspace{-15mu}a^{}_4} & 
      {} & 
      \underset{4}{\vtx} 
      \ar[ur]_(0.6){\mspace{-15mu}a^*_3}
      \ar[dr]^(0.65){\mspace{-15mu}a^{}_4} & 
      {} & 
      \underset{4}{\vtx} 
      \ar[ur]_(0.6){\mspace{-15mu}a^*_3}
      \ar[dr]^(0.65){\mspace{-15mu}a^{}_4} & 
      \mspace{12mu} \cdots
      \\
      \underset{5}{\vtx} 
      \ar[ur]_(0.6){\mspace{-15mu}a^*_4} & 
      & 
      \underset{5}{\vtx} 
      \ar[ur]_(0.6){\mspace{-15mu}a^*_4} & 
      &       
      \underset{5}{\vtx} 
      \ar[ur]_(0.6){\mspace{-15mu}a^*_4} & 
      &       
      \underset{5}{\vtx}
    }
    \end{gathered}    
  \end{equation}  
Note that by the mesh relations this diagram is now commutative(!) in $Q$. 

\begin{dfn}
  \label{dfn:isoelt}
For every combination of $p$, $q$, and $\ell$ where $Q^\ell(p,q) \neq 0$ (see   \prpref{dim-2} for a precise criterion), we let \smash{$\isoelt{\ell}{p}{q}$} be the be the unique \emph{signed path} (by which we just mean a morphism in $Q$ of the form $\pm$ a path in the quiver \smash{$\dou{(\vec{\mathbb{A}}_n)}$}) from $p$ to $q$ of length $\ell$ determined by the diagram \eqref{all-paths-2-minus}. When $\ell=0$, and hence $p=q$, we set \smash{$\isoelt{0}{q}{q} = \mathrm{id}_q$}.

As noted after \eqref{1-dim}, the singleton set \smash{$\{\isoelt{\ell}{p}{q}\}$} is a basis of $Q^\ell(p,q) \neq 0$.
\end{dfn}

For example, one has \smash{$\isoelt{0}{1}{1} = \mathrm{id}_1$},\, \smash{$\isoelt{1}{1}{2} = -a^{}_1$},\, \smash{$\isoelt{2}{1}{3} = -a^{}_2a^{}_1$}, and \smash{$\isoelt{3}{1}{4} = a^{}_3a^{}_2a^{}_1$}.  

\begin{rmk}
  \label{rmk:a-mult}
  It is clear from \dfnref{isoelt} above that multiplication with $a^{}_q$ and $a^*_q$ on the basis elements \smash{$\isoelt{\ell}{\text{\LARGE\raisebox{0.5pt}{$.$}}}{\text{\LARGE\raisebox{0.5pt}{$.$}}}$} acts as follows:
\begin{prt}
\item $a^{}_q\,\isoelt{\ell}{p}{q}= (-1)^q\,\isoelt{\ell+1}{p}{q+1}$ \ provided that \ $Q^\ell(p,q) \neq 0$ and $Q^{\ell+1}(p,q+1) \neq 0$.

\item $\isoelt{\ell}{q+1}{r}\,a^{}_q = (-1)^q\,\isoelt{\ell+1}{q}{r}$ \ provided that \ $Q^\ell(q+1,r) \neq 0$ and $Q^{\ell+1}(q,r) \neq 0$.

\item $a^*_q\,\isoelt{\ell}{p}{q+1}= \isoelt{\ell+1}{p}{q}$ \ provided that \ $Q^\ell(p,q+1) \neq 0$ and $Q^{\ell+1}(p,q) \neq 0$.

\item $\isoelt{\ell}{q}{r}\,a^*_q = \isoelt{\ell+1}{q+1}{r}$ \ provided that \ $Q^\ell(q,r) \neq 0$ and $Q^{\ell+1}(q+1,r) \neq 0$.
\end{prt}
A more refined version of part \prtlbl{a} can be found in \lemref{matrix}. Of course, it is also possible to make similar refined versions of \prtlbl{b}, \prtlbl{c}, and \prtlbl{d}.
\end{rmk}

\begin{proof}[Proof of \thmref{A-dou}.]
  We know from \corref{dimension} that $Q$ satisfies condition \eqref{Homfin} in \stpref[Setup ]{Bbbk} (Hom-finiteness), indeed, it even satisfies the stronger condition mentioned in \thmref[]{A-dou}\prtlbl{a}.
  
  It is evident that $Q$ satisfies condition \eqref{locbd} in \stpref{Bbbk} (Local Boundedness) as $Q$ only has finitely many objects. 
  
  It is known from \lemref{arrow-ideal} that $Q$ satisfies
  condition \eqmref{strong-retraction} in \dfnref{srp} (the Strong Retraction Property) and that the arrow ideal $\mathfrak{r}$ serves as the pseudo-radical in $Q$. That one has $\mathfrak{r}^n=0$, as asserted last in \thmref{A-dou}, is immediate from \corref{l-geq-n}. 
  
  It remains to argue that $Q$ satisfies condition \eqref{Serre} in \stpref{Bbbk}, i.e.~that $Q$ has a Serre~func\-tor, which is given by the formulae in \thmref[]{A-dou}\prtlbl{b}. The arguments take up the rest of the proof.
  
  By the universal property of the path category $P$, we can well-define a unique $\Bbbk$-linear endo\-functor $\Serre \colon P \to P$ by setting $\Serre(q) = n+1-q$ for $1 \leqslant q \leqslant n$ and \smash{$\Serre(a^{}_q) = (-1)^q \mspace{1mu} a^*_{n-q}$} and \smash{$\Serre(a^*_q) = (-1)^{n-q} \mspace{1mu} a^{}_{n-q}$} for $1 \leqslant q <n$; the same formulae as in \thmref[]{A-dou}\prtlbl{b}. This functor is an involution, i.e.~an automorphism with $\Serre^{-1} = \Serre$, as one has $\Serre(\Serre(q)) = q$ and $\Serre(\Serre(a^{}_q)) = a^{}_q$ and $\Serre(\Serre(a^*_q)) = a^*_q$. For the mesh relations $\mu_1,\ldots,\mu_n$, an easy computation shows that the identity $\Serre(\mu_q) = (-1)^n \mspace{1mu} \mu_{n+1-q}$ holds for all $1 \leqslant q \leqslant n$. Hence $\Serre$ preserves the mesh ideal $\mathfrak{m} = \langle \mu_1,\ldots,\mu_n\rangle$, so by the universal property of the quotient (mesh) category $Q = P/\mathfrak{m}$, it follows that $\Serre$ induces a $\Bbbk$-linear automorphism $\Serre \colon Q \to Q$.
  
It remains to prove that this automorphism $\Serre$ satisfies the defining property of a Serre functor, that is, we must establish an isomorphism of $\Bbbk$-modules,
\begin{equation}
  \label{eq:isomap}
\isomap{}{p}{q} \colon Q(p,q) \stackrel{\cong}{\longrightarrow} \Hom{\Bbbk}{Q(q,\Serre(p))}{\Bbbk}\;,
\end{equation}
which is natural in $p,q \in Q$. It follows from \corref{l-geq-n} that the direct sum in \eqref{Q-decomp} is finite, in fact, one has
\begin{equation*}
   Q(p,q) \,=\, Q^0(p,q) \oplus \cdots \oplus Q^{n-1}(p,q)
\end{equation*}  
(of course, some of these direct summands are zero by \prpref{dim-2}), and hence also
\begin{equation*}
   \Hom{\Bbbk}{Q^{}(q,\Serre(p))}{\Bbbk} \,=\,
   \Hom{\Bbbk}{Q^{n-1}(q,\Serre(p))}{\Bbbk} \oplus \cdots \oplus
   \Hom{\Bbbk}{Q^{0}(q,\Serre(p))}{\Bbbk}\;.
\end{equation*}  
Thus, to construct an isomorphism \smash{$\isomap{}{p}{q}$} as in \eqref{isomap}, it suffices to construct an isomorphism
\begin{equation}
  \label{eq:isomap-ell}
\isomap{\ell}{p}{q} \colon Q^\ell(p,q) \stackrel{\cong}{\longrightarrow} \Hom{\Bbbk}{Q^{n-1-\ell}(q,\Serre(p))}{\Bbbk}
\end{equation}
for every $\ell \in \mathbb{Z}$; here $Q^\ell(p,q)=0$ for $\ell<0$ as in \rmkref{negative-ell}. Indeed, having constructed isomorphisms $\isomap{\ell}{p}{q}$ for $\ell \in \mathbb{Z}$ we simply define $\isomap{}{p}{q} = \bigoplus_{\ell \in \mathbb{Z}}\, \isomap{\ell}{p}{q} = \bigoplus_{\ell=0}^{n-1}\,\isomap{\ell}{p}{q}$\,. 

To define $\isomap{\ell}{p}{q}$ note that the $\Bbbk$-modules
$Q^\ell(p,q)$ and $Q^{n-1-\ell}(q,\Serre(p))$ are simultaneously zero by     \corref{nonzero-symmetry} and \rmkref{negative-ell}, and in all such cases we set \smash{$\isomap{\ell}{p}{q} = 0$}.

For combinations of $p$, $q$, and $\ell$ where $Q^\ell(p,q) \neq 0$, and hence also $Q^{n-1-\ell}(q,\Serre(p)) \neq 0$, both $Q^\ell(p,q)$ and $\Hom{\Bbbk}{Q^{n-1-\ell}(q,\Serre(p))}{\Bbbk}$ are $1$-dimensional free $\Bbbk$-modules by \eqref{1-dim}, and hence they are, at least, non-canonically isomorphic. However, to obtain a \textsl{natural}~isomorphism, we have to be more careful:

In the situation where $Q^\ell(p,q) \neq 0$ we have already chosen a basis \smash{$\{\isoelt{\ell}{p}{q}\}$} for this module, see \dfnref{isoelt}. As a basis of $\Hom{\Bbbk}{Q^\ell(p,q)}{\Bbbk}$ we now take the dual of this basis, that is, \smash{$\{\isoeltd{\ell}{p}{q}\}$} where \smash{$\isoeltd{\ell}{p}{q} \colon Q^\ell(p,q) \to \Bbbk$} is the $\Bbbk$-linear map given by \smash{$\isoelt{\ell}{p}{q} \mapsto 1$}. Now let
\begin{equation*}
  \isomap{\ell}{p}{q} 
  \quad \text{be the $\Bbbk$-isomorphism given by} \quad
  \isoelt{\ell}{p}{q} \longmapsto \isoeltd{n-1-\ell}{q}{\Serre(p)}\;.
\end{equation*}

It remains to prove that the hereby obtained isomorphism \smash{$\isomap{}{p}{q} = \bigoplus_{\ell} \isomap{\ell}{p}{q}$}, see \eqref{isomap}, is natural in $p$ and $q$. Naturality in the variable $q$ means that for every morphism $\beta \colon q' \to q''$ in $Q$, the following diagram should be commutative,
\begin{equation*}
  \begin{gathered}
  \xymatrix@C=3pc{
    Q(p,q') 
    \ar[r]^-{\isomap{}{p}{q'}}_-{} 
    \ar[d]_-{\beta \,\circ\, -} 
    & 
    \Hom{\Bbbk}{Q(q',\Serre(p))}{\Bbbk}
    \ar[d]^-{- \,\circ\, Q(\beta,\,\Serre(p))}
    \\
    Q(p,q'') \ar[r]^-{\isomap{}{p}{q''}} & 
    \Hom{\Bbbk}{Q(q'',\Serre(p))}{\Bbbk}\;.\mspace{-8mu}
  }
  \end{gathered}
\end{equation*}
Evidently, it is enough to check this in the case where $\beta=a^{}_q$ or $\beta=a^*_q$ for some $1 \leqslant q<n$. We only consider the first case, as the second case can be dealt with similarly. By definition, \smash{$\isomap{}{p}{q}$} is the direct sum \smash{$\bigoplus_{\ell} \isomap{\ell}{p}{q}$}, so it suffices to argue that for every $\ell$, the diagram
\begin{equation}
  \label{eq:beta2}
  \begin{gathered}
  \xymatrix@C=3pc{
    Q^\ell(p,q) 
    \ar[r]^-{\isomap{\ell}{p}{q}} 
    \ar[d]_-{a^{}_q \,\circ\, -} 
    & 
    \Hom{\Bbbk}{Q^{n-1-\ell}(q,\Serre(p))}{\Bbbk}
    \ar[d]^-{- \,\circ\, Q(a^{}_q,\,\Serre(p))}
    \\
    Q^{\ell+1}(p,q+1) \ar[r]^-{\isomap{\ell+1}{p}{q+1}} & 
    \Hom{\Bbbk}{Q^{n-2-\ell}(q+1,\Serre(p))}{\Bbbk}
  }
  \end{gathered}
\end{equation}
is commutative. As already mentioned (see \corref{nonzero-symmetry} and \rmkref{negative-ell}), the domain and codomain of \smash{$\isomap{\ell}{p}{q}$} are simultaneously zero, and so are the domain and codomain of \smash{$\isomap{\ell+1}{p}{q+1}$}. Thus we may assume that all four modules that appear in the diagram above are non-zero (otherwise the diagram is trivially commutative), in which case the basis elements 
\begin{equation*}
 \isoelt{\ell}{p}{q}\,, \ \isoelt{n-1-\ell}{q}{\Serre(p)}\,, \ \isoeltd{n-1-\ell}{q}{\Serre(p)} \qquad \text{and} \qquad \isoelt{\ell+1}{p}{q+1}\,, \ \isoelt{n-2-\ell}{q+1}{\Serre(p)}\,, \ \isoeltd{n-2-\ell}{q+1}{\Serre(p)}
\end{equation*} 
exist. To see that the diagram \eqref{beta2} is commutative, it must be shown that the maps
\begin{equation*}
  \isomap{\ell}{p}{q}(\isoelt{\ell}{p}{q}) \circ Q(a^{}_q,\,\Serre(p))
  \qquad \text{and} \qquad
  \isomap{\ell+1}{p}{q+1}(a^{}_q\,\isoelt{\ell}{p}{q})
\end{equation*}
are identical. Using the definition of \smash{$\isomap{\ell}{p}{q}$} and \rmkref{a-mult}\prtlbl{a}, we get
\begin{align*}
  \isomap{\ell}{p}{q}(\isoelt{\ell}{p}{q}) \circ Q(a^{}_q,\,\Serre(p))
  &\,=\,
  \isoeltd{n-1-\ell}{q}{\Serre(p)} \circ Q(a^{}_q,\,\Serre(p)) \qquad \text{and}
  \\
  \isomap{\ell+1}{p}{q+1}(a^{}_q\,\isoelt{\ell}{p}{q})
  &\,=\, (-1)^q\, \isomap{\ell+1}{p}{q+1}(\isoelt{\ell+1}{p}{q+1})
  \,=\, (-1)^q\, \isoeltd{n-2-\ell}{q+1}{\Serre(p)}\;.
\end{align*}
To prove that these two $\Bbbk$-linear maps $Q^{n-2-\ell}(q+1,\Serre(p)) \to \Bbbk$ are identical, it suffices to see that they agree on \smash{$\isoelt{n-2-\ell}{q+1}{\Serre(p)}$}. By \rmkref{a-mult}\prtlbl{b} and the definition of dual bases, we get for the first map above:
\begin{align*}
  \big(\isoeltd{n-1-\ell}{q}{\Serre(p)} \circ Q(a^{}_q,\,\Serre(p))\big)(\isoelt{n-2-\ell}{q+1}{\Serre(p)}) 
  &\,=\, 
  \isoeltd{n-1-\ell}{q}{\Serre(p)}(\isoelt{n-2-\ell}{q+1}{\Serre(p)}\,a^{}_q) 
  \\
  &\,=\, 
  (-1)^q\,\isoeltd{n-1-\ell}{q}{\Serre(p)}(\isoelt{n-1-\ell}{q}{\Serre(p)})
  \,=\, (-1)^q\;.
\end{align*}
And for the second map we obviously also have $(-1)^q\, \isoeltd{n-2-\ell}{q+1}{\Serre(p)}(\isoelt{n-2-\ell}{q+1}{\Serre(p)}) = (-1)^q$. 

These arguments show that the isomorphism \smash{$\isomap{}{p}{q}$} is natural in $q$. A similar argument shows that it is natural in $p$, and consequently the proof is concluded.
\end{proof}

\begin{dfn}
  \label{dfn:T-matrices}
  For every $p,q \in Q$ we consider the (ordered) set
\begin{equation*}
  B_{p,q} \,=\, \big\{\isoelt{|p-q|}{p}{q},\isoelt{|p-q|+2}{p}{q},\isoelt{|p-q|+4}{p}{q},\ldots,\isoelt{|p-q|+2(d(p,q)-1)}{p}{q}\big\}\;,
\end{equation*}  
which is a basis of the $d(p,q)$-dimensional free $\Bbbk$-module $Q(p,q)$; see the direct sum decomposition \eqref{Q-decomp}, \prpref{dim-2}, and \dfnref{isoelt}. 

For every $1 \leqslant p \leqslant n$ and $1 \leqslant q < n$ we let $\mat{p}{q}$ and $\mat[*]{p}{q}$ be the matrices given by
\begin{equation*}
  \begin{gathered}
  \xymatrix@C=3pc{
    Q(p,q) \ar[d]_-{\cong} \ar[r]^-{a^{}_q\,\circ\,-} & Q(p,q+1) \ar[d]^-{\cong}
    \\
    \Bbbk^{d(p,q)} \ar[r]^-{\mat{p}{q}} & \Bbbk^{d(p,q+1)}
  }
  \end{gathered}
  \qquad \text{and} \qquad
  \begin{gathered}
  \xymatrix@C=3pc{
    Q(p,q+1) \ar[d]_-{\cong} \ar[r]^-{a^*_q\,\circ\,-} & Q(p,q) \ar[d]^-{\cong}
    \\
    \Bbbk^{d(p,q+1)} \ar[r]^-{\mat[*]{p}{q}} & \Bbbk^{d(p,q)}\;,\mspace{-8mu}
  }
  \end{gathered}
\end{equation*}
where the vertical isomorphisms are induced by the bases $B_{p,q}$ and $B_{p,q+1}$.
Here we view elements in $\Bbbk^m$ as a column vectors, so the matrices $\mat{p}{q}$ and $\mat[*]{p}{q}$ act from the left and have sizes $d(p,q+1) \times d(p,q)$ and $d(p,q) \times d(p,q+1)$, respectively. 
\end{dfn}

Our next goal is to find explicit descriptions of the matrices $\mat{p}{q}$ and $\mat[*]{p}{q}$.

\begin{lem}
  \label{lem:matrix}
  Let $1 \leqslant p \leqslant n$ and $1 \leqslant q < n$ be given. For every $0 \leqslant t < d(p,q)$ set 
\begin{equation*}
  \kfct{p}{q}(t) \,=\, 
  \left\{\mspace{-5mu}
  \begin{array}{ccl}
    t & \text{if} & p \leqslant q
    \\
    t+1 & \text{if} & p > q\;.
  \end{array}
  \right.
\end{equation*} 
The following formula holds:
\begin{equation*}
  a^{}_{q}\,\isoelt{|p-q|+2t}{p}{q} \,=\, 
  \left\{\mspace{-5mu}
  \begin{array}{ccl}
    (-1)^q\,\isoelt{|p-(q+1)|+2\mspace{1mu}\kfct{p}{q}(t)}{p}{q+1} & \text{if} & \kfct{p}{q}(t)<d(p,q+1)
    \vspace{0.5ex} \\
    0 & \multicolumn{2}{l}{\text{otherwise}\;.} 
  \end{array}
  \right.
\end{equation*}
\end{lem}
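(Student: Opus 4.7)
The plan is to reduce everything to Remark \ref{rmk:a-mult}\prtlbl{a} together with an explicit rewriting of the exponent $|p-q|+2t+1$ in the canonical form $|p-(q+1)|+2s$ used to label basis elements of $Q^{\ell+1}(p,q+1)$. Setting $\ell=|p-q|+2t$, the product $a^{}_q\,\isoelt{\ell}{p}{q}$ is, by construction, a morphism of length $\ell+1$ from $p$ to $q+1$, so it belongs to $Q^{\ell+1}(p,q+1)$. By Proposition \ref{prp:dim-2} combined with \eqref{1-dim}, this module is either zero or free of rank one with basis $\{\isoelt{\ell+1}{p}{q+1}\}$, and it is nonzero precisely when there exists an integer $0\leq s<d(p,q+1)$ with $\ell+1 = |p-(q+1)|+2s$. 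In the vanishing case the product is automatically $0$, which will furnish the ``otherwise'' branch.

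Assuming the nonvanishing case, I would invoke Remark \ref{rmk:a-mult}\prtlbl{a} to obtain $a^{}_q\,\isoelt{\ell}{p}{q} = (-1)^q\,\isoelt{\ell+1}{p}{q+1}$ and then identify the unique $s$ with $\ell+1 = |p-(q+1)|+2s$. This is a short case split. If $p\leq q$, then $|p-q|=q-p$ and $|p-(q+1)|=q+1-p$, so $\ell+1 = (q+1-p)+2t$ and $s=t$. If $p>q$, then $|p-q|=p-q$ and $|p-(q+1)|=p-q-1$ (valid even at the boundary $p=q+1$, where both quantities vanish and the leftover $+2$ is absorbed into the even summand), so $\ell+1 = (p-q-1)+2(t+1)$ and $s=t+1$. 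In both cases $s=\kfct{p}{q}(t)$, exactly matching the formula in the lemma.

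Finally, since the candidate $s=\kfct{p}{q}(t)$ is already a nonnegative integer, the nonvanishing criterion provided by Proposition \ref{prp:dim-2} collapses to the single inequality $\kfct{p}{q}(t)<d(p,q+1)$, which is precisely the case distinction stated. I do not foresee any genuine obstacle: once Remark \ref{rmk:a-mult}\prtlbl{a} is in hand, the whole argument is bookkeeping of exponents, the only mildly delicate point being the verification that the formula $|p-(q+1)|=p-q-1$ continues to make sense at the boundary $p=q+1$.
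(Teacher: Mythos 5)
Your argument is correct and is essentially the paper's own proof: both reduce the claim to \prpref{dim-2} (which pins down when $Q^{\ell+1}(p,q+1)$ is nonzero), the sign rule from \dfnref{isoelt}\,/\,\rmkref{a-mult}\prtlbl{a}, and the elementary bookkeeping showing that the unique admissible index $s$ with $|p-q|+2t+1=|p-(q+1)|+2s$ equals $\kfct{p}{q}(t)$. Your write-up merely spells out the case split $p\leqslant q$ versus $p>q$ (including the boundary $p=q+1$) that the paper leaves implicit.
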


\begin{proof}
  As \smash{$a^{}_q\,\isoelt{|p-q|+2t}{p}{q}$} is a (signed) path from $p$ to $q+1$ of length $\ell =|p-q| + 2t+1$ we know from \prpref{dim-2} that it is non-zero if and only if $\ell$ has the form $|p-(q+1)|+2s$ for some $0 \leqslant s < d(p,q+1)$. Note that the equation $|p-q| + 2t+1 = \ell = |p-(q+1)|+2s$ implies that $s$ is equal to $\kfct{p}{q}(t)$, so the desired conclusion follows from \dfnref{isoelt} (see also   \rmkref{a-mult}).
\end{proof}

In the next result, $I_m$ denotes the $m \times m$ identity matrix; for $m=0$ it is the empty matrix.

\begin{prp}
  \label{prp:matrix}
  For every $1 \leqslant p \leqslant n$ and $1 \leqslant q < n$ the following assertions hold.
  \begin{prt}
  \item If $p+q<n+1$ and $p \leqslant q$, then $\mat{p}{q}$ is the following $p \times p$ matrix:
  \begin{equation*}
    \mat{p}{q} \,=\, (-1)^q\cdot I_{p}\;.
  \end{equation*}

  \item If $p+q<n+1$ and $p > q$, then $\mat{p}{q}$ is the following $(q+1) \times q$ matrix:
  \begin{equation*}
    \mat{p}{q} \,=\, (-1)^q\cdot
    \left(\mspace{-4mu}
    \begin{array}{c}
      0 \\ \hline
      I_q
    \end{array}
    \mspace{-4mu}\right).
  \end{equation*}

  \item If $p+q \geqslant n+1$ and $p \leqslant q$, then $\mat{p}{q}$ is the following $(n-q) \times (n+1-q)$ matrix:
  \begin{equation*}
    \mat{p}{q} \,=\, (-1)^q\cdot
    \big(\, I_{n-q} \,\big|\; 0 \,\big)\;.
  \end{equation*}

  \item If $p+q \geqslant n+1$ and $p > q$, then $\mat{p}{q}$ is the following $(n+1-p) \times (n+1-p)$ matrix:
  \begin{equation*}
    \mat{p}{q} \,=\, (-1)^q\cdot
    \left(\mspace{-4mu}
    \begin{array}{c|c}
      0 & 0 
      \\ \hline
      I_{n-p} & 0
    \end{array}
    \mspace{-4mu}\right).
  \end{equation*}  
  \end{prt}
\end{prp}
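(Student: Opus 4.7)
The proof is a column-by-column computation based on \lemref{matrix}. By construction, the $t$-th column of $\mat{p}{q}$ (for $t \in \{0,1,\ldots,d(p,q)-1\}$) records the coordinates of $a^{}_q\,\isoelt{|p-q|+2t}{p}{q}$ in the basis $B_{p,q+1}$ of $Q(p,q+1)$. \lemref{matrix} tells us that this coordinate vector is zero whenever $\kfct{p}{q}(t) \geqslant d(p,q+1)$, and otherwise has a single entry equal to $(-1)^q$ at row index $\kfct{p}{q}(t)$. Hence $\mat{p}{q}$ has at most one nonzero entry per column, always equal to $(-1)^q$, and the entire task reduces to (i) computing the dimensions $d(p,q)$ and $d(p,q+1)$ in each of the four cases and (ii) identifying the set of indices $t$ for which $\kfct{p}{q}(t) < d(p,q+1)$.

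For step (i), a direct check using the definition $d(r,s) = \min\{r,s,n+1-r,n+1-s\}$ together with the case hypotheses yields $d(p,q) = p$ and $d(p,q+1) = p$ in case \prtlbl{a}; $d(p,q) = q$ and $d(p,q+1) = q+1$ in case \prtlbl{b}; $d(p,q) = n+1-q$ and $d(p,q+1) = n-q$ in case \prtlbl{c}; and $d(p,q) = n+1-p$ and $d(p,q+1) = n+1-p$ in case \prtlbl{d}. The mildly delicate points are the boundary sub-cases where $p = q+1$ in \prtlbl{b} (in which $d(p,q+1) = q+1$ requires $2q+1 \leqslant n$, a consequence of $p+q < n+1$) and where $p = q+1$ in \prtlbl{d} (in which one uses $p+q \geqslant n+1$ to deduce $\min\{p,n+1-p\} = n+1-p$).

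For step (ii), recall that $\kfct{p}{q}(t) = t$ when $p \leqslant q$ and $\kfct{p}{q}(t) = t+1$ when $p > q$. In cases \prtlbl{a} and \prtlbl{b} the condition $\kfct{p}{q}(t) < d(p,q+1)$ is satisfied for every admissible $t$, giving a full identity block — shifted down by one row in case \prtlbl{b} because $\kfct{p}{q}(t) = t+1$. In cases \prtlbl{c} and \prtlbl{d} the condition fails precisely at the maximal column index $t = d(p,q)-1$, producing a zero last column; in case \prtlbl{d} the same shift $\kfct{p}{q}(t) = t+1$ additionally forces the top row to be zero. Collecting these observations yields the four matrix formulae claimed. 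The only real obstacle in the argument is the bookkeeping needed to split and recombine the four cases correctly; no conceptually difficult step arises, since the coefficient values and the row/column shifts are all read off from \lemref{matrix} and from elementary inequalities in $p$, $q$, and $n$.
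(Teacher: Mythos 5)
Your proposal is correct and follows essentially the same route as the paper: compute $d(p,q)$ and $d(p,q+1)$ from the formula in \prpref{dim-2} under each of the four case hypotheses, then read off the columns of $\mat{p}{q}$ from \lemref{matrix} according to whether $\kfct{p}{q}(t)<d(p,q+1)$, with the row shift coming from $\kfct{p}{q}(t)=t+1$ when $p>q$. The paper writes out only cases \prtlbl{b} and \prtlbl{c} and leaves the rest "similar," so your uniform column-by-column bookkeeping over all four cases is, if anything, slightly more complete.
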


\begin{proof}
  We only prove parts \prtlbl{b} and \prtlbl{c} as the remaining two parts are proved similarly. Recall from \prpref{dim-2} that the function $d$ is given by
 that 
  \begin{align*}
    d(p,q) &\,=\, \min\{p,q,n+1-p,n+1-q\} \quad \text{and hence}
    \\
    d(p,q+1) &\,=\, \min\{p,q+1,n+1-p,n-q\}\;.
  \end{align*}
     
    \proofoftag{b} Clearly one has $d(p,q)=q$ and $d(p,q+1)=q+1$ under the given assumptions on $p$ and $q$. In this case, $\kfct{p}{q}(t)=t+1$ and \lemref{matrix} implies that 
\begin{equation*}
  a^{}_{q}\,\isoelt{|p-q|+2t}{p}{q} \,=\, (-1)^q\,\isoelt{|p-(q+1)|+2(t+1)}{p}{q+1}
  \quad \text{for every} \quad  0 \leqslant t<q\;.
\end{equation*}     
This shows that the matrix $\mat{p}{q}$ has the asserted form.

    \proofoftag{c} Clearly one has $d(p,q)=n+1-q$ and $d(p,q+1)=n-q$ under the given assumptions on $p$ and $q$. In this case, $\kfct{p}{q}(t)=t$ and \lemref{matrix} implies that 
\begin{equation*}
  a^{}_{q}\,\isoelt{|p-q|+2t}{p}{q} \,=\, 
  \left\{\mspace{-5mu}
  \begin{array}{ccl}
    (-1)^q\,\isoelt{|p-(q+1)|+2t}{p}{q+1} & \text{if} & 0 \leqslant t<n-q
    \vspace{0.5ex} \\
    0 & \text{if} & t=n-q\;.
  \end{array}
  \right.
\end{equation*}
This shows that the matrix $\mat{p}{q}$ has the asserted form.
\end{proof}

\begin{prp}
  \label{prp:matrix-star}
  For every $1 \leqslant p \leqslant n$ and $1 \leqslant q < n$ the following assertions hold.
  \begin{prt}
  \item If $p+q<n+1$ and $p \leqslant q$, then $\mat[*]{p}{q}$ is the following $p \times p$ matrix:
  \begin{equation*}
    \mat[*]{p}{q} \,=\, 
    \left(\mspace{-4mu}
    \begin{array}{c|c}
      0 & 0 
      \\ \hline
      I_{p-1} & 0
    \end{array}
    \mspace{-4mu}\right).
  \end{equation*}  

  \item If $p+q<n+1$ and $p > q$, then $\mat[*]{p}{q}$ is the following $q \times (q+1)$ matrix:
  \begin{equation*}
    \mat[*]{p}{q} \,=\, \big(\, I_{q} \,\big|\; 0 \,\big)\;.
  \end{equation*}

  \item If $p+q \geqslant n+1$ and $p \leqslant q$, then $\mat[*]{p}{q}$ is the following $(n+1-q) \times (n-q)$ matrix:
  \begin{equation*}
    \mat[*]{p}{q} \,=\, 
    \left(\mspace{-4mu}
    \begin{array}{c}
      0 \\ \hline
      I_{n-q}
    \end{array}
    \mspace{-4mu}\right).
  \end{equation*}

  \item If $p+q \geqslant n+1$ and $p > q$, then $\mat[*]{p}{q}$ is the following $(n+1-p) \times (n+1-p)$ matrix:
  \begin{equation*}
    \mat[*]{p}{q} \,=\, I_{n+1-p}\;.
  \end{equation*}
  \end{prt}
\end{prp}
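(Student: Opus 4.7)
The plan is to mirror the proof of \prpref{matrix} step by step, with $a_q$ replaced by $a_q^*$. The key ingredient is an analogue of \lemref{matrix} describing left multiplication by $a^*_q$ on the basis vectors $\isoelt{|p-(q+1)|+2t}{p}{q+1}$ of $Q(p,q+1)$. Such a lemma is immediate from \rmkref{a-mult}\prtlbl{c}, which records that $a^*_q\,\isoelt{\ell}{p}{q+1} = \isoelt{\ell+1}{p}{q}$ whenever $Q^{\ell+1}(p,q) \neq 0$ (note the absence of a sign, as built into \dfnref{isoelt} via the diagram \eqref{all-paths-2-minus}: only the $a^{}_q$'s with odd $q$ carry a sign). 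Combined with the criterion of \prpref{dim-2}, setting $\ell=|p-(q+1)|+2t$ and solving $\ell+1 = |p-q|+2s$ for $s$ yields the formula
\[
  a^*_q\,\isoelt{|p-(q+1)|+2t}{p}{q+1} \,=\,
  \begin{cases} \isoelt{|p-q|+2\mspace{1mu}\kappa^*(t)}{p}{q} & \text{if } \kappa^*(t) < d(p,q), \\ 0 & \text{otherwise,} \end{cases}
\]
where $\kappa^*(t) = t+1$ if $p \leqslant q$ and $\kappa^*(t) = t$ if $p > q$; these two sub-cases correspond respectively to $|p-(q+1)|-|p-q| = +1$ and $-1$.

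With this lemma in hand, I would treat the four cases in parallel with the four cases of \prpref{matrix}. For each case one already knows from the proof of \prpref{matrix} the precise values of $d(p,q)$ and $d(p,q+1)$, so the only remaining work is to read off, column by column, the image of each basis vector $\isoelt{|p-(q+1)|+2t}{p}{q+1}$ (indexed by $t \in \{0,\ldots,d(p,q+1)-1\}$) in terms of the basis $B_{p,q}$. Concretely: in case \prtlbl{a} we have $d(p,q)=d(p,q+1)=p$ and $\kappa^*(t)=t+1$, so column $t$ equals $e_{t+1}$ for $0 \leqslant t \leqslant p-2$ and column $p-1$ vanishes, producing the matrix $\bigl(\begin{smallmatrix} 0 & 0 \\ I_{p-1} & 0 \end{smallmatrix}\bigr)$. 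Cases \prtlbl{b}, \prtlbl{c}, \prtlbl{d} are handled in exactly the same bookkeeping style and give, respectively, $(I_q \mid 0)$, $\bigl(\begin{smallmatrix} 0 \\ I_{n-q} \end{smallmatrix}\bigr)$, and $I_{n+1-p}$.

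I do not anticipate any real obstacle: once the (nearly trivial) lemma above is extracted from \rmkref{a-mult}\prtlbl{c} and \prpref{dim-2}, the proposition is a purely combinatorial case-check whose cases have already been set up in the proof of \prpref{matrix}. The only subtlety worth emphasizing during the write-up is the absence of the sign factor $(-1)^q$ here, which is what ultimately distinguishes the four matrices above from the four in \prpref{matrix}.
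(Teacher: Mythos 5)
Your proposal is correct and is exactly the route the paper intends: its proof of this proposition is simply ``Similar to the proof of \prpref{matrix},'' and you have fleshed out precisely that mirroring, including the sign-free multiplication rule from \rmkref[Remark~]{a-mult}\prtlbl{c}--\prtlbl{d} and the correctly swapped analogue $\kappa^*$ of the function $\kfct{p}{q}$ from \lemref{matrix}. The dimension counts $d(p,q)$, $d(p,q+1)$ and the resulting column-by-column bookkeeping in your four cases all check out against the stated matrices.
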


\begin{proof}
  Similar to the proof of \prpref{matrix}.
\end{proof}

\begin{proof}[Proof of \thmref{A-dou-normal}.]
  Let $1 \leqslant p,q \leqslant n$ be given. We must show that $\mH{q}(Q(p,-))=0$. The proof is divided into three different cases: $q=1$, ($1<q<n$), and $q=n$. We start with the case $q=1$; the case $q=n$ is handled similarly and therefore left to the reader.
  
The mesh \eqref{mesh} associated to $q=1$ is: 
  \begin{equation*}
    \xymatrix{
      1 \ar[r]^-{a^{}_1} & 2 \ar[r]^-{a^*_1} & 1\;.
    }
  \end{equation*}
  It must be shown that the sequence 
  \begin{equation*}
    \xymatrix@C=3pc{
      Q(p,1) \ar[r]^-{a^{}_1 \,\circ\,-} & Q(p,2) \ar[r]^-{a^*_1 \,\circ\,-} & Q(p,1)
    }
  \end{equation*}
  is exact. By \dfnref{T-matrices} this sequence is isomorphic to
  \begin{equation*}
    \xymatrix@C=3pc{
      \Bbbk \ar[r]^-{\mat{p}{1}} & \Bbbk^{d(p,2)} \ar[r]^-{\mat[*]{p}{1}} & \Bbbk
    }.
  \end{equation*}
  For $p=1$, ($1<p<n$), and $p=n$, we get from parts \prtlbl{a}, \prtlbl{b}, and \prtlbl{d} in \prpref[Propositions~]{matrix} and \prpref[]{matrix-star} that this sequence is:
  \begin{equation*}
    \xymatrix{
      \Bbbk \ar[r]^-{-1} & \Bbbk \ar[r]^-{0} & \Bbbk
    }, 
    \quad
    \xymatrix{
      \Bbbk \ar[r]^-{
      \left(
        \begin{smallmatrix}
          0 \\ 
          -1
        \end{smallmatrix}
        \right)
      }
    & 
    \Bbbk^2
      \ar[r]^-{(1 \ \ 0)} 
    & 
    \Bbbk
  },   
  \quad \text{and} \quad
    \xymatrix{
      \Bbbk \ar[r]^-{0} & \Bbbk \ar[r]^-{1} & \Bbbk
    },
  \end{equation*}
  so evidently the sequence is exact in all three cases.
  
  It remains to consider the mesh at $1<q<n$, which is:
  \begin{equation*}
    \xymatrix@!=0.5pc{
      {} & q-1 \ar[dr]^(0.6){\mspace{-10mu}a^{}_{q-1}} & {}
      \\
      q
      \ar[ur]^(0.4){a^*_{q-1}\mspace{-10mu}}
      \ar[dr]_(0.4){a^{}_{q}\mspace{-10mu}}
      & {} & q
      \\
      {} & q+1 \ar[ur]_(0.6){\mspace{-5mu}a^*_{q}} & {}
    }
  \end{equation*}
It must be shown that the sequence
\begin{equation*}
  \xymatrix@C=6pc{
    Q(p,q) \ar[r]^-{
      \left(
      \begin{smallmatrix}
        a^*_{q-1} \,\circ\,- \\ 
        a^{}_{q} \,\circ\,-
      \end{smallmatrix}
      \right)
    }
    & 
    \mspace{-5mu}
    {
    \begin{array}{c}
      Q(p,q-1)
      \\
      \oplus
      \\
      Q(p,q+1)
    \end{array}
    }
    \mspace{-8mu}
    \ar[r]^-{
      \big(\!
      \begin{smallmatrix}
        a^{}_{q-1} \,\circ\,- & & a^*_{q} \,\circ\,-
      \end{smallmatrix}
      \!\big)    
    } 
    & 
    Q(p,q)
  }
\end{equation*}
is exact. By \dfnref{T-matrices} this sequence is isomorphic to
\begin{equation}
  \label{eq:3-term-complex}
  \xymatrix@C=7pc{
    \Bbbk^{d(p,q)} \ar[r]^-{M_{p,q} \ := \
      \left(\mspace{-1mu}
      \begin{smallmatrix}
        \mat[*]{p}{q-1} \vspace{1pt} \\ \hline \\
        \mat{p}{q}
      \end{smallmatrix}
      \mspace{-1mu}\right)
    }
    & 
    \Bbbk^{d(p,q-1)\,+\,d(p,q+1)}
    \ar[r]^-{N_{p,q} \ := \
      \big(
        \mat{p}{q-1} \big| \mat[*]{p}{q}
      \big)    
    } 
    & 
    \Bbbk^{d(p,q)}
  }.
\end{equation}
There are now four cases to check: \prtlbl{a}, \prtlbl{b}, \prtlbl{c}, and \prtlbl{d}, corresponding to the four cases in \prpref[Propositions~]{matrix} and \prpref[]{matrix-star}. We only consider the first case as the remaining three cases are handled similarly.

Thus, assume that the pair $(p,q)$ satisfies $p+q<n+1$ and $p \leqslant q$. In this case, part \prtlbl{a} in \prpref[Propositions~]{matrix} and \prpref[]{matrix-star} yields expressions for the matrices $\mat{p}{q}$ and $\mat[*]{p}{q}$. If $p<q$ then the pair $(p,q-1)$ satisfies $p+(q-1)<n+1$ and $p \leqslant q-1$, however, if $p=q$ then $p+(q-1)<n+1$ and $p > q-1$. Thus depending on the situation $p<q$ or $p=q$ we can use either part \prtlbl{a} or \prtlbl{b} in \prpref[Propositions~]{matrix} and \prpref[]{matrix-star} to find expressions for the matrices $\mat{p}{q-1}$ and $\mat[*]{p}{q-1}$. Explicitly, if $p<q$, then the block matrices in \eqref{3-term-complex} are
\begin{equation*}
  M_{p,q} \,=\,
  \left(\mspace{-3mu}
   \setlength{\arraycolsep}{1pt}  
   \begin{array}{c|c}
     0 & 0 \\ \hline
     I_{p-1} & 0 \\ \hline
     (-1)^q I_{p-1} & 0 \\ \hline
     0 & (-1)^q
   \end{array}
  \mspace{-3mu}\right)  
  \quad \text{and} \quad
  N_{p,q} \,=\,
  \left(\mspace{-3mu}
   \setlength{\arraycolsep}{2pt}
   \begin{array}{c|c|c|c}
     (-1)^{q-1} & 0 & 0 & \,0\, \\ \hline
     0 & (-1)^{q-1} I_{p-1} & I_{p-1} & \,0\, 
   \end{array}
  \mspace{-3mu}\right)
\end{equation*}
of sizes $2p \times p$ and $p \times 2p$, and if $p=q$ they are
\begin{equation*}
  M_{q,q} \,=\,
  \left(\mspace{-3mu}
   \setlength{\arraycolsep}{1pt}  
   \begin{array}{c|c}
     I_{q-1} & 0 \\ \hline
     (-1)^q I_{q-1} & 0 \\ \hline
     0 & (-1)^q
   \end{array}
  \mspace{-3mu}\right)  
  \quad \text{and} \quad
  N_{q,q} \,=\,
  \left(\mspace{-3mu}
   \setlength{\arraycolsep}{2pt}
   \begin{array}{c|c|c}
     0 & 0 & \,0\, \\ \hline
     (-1)^{q-1} I_{q-1} & I_{q-1} & \,0\, 
   \end{array}
  \mspace{-3mu}\right)
\end{equation*}
of sizes $(2q-1) \times q$ and $q \times (2q-1)$. In both cases, the sequence \eqref{3-term-complex} is clearly exact.
\end{proof}

\section*{Acknowledgement}

The second author was supported by a DNRF Chair from the Danish National Research Foundation (grant no. DNRF156) and by Aarhus University Research Foundation (grant no. AUFF-F-2020-7-16).

\def\cprime{$'$} \def\soft#1{\leavevmode\setbox0=\hbox{h}\dimen7=\ht0\advance
  \dimen7 by-1ex\relax\if t#1\relax\rlap{\raise.6\dimen7
  \hbox{\kern.3ex\char'47}}#1\relax\else\if T#1\relax
  \rlap{\raise.5\dimen7\hbox{\kern1.3ex\char'47}}#1\relax \else\if
  d#1\relax\rlap{\raise.5\dimen7\hbox{\kern.9ex \char'47}}#1\relax\else\if
  D#1\relax\rlap{\raise.5\dimen7 \hbox{\kern1.4ex\char'47}}#1\relax\else\if
  l#1\relax \rlap{\raise.5\dimen7\hbox{\kern.4ex\char'47}}#1\relax \else\if
  L#1\relax\rlap{\raise.5\dimen7\hbox{\kern.7ex
  \char'47}}#1\relax\else\message{accent \string\soft \space #1 not
  defined!}#1\relax\fi\fi\fi\fi\fi\fi} \def\cprime{$'$}
  \providecommand{\arxiv}[2][AC]{\mbox{\href{http://arxiv.org/abs/#2}{\tt
  arXiv:#2 [math.#1]}}}
  \providecommand{\oldarxiv}[2][AC]{\mbox{\href{http://arxiv.org/abs/math/#2}{\sf
  arXiv:math/#2
  [math.#1]}}}\providecommand{\MR}[1]{\mbox{\href{http://www.ams.org/mathscinet-getitem?mr=#1}{#1}}}
  \renewcommand{\MR}[1]{\mbox{\href{http://www.ams.org/mathscinet-getitem?mr=#1}{#1}}}
\providecommand{\bysame}{\leavevmode\hbox to3em{\hrulefill}\thinspace}
\providecommand{\MR}{\relax\ifhmode\unskip\space\fi MR }
\providecommand{\MRhref}[2]{%
  \href{http://www.ams.org/mathscinet-getitem?mr=#1}{#2}
}
\providecommand{\href}[2]{#2}


\end{document}